\newcommand{\R}{\mathbb{R}}
\newcommand{\C}{\mathbb{C}}
\newcommand{\Z}{\mathbb{Z}}
\newcommand{\N}{\mathbb{N}}
\newcommand{\G}{\mathbb{G}}
\newcommand{\F}{\mathbb{F}}
\newcommand{\Pb}{\mathbb{P}}
\newcommand{\B}{\mathbb{B}}
\newcommand{\M}{\mathbb{M}}
\newcommand{\Sb}{\mathbb{S}}
\newcommand{\Hb}{\mathbb{H}}
\newcommand{\K}{\mathbb{K}}
\newcommand{\ket}{\rangle}
\newcommand{\bra}{\langle}
\newcommand{\T}{\mathbb{T}}
\newcommand{\Li}{\mathcal{L}}
\newcommand{\Ai}{\mathcal{A}}
\newcommand{\Bi}{\mathcal{B}}
\newcommand{\Hi}{\mathcal{H}}
\newcommand{\Mi}{\mathcal{M}}
\newcommand{\Ki}{\mathcal{K}}
\newcommand{\Ri}{\mathcal{R}}
\newcommand{\Di}{\mathcal{D}}
\newcommand{\Ei}{\mathcal{E}}
\newcommand{\Fi}{\mathcal{F}}
\newcommand{\Oi}{\mathcal{O}}
\newcommand{\Ti}{\mathcal{T}}
\newcommand{\Ii}{\mathcal{I}}
\newcommand{\GH}{\mathfrak{H}}
\newcommand{\GK}{\mathfrak{K}}
\newcommand{\id}{\operatorname{id}}
\newcommand{\ep}{\varepsilon}
\newcommand{\pd}{\partial}
\newcommand{\Tr}{\operatorname{Tr}}
\newcommand{\End}{\operatorname{End}}
\newcommand{\Hom}{\operatorname{Hom}}
\newcommand{\longto}{\longrightarrow}
\newcommand{\SU}{\operatorname{SU}}
\newcommand{\Un}{\operatorname{U}}
\newcommand{\On}{\operatorname{O}}
\newcommand{\Irrep}{\operatorname{Irrep}}
\newcommand{\Mn}{\operatorname{M}}
\newcommand{\GeL}{\operatorname{GL}}
\newcommand{\gr}{\operatorname{gr}}
\newcommand{\qgr}{\operatorname{qgr}}
\newcommand{\coh}{\operatorname{coh}}
\newtheorem{thm}{Theorem}[section]
\newtheorem{Lemma}[thm]{Lemma}
\newtheorem{prop}[thm]{Proposition}
\newtheorem{cor}[thm]{Corollary}
\numberwithin{equation}{section}
\theoremstyle{definition}
\newtheorem{dfn}[thm]{Definition}
\newtheorem{Notation}[thm]{Notation}
\newtheorem{Ass}[thm]{Assumption}
\newtheorem{Remark}[thm]{Remark}
\newtheorem{Example}[thm]{Example}
\newtheorem{Conjecture}[thm]{Conjecture}
\newtheorem{Problem}[thm]{Problem}
\newtheorem*{sop}{Sketch of proof}
\newtheorem*{Ack}{Acknowledgment}
\newcommand{\Ad}{\operatorname{Ad}}
\newcommand{\Gr}{\operatorname{Gr}}
\newcommand{\bone}{\mathbf{1}}
\newcommand{\lef}{\operatorname{left}}
\newcommand{\rig}{\operatorname{right}}
\newcommand{\FS}{\operatorname{FS}}
\newcommand{\vol}{\operatorname{vol}}
\title{Berezin quantization of noncommutative projective varieties}
\author{Andreas Andersson}
\affil{\small Email: fornjotnr@hotmail.com}
\affil{\footnotesize Max Planck Institute for Mathematics in the Sciences. Inselstrasse 22, D-04103 Leipzig, Germany
\\Wollongong University, School of Mathematics and Applied Statistics, 2522 Wollongong, Australia}
\affil{Mathematics Subject Classification 2010 Primary: 58B32; Secondary: 58B34, 46L89}
\affil{Keywords: Berezin quantization, Toeplitz operators, Cuntz--Pimsner algebras, compact quantum groups, generalized inductive limits, noncommutative geometry, subproduct systems, noncommutative random walks, Arveson's conjecture, balanced metrics.}
\begin{document}
\maketitle
\abstract
We use operator algebras and operator theory to obtain new result concerning Berezin quantization of compact Kähler manifolds. Our main tool is the notion of subproduct systems of finite-dimensional Hilbert spaces, which enables all involved objects, such as the Toeplitz operators, to be very conveniently expressed in terms of shift operators compressed to a subspace of full Fock space. This subspace is not required to be contained in the symmetric Fock space, so from finite-dimensional matrix algebras we can construct noncommutative manifolds with extra structure generalizing that of a projective variety endowed with a positive Hermitian line bundle and a Kähler metric in the class of the line bundle. Even in the commutative setting these constructions are very fruitful. Firstly, we show that the algebra of smooth functions on any smooth projective variety can be quantized in a strong sense of inductive limits, as was previously only accomplished for homogeneous manifolds. In this way the Kähler manifold is recovered exactly from quantization and not just approximately. Secondly, we obtain a strict quantization also for singular varieties. Thirdly, we show that the Arveson conjecture is true in full generality for shift operators compressed to the subspace of symmetric Fock space associated with any homogeneous ideal. For noncommutative examples we consider homogeneous spaces for compact matrix quantum groups which generalize $q$-deformed projective spaces, and we show that these can be obtained as the cores of Cuntz--Pimsner algebras constructed solely from the representation theory of the quantum group. We also discuss interesting connections with noncommutative random walks.

\tableofcontents 

\section{Introduction}

With motivations from physics, Berezin introduced a way of approximating projective compact Kähler manifolds $M$ by finite-dimensional matrix algebras $\Bi(\GH_m)$ parameterized by $m\in\N_0$ \cite{Bere1, Bere2, CGR1, Lan2, Schl1}. When $M=G/K$ is a homogeneous space of some Lie group $G$, each Hilbert space $\GH_m$ carries an irreducible representation of $G$, and it is obtained by the Borel--Weil construction: $\GH_m$ is the space of holomorphic sections of a suitable line bundle $L^{\otimes m}$ over $M$.  

In order to apply Berezin quantization to quantum physics, the parameters (time or temperature or energy etc.) should be chosen such that the limit $m\to\infty$ simplifies the description of the system at hand. This is a powerful method; for instance it gives the Hartree--Fock approximation as a special case \cite{Raj1}. The limit behavior is captured by a classical (compact Kähler) manifold $M$. 

Nowadays it is however becoming more and more important to have a versatile theory of open quantum systems. Recently we observed how to obtain a simplifying infinite-$m$ limit in the standard framework of quantum channels as driving the evolution of open quantum systems \cite{An4, An5}. It so happens though, that the infinite-$m$ system is (in general) not given by a classical manifold but by a \emph{noncommutative} manifold, i.e. there is a noncommutative $C^*$-algebra $C(\M)$ which is supposed to encode the properties of the dynamics and which is a surprisingly good analogue of the commutative $C^*$-algebra $C(M)$ of continuous functions on a projective variety $M$. Here we use the symbol $\M$ for a nonexisting object appearing only in the notation for the algebra $C(\M)$, while $M$ denotes an honest manifold. Such a $C^*$-algebra $C(\M)$ of continuous functions on a ``noncommutative projective variety" will be obtained as an inductive limit of matrix algebras $\Bi(\GH_m)$. These in turn arise from a sequence $\GH_\bullet=(\GH_m)_{m\in\N_0}$ of finite-dimensional Hilbert spaces such that $\GH_{m+l}\subseteq\GH_m\otimes\GH_l$ for all $m,l\in\M_0$. Such a sequence has been referred to as a ``subproduct system" \cite{ShSo1} and it generalizes the structure needed to perform ordinary Berezin quantization. In \cite{An4} we discussed the physical meaning of the special case when $\M=\G/\K$ is a ``projective" quantum homogeneous space (a generalization of compact coadjoint orbits).

%The ``noncommutative Kähler manifolds" $\M$ appearing in this way generalize only a special kind of compact Kähler manifolds, namely (complex) projective manifolds $M\subset\C\Pb^{n-1}$ (also singular varieties can occur though). 
The purpose of the present paper is to introduce and study the $C^*$-algebras $C(\M)$ defining these noncommutative manifolds. They will be constructed in such a way that they possess a lot of extra structure generalizing that complex-analytic structure, a positive line bundle and a Kähler metric. The construction is new also in the commutative setting and we will solve some very interesting problems in operator theory which provide new insight in the geometry of compact Kähler manifolds. In this way we set the stage for a new interplay between operator theory and complex differential geometry. We begin by outlining the results from several different viewpoints.

%We work with frame matrix representations of operators on Fock space. ...

%Very inspiring for the geometric interpretation of the present work was the paper \cite{LMS1}. 

\subsection{Kähler geometry}

Let $(M,L)$ be a polarized manifold, i.e. $M$ is a compact connected complex-analytic manifold admitting a Kähler metric and $L$ is a holomorphic line bundle over $M$ with the property that any choice of basis for the finite-dimensional vector space $H^0(M;L)$ of holomorphic sections of $L$ gives a holomorphic embedding of $M$ into the projective space $\C\Pb^{n-1}$, where $n:=\dim H^0(M;L)$. For $m\in\N$, let $L^m$ be the $m$th tensor power of the line bundle $L$ and let $L^0=\Oi_M$ be the trivial holomorphic line bundle. The idea of Berezin quantization is that if one chooses an inner product on $H^0(M;L^m)$ for each $m$, so that we obtain a sequence $\GH_\bullet=(\GH_m)_{m\in\N_0}$ of Hilbert spaces, then the finite-dimensional matrix algebras $\Bi(\GH_m)$ should give an increasingly good approximation of the $C^*$-algebra $C(M)$ of continuous functions on $M$ as $m$ goes to infinity. This works for any polarized manifold $(M,L)$ \cite{BMS} if the inner product on each $\GH_m$ is given by
$$
\bra\phi|\psi\ket_{h,\omega}:=m^d\int_Mh^m(\phi(x),\psi(x))\frac{\omega(x)^d}{d!},\qquad\forall \psi,\phi\in H^0(M,L^m)
$$
for some fixed Kähler metric $\omega$ on $M$ and a Hermitian metric $h$ on $L$ related to the Kähler metric via the ``prequantum condition''
\begin{equation}\label{pqcondintro}
\sqrt{-1}\bar{\pd}\pd\log h=\omega.
\end{equation}
The approximation of $C(M)$ by the matrix algebras $\Bi(\GH_m)$ is, more precisely, a ``strict quantization" of $C(M)$ realized by explicit surjective positive maps (``Toeplitz maps") (see \S\ref{quantcommsec} for more details)
$$
\breve{\varsigma}^{(m)}:C(M)\to\Bi(\GH_m)
$$
and explicit injective positive maps (covariant symbol maps) 
$$
\varsigma^{(m)}:C(M)\to\Bi(\GH_m).
$$
such that $\varsigma^{(m)}$ is the adjoint of $\breve{\varsigma}^{(m)}$ with respect to the inner product on $\Bi(\GH_m)$ and $C(M)$ defined by the trace and by $\omega$ respectively, and such that the ``Berezin transforms'' $\varsigma^{(m)}\circ\breve{\varsigma}^{(m)}$ converge to the identity map on $C(M)$ as the ``quantum number'' $m$ goes to infinity. 

One could imagine however, that it should be possible to approximate $C(M)$ in some stronger sense. It turns out that is not true for any polarized manifold $(M,L)$, if we want the prequantum condition \eqref{pqcondintro}. We are now interested in the problem of relating properties of $(M,L)$ to the well-behavedness of the quantization $\GH_\bullet$. As has been well known every since the beginnings of quantization theory, and made very clear in \cite{CGR1, CGR2}, if $(M,L)$ is a homogeneous manifold then it can be quantized in a stronger sense than that provided by a strict quantization. Hawkins made this very explicit in a $C^*$-algebraic setting \cite{Hawk1} by realizing $C(M)$ as an inductive limit of the matrix algebras $\Bi(\GH_m)$. We shall show that these inductive limits are arising very intrinsically from a choice of embedding $M\hookrightarrow\C\Pb^{n-1}$ and that the construction works for any projective variety $M$ if we drop the prequantum condition \eqref{pqcondintro} and instead choose $h$ and $\omega$ to be related in a different way. In fact we need not refer to Hermitian metrics or Kähler metrics for the construction of the inductive limit, and the covariant symbol $\varsigma^{(m)}$ and the Toeplitz maps $\breve{\varsigma}^{(m)}$ arise naturally from the very maps $\iota_{m,l}:\Bi(\GH_m)\to\Bi(\GH_l)$ which defines the inductive system. Nevertheless it is easy to recover the metrics $h$ and $\omega$ such that $\varsigma^{(m)}$ and $\breve{\varsigma}^{(m)}$ attains the same geometric meaning as before. 

Thus we drop the prequantum condition \eqref{pqcondintro} and instead look at the Hilbert spaces $\GH_m$ obtained from a sequence $(\omega,h_m)$ where the metric on $L^m$ varies with $m\in\N_0$, or equivalently a sequence $(\omega_m,h)$ where the Kähler metric varies with $m$; the important datum is the sequence $\GH_\bullet=(\GH_m)_{m\in\N_0}$ of Hilbert spaces. We call $\GH_\bullet$ a ``projectively induced quantization" of $(M,L)$ (again see \S\ref{quantcommsec} for details) if $\GH_\bullet$ is a \textbf{subproduct system} in the sense of \cite{ShSo1}, i.e.
$$
\GH_l\subset\GH_m\otimes\GH_{l-m},\qquad\forall m\leq l\in\N.
$$
This is the condition which allows the construction of an inductive system of unital completely positive maps
$$
\iota_{m,l}:\Bi(\GH_m)\to\Bi(\GH_l),\qquad\forall m\leq l\in\N_0
$$
such that $C(M)$ is recovered as a $C^*$-algebra as the limit of the matrix algebras $\Bi(\GH_m)$ as $m$ goes to infinity. 

An inductive system which recovers $C(M)$ could possibly be constructed also from a prequantum quantization, but the inductive system will have less nice properties, depending on the geometry of $M$. It would be interesting to study further if one could characterize (``stability") properties of $M$ as existence of an inductive system of matrix algebras with certain properties (see Conjecture \ref{DTYQKconj} below). 

What is special to the case when $M$ is a coadjoint orbit is that then the maps $\iota_{m,l}$ intertwine the tracial states. In that case Hawkins showed that one obtains $C(M)$ as a projective limit is well \cite{Hawk2}. He also showed that a similar construction works for general polarized manifolds if one uses a prequantum quantization \cite{Hawk2}. Our contribution is the description of these constructions on a single Hilbert space, namely the Hilbert-space direct sum
$$
\GH_\N:=\bigoplus_{m\in\N_0}\GH_m,
$$
which (as will be discussed in much more detail in future work) can be realized as a Hilbert space of analytic functions on a subvariety of the unit ball in $\C^n$ (cf. \cite{DRS1, DRS2}). The space $\GH_\N$ sits as a subspace of the symmetric Fock space $\GH^{\vee\N}$ over $\GH_1$, and the point is that $\GH^{\vee\N}$ can be identified with the so-called Drury--Arveson space which has a very central role in multivariable operator theory. Our ongoing work uses the results of the present paper to study the geometry of $(M,L)$, in particular the stability of vector bundles over $M$, using operator theory in Drury--Arveson space. 

The subproduct property of the sequence $\GH_\bullet$ says precisely that $\GH_\N$ is invariant under the backward shifts on $\GH^{\vee\N}$. The inductive and projective limits mentioned above encode the data of a ``strict quantization", as needed to generalize the classical setting, but they are much more convenient than just knowing that there is a strict quantization because they are constructed using the shift operators on $\GH_\N$. The notion of subproduct systems and the associated operator algebras of shift operators provide a machinery for explicit calculations that has previously been available mainly in the case of $\C\Pb^{n-1}$, where Berezin quantization and fuzzy geometry has been successfully described in terms of creation and annihilation operators (the unnormalized shift operators) (see e.g. \cite{BDLMC}).

\subsection{Algebraic geometry}

There is an analogue of Berezin quantization in projective algebraic geometry saying that the vector spaces $H^0(M;L^m)$ of algebraic sections of higher tensor powers of a very ample line bundle $L$ over a projective complex variety $M\subset\C\Pb^{n-1}$ determine all coherent sheaves on $M$. Namely, take the Abelian category $\gr(\Ai)$ of finitely generated graded modules over the graded algebra
$$
\Ai=\bigoplus_{m\in\N_0}H^0(M;L^m).
$$
Elements of $\Ai$ are not functions on the variety $M$. In order to pass to objects defined on $M$ we have to work modulo the modules which are finite-dimensional as vector spaces over $\C$, by replacing the homomorphism spaces $\Hom_{\gr(A)}(E,F)$ by
$$
\Hom_{\qgr(A)}(\Ei,\Fi):=\lim_{m\to\infty}\Hom_{\gr(A)}(E_{\geq m},F),
$$
where $E_{\geq m}:=\bigoplus_{l\geq m}E_l$ is a tail of a graded $A$-module $E=\bigoplus_{k\in\Z}E_k$. It was shown in \cite{Serre2} that in this way we obtain an Abelian category $\qgr(\Ai)$ which is equivalent to the Abelian category $\coh(M)$ of coherent sheaves on $M$. This result is also the starting point for noncommutative projective algebraic geometry \cite{ArZh1}, namely one starts with a finitely generated graded algebra $\Ai=\bigoplus_{m\in\N_0}\Ai_m$ satisfying less stringent assumptions than commutativity and one studies the category $\qgr(\Ai)$ guided by the geometric intuition from analogy with commutative algebraic geometry. 

%Smooth version, ``smooth homogeneous coordinate ring", ``smooth projective scheme"...
%...
Our observation in \S\ref{indlimsec} is that if we endow the subspaces $\Ai_m\subset\Ai$ by inner products such that the resulting Hilbert spaces $\GH_m$ form a subproduct system then the above inductive system can be explicitly described by unital completely positive maps and is compatible with the inner products in a sense (namely we obtain a generalized inductive limit of $C^*$-algebras as described below). Instead of $\Ai$ one considers a $\N_0$-graded ring $\Ri=\bigoplus_{m\in\N_0}\Ri_m$ which can be concretely described as a (non-$*$-closed) algebra of operators on the Fock space $\GH_\N$. The limit
$$
^{0}\Bi_\infty:=\lim_{m\to\infty}\End_{\gr(\Ri)}(\Ri_{\geq m})
$$
exists for the same reason as in the case of $\Ai$. We show that $^{0}\Bi_\infty$ coincides with the normally ordered part of the algebraic ``Cuntz--Pimsner core'' of the subproduct system (for the definitions see \S\ref{supprodsec}). In the commutative case it is the algebra of real-algebraic $\C$-valued functions on the variety $M\subset\C\Pb^{n-1}$ associated with $\GH_\bullet$. Moreover, there is a natural norm on $^{0}\Bi_\infty$ such that its completion $\Bi_\infty$ is (in the commutative case) the $C^*$-algebra $C(M)$ of continous functions on $M$. 

The quotient category $\qgr(\Ri)$ is (as will be discussed in another publication) in the commutative case equivalent to the category of torsionfree modules over the sheaf of real-algebraic functions on the variety $M$. From there it is easy to go to modules over the sheaf of continuous functions. Thus we are discussing a continuous version of Serre's theorem and thereby a continuous analogue of Artin--Zhang approach to noncommutative projective geometry.

The ring $\Ai$ is contained as a graded subring of $\Ri$,
$$
\Ai_m\subset\Ri_m,\qquad \forall m\in\N_0,
$$
and one can define a ``holomorphic structure'' on (the continuous extensions of) objects in $\qgr(\Ri)$ as an operator on modules over $\Bi_\infty$ possessing certain properties. Then we are lead to the following research problem:
\begin{Problem}[Noncommutative GAGA]
Find examples of noncommutative $\N_0$-graded algebras $\Ai$ such that $\qgr(\Ai)$ is equivalent to the holomorphic subcategory of $\qgr(\Ri)$.
\end{Problem}

The works \cite{DRS1, DRS2} can be regarded as an operator-theoretic approach to noncommutative projective geometry using shift operators on Fock space. What we are doing here suggest that this gives the analytic version of Artin--Zhang's algebraic approach.

%In this way, when we choose the inner products so that we obtain a subproduct system, the passage from algebraic to differential geometry goes as smoothly as possible and the resulting limit $^{0}\Bi_\infty$ has, due to the inner products on finite-dimensional approximands, more structure than merely that of an algebra over $\C$. 

%The inner product on the full Fock space $\GH^{\otimes\N}$ is the unique inner product ``compatible'' with the ring-theoretic inductive system $\End_{\Gr(\Ri)}(\Ri_{\geq m})$ for every connected $\N_0$-graded algebra $\Ai\subset\C\bra z_1,\dots,z_n\ket$, in the sense that the map $\End_{\Gr(\Ri)}(\Ri_{\geq m})\to\End_{\Gr(\Ri)}(\Ri_{\geq l})$ for $m\leq l$ can be realized explicitly as a \emph{unital} completely positive map $\iota_{m,l}:\Bi(\GH_m)\to\Bi(\GH_l)$ on the algebra of operators on the Hilbert spaces $\GH_m$ obtained by endowing $\Ai_m$ with the inner product of $\GH^{\otimes m}$.

\subsection{Operator algebra}
The $C^*$-algebra $\Bi_\infty$ is a special case of a ``generalized inductive limit" of $C^*$-algebras in the sense of \cite{BlKi1}. What is special with the inductive system coming from a subproduct system is not only that it is ``NF", i.e. defined by completely positive maps, but also that the underlying algebraic inductive limit of sets is already an algebra. 

A tracial state $\omega$ on a $C^*$-algebra such as $\Bi_\infty$ can have finite-dimensional approximation properties of various strengths, e.g. ``quasidiagonality'', ``uniform quasidiagonality", ``amenability'' and ``uniform amenability"; see \cite[\S3]{Brow1}. These notions involve tracial states $\phi_m:\Bi(\GH_m)\to\C$ on finite-dimensional matrix algebras such that $\omega$ is in some sense the limit of the $\phi_m$'s as $m\to\infty$, in that there are completely positive maps $\breve{\varsigma}^{(m)}:\Bi_\infty\to\Bi(\GH_m)$ intertwining $\omega$ and $\phi_m$ up to some error. In this case we call $(\breve{\varsigma}^{(m)})_{m\in\N_0}$ an \textbf{explicit realization} of the approximation property of $\omega$. 

Consider the case of a commutative smooth projective variety $M$, where as mentioned we shall show that the inductive limit $\Bi_\infty$ coincides with $C(M)$. Berezin quantization can be regarded as an explicit realization of the quasidiagonality of states on $C(M)$, namely the states associated with volume forms of Kähler metric on $M$. The point is not to show that some traces are quasidiagonal, but to obtain an explicit realization where the approximating finite-dimensional tracial states $\phi_m:\Bi(\GH_m)\to\C$ are defined on matrix algebras whose dimension $n_m=\chi(\Oi_\M(m))$ depend on the complex-analytic structure of $\M$. 

With a \emph{prequantum} quantization $\GH_\bullet$ of $(M,L)$ one will not obtain a subproduct system unless the quantization is ``regular" in the sense of \cite{CGR2}. However, Blackadar--Kirchberg's notion of generalized inductive limit is very general and one can probably still obtain $C(M)$ as such an inductive limit, albeit in a weaker sense than as the Cuntz--Pimsner algebra of a subproduct system. If one could characterize precisely the properties of such an inductive limit needed for the existence of a constant scalar curvature Kähler metric in the class $c_1(L)$, one could hope to find the correct stability condition on the manifold $(M,L)$ which characterizes the existence of such a metric.

By analogue with the Donaldson--Tian--Yau conjecture about the equivalence of the exstence of a constant scalar curvature Kähler metric and some stability property (``$K$-stability'') \cite{Don9, Don12, Tian1, Tian2} we formulate the following speculative but suggesting conjecture:
\begin{Conjecture}\label{DTYQKconj}
There exists an approximation property (say ``$QK$-stability'') of traces on $C^*$-algebras (perhaps quasidiagonality or amenability) such that the following holds: If $\omega$ is a Kähler form on a smooth projective variety $M\subset\C\Pb^{n-1}$ then $\omega$ has constant scalar curvature if and only if there is a prequantum quantization $\GH_\bullet$ of $\omega$ which explictly realizes $\omega$ as a $QK$-stable state on $C(M)$.
\end{Conjecture}
The inductive and projective limits constructed in the present papers may serve as guidance for less well-behaved structures associated with quantizations with prequantum condition. Our ongoing work is more focused on showing that such a characterization is possible for the analogous problem of stability of vector bundles (where the analogue of the Donaldson--Tian--Yau conjecture is known to be true \cite{LuTe1}).

\subsection{Operator theory}

%Finally, the identification of these shift-operator algebras with inductive limits allows us to solve the ten years-old Arveson conjecture. 
Starting from any subproduct system $\GH_\bullet=(\GH_m)_{m\in\N_0}$ one can form its Fock space $\GH_\N:=\bigoplus_{m\in\N_0}\GH_m$, which is a subspace of the full Fock space $\GH^{\otimes\N}:=\bigoplus_{m\in\N_0}\GH^{\otimes m}$ which is invariant under the backward shift operators. The shift operators $S_1,\dots,S_n$ on $\GH_\N$ commute pairwisely if and only if $\GH_\N$ is contained in the symmetric Fock space $\GH^{\vee\N}=\bigoplus_{m\in\N_0}\GH^{\vee m}$, where $\GH^{\vee m}$ denotes the $m$th symmetric tensor power of the Hilbert space $\GH=\GH_1=\C^n$. The symmetric Fock space $\GH^{\vee\N}$ can be identified with a Hilbert space of analytic functions on the unit ball $\B^n\subset\C^n$, called the \textbf{Drury--Arveson space} and usually denoted by $H^2_n$, in such a way that the shifts on $\GH^{\vee\N}$ identify with the operators $M_1,\dots,M_n$ on $H^2_n$ acting by multiplication by the coordinate functions $z_1,\dots,z_n$ on $\B^n$. 

Subspaces of $H^2_n$ which are invariant under the adjoint shifts $M_1^*,\dots,M_n^*$ (briefly, ``quotient modules" of $H^2_n$) have a simple description generalizing that of the Beurling representation of quotient modules of the Hardy space $H_1^2=H^2(\Sb^1)$ of the unit circle \cite{McTr1}. The most basic open question about these quotient modules is whether the following is true. 
\begin{Conjecture}[Arveson's conjecture]\label{Arvcon}
Let $\GH_\N$ be a graded quotient module of the Drury--Arveson space $\GH^{\vee\N}$. Then the shift operators $S_1,\dots,S_n$ on $\GH_\N$ form an essentially normal $n$-tuple, i.e.
$$
[S_j^*,S_k]\in\Ki,\qquad\forall j,k=1,\dots,n.
$$
\end{Conjecture}
In the present paper we shall prove Conjecture \ref{Arvcon} by operator-algebraic methods. Moreover, the geometric interpretations discussed here lead to an alternative more geometric proof which we will present in a separate paper.

The conjecture has been proven previously in several special cases by different means (see e.g. \cite{Arv8, Arv9, Doug4, DoWa2, DoWa4, EnEs1, Esch1, Kenn1, KeSh1, KeSh2, Sha3} and references therein), and the proofs have provided great new insights in the structure of submodules and quotient modules of Drury--Arveson space, Hardy space and Bergman space. It is probably not possible to have essential normality for any quotient module, as counterexamples have been found e.g. in the case of the Hardy space of the polydisk \cite{Doug4}, and one has to restrict attention to some special subclass of quotient modules. In relation to projective geometry one only needs \emph{graded} quotient modules, which is why Conjecture \ref{Arvcon} is very important there. 
%it seems like one has to restrict attention to ``graded completions" of the polynomial ring $\C[z_1,\dots,z_n]$ \cite{Arv9}. But for every projective variety $M\subset\C\Pb^{n-1}$ one can consider such graded completions of the homogeneous coordinate ring of $M$ defined by the restriction to $M$ of the hyperplane bundle $\Oi(1)$ over $\C\Pb^{n-1}$. We will show that, when the graded inner product on the coordinate ring is the Fock-space one, essential normality holds for any $M\subset\C\Pb^{n-1}$, just as for $\C\Pb^{n-1}$.

%\begin{thm}
%Arveson's conjecture is true for every graded quotient module $\GH_\N$ of the Drury--Arveson space $\GH^{\vee\N}$. %Moreover, the Hilbert space $\GH_\N$ identifies with the reproducing kernel Hilbert space on a homogeneous subset $\B\subset\B^n$ with reproducing kernel
%$$
%k_\B(z,w)=\frac{1}{1-h^L(z,w)}
%$$
%where $h^L(z,w)$ is the ``globalization" (cf. \cite{CaDA1}) of a positively curved Hermitian metric on a polarized manifold $(\M,\Li)$.
%\end{thm}
%In this way we obtain a direct proof of the famous Agler--McCarthy theorem \cite{AgMc1}, \cite[\S7]{AgMc2}:
%\begin{cor}

%\end{cor}
Let $\Ti_\GH$ be the $C^*$-algebra generated by $S_1,\dots,S_n$ and let $\Oi_\GH$ be the quotient $\Ti_\GH/\Ki$ by the ideal of compact operators. Then Conjecture \ref{Arvcon} can be reformulated by saying that the $C^*$-algebra $\Oi_\GH$ is commutative. The algebras $\Ti_\GH$ and $\Oi_\GH$ both have a natural $\Z$-grading due to the grading of $\GH_\N$. Our proof of the essential normality of the tuple $(S_1,\dots,S_n)$ is based on the observation that the degree-0 part $\Oi_\GH^{(0)}$ of $\Oi_\GH$ can be constructed as an inductive system of sets. The inductive system is given by explicit unital completely positive maps $\iota_{m,l}:\Bi(\GH_m)\to\Bi(\GH_l)$ for $m\leq l\in\N_0$. To show that the limit is a unital $C^*$-algebra it is therefore enough to show that the $\iota_{m,l}$'s are ``asymptotically multiplicative''. To see this we forget about the Hilbert-space structure and regard the $\iota_{m,l}$'s as maps between matrix rings. Then we observe that these matrix rings all sit inside a bigger $\N_0$-graded ring $\Ri$ and that the inductive system is of a certain type which is known to have a ring structure on the set-theoretic limit. 

%As forseen by Arveson, the establishment of his conjecture opens up a new field of research where problems in operator theory can be tackled by geometric methods and vice versa. In upcoming works we will attempt to characterize various stability conditions on vector bundles over polarized manifolds using operator theory, and in this way obtain very interesting new results in both complex differential geometry and operator theory. 

%In slightly more detail, t
The importance of the Arveson conjecture comes from the fact that, if the $C^*$-algebra $\Ti_\GH$ generated by $S_1,\dots,S_n$ on $\GH_\N$ is commutative modulo compacts, we have a short exact sequence of $C^*$-algebras
$$
0\to\Ki\to\Ti_\GH\to C(\Sb)\to 0,
$$
where $\Sb$ is a subset of the unit sphere $\Sb^{2n-1}$ in $\C^n$. Taking the $\Un(1)$-invariant part of this sequence one obtains
$$
0\to\Gamma_0\to\Ti_\GH^{(0)}\to C(M)\to 0,
$$
and $M$ is a compact Kähler manifold endowed with a positive line bundle $L$ with associated circle bundle equal to $\Sb$. The algebra $\Ti_\GH^{(0)}$ acts on $\GH_\N$ which, just as the ambient space $H^2_n$, has the property that every invariant subspace under $S_1,\dots,S_n$ has a ``Beurling decomposition" \cite{McTr1}. Such submodules give rise to coherent sheaves on $M$ and sometimes to vector bundles over $M$. We expect that vector bundles arising like this have certain stability properties not possessed by an arbitrary vector bundle over $M$.
%While Toeplitz quantization of a projective Kähler manifold does not require a subproduct structure, the latter gives a much more well-behaved quantization. We show in the is paper that such a strong kind of quantization is possible for any polarized manifold $(M,L)$. This result will be useful in studying questions such as the existence of constant scalar curvature Kähler metric in the class $c_1(L)$, since if one takes instead a prequantum quantization, the obstruction to a subproduct structure is coming from the nonconstancy of the scalar curvature. For studying similar questions for vector bundles however, one is free to choose the Kähler metric as one wishes and then it is very useful (not to say crucial) to have a subproduct quantization. Indeed, one then has all the tools and results from operator theory on spaces such as $\GH_\N$ available. 

%Somewhat surprising is perhaps the strong connection with the notion of balanced metrics \cite{Don1, Don2} 

\subsection{Quantum homogeneous manifolds}

Smooth projective varieties include in particular all coadjoint orbits $G/K$ and, going noncommutative, we observe that every compact matrix quantum group $\G$ defines a ``noncommutative manifold" $\G/\K$ with properties resembling very much those of a coadjoint orbit. Our main aim is then to show that the $C^*$-algebra $C(\G/\K)$ can be recovered from a suitably chosen subproduct system $\GH_\bullet$ via the noncommutative version of strict quantization sketched above. 

In fact the definition of $\G/\K$ is very simple. Let $\G$ be a compact matrix quantum group with defining unitary representation $u\in\Mn_n(\C)\otimes C(\G)$. We let $z_j:=u_{1,j}$ for $j=1,\dots,n$ denote the elements of the first row of $u$. Then $C(\G/\K)$ is defined as the $C^*$-algebra generated by elements of the form $z_{j_1}\cdots z_{j_m}z_{k_m}^*\cdots z_{k_1}^*$ for all multi-indices $(j_1,\dots,j_m)$ and $(k_1,\dots,k_m)$ of equal length. We also denote by $C(\Sb_\G)$ the $C^*$-algebra generated by $z_1,\dots,z_n$ (and refer to it as the ``first-row algebra").

%We can summarize one of the main results of this paper in the following way. 
Suppose that the compact matrix quantum group $\G$ is such that every element of $C(\G/\K)$ can be ``normally ordered", in the sense that all $z_j^*$'s are to the right of the $z_k$'s (this is a crucial assumption). Assume also that the Haar state is faithful on $C(\G/\K)$. As recalled in \S\ref{CMQGprels}, every compact quantum group $\G$ has a dual discrete quantum group $\hat{\G}$, and both $\G$ and $\hat{\G}$ ``act" on the $C^*$-algebra $C(\G)$. Hence they also act on the subalgebra $C(\Sb_\G)$.
\begin{thm} For each $m\in\N_0$, let $\GH_m$ be the Hilbert space spanned by products $z_{j_1}\cdots z_{j_m}$ with inner product coming from the Haar measure on $\G$ (the sequence of $\GH_m$'s is the ``$\G$-subproduct system" $\GH_\bullet$). Then there are explicit $\G$-$\hat{\G}$-biequivariant injections (``Berezin covariant symbol maps")
$$
\varsigma^{(m)}_\G:\Bi(\GH_m)\to C(\G/\K)
$$ 
and explicit surjections (``Toeplitz quantization maps")
$$
\breve{\varsigma}^{(m)}_\G:C(\G/\K)\to\Bi(\GH_m)
$$
such that $\varsigma^{(m)}_\G\circ\breve{\varsigma}^{(m)}_\G$ converges point-norm to the identity on $C(\G/\K)$ as $m\to\infty$. This effects an isomorphism
$$
C(\Sb_\G)\cong\Oi_\GH
$$
of the first-row algebra $C(\Sb_\G)$ with the Cuntz--Pimsner algebra $\Oi_\GH$ of the subproduct system $\GH_\bullet$, and this isomorphism is equivariant for the natural ergodic actions of $\G$ and its discrete dual group $\hat{\G}$.
\end{thm}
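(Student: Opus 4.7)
The plan is to construct $\varsigma^{(m)}_\G$ and $\breve\varsigma^{(m)}_\G$ explicitly out of the compressed shifts on the $\G$-subproduct system, then show that their composition is a Berezin transform converging to the identity, and finally deduce the Cuntz--Pimsner identification by a gauge-invariant uniqueness argument. The first step is to verify that $\GH_\bullet$ is genuinely a subproduct system: multiplication in $C(\G)$ descends to a surjection $\GH_m\otimes\GH_l\twoheadrightarrow\GH_{m+l}$, and its adjoint for the Haar inner products embeds $\GH_{m+l}$ into $\GH_m\otimes\GH_l$, with faithfulness of the Haar state on $C(\G/\K)$ ruling out degeneracy. Compressing the full-Fock creation operators $S_j$ to the subspace $F^+_\GH:=\bigoplus_m\GH_m$ yields bounded operators $\tilde S_j$ with $\sum_j\tilde S_j\tilde S_j^*\leq 1$; these implement both quantization maps.

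Next, I define the Toeplitz map on the normally ordered generators of $C(\G/\K)$ by
$$
\breve\varsigma^{(m)}_\G\bigl(z_{j_1}\cdots z_{j_k}z_{l_k}^*\cdots z_{l_1}^*\bigr):=P_m\,\tilde S_{j_1}\cdots\tilde S_{j_k}\tilde S_{l_k}^*\cdots\tilde S_{l_1}^*\,P_m,
$$
where $P_m$ is the projection onto $\GH_m$, and extend linearly. The normal-ordering hypothesis is exactly what makes this a well-defined $\ast$-linear map on $C(\G/\K)$. Dually, for $T\in\Bi(\GH_m)$ expanded in any orthonormal basis $\{f_\alpha\}$ of $\GH_m$ as $T=\sum_{\alpha,\beta}T_{\alpha\beta}|f_\alpha\rangle\langle f_\beta|$, I set
$$
\varsigma^{(m)}_\G(T):=\sum_{\alpha,\beta}T_{\alpha\beta}\,f_\alpha f_\beta^*\in C(\G/\K),
$$
where $f_\alpha f_\beta^*$ denotes the product in $C(\G)$ of the degree-$m$ polynomial in the $z_j$'s representing $f_\alpha$ with the adjoint of that representing $f_\beta$. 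A direct computation using $\tilde S_{f_\alpha}\tilde S_{f_\beta}^*\bigr|_{\GH_m}=|f_\alpha\rangle\langle f_\beta|$ (up to the invertible metric twist coming from the subproduct inclusion $\GH_{2m}\subseteq\GH_m\otimes\GH_m$) shows that $\breve\varsigma^{(m)}_\G\circ\varsigma^{(m)}_\G$ is a positive invertible operator on $\Bi(\GH_m)$, giving injectivity of $\varsigma^{(m)}_\G$ and surjectivity of $\breve\varsigma^{(m)}_\G$. Equivariance is immediate: the $\G$-coaction acts on $\GH_m$ through the corepresentation coming from $\Delta(z_j)=\sum_k u_{1,k}\otimes u_{k,j}$, while the dual $\hat\G$ acts via the $\T$-gauge grading by word length in the $z_j$'s.

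The analytical heart of the argument is the point-norm convergence $\varsigma^{(m)}_\G\circ\breve\varsigma^{(m)}_\G\to\id$. On a normally ordered generator $a=z_Iz_J^*$ of bidegree $(k,k)$, I would expand the difference as a sum of commutator corrections arising from reordering $z^*$'s past $z$'s inside the iterated compression to $\GH_m$; each correction is weighted by a ratio involving $\dim\GH_{m-k}/\dim\GH_m$ and analogous subproduct data, all of which tend to zero as $m\to\infty$. This is the main obstacle, since an explicit rate requires fine growth properties of the subproduct system that must be extracted from ergodicity of the $\G$-coaction. Pointwise convergence on the dense $\ast$-subalgebra of normally ordered polynomials then extends to all of $C(\G/\K)$ because each composition is completely positive and contractive. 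The isomorphism $C(\Sb_\G)\cong\Oi_\GH$ follows by gauge-invariant uniqueness: the universal property of $\Oi_\GH$ yields a surjection $\Oi_\GH\to C(\Sb_\G)$ sending canonical generators to $z_j$ and intertwining the $\T$-gauge actions, so injectivity reduces to faithfulness of the Haar conditional expectation $C(\Sb_\G)\to C(\G/\K)$, which is precisely the content of the convergence just established. Equivariance of this isomorphism for the $\G$- and $\hat\G$-actions is built into the construction throughout.
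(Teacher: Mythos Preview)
Your construction of $\varsigma^{(m)}_\G$ and $\breve\varsigma^{(m)}_\G$ is essentially the same as the paper's (compare the paper's Theorem~\ref{twocovsymthm} and equation~\eqref{sumexprcontravarpos}), and your observation that the normal-ordering hypothesis is exactly what is needed to make the Toeplitz map well defined on all of $C(\G/\K)$ is correct and matches the paper's Remark~\ref{normordrem}. However, the analytical heart of your argument has a genuine gap.

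You propose to prove $\varsigma^{(m)}_\G\circ\breve\varsigma^{(m)}_\G\to\id$ by expanding the commutator corrections and bounding them by ratios ``involving $\dim\GH_{m-k}/\dim\GH_m$ \dots\ all of which tend to zero as $m\to\infty$.'' This is not true in general: for the symmetric subproduct system over $\C^n$ one has $\dim\GH_m=\binom{m+n-1}{n-1}$ and hence $\dim\GH_{m-k}/\dim\GH_m\to 1$, not $0$. The paper is explicit that direct norm estimates of this kind are hard (see the proof of the theorem preceding Lemma~\ref{indlimasshiftslemma}), and it circumvents them entirely. Instead, the paper first shows that the set of covariant symbols already forms an \emph{algebra} by identifying the algebraic inductive limit with a ring of endomorphisms of graded modules over $\Ri=\bigoplus_m\Oi_\GH^{(m)}$ (invoking \cite[\S IX.1]{Sten1}); this yields $\Bi_\infty\cong\Oi_\GH^{(0)}$ without any asymptotic estimate. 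The convergence of the Berezin transforms is then deduced \emph{a posteriori} from the fact that the total Toeplitz map $\breve\varsigma$ is a complete order isomorphism onto its image, which forces it to be multiplicative modulo compacts (Theorem~\ref{indlimtasprojlimthm} and Corollary~\ref{lemmaprojlimmult}). You have not supplied a substitute for this mechanism.

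Your final step via gauge-invariant uniqueness is also different from the paper's route and, as stated, incomplete: you assert a universal surjection $\Oi_\GH\to C(\Sb_\G)$, but for subproduct-system Cuntz--Pimsner algebras there is no off-the-shelf gauge-invariant uniqueness theorem, and it is not clear that the $z_j$ satisfy the relations defining $\Oi_\GH$ before one already knows $C(\G/\K)\cong\Oi_\GH^{(0)}$. The paper instead first establishes $C(\G/\K)\cong\Oi_\GH^{(0)}$ (Theorem~\ref{mainthmCMQGasindlim}) and then upgrades to $C(\Sb_\G)\cong\Oi_\GH$ by comparing the Hilbert-bimodule frames over the already-identified base algebra (Corollary~\ref{firstrowasPims}).
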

Denoting by $\Oi_\GH^{(0)}$ the $\Un(1)$-invariant part of $\Oi_\GH$ under the gauge action, we also have
\begin{equation}\label{desidisom}
C(\G/\K)\cong\Oi_\GH^{(0)},
\end{equation}
and it is the $C^*$-algebra $\Oi_\GH^{(0)}$ which will occupy most of the paper. We will first realize $\Oi_\GH^{(0)}$ as a ``generalized inductive limit" in the sense of \cite{BlKi1}, \cite{Hawk1} (as in the case of a general subproduct system as described before), and then as a generalized projective limit in the spirit of \cite{Hawk1}. Then we do the same thing for $C(\G/\K)$ to obtain the desired isomorphism \eqref{desidisom}. 

The quantization of coadjoint orbits is studied in detail in \cite{Hawk1} and \cite{Rie2}. Berezin quantization for quantum homogeneous spaces of compact quantum groups with tracial Haar state was discussed in \cite{Sain}.  

The idea of looking at the $\G$-subproduct system was partially motivated by Woronowicz' reconstruction of a compact matrix quantum group $\G$ from its irreducible representations \cite{Wor3}. Since $\GH_\bullet$ only contains a subset of all irreducible representations (in general), we recover not $C(\G)$ but $C(\Sb_\G)$. 

For a classical manifold $M$, with quantization defined by a line bundle $L$ over $M$, elements of $\Oi_\GH^{(k)}$ are continuous sections of the line bundle $L^{\otimes k}$. The subspace $\GH_m\subset\Oi_\GH^{(m)}$ consists of the holomorphic sections of the line bundle. For $M=G/K$ a homogeneous space, the $\GH_m$'s are irreducible representations of $G$. In general, the subspaces $\Oi_\GH^{(k)}$ for $k\in\Z\setminus\{0\}$ are Hilbert modules over $\Oi_\GH^{(0)}$, and for a quantum homogeneous space $\M=\G/\K$ each $\GH_m$ is an irreducible representation of $\G$. In this sense, $\Oi_\GH$ is a kind of ``Borel--Weil algebra" for $\G$ (cf. \cite[Thm. 14.1]{Seg1}).

We shall also compare our results with ``noncommutative random walks" on duals of compact quantum groups \cite{Iz1, INT1, Iz4}. The Toeplitz core $\Ti_\GH^{(0)}$ plays the role of Martin compactification of a walk restricted to the ``dual" of $\G/\K$ while $\Oi_\GH^{(0)}$ is the boundary of the walk. Recalling that the simplest Cuntz--Pimsner algebras $C(\Sb^{2n-1})$ of functions on spheres behave like boundaries of the corresponding Toeplitz algebras, these observations are not too surprising.

%\subsection{Physical motivation}

%In physical terms, the fact that the limit state behaves like the volume form associated with a Kähler form should be interpreted as an open quantum system analogue of Dirac's quantization condition ``commutators go over to Poisson brackets in the classical limit''. 

%...
%superselection, subproduct systems needed instead of product systems.
%...

This shows that in the presence of time-reversal symmetry, one obtains a noncommutative random walk, not on the dual of a compact quantum group, but on the dual of a compact quantum homogeneous space. In this way one can use results from the theory of noncommutative random walks to study physically relevant quantum walks, even though these two notions of ``walks" are a priori completely unrelated. Most significant is the possibility to describe the infinite-time limit in a mathematically rigorous way, as provided by the Martin boundary of a noncommutative random walk (or rather, since we really end up with random walks on homogeneous spaces, one has the dequantization manifold replacing the Martin boundary as the infinite-time limit).

\begin{Ack}
The author thanks Adam Rennie for great comments, support and inspiration. Many thanks also to Orr Shalit and Guy Salomon for finding an important error in an earlier version of this paper. We thank Orr Shalit also for other remarks of great value.
\end{Ack}

\section{Subproduct systems}\label{supprodsec}
\subsection{Basic properties}
In this paper we write $\N_0:=\N\cup\{0\}=\{0,1,2,\dots\}$. 
\begin{dfn}{[\cite[Def. 6.2]{ShSo1}]} A \textbf{subproduct system} is a sequence $\GH_\bullet=(\GH_m)_{m\in\N_0}$ of finite-dimensional Hilbert space $\GH_m$ such that $\GH_0=\C$ and
\begin{equation}\label{subprodcond}
\GH_{m+l}\subseteq\GH_m\otimes\GH_l,\qquad\forall m,l\in\N_0.
\end{equation}
\end{dfn}
We shall always denote $\GH_1$ by $\GH$ and, throughout this paper, $n\in\N$ will always be the dimension of $\GH$, 
$$
\GH\cong\C^n.
$$
\begin{Example}\label{fullandbosonex}
Given a finite-dimensional Hilbert space $\GH$, we set $\GH_m:=\GH^{\otimes m}$ for each $m$ and refer to it as the ``product system" associated with $\GH$. Another example of a subproduct system is obtained by taking $\GH_m:=\GH^{\vee m}$ to be the $m$th symmetric power of $\GH$; this is the ``symmetric subproduct system". 
\end{Example}
\begin{dfn}\label{commutedef}
A subproduct system is \textbf{commutative} if $\GH_m\subseteq\GH^{\vee m}$ for all $m\in\N_0$.
\end{dfn}
\begin{Lemma}{[\cite[Lemma 6.1]{ShSo1}]} Let $\GH_\bullet$ be a subproduct system. Then the projections $p_m:\GH^{\otimes m}\to\GH_m$ satisfy
\begin{equation}\label{subprodprojdom}
p_l(p_m\otimes  p_{l-m})p_l=p_l=p_l( p_{l-m}\otimes p_m)p_l
\end{equation}
whenver $m\leq l$.
\end{Lemma}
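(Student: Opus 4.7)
The proof should be essentially a direct unpacking of the defining inclusion \eqref{subprodcond}. The plan is as follows.

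First, I would apply the subproduct condition \eqref{subprodcond} with the decomposition $l = m + (l-m)$, which is permitted since $m \leq l$, to obtain the inclusion $\GH_l \subseteq \GH_m \otimes \GH_{l-m}$ inside $\GH^{\otimes l} = \GH^{\otimes m} \otimes \GH^{\otimes (l-m)}$. The right-hand side is precisely the range of the projection $p_m \otimes p_{l-m}$ viewed as an operator on $\GH^{\otimes l}$.

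Next, I would observe that since $\GH_l$, the range of $p_l$, sits inside the range of $p_m \otimes p_{l-m}$, the latter acts as the identity on every vector in $\GH_l$. In operator-algebra terms this is the familiar fact that if $p, q$ are orthogonal projections with $\mathrm{ran}(p) \subseteq \mathrm{ran}(q)$, then $qp = p$. Applying this with $p = p_l$ and $q = p_m \otimes p_{l-m}$ gives
$$
(p_m \otimes p_{l-m})\, p_l = p_l,
$$
and multiplying both sides on the left by $p_l$ yields $p_l (p_m \otimes p_{l-m}) p_l = p_l^2 = p_l$, which is the first asserted identity.

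For the second equality I would apply the same argument with the roles of $m$ and $l-m$ swapped, using the decomposition $l = (l-m) + m$ in \eqref{subprodcond} to get $\GH_l \subseteq \GH_{l-m} \otimes \GH_m$, so that $(p_{l-m} \otimes p_m) p_l = p_l$ and therefore $p_l (p_{l-m} \otimes p_m) p_l = p_l$. There is no real obstacle here; the only thing to verify carefully is the tacit identification of $\GH^{\otimes l}$ with the corresponding tensor product $\GH^{\otimes m} \otimes \GH^{\otimes (l-m)}$, under which the nested inclusions of subproduct fibres live inside the appropriate subspace of $\GH^{\otimes l}$, so that the tensor-product projections $p_m \otimes p_{l-m}$ and $p_{l-m} \otimes p_m$ make sense as projections on $\GH^{\otimes l}$.
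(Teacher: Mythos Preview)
Your proof is correct and follows essentially the same route as the paper: the inclusion $\GH_l \subseteq \GH_m \otimes \GH_{l-m}$ is rewritten as the projection inequality $p_l \leq p_m \otimes p_{l-m}$ (equivalently $(p_m \otimes p_{l-m})p_l = p_l$), from which the identity follows immediately.
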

\begin{proof} Replacing $l$ by $l-m$ for $m\leq l$, the condition \eqref{subprodcond} reads
\begin{equation}\label{subprodcondalt}
\GH_l\subseteq\GH_m\otimes\GH_{l-m},\qquad\forall m\leq l\in\N_0.
\end{equation}
Writing \eqref{subprodcondalt} in terms of projections,
$$
p_l\leq p_m\otimes p_{l-m},
$$
the result is clear.
\end{proof}
\begin{dfn} The \textbf{Fock space} associated with a subproduct system $\GH_\bullet$ is the Hilbert space 
$$
\GH_\N:=\bigoplus_{m\in\N_0}\GH_m.
$$
We regard $\GH_\N$ as a subspace of full Fock space $\GH^{\otimes \N}:=\bigoplus_{m\in\N_0}\GH^{\otimes m}$ and denote by $p_\N=\sum_{m\in\N_0}p_m$ the projection from $\GH^{\otimes \N}$ onto $\GH_\N$. Thus $p_\N$ is the identity in $\Bi(\GH_\N)$, just as $p_m$ is the identity in $\Bi(\GH_m)$.
\end{dfn}
\begin{Example} If $\GH_\bullet=\GH^{\otimes \bullet}$ is the product system over a fixed Hilbert space $\GH$ then $\GH_\N$ is the full (or ``Boltzmannian") Fock space $\GH^{\otimes \N}$ over $\GH$. 
\end{Example}
\begin{Example}
If $\GH_\bullet=\GH^{\vee \bullet}$ is the full commutative subproduct system then $\GH_\N$ is the symmetric (or ``Bosonic") Fock space $\GH^{\vee \N}:=\bigoplus_{m\in\N_0}\GH^{\vee m}$ over $\GH$.
\end{Example}
\begin{Notation} We denote by $\C\bra \mathbf{z}\ket=\C\bra z_1,\dots,z_n\ket$ the algebra of polynomials in $n$ freely commuting variables. We denote by $\C[\mathbf{z}]=\C[z_1,\dots,z_n]$ the algebra of polynomials in $n$ commuting variables.
\end{Notation}
 For any polynomial $f(z_1,\dots,z_n)=\sum_{j_1,\dots,j_n}f_{j_1,\dots,j_n}z^{j_1}_1\cdots z^{j_n}_n$ in $\C\bra  z_1,\dots,z_n\ket$, evaluation on the basis $e_1,\dots,e_n$ for $\GH=\GH_1$ defines an element in Fock space,
$$
f(e_1,\dots,e_n):=\sum_{j_1,\dots,j_n}f_{j_1,\dots,j_n}e^{j_1}_1\otimes\cdots\otimes e^{j_n}_n\in\GH_\N,
$$
and $f$ is homogeneous iff
$$
f(e_1,\dots,e_n)\in\GH^{\otimes m},\qquad \text{for some }m\in\N_0.
$$
\begin{Lemma}\label{idealsublemma}
Every subproduct system $\GH_\bullet=(\GH_m)_{m\in\N_0}$ defines a homogeneous ideal in $\C\bra\mathbf{z}\ket$, where $n=\dim(\GH)$. Conversely, every homogeneous ideal in $\C\bra \mathbf{z}\ket$ corresponds to a subproduct system \cite[Prop. 7.2]{ShSo1}. For commutative subproduct systems the same is true with $\C[\mathbf{z}]$ instead \cite[§2.3]{DRS1}.
\end{Lemma}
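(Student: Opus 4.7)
The plan is to exploit the standard graded isomorphism $\C\langle\mathbf{z}\rangle\cong T(\GH):=\bigoplus_{m\in\N_0}\GH^{\otimes m}$ which sends the generator $z_i$ to the basis vector $e_i\in\GH$ and the monomial $z_{i_1}\cdots z_{i_m}$ to $e_{i_1}\otimes\cdots\otimes e_{i_m}$. Under this identification, a homogeneous two-sided ideal $I\subset\C\langle\mathbf{z}\rangle$ is the same as a graded subspace $I=\bigoplus_m I_m$ with $I_m\subseteq\GH^{\otimes m}$ satisfying $\GH^{\otimes a}\otimes I_m\otimes\GH^{\otimes b}\subseteq I_{a+m+b}$ for all $a,b,m\in\N_0$. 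The strategy is then to set up a bijection between such ideals and subproduct systems by passing to orthogonal complements in each graded degree.

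First I would go from a subproduct system to an ideal. Given $\GH_\bullet$, define $I_m:=\GH_m^\perp\subseteq\GH^{\otimes m}$ and $I:=\bigoplus_m I_m$. The subproduct condition $\GH_{m+l}\subseteq\GH_m\otimes\GH_l$, read dually, gives
\begin{equation*}
\GH_{m+l}^\perp\supseteq(\GH_m\otimes\GH_l)^\perp=\bigl(\GH_m^\perp\otimes\GH^{\otimes l}\bigr)+\bigl(\GH^{\otimes m}\otimes\GH_l^\perp\bigr),
\end{equation*}
so $I_m\otimes\GH^{\otimes l}\subseteq I_{m+l}$ and $\GH^{\otimes m}\otimes I_l\subseteq I_{m+l}$; iterating gives the two-sided ideal property. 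Conversely, given a homogeneous ideal $I=\bigoplus_m I_m$, set $\GH_m:=I_m^\perp\subseteq\GH^{\otimes m}$; then for each $m,l$ the inclusions $I_m\otimes\GH^{\otimes l}\subseteq I_{m+l}$ and $\GH^{\otimes m}\otimes I_l\subseteq I_{m+l}$ yield, after taking orthogonal complements, $\GH_{m+l}\subseteq(\GH_m\otimes\GH^{\otimes l})\cap(\GH^{\otimes m}\otimes\GH_l)=\GH_m\otimes\GH_l$, which is \eqref{subprodcond}. Since $(\cdot)^\perp$ is an involution on graded subspaces of $T(\GH)$ the two assignments are mutually inverse.

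For the commutative case I would repeat the argument in the symmetric algebra $S(\GH)\cong\C[\mathbf{z}]$, using that $\GH^{\vee\bullet}$ is itself a subproduct system with $\GH^{\vee(m+l)}\subseteq\GH^{\vee m}\otimes\GH^{\vee l}$. A commutative subproduct system, by Definition \ref{commutedef}, consists of subspaces $\GH_m\subseteq\GH^{\vee m}$, so one takes $I_m$ to be the orthogonal complement of $\GH_m$ inside $\GH^{\vee m}$ (equivalently, the ideal in $\C\langle\mathbf{z}\rangle$ then contains all commutators $z_iz_j-z_jz_i$, and descends to a homogeneous ideal in $\C[\mathbf{z}]$). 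The same orthogonal-complement computation, now performed in $S(\GH)$, gives the bijection with homogeneous ideals of $\C[\mathbf{z}]$.

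The only real technical point is the orthogonal-complement identity $(\GH_m\otimes\GH_l)^\perp=\GH_m^\perp\otimes\GH^{\otimes l}+\GH^{\otimes m}\otimes\GH_l^\perp$ used above; this is standard for tensor products of finite-dimensional Hilbert spaces, and can be verified either by dimension count or, more cleanly, by writing the projection onto $\GH_m\otimes\GH_l$ as $p_m\otimes p_l$ and observing that $\bone-p_m\otimes p_l=(\bone-p_m)\otimes\bone+p_m\otimes(\bone-p_l)$. Once this is in hand, everything else reduces to bookkeeping with graded subspaces, and the statement reduces to the two cited results \cite[Prop.~7.2]{ShSo1} and \cite[\S2.3]{DRS1}.
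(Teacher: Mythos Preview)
Your proposal is correct and follows essentially the same approach as the paper's sketch: both pass between subproduct systems and homogeneous ideals by taking orthogonal complements degree by degree, i.e.\ $I_m=\GH^{\otimes m}\ominus\GH_m$ and $\GH_m=\GH^{\otimes m}\ominus I_m$. Your version is in fact more complete, since you actually verify the ideal property and the subproduct inclusion via the projection identity $\bone-p_m\otimes p_l=(\bone-p_m)\otimes\bone+p_m\otimes(\bone-p_l)$, whereas the paper only records the definitions and defers to the cited references.
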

\begin{sop} Given $\GH_\bullet$, define a homogeneous ideal $\Ii$ in $\C\bra \mathbf{z}\ket$ by
$$
\Ii:=\{f\in\C\bra z\ket|f(e_1,\dots,e_n)\in\GH^{\otimes m}\ominus\GH_m\text{ for some }m\in\N_0\}.
$$
Conversely, given a homogeneous ideal $\Ii$, we associate the Hilbert spaces 
$$
\GH_m:=\GH^{\otimes m}\ominus\{f(e_1,\dots,e_n)|f\in\Ii^{(m)}\},
$$
where $\Ii^{(m)}$ is the degree-$m$ component of $\Ii$.
\end{sop}

\subsection{Toeplitz algebras}
Having fixed a subproduct system $\GH_\bullet$, we shall always denote by $S_1,\dots,S_n$ the operators on Fock space $\GH_\N$ defined by
$$
S_k\phi:=p_{m+1}(e_k\otimes \phi),\qquad \forall \phi\in\GH_m
$$ 
for all $m\in\N_0$, where $p_{m+1}:\GH^{\otimes (m+1)}\to\GH_{m+1}$ is the orthogonal projection. They are the compressions to $\GH_\N$ of the left shifts $\psi\to e_k\otimes\psi$ on full Fock space $\GH^{\otimes\N}$. For more about compressed $n$-tuples of shift operators, see \cite{Pop1}, \cite{Pop2}, \cite{ShSo1}, \cite{DRS1}, \cite{DRS2}.
\begin{dfn} The \textbf{Toeplitz algebra} of a subproduct system $\GH_\bullet$ is the unital $C^*$-algebra $\Ti_\GH$ of operators on $\GH_\N$ generated by the shifts $S_1,\dots,S_n$. 
\end{dfn}
The backward shifts on $\GH^{\otimes \N}$ preserves the subspace $\GH_\N$, so the adjoints $S_1^*,\dots,S_n^*$ of $S_1,\dots,S_n$ are just the restrictions to $\GH_\N$ of the backward shifts on $\GH^{\otimes \N}$.

\begin{dfn} The \textbf{vacuum state} on the Toeplitz algebra $\Ti_\GH$ is the restriction $\hat{\ep}:\Ti_\GH\to\C$ of the vector state on $\Bi(\GH_\N)$ defined by the unit vector $\Omega\in\GH_0=\C$. That is,
$$
\hat{\ep}(X):=\bra\Omega|X\Omega\ket,\qquad \forall X\in\Bi(\GH_\N).
$$
\end{dfn}
If $p_0$ denotes the unit in $\Bi(\GH_0)$ then
\begin{equation}\label{vacuumcounit}
Xp_0=\hat{\ep}(X)p_0=p_0X,\qquad \forall X\in\Bi(\GH_\N).
\end{equation}

\begin{Notation}\label{freeunitsemnot}
Let $\F_n^+$ be the free unital semigroup generated by $n$ elements $1,\dots,n$ (the empty word $\emptyset$ is the identity in $\F_n^+$). We write a word $\mathbf{k}\in\F_n^+$ as $\mathbf{k}=k_1\cdots k_m$ and refer to $|\mathbf{k}|:=m$ as the \textbf{length} of $\mathbf{k}$. For the shifts $S_1,\dots,S_n$ and the basis vectors $e_1,\dots,e_n$ we then write
$$
S_\mathbf{k}:=S_{k_1}\cdots S_{k_m},\qquad S_\mathbf{k}^*:=(S_\mathbf{k})^*=S_{k_m}^*\cdots S_{k_1}^*,
$$
$$
e_\mathbf{k}:=e_{k_1}\otimes \cdots \otimes e_{k_m},
$$
and similarly for other $n$-tuples of elements defined below. Finally, $\mathbf{j}\mathbf{k}:=j_1\cdots j_lk_1\cdots k_m$ for $\mathbf{j},\mathbf{k}\in\F_n^+$ with $|\mathbf{j}|=l$ and $|\mathbf{k}|=m$.
\end{Notation}

\subsubsection{The Toeplitz core}
Let $N=\bigoplus_m m\, p_m$ be the \textbf{number operator} on $\GH_\N$. It generates a unitary group on $\GH_\N$ which implements an action $\gamma_\bullet$ of the circle group $\Un(1)$ on $\Ti_\GH$,
\begin{equation}\label{gaugeacttoep}
\gamma_t(S_k):=e^{it}S_k,\qquad \forall e^{it}\in\Un(1),\, k\in\{1,\dots,n\},
\end{equation} 
referred to as the \textbf{gauge action} on $\Ti_\GH$.

The gauge action \eqref{gaugeacttoep} splits $\Ti_\GH$ into the $C^*$-direct sum of the subspaces 
$$
\Ti_\GH^{(k)}:=\{T\in\Ti_\GH|\gamma_t(T)=e^{ikt}T\text{ for all }e^{it}\in\Un(1)\},\qquad k\in\Z.
$$
The fixed-point subalgebra $\Ti_\GH^{(0)}$ (the \textbf{Toeplitz core}) will be of great importance to us. It is generated by polynomials in the shifts $S_j$ and $S_k^*$ which are ``homogeneous of degree zero" in the sense that each term contains equally many forward shifts $S_j$ as backward shifts $S_k^*$. 
 
\subsubsection{The right shifts}
In addition to the ``sinister" shift $S_k$ by the basis vector $e_k\in\GH$, we shall need the ``rectus" shift
\begin{equation}\label{rectoshift}
R_k\psi:=p_{m+1}(\psi\otimes e_k),\qquad\forall \psi\in\GH_m,m\in\N_0
\end{equation}
acting on the same Fock space $\GH_\N$. Note that $R_k$ commutes with each $S_j$, and that $R_k^*$ commutes with each $S_j^*$, but $[R_k,S_j^*]\ne 0$. 

The following formulas will be used extensively.
\begin{Lemma} For all $m,l\in\N$ with $m\leq l$ we have
$$
\sum_{|\mathbf{r}|=m}S_\mathbf{r}S_\mathbf{r}^*\big|_{\GH_l}=p_l=\sum_{|\mathbf{r}|=m}R_\mathbf{r}R_\mathbf{r}^*\big|_{\GH_l}.
$$
Hence
$$
\sum_{|\mathbf{r}|=m}S_\mathbf{r}S_\mathbf{r}^*=\sum_{l\geq m}p_l
$$
and, in particular ($m=1$), 
\begin{equation}\label{Toeplidminusvac}
\sum_{k=1}^nS_kS_k^*=\bone-|\Omega\ket\bra\Omega|.
\end{equation}
\end{Lemma}
\begin{proof} For $\mathbf{r},\mathbf{s}\in\F_n^+$ with $|\mathbf{r}|=m=|\mathbf{s}|$ we have $S_\mathbf{r}S_\mathbf{s}^*|_{\GH_l}=p_l(|e_\mathbf{r}\ket\bra e_\mathbf{s}|\otimes p_{l-m})p_l=p_l(|e_\mathbf{r}\ket\bra e_\mathbf{s}|\otimes p_{l-m})p_l$, so 
\begin{align*}
\sum_{|\mathbf{r}|=m}S_\mathbf{r}S_\mathbf{r}^*\big|_{\GH_l}&=p_l\Big(\sum_{|\mathbf{r}|=m}|e_\mathbf{r}\ket\bra e_\mathbf{r}|\otimes p_{l-m}\Big)p_l
\\&=p_l(p_m\otimes p_{l-m})p_l=p_l,
\end{align*}
and similarly for the right shifts.
\end{proof}
From \eqref{Toeplidminusvac} we see that the vacuum projection $p_0=|\Omega\ket\bra\Omega|$ belongs to the Toeplitz algebra. Considering multiplying $p_0$ from both sides with different shift operators, it is a simple matter to deduce the following.
\begin{cor} The Toeplitz algebra $\Ti_\GH$ contains the $C^*$-algebra $\Ki$ of all compact operators on Fock space $\GH_\N$ as a norm-closed two-sided ideal.
\end{cor}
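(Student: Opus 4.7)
The plan is to first use \eqref{Toeplidminusvac} to deposit the vacuum projection $p_0=|\Omega\ket\bra\Omega|$ into $\Ti_\GH$, then build up all finite-rank operators on $\GH_\N$ by sandwiching $p_0$ between words in the shifts and their adjoints, and finally appeal to the fact that $\Ki$ is already a two-sided ideal in the ambient $\Bi(\GH_\N)$.

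Concretely, the first step is to take $m=1$ in \eqref{Toeplidminusvac} and rearrange to get $p_0=p_\N-\sum_{|\mathbf{r}|=1}S_\mathbf{r}S_\mathbf{r}^*$, which visibly lies in $\Ti_\GH$. The second step is the key observation that, for any word $\mathbf{k}=k_1\cdots k_m\in\F_n^+$, iterating the defining relation $S_j\phi=p_{m+1}(e_j\otimes\phi)$ starting from $\phi=\Omega$ yields
$$
S_\mathbf{k}\Omega=p_m(e_{k_1}\otimes\cdots\otimes e_{k_m})=p_m e_\mathbf{k}.
$$
Because the image of $p_m$ is precisely $\GH_m$ and the $e_\mathbf{k}$ with $|\mathbf{k}|=m$ span $\GH^{\otimes m}$, the vectors $\{S_\mathbf{k}\Omega:|\mathbf{k}|=m\}$ span $\GH_m$.

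The third step is to compute, using $p_0\Omega=\Omega$ and $p_0\xi=0$ for $\xi\perp\Omega$,
$$
S_\mathbf{j}p_0S_\mathbf{k}^{*}=|S_\mathbf{j}\Omega\ket\bra S_\mathbf{k}\Omega|,\qquad \mathbf{j},\mathbf{k}\in\F_n^{+}.
$$
Taking linear combinations over all $\mathbf{j}$ with $|\mathbf{j}|=m$ and all $\mathbf{k}$ with $|\mathbf{k}|=l$ and using the spanning property from step two, we obtain every rank-one operator $|\phi\ket\bra\psi|$ with $\phi\in\GH_m,\psi\in\GH_l$ inside $\Ti_\GH$. Summing over $m,l\in\N_0$ produces all finite-rank operators on $\GH_\N$, and since finite-rank operators are norm-dense in $\Ki$ and $\Ti_\GH$ is norm-closed, we conclude $\Ki\subseteq\Ti_\GH$. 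The ideal property is then automatic: $\Ki$ is a two-sided norm-closed ideal in $\Bi(\GH_\N)$, so $\Ti_\GH\cdot\Ki\subseteq\Ki$ and $\Ki\cdot\Ti_\GH\subseteq\Ki$ inherit from the ambient inclusion $\Ti_\GH\subseteq\Bi(\GH_\N)$.

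There is no real obstacle here; the only point requiring a moment of care is verifying the spanning property in step two, i.e. that $p_m$ applied to the basis $\{e_\mathbf{k}\}_{|\mathbf{k}|=m}$ really does cover $\GH_m$, which is immediate from surjectivity of the projection $p_m:\GH^{\otimes m}\twoheadrightarrow\GH_m$. Everything else is manipulation of rank-one operators.
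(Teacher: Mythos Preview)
Your proof is correct and follows exactly the approach the paper sketches: the paper simply says that $p_0\in\Ti_\GH$ by \eqref{Toeplidminusvac} and that ``multiplying $p_0$ from both sides with different shift operators'' yields the result, which is precisely what you have carried out in detail. Your explicit verification that $S_\mathbf{k}\Omega=p_me_\mathbf{k}$ spans $\GH_m$ and that $S_\mathbf{j}p_0S_\mathbf{k}^*=|S_\mathbf{j}\Omega\ket\bra S_\mathbf{k}\Omega|$ gives all rank-one operators is exactly the content the paper leaves implicit.
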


\subsubsection{Normal ordering}
In \S\ref{CMasprojlimsec} we will obtain the following important result about the Toeplitz algebra, which we state here for emphasis. 
\begin{Lemma}[Normal ordering]\label{Lemmanormorder}
Let $\GH_\bullet$ be a subproduct system. Let $\Ai_\GH$ denote the norm-closed (non-$*$) algebra generated by the shifts $S_1,\dots,S_n$ and the identity in $\Bi(\GH_\N)$. Then
$$
\overline{\operatorname{span}}(\Ai_\GH\Ai_\GH^*)=\Ti_\GH.
$$
In particular, $\overline{\operatorname{span}}(\Ai_\GH\Ai_\GH^*)$ is an algebra.
%\begin{enumerate}[(i)]
%\item{$\overline{\operatorname{span}}(\Ai_\GH\Ai_\GH^*)$ is an algebra,}
%\item{$\Ti_\GH=\overline{\operatorname{span}}(\Ai_\GH\Ai_\GH^*)$, and}
%\item{$\Ti_\GH=\overline{\operatorname{span}}(\Ai_\GH^*\Ai_\GH\cup\Ki)$.}\label{antinormlemmanormord}
%\end{enumerate}
\end{Lemma}
%The proof of (i) and (ii) will be given in Lemma \ref{Toepevconst} and (iiii) will be obtained in \S\ref{CMasprojlimsec}. For the moment we just note that the $C^*$-algebra $\Ki$ of compact operator is contained in $\overline{\operatorname{span}}(\Ai_\GH\Ai_\GH^*)$. 

\subsection{Cuntz--Pimsner algebras}
\begin{dfn}[{\cite[Cor. 3.2]{Vis2}}] The \textbf{Cuntz--Pimsner algebra} of a subproduct system $\GH_\bullet$ is the quotient of the Toeplitz algebra $\Ti_\GH$ by the ideal $\Ki$ of all compact operators on $\GH_\N$,
$$
\Oi_\GH:=\Ti_\GH/\Ki.
$$
\end{dfn}
We denote by $Z_1,\dots,Z_n$ the generators of $\Oi_\GH$, i.e. the images of the shifts $S_1,\dots,S_n$ in the quotient. They satisfy the \textbf{sphere relation}
$$
\sum^n_{k=1}Z_kZ_k^*=\bone,
$$
which suggests viewing $\Oi_\GH$ as the ``boundary" of $\Ti_\GH$; in the latter holds $\sum^n_{k=1}S_kS_k^*\leq\bone$, as we saw in \eqref{Toeplidminusvac}.

The formula \eqref{gaugeacttoep}, but with $Z_k$ replacing $S_k$, defines the gauge action on $\Oi_\GH$, which gives a splitting
$$
\Oi_\GH=\overline{\bigoplus_{k\in\Z}\Oi_\GH^{(k)}}^{\|\cdot\|}
$$ 
of $\Oi_\GH$ into spectral subspaces for this $\Un(1)$-action.

\begin{Remark}[Known examples]\label{Arvvconjrem}
The most straightforward example of a subproduct Cuntz--Pimsner algebra is the Cuntz algebra $\Oi_n$, obtained from $\GH_\bullet=\GH^{\otimes\bullet}$. As a commutative example, $\Oi_\GH$ for the symmetric subproduct system $\GH^{\vee\bullet}$ was shown in \cite{Arv6c} to be isomorphic to the $C^*$-algebra $C(\Sb^{2n-1})$ of continuous functions on the unit sphere $\Sb^{2n-1}\subset\C^n$. Cuntz--Pimsner algebras coming from monomial ideals were described in \cite{KaSh1}. 
\end{Remark}
We can reformulate Conjecture \ref{Arvcon} as a statement about Cuntz--Pimsner algebras:
\begin{Conjecture}[Arveson's conjecture]\label{Arvconreform}
For every commutative subproduct system $\GH_\bullet\subset\GH^{\bullet\vee}$, the Cuntz--Pimsner algebra $\Oi_\GH$ is commutative.
\end{Conjecture}

For any subproduct system $\GH_\bullet$, the spectral subspaces $\Oi_\GH^{(k)}$ for the gauge action on $\Oi_\GH$ are Hilbert $C^*$-bimodules over the fixed-point subalgebra $\Oi_\GH^{(0)}$, with left and right inner products
\begin{equation}\label{innerprodCuntzhilbmod}
\bra\xi|\eta\ket_{\rig}:=\xi^*\eta,\qquad \bra\xi|\eta\ket_{\lef}:=\xi\eta^*
\end{equation}
for $\xi,\eta\in\Oi_\GH^{(k)}$.

\section{Review of quantization of projective varieties}\label{quantcommsec}
Let us formulate Berezin quantization of complex submanifolds of projective $n$-space $\C\Pb^{n-1}$ in terms of subproduct systems, just to make it clear how the results of the subsequent sections relate to the classical ones. 

\subsection{Berezin quantization with prequantum condition}\label{quantkaehlersec}
Recall that Chow's theorem says that a submanifold of projective space $\Pb[\C^n]=\C\Pb^{n-1}$ is a nonsingular (i.e. smooth) projective variety, i.e. the zero-set of some finitely generated homogeneous ideal in $\C[z_1,\dots,z_n]$. These manifolds can be characterized without even referring to $\C\Pb^{n-1}$ (see Lemma \ref{submanvariety} below), but for this we need to recall some complex geometry. A \textbf{Kähler manifold} is a pair $(M,\omega)$ consisting of a complex manifold $M$ and a closed nondegenerate $2$-form $\omega$ on $M$ which equals the imaginary part of a Hermitian metric on $M$.
\begin{Remark}[Poisson bracket]
The Kähler form $\omega$ is in particular a symplectic (i.e. closed and nondegenerate) form, making $M$ a symplectic manifold. The nondegeneracy of $\omega$ allows us to use the inverse $\omega^{-1}$ to define a Poisson bracket on $C^\infty(M)$ by
\begin{equation}\label{Poissbrack}
\{f,g\}:=\omega^{j,k}\frac{\pd f}{\pd x_j}\frac{\pd g}{\pd x_k},
\end{equation}
if we denote by $\omega^{j,k}$ the coefficients of $\omega^{-1}$ in local Darboux coordinates $x_j$. 
\end{Remark}
 Recall that for a holomorphic line bundle $L$ with a fixed choice of Hermitian metric $h$, there is a unique connection, the ``Chern connection", which is compatible with both the metric and the holomorphic structure in a suitable sense \cite[Prop. 4.2.14]{Huy}. If we locally represent $h$ by a matrix-valued function, the curvature of this connection is given by $\bar{\pd}\pd\log h$.
\begin{dfn}[{\cite[§2.1]{BeSl1}}] A compact Kähler manifold $(M,\omega)$ is \textbf{quantizable} if there is a holomorphic Hermitian line bundle $(L,h)$ over $M$ such that the curvature $\bar{\pd}\pd\log h$ of the Chern connection satisfies the \textbf{prequantum condition}
\begin{equation}\label{prequantucond}
\sqrt{-1}\bar{\pd}\pd\log h=\omega.
\end{equation}
Then $(L,h)$ gives a \textbf{quantization} of the Kähler manifold $(M,\omega)$.
\end{dfn}
Condition \eqref{prequantucond} is there to ensure 
ensures the following. 
\begin{Lemma}[{\cite{Schl2}, \cite[\S2.1]{BeSl1}}]\label{submanvariety}
Every quantization $(L,h)$ of a compact Kähler manifold $(M,\omega)$ gives an embedding of $M$ as a submanifold of $\C\Pb^{n-1}$ for some $n\in\N$, and hence $M$ can be regarded as a projective algebraic variety (by Chow's theorem). Conversely, every smooth projective algebraic variety is a quantizable compact Kähler manifold. 
\end{Lemma}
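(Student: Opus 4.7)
The plan is to handle the two implications separately, with both resting on classical complex-analytic results that I would cite rather than reprove.

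For the forward direction, I would observe that the prequantum condition $\bar{\pd}\pd\log h=-\sqrt{-1}\omega$, combined with the positivity of the Kähler form $\omega$, says exactly that $(L,h)$ is a \emph{positive} holomorphic line bundle in the sense of Kodaira. I would then invoke Kodaira's embedding theorem: there exists $k\in\N$ such that the global holomorphic sections of $L^{\otimes k}$ are base-point free and separate both points and tangent vectors of $M$. Picking a basis of $H^0(M,L^{\otimes k})$ then yields a holomorphic embedding
$$
\iota:M\hookrightarrow\Pb(H^0(M,L^{\otimes k})^*)\cong\C\Pb^{n-1},\qquad n=\dim H^0(M,L^{\otimes k}).
$$
Chow's theorem identifies $\iota(M)$ as the zero set of a finitely generated homogeneous ideal in $\C[z_1,\dots,z_n]$, so $M$ is realized as a smooth projective algebraic variety.

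For the converse, given any smooth projective variety $M\subset\C\Pb^{n-1}$, I would pull back the canonical Kähler data from projective space. Let $\omega_{\FS}$ be the Fubini--Study form on $\C\Pb^{n-1}$ and let $L_H$ be the hyperplane line bundle equipped with its canonical Hermitian metric $h_{\FS}$. A direct computation in homogeneous coordinates shows that the Chern curvature of $(L_H,h_{\FS})$ equals $-\sqrt{-1}\omega_{\FS}$. Restricting $\omega_{\FS}$ and $(L_H,h_{\FS})$ to $M$, and using that the operator $\bar{\pd}\pd$ commutes with pullback by the inclusion, produces the required Kähler form and quantization $(L,h)$ on $M$.

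The substantive analytic content lives entirely in the forward direction, concentrated in Kodaira's embedding theorem. The hard part is not writing the embedding itself but producing enough sections of $L^{\otimes k}$ to separate pairs of points and tangent vectors; this is standardly obtained via Kodaira's vanishing theorem applied to $L^{\otimes k}\otimes K_M^{-1}$ for large $k$, combined with a blow-up argument at the relevant configurations. Since this is a well-known result in complex geometry, my proof would simply invoke it, as the references \cite{Schl2} and \cite[\S2.1]{BeSl1} do.
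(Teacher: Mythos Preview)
Your proposal is correct and is the standard argument. The paper itself does not supply a proof of this lemma; it simply cites \cite{Schl2} and \cite[\S2.1]{BeSl1}, and your sketch is exactly the content one finds behind those citations (positivity $\Rightarrow$ Kodaira embedding $\Rightarrow$ Chow for the forward direction, pullback of Fubini--Study data for the converse).
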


\begin{Example}[The hyperplane bundle]\label{hyperex}
The ``tautological line bundle" over $\C\Pb^{n-1}$ is the holomorphic line bundle, usually denoted by $\Oi(-1)$, whose fiber over a point in $\C\Pb^{n-1}$ is the corresponding line in $\C^n$. The dual $\Oi(1):=\Oi(-1)^*$ of this line bundle is the \textbf{hyperplane line bundle} over $\C\Pb^{n-1}$. The triple $(\C\Pb^{n-1},\Oi(1),\omega_{\text{FS}})$, where $\omega_{\text{FS}}$ is the Fubini--Study form and $\Oi(1)$ is equipped with the Fubini--Study metric, is the prototypical example of a quantizable Kähler manifold $(M,L,\omega)$ \cite{Schl1}.
\end{Example}
The embedding into $\C\Pb^{n-1}$ mentioned in Lemma \ref{submanvariety} requires a sufficiently positive line bundle, and \eqref{prequantucond} says only that $L$ is positive (``ample" ). It may therefore be necessary to use some tensor power $L^{\otimes m}$ of the line bundle $L$. However, by replacing $L$ by $L^{\otimes m}$ and rescaling the Kähler form $\omega$ to $m\omega$ we can, and shall, assume that $L$ is itself sufficiently positive (``very ample").

The important requirement in Lemma \ref{submanvariety} is that $M$ \emph{admits} a Kähler metric, but we do not need to choose one in order to an embedding of $M$ into $\C\Pb^{n-1}$ as a complex-analytic submanifold (similarly we do not have the choose a Hermitian metric on $L$). Also, if we choose a Kähler metric $\omega$ on $M$ and a Hermitian metric $h$ on $L$, so that we can embed $M$ as a Kähler submanifold of $\C\Pb^{n-1}$, there is no need to require the prequantum relation \eqref{prequantucond} between $h$ and $\omega$; it is just the existence of metrics satisfying the prequantum condition which is needed, to ensure that there is an ample line bundle on $M$. Therefore, we will often speak of a \textbf{polarized manifold}, i.e. a pair $(M,L)$ where $M$ is a compact Kähler manifold (with no choice of Kähler metric) and $L$ is a positive line bundle on $M$ (which we shall assume very ample for convenience, with no choice of Hermitian metric specified).

Let $(M,\omega)$ be a compact Kähler manifold and let $(L,h)$ be a Hermitian line bundle over $M$. The space $H^0(M;L)$ of global holomorphic sections of $L$ is finite-dimensional and hence made into a Hilbert space after fixing any inner product on $H^0(M;L)$. In Berezin quantization one looks at the limit of large $m$ for the spaces $H^0(M;L^{\otimes m})$ of sections of the tensor powers of $L$, and therefore the inner products on these spaces should be comparable in some way. A consequence of the fact that the Kähler form $\omega$ is symplectic is that $\omega^d/d!$ (where $d:=\dim_\C M$) is a volume form on $M$ (the ``Liouville form") and we can take the inner product
\begin{equation}\label{Ltwoinnerprod}
\bra\phi|\psi\ket_{h,\omega}:=m^d\int_Mh^m(\phi(x),\psi(x))\frac{\omega(x)^d}{d!},\qquad\forall \psi,\phi\in H^0(M,L^m),
\end{equation}
where $h^m$ is the Hermitian metric on $L^{\otimes m}$ induced by $h$. Note that \eqref{Ltwoinnerprod} is determined for all $m$ by the choice of inner product on $H^0(M;L)$. If $(L,h)$ is a quantization of $(M,\omega)$ then it is reasonable to leave out either $h$ or $\omega$ from the notation in $\bra\cdot|\cdot\ket_{h,\omega}$.

%When $H^0(M,L)$ is endowed with the structure of Hilbert space defined by \eqref{Ltwoinnerprod}, we denote it by $\GH$. More generally, we write
%\begin{equation}\label{Kahlerhilbs}
%\GH_m:=(H^0(M,L^{\otimes m}),\bra\cdot|\cdot\ket),\qquad m\in\N
%\end{equation}
%and we set $\GH_0:=\C$. 

\begin{Example}[Sections of the hyperplane bundle]\label{hypersectionsex}
Recall the hyperplane line bundle $\Oi(1)$ over $\Pb[\GH^*]$ introduced in Example \ref{hyperex}. Fixing a basis for $\GH^*\cong\C^n$, the global holomorphic sections of the $m$th tensor power $\Oi(m)$ of $\Oi(1)$ are identified with the degree-$m$ homogeneous polynomials on $\C^n$. In particular, the space of holomorphic sections of $\Oi(1)$ is just the space $\GH$ of continuous linear functionals on $\GH^*$. If we define $\GH_m$ to be the Hilbert space of holomorphic sections of $\Oi(m)$ with inner product \eqref{Ltwoinnerprod} then for $L=\Oi(1)$ we simply have $\GH_m=\GH^{\vee m}$ (symmetrized tensor product). Thus, the $\GH_m$'s form the full symmetric subproduct system (see Example \ref{fullandbosonex}).
\end{Example}

Let $(L,h)$ be a quantization of $(M,\omega)$ and endow $H^0(M;L^m)$ with the inner product \eqref{Ltwoinnerprod}. The Hilbert space $L^2(M;L^{\otimes m})$ of all square-integrable sections of the line bundle $L^{\otimes m}$ is much larger than the holomorphic subspace $H^0(M;L^m)$. If $\Pi_m$ denotes the projection from $L^2(M;L^{\otimes m})$ to $H^0(M;L^m)$ then every function $f\in C(M)$ defines an operator $\breve{\varsigma}^{(m)}(f)$ on $H^0(M;L^m)$ by
\begin{equation}\label{Toeplitzop}
\breve{\varsigma}^{(m)}(f)\phi:=\Pi_m (f\phi),\qquad \forall \phi\in H^0(M;L^m),
\end{equation}
i.e. acting as multiplication by $f$ (recall that a section of a line bundle can be multiplied by continuous functions to yield a new section) followed by projection back to $H^0(M;L^m)$ (the latter step is needed since $f$ is not holomorphic unless it is constant). 
In the case $(L,h)$ is a quantization of $(M,\omega)$, we have the following. 
\begin{prop}[{\cite[Thms. 4.1,4.2, \S5]{BMS}}]\label{strictquant} Let $(L,h)$ be a quantization of a compact Kähler manifold $(M,\omega)$ and define a structure of Hilbert space on $H^0(M;L^m)$ by \eqref{Ltwoinnerprod}. Then the collection of endomorphism algebras $\End_\C H^0(M;L^m)$ and maps $C^\infty(M)\ni f\to \breve{\varsigma}^{(m)}(f)\in\End_\C H^0(M;L^m)$ defined by \eqref{Toeplitzop} gives a strict quantization of the algebra $C^\infty(M)$ in the sense of \cite[Def. 1.1.1]{Lan1}, i.e. for all $f,g\in C^\infty(M)$ we have
\begin{enumerate}[(i)]
\item{$\lim_{m\to\infty}\|\breve{\varsigma}^{(m)}(f)\|=\|f\|$ \textnormal{(Rieffel condition)},}
\item{$\lim_{m\to\infty}\|\breve{\varsigma}^{(m)}(fg)-\breve{\varsigma}^{(m)}(f)\breve{\varsigma}^{(m)}(g)\|=0$ \textnormal{(von Neumann condition)},}
\item{$\lim_{m\to\infty}\|m^{-1}[\breve{\varsigma}^{(m)}(f),\breve{\varsigma}^{(m)}(g)]-\{f,g\}\|=0$ \textnormal{(Dirac condition)},}
\end{enumerate} 
and every operator in $\End_\C H^0(M;L^m)$ is of the form $\breve{\varsigma}^{(m)}(f)$ for some $f\in C^\infty(M)$ \cite[Prop. 4.2]{BMS}. Here $\{\cdot,\cdot\}$ is the Poisson bracket \eqref{Poissbrack}.  
\end{prop}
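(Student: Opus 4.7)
The plan is to lift the whole family of Toeplitz operators $\breve{\varsigma}^{(m)}(f)$ to a single operator on the Hardy space of an associated circle bundle, where the large-$m$ asymptotics reduce to a question about principal and subprincipal symbols in a microlocal calculus. Concretely, let $X\subset L^*$ be the unit circle bundle; positivity of $L$ (the prequantum condition) makes $X$ a strictly pseudoconvex CR manifold, and its Hardy space decomposes under the fibrewise $\Un(1)$-action as $H^2(X)=\bigoplus_{m\in\N_0}H^0(M;L^{\otimes m})$, with each summand carrying (up to a normalizing volume constant) the inner product \eqref{Ltwoinnerprod}. Any $f\in C^\infty(M)$ pulls back to a $\Un(1)$-invariant function on $X$, and the Hardy-space Toeplitz operator $T_f:=\Pi M_f\Pi$, where $\Pi$ denotes the Szeg\H{o} projector, is $\Un(1)$-equivariant and restricts on the $m$th isotypic component to exactly $\breve{\varsigma}^{(m)}(f)$.

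The key input I would then invoke is the Boutet de Monvel--Sj\"ostrand description of $\Pi$ as a Fourier integral operator with complex phase, which places $T_f$ in the Boutet de Monvel--Guillemin calculus of generalized Toeplitz operators. In that calculus $T_f$ has principal symbol equal to $f$ (viewed on the symplectic cone generated by the contact form of $X$) and one has composition formulas of the form
\begin{equation*}
T_fT_g \;=\; T_{fg}+R_1(f,g),\qquad [T_f,T_g]\;=\;\tfrac{1}{\sqrt{-1}}T_{\{f,g\}}+R_2(f,g),
\end{equation*}
where $R_1,R_2$ are Toeplitz operators of order $-1$ and $-2$ respectively. Restricting to the $m$th Fourier mode converts ``order $-j$'' into an operator-norm bound $O(m^{-j})$ on the $m$th component, which gives the von Neumann condition (ii) and the Dirac condition (iii) immediately. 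For the Rieffel condition (i), the upper bound $\|\breve{\varsigma}^{(m)}(f)\|\le\|f\|_\infty$ is immediate from $\|\Pi_m\|\le 1$, while the matching lower bound is produced by testing $\breve{\varsigma}^{(m)}(f)$ against a normalized coherent state $e_{x_0}^{(m)}\in H^0(M;L^{\otimes m})$ peaked at a point $x_0$ where $|f|$ attains its maximum and using Bergman kernel asymptotics to conclude $\bra e_{x_0}^{(m)}|\breve{\varsigma}^{(m)}(f) e_{x_0}^{(m)}\ket \to f(x_0)$.

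Finally, for the surjectivity of $\breve{\varsigma}^{(m)}:C^\infty(M)\to\End_\C H^0(M;L^{\otimes m})$, I would use a duality argument: since $V_m:=\End_\C H^0(M;L^{\otimes m})$ is finite-dimensional, surjectivity of $\breve{\varsigma}^{(m)}$ is equivalent to injectivity of its transpose with respect to the natural trace and $L^2$ pairings, and a direct calculation identifies this transpose (up to the reproducing-kernel density) with the Berezin covariant symbol map $T\mapsto\varsigma^{(m)}(T)$; injectivity of the latter is the standard fact that coherent states form a total family in $H^0(M;L^{\otimes m})$. The real obstacle in this whole plan is the Boutet de Monvel--Sj\"ostrand microlocal analysis of $\Pi$; once that deep input is granted, properties (i)--(iii) are essentially readings of principal and subprincipal symbols, and the surjectivity reduces to a finite-dimensional duality together with the totality of coherent states.
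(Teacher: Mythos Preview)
The paper does not supply its own proof of this proposition; it is quoted as a result from \cite{BMS} (see the attribution in the proposition header and the surrounding discussion in \S\ref{quantkaehlersec}). Your outline is essentially a faithful sketch of the BMS argument itself---lifting to the circle bundle, using the Boutet de Monvel--Sj\"ostrand parametrix for the Szeg\H{o} projector, and reading off (i)--(iii) from the symbol calculus of generalized Toeplitz operators---and the paper in fact recalls exactly this circle-bundle picture in \S\ref{circbundsec} as background. So there is no discrepancy of approach to discuss: you have reconstructed the cited proof, not an alternative to it.

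One small technical slip worth flagging: your displayed identity $[T_f,T_g]=\tfrac{1}{\sqrt{-1}}T_{\{f,g\}}+R_2(f,g)$ with $R_2$ of order $-2$ is not dimensionally consistent as written, since $[T_f,T_g]$ is of order $-1$ while $T_{\{f,g\}}$ is of order $0$. The correct statement in the BMS framework involves the first-order generator $D_\varphi$ of the circle action (which acts as multiplication by $m$ on the $m$th isotypic component), so that $D_\varphi[T_f,T_g]-\tfrac{1}{\sqrt{-1}}T_{\{f,g\}}$ is of order $-1$; restricting to level $m$ then gives $\|m[\breve{\varsigma}^{(m)}(f),\breve{\varsigma}^{(m)}(g)]-\tfrac{1}{\sqrt{-1}}\breve{\varsigma}^{(m)}(\{f,g\})\|=O(m^{-1})$, which combined with (i) yields the Dirac condition. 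This is purely a bookkeeping correction and does not affect the soundness of your plan.
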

For any volume form $\omega^d/d!$ of $(M,L)$, we have the associated Lebesgue space $L^2(M,\omega)$. The algebra $C(M)$ identifies with a subspace of $L^2(M,\omega)$. Let $\varsigma^{(m)}:\Bi(\GH_m)\to C(M)$ denote the adjoint of $\breve{\varsigma}^{(m)}:C(M)\to\Bi(\GH_m)$ with respect to the $L^2$-inner product on $L^2(M,\omega)$ and the normalized Hilbert-Schmidt inner product on $\Bi(\GH_m)$. 

For $A\in\Bi(\GH_m)$, the function $\varsigma^{(m)}(A)$ is also called the \textbf{Berezin covariant symbol} of $A$, and if $A=\breve{\varsigma}^{(m)}(f)$ then $f$ is a (non-unique) \textbf{contravariant symbol} of $A$. The map 
$$
\varsigma^{(m)}\circ\breve{\varsigma}^{(m)}:C(M)\to C(M)
$$
is the \textbf{Berezin transform} at level $m$. Similar to the famous expansion of the integral kernel for the Bergman projections $\Pi_m$ \cite{Zeld1}, the Berezin transform $\varsigma^{(m)}\circ\breve{\varsigma}^{(m)}$ has an asymptotic expansion at large $m$ \cite{KaSc1}. 

Since the Toeplitz maps $\breve{\varsigma}^{(m)}$ are surjective by Proposition \ref{strictquant}, their adjoints $\varsigma^{(m)}$ are injective. Hence we may regard the $\Bi(\GH_m)$'s as embedded in $C^\infty(M)$ as vector subspaces. One may then ask if it is possible to use the covariant symbols to approximate the whole $C^*$-algebraic structure of $C(M)$ by that of finite-dimensional matrix algebras $\Bi(\GH_m)$. 
%For homogeneous polarized manifolds $(M,L)$ (namely coadjoint orbits for compact Lie groups), the Berezin transforms converge to the identity map on $C(M)$ as $m$ goes to infinity \cite[Thm. 6.1]{Rie2}. This convergence result, which is stronger than the mere Toeplitz convergence in Proposition \ref{strictquant}, relies on the fact coadjoint orbits are ``balanced" (see below). %It is natural to believe, in view of the results in this paper, that such a strong covergence is true for a polarized manifold $(M,L)$ if it admits balanced embeddings $M\hookrightarrow\Pb[H^0(M;L^m)^*]$ using Hermitian metrics $h_m$ on $L^m$ and the rescaled metrics $m^{-1}\sqrt{-1}\pd\bar{\pd}\log h_m$ converge to a Kähler form on $M$. 

We shall see that  by changing the definition of the Toeplitz maps $\breve{\varsigma}^{(m)}$ we can in fact obtain $C(M)$  as an inductive limit for \emph{any} polarized manifold $(M,L)$.

\subsection{Projectively induced quantization}\label{projquantsec}
The vector spaces $H^0(M;L^m)$ equipped with the inner products \eqref{Ltwoinnerprod} do not always form a subproduct system. For that one has to choose $\omega$ and $h$ appropriately, and for most polarized manifolds $(M,L)$ one cannot choose them to satisfy the prequantum condition \eqref{prequantucond} at the same time.

If we do \emph{not} require that $h$ and $\omega$ are related as in \eqref{prequantucond} then any two of (i) an inner product on $H^0(M;L)$, (ii) a Hermitian metric on $L$ with positive curvature and (iii) a volume form $\omega^d/d!$ on $M$ determines the third via \eqref{Ltwoinnerprod}. By the Calabi--Yau theorem \cite{Yau1}, any volume form on $M$ can be obtained as $\omega^d/d!$ for some $\omega$ in the cohomology class $c_1(L)$ (for any choice of polarization $L$, after normalization of the volume form). If we do not require $h$ to have positive curvature then there are infinitely many Hermitian metrics $h$ giving rise to the same inner product via the same volume form (see \cite[Eq. (3.25)]{LMS1}).  

For us, the choice of inner product $\bra\cdot|\cdot\ket$ on $H^0(M;L)$ will be the important input, and it will not matter which Hermitian metric on $L$ and volume form on $M$ was used to define it. %Therefore, we can simply speak about a polarized manifold $(M,L)$.  

Given a polarized manifold $(M,L)$, a choice of basis for the $n$-dimensional vector space $H^0(M;L)$ allows use to embed $M$ into the projectivization $\Pb[H^0(M;L)^*]$ of the vector space dual to $H^0(M;L)$. The elements of the basis for $H^0(M;L)$ become the restrictions of the homogeneous coordinate functions $z_1,\dots,z_n$ on $\Pb[H^0(M;L)^*]$ to the embedded $M$.
Choosing an inner product $\bra\cdot|\cdot\ket$ on $H^0(M;L)$ we obtain an $n$-dimensional Hilbert space $\GH$ which after a choice of orthonormal basis identifies with $\C^n$, and so $M$ embeds into $\Pb[\GH^*]=\C\Pb^{n-1}$. 
Whatever inner product on $H^0(M;L)$ we used to define the Hilbert space $\GH$, it will produce the symmetric subproduct system $\GH^{\vee\bullet}$ of holomorphic sections of the hyperplane bundle on $\Pb[\GH^*]$ as in Example \ref{hypersectionsex}. 
What Lemma \ref{submanvariety} says is that the ideal determined by the algebraic relations among the $z_j$'s, appearing when we restrict them to the submanifold $M$, is homogeneous. The subspaces $H^0(M;L^m)\subset H^0(M;L)^{\vee m}$ of holomorphic sections of the tensor powers of $L$ endowed with the inner product as a subspace of $\GH^{\vee m}$ will be denoted by 
$$
\GH_m=(H^0(M;L),\bra\cdot|\cdot\ket).
$$
Here $\bra\cdot|\cdot\ket$ is thus the inner product \eqref{Ltwoinnerprod} in the special case when $\omega$ and $h$ are the restrictions to $M$ of the Fubini--Study metrics on $\Pb[\GH^*]$, depending only on the inner product on $H^0(M;L)$ which defines the one-particle Hilbert space $\GH$. We set $\GH_0:=\C$. 

We therefore have a description of polarized manifolds $(M,L)$ with the extra datum of an inner product on $H^0(M;L)$ as a collection of Hilbert spaces $\GH_m$ satisfying \eqref{subprodcond} with $\GH_m\subseteq\GH^{\vee m}$, where $\vee$ is the symmetrized tensor product (recall Lemma \ref{idealsublemma}). The subproduct system $\GH_\bullet$ is obtained from $\GH^{\vee\bullet}$ by quotiening out by the ideal in $\C[z_1,\dots,z_n]$ which defines the embedded $M$. 
\begin{cor}\label{commutecorkahler} Every commutative subproduct system $\GH_\bullet=(\GH_m)_{m\in\N_0}$ (see Definition \ref{commutedef}) determines (via its associated homogeneous ideal in $\C[z_1,\dots,z_n]$) a polarized manifold $(M,L)$ with a fixed structure of Hilbert space on $H^0(M;L)$. Conversely, every such datum $(M,L,\bra\cdot|\cdot\ket)$ determines a commutative subproduct system. 
\end{cor}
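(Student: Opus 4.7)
The plan is to combine the content of Lemma \ref{idealsublemma} (in its commutative version) with Lemma \ref{submanvariety} and the explicit embedding construction described in the paragraphs immediately preceding the corollary, showing that the two directions are mutually inverse up to the natural identifications.

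For the forward direction, start with a commutative subproduct system $\GH_\bullet$ and use Lemma \ref{idealsublemma} to obtain a homogeneous ideal $\Ii\subset\C[\mathbf{z}]$, where $n=\dim\GH$, such that $\GH_m$ is the orthogonal complement in $\GH^{\vee m}$ of the image of the degree-$m$ part $\Ii^{(m)}$ under evaluation on the basis $e_1,\dots,e_n$. The projective scheme $M\subset\Pb[\GH^*]=\C\Pb^{n-1}$ cut out by $\Ii$ is a projective variety; under the assumption that it is smooth (the singular case being treated separately later in the paper), Lemma \ref{submanvariety} identifies $M$ with a polarized compact Kähler manifold whose ample line bundle is $L:=\Oi(1)|_M$. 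Since $\Ii$ may be taken to have trivial degree-one component (after passing to a nondegenerate subspace of $\GH$ if necessary), one has $\GH_1=\GH\cong H^0(M;L)$, so the given Hilbert space structure on $\GH$ furnishes the required inner product on $H^0(M;L)$.

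For the reverse direction, start with a polarized manifold $(M,L)$ together with an inner product on $\GH:=H^0(M;L)$, and use the very ampleness of $L$ to embed $M\hookrightarrow\Pb[\GH^*]=\C\Pb^{n-1}$ as in the discussion of Example \ref{hypersectionsex}. The restriction of degree-$m$ polynomials on $\C^n$ to $M$ realizes $H^0(M;L^{\otimes m})$ as a quotient of $\GH^{\vee m}$ by (the image under evaluation of) $\Ii^{(m)}$, where $\Ii$ is the homogeneous ideal of $M$. Taking the orthogonal complement in the Hilbert space $\GH^{\vee m}$ (with the inner product of Example \ref{hypersectionsex}) identifies $\GH_m:=H^0(M;L^{\otimes m})$ with a subspace of $\GH^{\vee m}$. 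To verify the subproduct condition $\GH_{m+l}\subseteq\GH_m\otimes\GH_l$, observe that multiplication of sections gives a surjection $\GH^{\vee m}\otimes\GH^{\vee l}\to\GH^{\vee(m+l)}$ whose kernel contains $\Ii^{(m)}\otimes\GH^{\vee l}+\GH^{\vee m}\otimes\Ii^{(l)}$, so the adjoint injection sends $\GH^{\vee(m+l)}$ into the orthogonal complement of this sum, which is exactly $\GH_m\otimes\GH_l$, and restricts on $\GH_{m+l}$ to the desired inclusion.

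The main obstacle is confirming that the two constructions are inverse to each other, which reduces to the statement that the homogeneous ideal of the projective variety embedded by $|L|$ coincides, after evaluation on $e_1,\dots,e_n$, with the ideal produced from $\GH_\bullet$ by Lemma \ref{idealsublemma}. This is immediate once one matches the Fubini--Study inner product on $\GH^{\vee m}$ with the $L^2$-inner product of \eqref{Ltwoinnerprod} for the Fubini--Study data on $\Pb[\GH^*]$, as spelled out above the corollary, so that the orthogonal decomposition $\GH^{\vee m}=\GH_m\oplus\Ii^{(m)}(e_1,\dots,e_n)$ used in either construction is the same one. A minor technical point worth double-checking is the saturation of the ideal $\Ii$ (so that $H^0(M;L^{\otimes m})$ really equals $\GH^{\vee m}/\Ii^{(m)}$ rather than its saturation), which holds under the very ampleness assumption placed on $L$ in \S\ref{quantkaehlersec}.
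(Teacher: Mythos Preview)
Your proposal is correct and follows the same approach as the paper, which treats the corollary as an immediate consequence of the discussion preceding it (combining Lemma \ref{idealsublemma} with the Kodaira embedding described in \S\ref{projquantsec}) rather than giving a separate proof. You have simply spelled out the details---in particular the verification of the subproduct inclusion $\GH_{m+l}\subseteq\GH_m\otimes\GH_l$ via the adjoint of the multiplication map, and the caveat about saturation---that the paper leaves implicit.
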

We stress that for obtaining the subproduct system $\GH_\bullet$, the inner product on $H^0(M;L)$ is arbitrary; we do not require $h$ and $\omega$ in \eqref{Ltwoinnerprod} to satisfy the pre-quantum condition. Even if we did require $\omega=\sqrt{-1}\bar{\pd}\pd\log h$, the inner product on $\GH_m\subset\GH^{\vee m}$ for $m\geq 2$ would in general differ from the inner product $\bra\cdot|\cdot\ket_h$ defined by the initial $\omega$ and $h$.  
\begin{Lemma}\label{CYlemma}
Let $(M,L)$ be a polarized manifold with $d:=\dim_\C M$. Then for any inner product $\bra\cdot|\cdot\ket$ on $H^0(M;L)$, there exists a unique volume form on $M$, which we can express as $\omega^d$ for a Kähler metric in the class $c_1(L)$, such that $\bra\cdot|\cdot\ket$ is $\omega^d$-\textbf{balanced} in the sense of \cite[\S2.2]{Don3}, i.e. if $Z_1,\dots,Z_n$ are the homogeneous coordinates on $M\subset\Pb[\GH^*]$ associated with any orthonormal basis for $\GH=(H^0(M;L),\bra\cdot|\cdot\ket)$, normalized to $\sum_{k=1}^nZ_kZ_k^*=\bone\in C^\infty(M)$, then
$$
\frac{1}{\vol(M,L)}\int_MZ_j^*Z_k\frac{\omega^d}{d!}=\frac{\delta_{j,k}}{n},\qquad\forall j,k=1,\dots,n,
$$
where $\vol(M,L):=\int_M\omega^d/d!$. 
\end{Lemma} 
\begin{proof}
The statement follows from the Calabi--Yau theorem \cite{Yau1} and the fact that every polarized manifold $(M,L)$ admits a unique $\omega^d$-balanced metric for every volume form $\omega^d$ on $M$ \cite{BLY1}, \cite[\S2.2]{Don3} (equivalently, every line bundle is stable and from this it follows that every very ample line bundle is ``balanced as a line bundle" in the sense of \cite{Wa1}).
\end{proof}
To say that $(M,L)$ is \textbf{balanced} as a polarized manifold \cite{Don1} means precisely that there exists a Hilbert space structure $\GH$ on $H^0(M;L)$ such that the volume form $\omega^d$ in Lemma \ref{CYlemma} is the restriction to $M$ of the Fubini--Study volume form on $\Pb[\GH^*]$. Not every polarized manifold is balanced, and so in general one cannot form a subproduct system from a quantization $(L,h)$ of a compact Kähler manifold $(M,\omega)$ in the sense of the last section. In fact that would require $(M,L^m)$ to be balanced for each $m\in\N$, in which case one says that the quantization is \textbf{regular} \cite{CGR2}. The only polarized manifolds known to admit a regular quantization are coadjoint orbits; cf. \cite{ArLo1}. Since the subproduct condition clearly will lead to a stronger kind of quantization (as will be shown in this paper), to be able to use it for any polarized manifold we therefore consider a new kind of quantization.

We shall see that the normalized traces on the $\Bi(\GH_m)$'s converge to a faithful state $\omega$ on $C(M)$, which we call the \textbf{limit state} of the subproduct system $\GH_\bullet$. 
\begin{dfn}\label{projinddef}
A \textbf{projectively induced} quantization of a polarized manifold $(M,L)$ is the datum of a subproduct system $\GH_\bullet\subseteq\GH^{\vee\bullet}$ associated with some choice of inner product $\bra\cdot|\cdot\ket$ on $H^0(M;L)$ %, and the covariant symbol maps $\varsigma^{(m)}:\Bi(\GH_m)\to C(M)$ which it determines.
together with the covariant and contravariant symbol maps specified by $\GH_\bullet$. That is, we define $\varsigma^{(m)}:\Bi(\GH_m)\to C(M)$ as the Berezin covariant symbol map (see e.g. \cite{KMS1}) and we take the Toeplitz map $\breve{\varsigma}^{(m)}:C(M)\to\Bi(\GH_m)$ to be the adjoint of $\varsigma^{(m)}$ with respect to the limit state $\omega$ and the normalized trace on $\Bi(\GH_m)$.
%$$
%\breve{\varsigma}^{(m)}(f)\phi:=\Pi_m(f\phi),\qquad\forall \phi\in\GH_m,
%$$
%where $\Pi_m:L^2(M,\omega_m;L^m,h^m)\to\GH_m$ is the orthogonal projection, and we let $\varsigma^{(m)}:\Bi(\GH_m)\to L^2(M,\omega)$ be its adjoint. 
%The Hermitian metric $h$ on $L$ used to define $L^2(M;L^m)$ is then forced to be the Fubini--Study metric associated with $\bra\cdot|\cdot\ket$ (cf. \cite{Don2}), i.e. the unique Hermitian metric $h=\FS(\bra\cdot|\cdot\ket)$ on $L$ for which any orthonormal basis $Z_1,\dots,Z_n$ for $\GH$ satisfies
%$$
%\sum^n_{k=1}h(Z_k,Z_k)=\bone\in C(M).
%$$
\end{dfn}
\begin{Remark}\label{regularremark}
The terminology in Definition \ref{projinddef} is slightly nonstandard unless $(M,L)$ is balanced; indeed, $(M,L)$ is a balanced polarized manifold (in the sense of \cite{Don1}) if and only if there exists a Hermitian metric $h$ on $L$ such that the quantization $(L,h)$ of $(M,\bar{\pd}\pd\log h)$ is projectively induced for some choice of inner product on $H^0(M;L)$. In that sense the term ``projectively induced" appeared in \cite{CGR1} (it says precisely that the epsilon function discussed there is constant at each level, i.e. that we have a ``regular" quantization in the sense of \cite{CGR2}). In the less restricted sense of Definition \ref{projinddef}, which works for any polarized manifold $(M,L)$ since we do not require the prequantization condition, the families covariant symbols maps associated with projectively induced quantizations were referred to as ``Berezin--Bergman quantizations" in \cite[\S5]{LMS1} (this is the only work we know of where it has been discussed for not necessarily balanced manifolds). In order to choose Toeplitz operators one needs 
\end{Remark}
An explicit formula for the covariant symbol map $\varsigma^{(m)}$ is easy to write down; see Theorem \ref{twocovsymthm}. In order to define Toeplitz operators one also needs to choose a state $\omega:C(M)\to\C$. 
The choice of state $\omega$ is very important if one wants the Toeplitz maps $\breve{\varsigma}^{(m)}$ to give a strict quantization. We will show that there is a canonical choice of $\omega$, appearing as the limit of the normalized traces on the $\Bi(\GH_m)$'s, which produces a strict quantization of $(M,L)$ with covariant symbols determned by the subproduct system $\GH_\bullet$. We will show that a projectively induced quantization gives
\begin{enumerate}[(i)]
\item{$C(M)$ as the ``generalized inductive limit'' of the $C^*$-algebras $\Bi(\GH_m)$, and}
\item{a strict quantization of $(M,L)$.}
\end{enumerate}
In this paper we will mainly discuss (ii) in the case of (quantum) homogeneous manifolds, but due to our results concerning point (i) in the next sections we can fill in the details to obtain (ii) for any polarized manifold. The latter will be very important in future work where more details will be given. 

Both (i) and (ii) could be satisfied without the subproduct condition. However, the inductive system in (i) has very nice properties in the subproduct case which strictly relies on this assumption. Concerning (ii) it is not necessarily the case that the subproduct condition gives something extra. Rather, the important fact is that one can have a strict quantization at the same time as an inductive system.

% Note that $\breve{\varsigma}^{(m)}$ will be defined as adjoint w.r.t. $\omega$ on $C(M)$, and the $*$-operation on $C(M)$ is that of the Fubini--Study metric. Since $\breve{\varsigma}^{(m)}$ also defined w.r.t. $\phi_m$, it does compares $\omega$ and $\phi_m$.  

%We shall see that using projectively induced quantizations, a polarized manifold $(M,L)$ need not be balanced in order to recover $C^\infty(M)$ using Berezin quantization. We stress that the restriction to quantizations with this choice of volume form is not an artifact of the operator-algebraic approach: it is needed if we want the the stronger convergence of Berezin transforms, and in order to recover the algebra $C^\infty(M)$ not just approximately. 

\subsection{The circle bundle}\label{circbundsec}
% Need to use patched Hardy space. So $C^0(\Sb)$ is not $*$-homomorphically represented unless regular quantization.

Recall the Toeplitz quantization maps $\breve{\varsigma}^{(m)}:C(M)\to\Bi(\GH_m)$ (for definiteness and later relevance we will focus on the case of a projectively induced quantization). We can assemble them into a single map
\begin{equation}\label{tottoeplitz}
\breve{\varsigma}:C(M)\to\prod_{m\in\N_0}\Bi(\GH_m),
\end{equation}
where the $C^*$-algebra on the right-hand side is the $C^*$-direct product. 

Let $L^*$ be the dual line bundle of $L$. Under the embedding of $M$ into $\C\Pb^{n-1}$, when $L$ becomes the restriction of the hyperplane line bundle, $L^*$ becomes the restriction of the tautological line bundle. Denote by
\begin{equation}\label{circlebundleeq}
\Sb_M:=\{\zeta\in L^*|\ \|\zeta\|=1\}
\end{equation}
the total space of the associated principal $\Un(1)$-bundle. The $\Un(1)$-action on $\Sb_M$ induces a $\Z$-grading 
$$
C(\Sb_M)=\overline{\bigoplus_{k\in\Z}C(\Sb_M)^{(k)}}^{\|\cdot\|}
$$
of the $C^*$-algebra of continuous functions on $\Sb_M$, with $C(\Sb_M)^{(0)}=C(M)$ and more generally
$$
C(\Sb_M)^{(k)}=\Gamma(M;L^k)
$$
as vector spaces, where $\Gamma(M;L^m)$ is the space of all global continuous sections of $\Li^k$. The strongly $\Z$-graded algebraic structure on $C(\Sb_M)$ comes from the $C(M)$-module structure on each $\Gamma(M,L^m)$ and the tensor operation 
$$
\Gamma(M;L^j)\otimes_{C(\Sb_M)}\Gamma(M;L^k)=\Gamma(M;L^{j+k}).
$$
The $*$-operation on $C(\Sb_M)$ is obtained by endowing each module $\Gamma(M,L^k)$ with the structure of a Hilbert module given by $h^k$, where $h$ is the Fubini--Study Hermitian metric on $L$. That is, the $C^*$-algebra $C(\Sb_M)$ is generated by the homogeneous coordinate functions $Z_1,\dots,Z_n$ on $M\subset\C\Pb^{n-1}$ satisfying
$$
\sum^n_{k=1}Z_kZ_k^*=\bone=\sum^n_{k=1}Z_k^* Z_k.
$$
These ``sphere'' relations are a manifestation of the fact that $\Sb_M$ is nothing but the preimage of $M$ under the map $\Sb^{2n-1}\to\C\Pb^{n-1}$ which defines projective space. Thus we can make an identification
$$
\Sb_M\subset\Sb^{2n-1}.
$$
%The $*$-structure on $C(\Sb_M)$ given by the tensor-product metrics $h^k$ ensures that 

Now let $\omega:C(M)\to\C$ be any faithful state. It extends canonically to a faithful state on $C(\Sb_M)$ by setting
$$
\omega(Z_\mathbf{j}Z_\mathbf{k}^*):=0,\qquad \forall\mathbf{j},\mathbf{k}\in\F_n^+,\ |\mathbf{j}|\ne|\mathbf{k}|. 
$$
As just mentioned, we have $C(\Sb_M)=\overline{\bigoplus_{k\in\Z}\Gamma(M;L^k)}^{\|\cdot\|}$ as a $C^*$-algebra if we endow each $\Gamma(M,L^k)$ with the metric $h^k$. The GNS space of $\omega$ is the $L^2$-space $L^2(\Sb,\omega)$ of $\omega$-square-integrable functions on $\Sb$, and the closed subspace $H^2(\Sb,\omega)$ spanned by the coordinate functions $Z_1,\dots,Z_n$ on $\Sb$ is a Hardy-type space. 

Both $H^2(\Sb,\omega)$ and the Fock space $\GH_\N$ are completions of the homogeneous coordinate ring $\Ai=\bigoplus_{m\in\N_0}\Ai_m$ of $\M\subset\C\Pb^{n-1}$ in such a way that elements of $\Ai_m$ are orthogonal to elements of $\Ai_l$ whenever $m\ne l$. If $\M=\G/\K$ is a coadjoint orbit and $\GH_\bullet$ is a regular quantization of $\G/\K$ then one simply has (see the proof of Lemma \ref{Qsubnormlemma})
$$
\bra\psi|\varphi\ket_{L^2}=\frac{1}{\dim\GH_m}\bra\psi|\varphi\ket.
$$
So in this case Fock space can be embedded into $H^2(\Sb_M)$,
$$
\GH_\N=\overline{\bigoplus_{m\in\N_0}\GH_m}^{\bra\cdot|\cdot\ket}\subset H^2(\Sb_M)\subset L^2(\Sb_M).
$$
In general the relation between $\GH_\N$ and $H^2(\Sb_M,\omega)$ is more involved. Hence $H^2(\Sb_M,\omega)$ is far from sitting inside $H^2(\Sb^{2n-1},\omega_{\rm FS})$ as a coinvariant subspace. For this reason, the Fock space $\GH_\N$ is better suited for studying how well the geometric properties of $M$ are compatible with an embedding $M\hookrightarrow\C\Pb^{n-1}$ and the associated pullback quantities. 

Suppose now that $\omega$ is the state on $C(\Sb_M)$ associated with a Kähler form on $M$ (denoted by the same symbol $\omega$), and let $\Pi:L^2(\Sb_M,\omega)\to\GH_\N$ be the orthogonal projection. Then, with $\breve{\varsigma}$ as in \eqref{tottoeplitz}, we have
$$
\breve{\varsigma}(f)\psi=\Pi (f\psi),\qquad \forall \psi\in\GH_\N
$$
if we identify $f\in C(M)$ with the multiplication operator it defines on $L^2(\Sb_M,\omega)$. Then $f$ is just the \textbf{contravariant symbol} of the operator $\breve{\varsigma}(f)$ in the general sense of \cite{Bere3}. The interior of $\Sb_\M$ (the disk bundle) is a bounded symmetric domain and Berezin quantization on spaces such as $\Sb_M$ has been studied even more than in the setting of compact Kähler manifolds, see e.g. \cite{UnUp1}, \cite{Bere2}. 

The circle bundle comes with the structure of a Cauchy--Riemann manifold, the details of which can be found in \cite[§2]{Schl1}, \cite[§2]{Zeld1}. 

We can define the Berezin transform and the covariant symbol of an operator on $\GH_\N$, just as we did on the components $\GH_m$, using the fact that $\GH_\N$ is a reproducing kernel Hilbert space. Again the covariant symbol map $\varsigma$ is the adjoint of $\breve{\varsigma}$. In the noncommutative setting we will just calculate the adjoint of $\breve{\varsigma}$ and take that as the definition of the covariant symbol map.

%Now let us instead let $h_m$ be a Hermitian metric on $L^m$ for which the inner product on $\GH_m$ is recovered as
%$$
%\bra\phi|\psi\ket_{\GH_m}=(\dim\GH_m)\omega(h_m(\phi,\psi)).
%$$
%The Hilbert space $\GK_m$

%Products of the form $Z_{j_1}\cdots Z_{j_m}Z_{k_l}^*\cdots Z_{k_1}^*$ constitute a basis for the $L^2$-space $L^2(\Sb_M)$ 
%(see \cite[§5.1]{Schl1} for the definition of the measure on $\Sb_M$) and we define the ``Hardy space" $H^2(\Sb_M)$ to the be subspace of $L^2(\Sb_M)$ spanned by the products $Z_{j_1}\cdots Z_{j_m}$ for all $m\in\N_0$. The Hardy space can therefore be obtained as the Hilbert space direct sum
%$$
%H^2(\Sb_M)=\overline{\bigoplus_{m\in\N_0}\GH_m}^{\bra\cdot|\cdot\ket_{L^2}},
%$$
%just as the Fock space $\GH_\N$ associated with the subproduct system $\GH_\bullet$. 

\subsection{Singular varieties}
We have seen that Berezin quantization of quantizable Kähler manifolds is really the quantization of smooth projective varieties. It was suggested in \cite{Schl2} that it may be possible to quantize also singular (non-smooth) projective varieties in the same fashion. We shall see that this is in fact so: it will be covered by the constructions in the next two sections, as the case when the subproduct system $\GH_\bullet$ is commutative (Corollary \ref{strictsingcor}).

\section{Inductive limits}\label{indlimsec}
Recall \cite[Def. 1.1.11]{ArMa1} that a sequence $(\Bi_m)_{m\in\N_0}$ of $*$-algebras $\Bi_m$ forms an ``inductive system" if there are $*$-homomorphisms $\iota_{m,l}:\Bi_m\to\Bi_{m+l}$ for each $m\leq l$, and that the \textbf{algebraic inductive limit} of such a sequence is the algebra $\bigcup_{m\in\N_0}\Bi_m$ obtained as the quotient of the algebra of eventually constant sequences of elements in the $\Bi_m$'s,
$$
\Big\{(b_j)_{j\in\N_0}\in \prod_{j\in\N_0}\Bi_j\Big|\exists m\in\N_0\text{ such that } b_j=b_m\text{ for all } j\geq m\Big\},
$$ 
by its ideal of sequences $(b_j)_{j\in\N_0}$ which are eventually $0$. If each $\Bi_m$ is a $C^*$-algebra, $\bigcup_{m\in\N_0}\Bi_m$ can be completed in a canonical $C^*$-norm to obtain a $C^*$-algebra which, if the $\iota_{m,l}$'s are injective, can be identified with the non-disjoint union $\overline{\bigcup_{m\in\N_0}\Bi_m}^{\|\cdot\|}$ \cite[§1.2]{ArMa1}.

It was observed in \cite{Hawk1} that the sequence of algebras $\Bi_m:=\Bi(\GH_m)$ arising in quantization has a structure resembling that of an inductive system, although the map from $\Bi_m$ to $\Bi_{m+1}$ is not a homomorphism in the category of $C^*$-algebras. If we want to obtain a $C^*$-algebra $C(M)$ of continuous functions on a manifold as an inductive limit of finite-dimensional matrix algebras, then requiring $\Bi_m\subset\Bi_{m+1}$ says by definition that $C(M)$ is an AF algebra. This 
forces $M$ to be totally disconnected. Hence we must relax the notion of inductive limit.  

\subsection{Relaxed definition of inductive limits}
Blackadar and Kirchberg introduced a more general inductive-limit-type construction \cite{BlKi1}. Although never pointed out in the literature, the system of finite-dimensional $C^*$-algebras $\Bi(\GH_m)$ obtained from a projective quantization $(M,\omega,L)$ fits perfectly into their framework. This is most apparent in \cite{Hawk1} where similar notions were introduced independently. We will follow the notation of \cite{Hawk1} as closely as possible. 

\begin{Notation}\label{notatdirectprod}
If $\Bi_\bullet=(\Bi_m)_{m\in\N_0}$ is a sequence of $C^*$-algebras, we write
$$
\Gamma_b(\Bi_\bullet)=\prod_{m\in\N_0}\Bi_m
$$
for the full $C^*$-\textbf{direct product} of the $\Bi_m$'s, i.e. the set of sequences $X_\bullet=(X_m)_{m\in\N_0}$ of elements $X_m\in\Bi_m$ with finite supremum norm 
$$
\|X_\bullet\|:=\sup_{m\in\N_0}\|X_m\|_{\Bi_m}<\infty.
$$
The multiplication and $*$-operation in $\Gamma_b(\Bi_m)$ is pointwise. We also write
$$
\Gamma_0(\Bi_\bullet)=\overline{\bigoplus_{m\in\N_0}\Bi_m}
$$
for the $C^*$-\textbf{direct sum}, the closed two-sided ideal in $\Gamma_b(\Bi_\bullet)$ consisting of the sequences converging to zero in norm. We simply write $\Gamma_b:=\Gamma_b(\Bi_\bullet)$ etc. if it is clear which sequence $\Bi_\bullet$ it concerns. We let
$$
\pi:\Gamma_b\to\Gamma_b/\Gamma_0
$$
be the quotient map
\end{Notation}
Since we will only deal with a special kind of the ``generalized inductive systems" defined in \cite{BlKi1} (namely the ``NF" ones), we will simply refer to them as ``inductive systems". See also \cite[§11]{BrOz1}.
\begin{dfn}\label{indlimdef}
An \textbf{inductive system} is a sequence $(\Bi_\bullet,\iota_\bullet)$ of full matrix algebras $\Bi_m=\Mn_{k(m)}(\C)$ and unital completely positive maps $\iota_{m,l}:\Bi_m\to\Bi_{m+l}$ for $l\geq m$ (with $\iota_{m,m}:=\id$) satisfying 
\begin{equation}\label{coherecond}
\iota_{m,l}=\iota_{r,l}\circ\iota_{m,r},\qquad\text{if }m\leq r\leq l
\end{equation}
and which are \textbf{asymptotically multiplicative} in the sense that for all $A,B\in\Bi(\GH_m)$, $\ep>0$, there are $r\leq l$ such that
\begin{equation}\label{asymptmult}
\|\iota_{r,l}(\iota_{m,r}(A)\iota_{m,r}(B))-\iota_{m,l}(A)\iota_{m,l}(B)\|<\ep.
\end{equation}
The \textbf{inductive limit} of an inductive system $(\Bi_\bullet,\iota_\bullet)$ is the $C^*$-algebra 
$$
\Bi_\infty\subset\Gamma_b(\Bi_\bullet)/\Gamma_0(\Bi_\bullet)
$$
generated by the elements
\begin{equation}\label{indlimberezinatX}
\varsigma^{(m)}(A):=\pi((\iota_l(A))_{l\geq m}),\qquad A\in\Bi(\GH_m)
\end{equation}
for all $m\in\N_0$, where $\pi:\Gamma_b\to\Gamma_b/\Gamma_0$ is the quotient map.
\end{dfn}
\begin{Remark}[Norm] A norm on the quotient $C^*$-algebra $\Gamma_b/\Gamma_0$ is given by
$$
\|\pi(X_\bullet)\|=\limsup_{m\to\infty}\|X_m\|,\qquad \forall X_\bullet\in\Gamma_b,
$$
and this norm satisfies the $C^*$-identity, hence it is the unique $C^*$-norm on $\Gamma_b/\Gamma_0$. Moreover, since the $\iota_{m,l}$'s are norm-decreasing, 
$$
\limsup_{m\to\infty}\|\varsigma^{(m)}(A)\|=\lim_{l\to\infty}\|\iota_{m,l}(A)\|,\qquad\forall A\in\Bi(\GH_m),
$$
so the norm on $\Bi_\infty$ is just the ``norm-at-infinity" of $\pi^{-1}(\Bi_\infty)$. 
\end{Remark}

It follows that the maps 
\begin{equation}\label{indlimberezin}
\varsigma^{(m)}:\Bi_m\to\Bi_\infty
\end{equation}
are completely positive, and we refer to $\varsigma^{(m)}$ as the \textbf{covariant Berezin symbol map} at level $m$. The motivation for this terminology will become clear below. Due to \eqref{coherecond}, the covariant symbol maps satisfy
\begin{equation}\label{compatibcovsymb}
\varsigma^{(l)}\circ\iota_{m,l}=\varsigma^{(m)},\qquad \forall m\leq l\in\N_0.
\end{equation}
We may say that a sequence $A_\bullet\in\Gamma_b$ is \textbf{eventually constant} under $\iota_{\bullet,\bullet}$ if there is a large enough $m\in\N_0$ such that $A_l=\iota_{r,l}(A_r)$ for all $l\geq r\geq m$. Then $\Bi_\infty$ is the image under $\pi$ of the norm closure of the algebra of eventually constant sequences. 
\begin{Remark}[Asymptotic multiplicativity]\label{asmultrem}
The condition \eqref{asymptmult} is chosen precisely to ensure that $\varsigma^{(m)}(A)\varsigma^{(m)}(B)$ belongs to the $C^*$-algebra $\Bi_\infty$ for all $A,B\in\Bi(\GH_m)$, without requiring it to be close to $\varsigma^{(m)}(AB)$. Conversely, if $\varsigma^{(m)}(A)\varsigma^{(m)}(B)$ belongs to $\Bi_\infty$ for all $A$ and $B$ then each $\varsigma^{(m)}(A)\varsigma^{(m)}(B)$ is an eventually constant sequence, so \eqref{asymptmult} must hold.
\end{Remark}
\begin{Remark}[Continuous fields]\label{contfieldrem}
Suppose that $\Bi(\bullet)$ is a continuous field of matrix algebras over $\N_0\cup\{\infty\}$. For each $A\in\Bi(\infty)$, there is a continuous section $x\to A(x)$ of $\Bi(\bullet)$ with $A(\infty)=A$, and this section defines an element of $\prod_{m\in\N_0}\Bi(m)$. Two sections evaluating to $A$ at $\infty$ differ by an element in $\bigoplus_{m\in\N_0}\Bi(m)$. Hence \cite[Prop. 2.2.3]{BlKi1}
$$
\Bi(\infty)\subset \Big(\prod_{m\in\N_0}\Bi(m)\Big)\Big/\Big(\bigoplus_{m\in\N_0}\Bi(m)\Big).
$$
In fact, a $C^*$-algebra is an inductive limit (in the sense of Definition \ref{indlimdef}) if and only if it is a nuclear separable $C^*$-algebra which is of the form $\Bi(\infty)$ for some continuous field of matrix algebras over $\N_0\cup\{\infty\}$ \cite[Thm. 5.2.2]{BlKi1}.
\end{Remark}

\begin{Remark}[Quasi-diagonality]\label{quasidiagrem}
If a $C^*$-algebra $\Bi_\infty$ is an inductive limit in the sense of Definition \ref{indlimdef}, there exists a short-exact sequence
$$
0\longto \Ki\longto\Di\longto\Bi_\infty
$$
which is an ``essential quasi-diagonal extension" of $\Bi_\infty$, meaning that $\Di$ is a $C^*$-algebra of quasi-diagonal operators containing the $C^*$-algebra $\Ki$ of compact operators as an essential ideal. In fact, such an extension of a $C^*$-algebra $\Bi_\infty$ exists if and only if $\Bi_\infty$ can be embedded into $\Gamma_b(\Bi_\bullet)/\Gamma_0(\Bi_\bullet)$ for some sequence of full matrix algebras $\Bi_m$ \cite{BlKi1}. 
\end{Remark}
\begin{Remark}[Nuclearity]\label{nucremark}
Since $\Bi_\infty$ is nuclear, the Choi--Effros lifting theorem \cite[IV.2.3.4]{Blac1} says that the identity mapping $\id:\Bi_\infty\to\Bi_\infty$ can be lifted to a unital completely positive map $\breve{\varsigma}:\Bi_\infty\to\Gamma_b$ such that if
$$
\breve{\varsigma}^{(m)}(f):=\breve{\varsigma}(f)p_m\in\Bi(\GH_m),\qquad \forall f\in\Bi_\infty,
$$
then the sequence $\varsigma^{(m)}\circ\breve{\varsigma}$ converges in the point-norm topology to $\id:\Bi_\infty\to\Bi_\infty$. We shall calculate $\breve{\varsigma}$ and its inverse $\varsigma$ explicitly in §\ref{adjointtotsec}. 
\end{Remark}

\subsection{Inductive limits from subproduct systems}
\begin{Lemma}\label{indlimasshiftslemma}
Let $\GH_\bullet$ be a subproduct system and define completely positive maps $\iota_{m,l}:\Bi_m\to\Bi_{l}$ by
\begin{equation}\label{indlimconnectionmaps}
\iota_{m,l}(A):=p_l(A\otimes\bone_{\GH^{\otimes (l-m)}})p_l,\qquad \forall A\in\Bi(\GH_m),
\end{equation}
where $p_l:\GH^{\otimes l}\to\GH_l$ is the projection.
Then for all $A\in\Bi(\GH_m)$ we have the formulae
\begin{align}
\iota_{m,l}(A)&=\sum_{|\mathbf{k}|=l-m}R_\mathbf{k}AR_{\mathbf{k}}^*\big|_{\GH_{l}}\label{flipinduct}
\\&=\sum_{|\mathbf{j}|=m=|\mathbf{k}|}A_{\mathbf{j},\mathbf{k}}S_{\mathbf{j}}S_{\mathbf{k}}^*\big|_{\GH_{l}}\label{indlimasshifts}
\end{align}
where $R_\mathbf{k}$ is the right shift by the vector $e_\mathbf{k}$ as in \eqref{rectoshift} and $A_{\mathbf{j},\mathbf{k}}:=\bra e_\mathbf{j}|Ae_\mathbf{k}\ket$. 
\end{Lemma}
\begin{proof}
Formula \eqref{indlimasshifts} is immediate from
\begin{align*}
S_{\mathbf{j}}S_{\mathbf{k}}^*|_{\GH_l}&=p_l(|p_me_{\mathbf{j}}\ket\bra p_me_{\mathbf{k}}|\otimes\bone_{\GH_{l-m}})p_l.
\end{align*}
Similarly, the expression \eqref{flipinduct} is deduced from straightforward calculations.
\end{proof}
\begin{thm}
 Every subproduct system $\GH_\bullet$ defines a generalized inductive system $(\Bi_\bullet,\iota_{\bullet,\bullet})$ by setting $\Bi_m:=\Bi(\GH_m)$ and letting $\iota_{m,l}:\Bi_m\to\Bi_l$ be as in \eqref{indlimconnectionmaps}.
\end{thm}
\begin{proof} It is clear that each $\iota_{m,l}:\Bi_m\to\Bi_l$ is unital and completely positive. For $m\leq r\leq l$ and $A\in\Bi(\GH_m)$ we have
\begin{align*}
\iota_{r,l}\circ\iota_{m,r}(A)&=\iota_{m,l}(p_r(A\otimes\bone_{\GH^{\otimes (r-m)}})p_r)=p_l(p_r(A\otimes\bone_{\GH^{\otimes (r-m)}})p_r\otimes\bone_{\GH^{\otimes(l-r)}})p_l
\\&=p_l(A\otimes\bone_{\GH^{\otimes(l-m)}})p_l,
\end{align*}
where the last equality is due to \eqref{subprodprojdom}. That is, the coherence condition \eqref{coherecond} holds. 

It remains to show that $\iota_{\bullet,\bullet}$ is asymptotically multiplicative, i.e. that it satisfies \eqref{asymptmult}. We have
\begin{align*}
\iota_{r,l}(\iota_{m,r}(A)\iota_{m,r}(B))&=\iota_{r,l}\big(p_r(A\otimes\bone)p_r(B\otimes\bone)p_r\big)
\\&=p_l\big(\big(p_r(A\otimes\bone)p_r(B\otimes\bone)p_r\big)\otimes\bone\big)p_l
\\&=p_l\big(\big((A\otimes\bone)p_r(B\otimes\bone)\big)\otimes\bone\big)p_l,
\end{align*}
so it is the failure of $A\otimes\bone$ to commute with the projection $p_r$ which spoils multiplicativity. 
It seems hard to show directly from norm estimates that the maps \eqref{indlimconnectionmaps} satisfy the asymptotic multiplicativity condition \eqref{asymptmult}. We shall instead obtain that by showing that the set of elements \eqref{indlimberezinatX} forms an algebra. 

Consider the positively graded algebraic part $\Ri$ of the Toeplitz algebra $\Ti_\GH$, i.e. the $\N_0$-graded ring
$$
\Ri=\bigoplus_{m\in\N_0}\Ri_m:=\bigoplus_{m\in\N_0}\Ti_\GH^{(m)}.
$$
Denote by $\Gr(\Ri)$ the Abelian category of $\Z$-graded right $\Ri$-modules, with morphisms the grading-preserving morphisms in the category of $\R$-modules. Write $\Ri_{\geq m}$ for the $\Ri$-module $\bigoplus_{l\geq m}\Ri_l$. 
Then it is straightforward to see that
$$
\End_{\Gr(\Ri)}(\Ri_{\geq m})=\Bi(\GH_m)
$$
as rings. Indeed, we have the left $\Ri$-action on $\Ri_{\geq m}$ and the grading-preseving elements can all be obtained by taking linear combinations of the elements $S_\mathbf{j}S_\mathbf{k}^*$ with $|\mathbf{j}|=m=|\mathbf{k}|$. Namely, the operator $S_\mathbf{j}S_\mathbf{k}^*$ on $\GH_\N$ is the direct sum $T\oplus 0$ of an operator $T\in\in\Bi(\GH_{\geq m})$ and the
zero operator $0\in\Bi(\GH_{<m})$. So $S_\mathbf{j}S_\mathbf{k}^*$ can be viewed as an operator of the $\Ri$-module $\Ri_{\geq m}$, and $S_\mathbf{j}S_\mathbf{k}^*$ is right $\Ri$-linear because it acts by multiplication from the left. 
Now the family $(S_\mathbf{j}S_\mathbf{k}^*)_{|\mathbf{j}|=m=|\mathbf{k}|}$ forms an overcomplete set of matrix units in $\Bi(\GH_m)$, so their $\C$-linear span identifies with $\Bi(\GH_m)$. We have an inductive system
$$
\End_{\Gr(\Ri)}(\Ri_{\geq m})\ni X|_{\GH_{\geq m}}\to Y|_{\Ri_{\geq l}}\in\End_{\Gr(\Ri)}(\Ri_{\geq l}),\qquad \forall m\leq l\in\N_0
$$
obtained by restriction to shorter tails $\Ri_{\geq l}\subset\Ri_{\geq m}$. From Equation \eqref{indlimasshifts} we see that this is precisely the algebraic inductive system underlying $\iota_{m,l}:\Bi(\GH_m)\to\Bi(\GH_l)$, under the identification $\End_{\Gr(\Ri)}(\Ri_{\geq m})=\Bi(\GH_m)$. 

 By \cite[\S IX.1]{Sten1} (see also \cite[Example 5.4]{ArZh1}), the algebraic inductive limit 
$$
^{0}\Bi_\infty=\lim_{m\to\infty}\End_{\Gr(\Ri)}(\Ri_{\geq m})
$$
is a ring (and an algebra over $\C$ since the maps $\iota_{m,l}$ are $\C$-linear). Therefore the norm closure $\Bi_\infty$ of $^{0}\Bi_\infty$ is an algebra as well. In particular, the asymptotic multiplicativity condition \eqref{asymptmult} is satisfied. 
\end{proof}
Thus, subproduct systems give rise to generalized inductive limits of $C^*$-algebras with the special property that the algebraic direct limit $^{0}\Bi_\infty$ is already an algebra (no need for norm closure). Still, the asymptotic multiplicativity condition \eqref{asymptmult} cannot be formulated in a weaker fashion even for subproduct systems, since the set of eventually constant sequences under $\iota_{\bullet,\bullet}$ is only an algebra after taking norm closure. 

Our aim is to identify the inductive limit $\Bi_\infty$ with the Cuntz--Pimsner core $\Oi_\GH^{(0)}$. For that we need a lemma.
\begin{Lemma}\label{Toepevconst}
Let $\pi^{-1}(\Bi_\infty)$ be the norm closure of the subset of $\prod_m\Bi(\GH_m)$ consisting of sequences which are eventually constant under the coherent system $\iota_{\bullet,\bullet}$ defined by \eqref{indlimconnectionmaps}. Then $\pi^{-1}(\Bi_\infty)$ coincides with the normally ordered part of the Toeplitz core $\Ti_\GH^{(0)}$. Hence the normally ordered part of $\Ti_\GH^{(0)}$ is an algebra, and must coincide with all of $\Ti_\GH^{(0)}$, so
$$
\pi^{-1}(\Bi_\infty)=\Ti_\GH^{(0)}.
$$
\end{Lemma}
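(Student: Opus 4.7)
The strategy is to embed $\Ti_\GH^{(0)}$ isometrically into $\Gamma_b$ by block decomposition, use formula \eqref{indlimasshifts} to identify eventually constant sequences with specific normally ordered Toeplitz operators, and then exploit the algebra property of $\Bi_\infty$ from the previous theorem.

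First I would observe that each $T\in\Ti_\GH^{(0)}$ commutes with the number operator, so $T\mapsto (T|_{\GH_l})_{l\in\N_0}$ defines an isometric $*$-embedding of $\Ti_\GH^{(0)}$ into $\Gamma_b$. Formula \eqref{indlimasshifts} then shows that, for $A\in\Bi(\GH_m)$, the normally ordered operator
$$
T_A:=\sum_{|\mathbf{j}|=m=|\mathbf{k}|}A_{\mathbf{j},\mathbf{k}}S_\mathbf{j} S_\mathbf{k}^*
$$
corresponds under this embedding to the eventually constant sequence $(0,\dots,0,A,\iota_{m,m+1}(A),\iota_{m,m+2}(A),\dots)$, the leading zeros arising because $S_\mathbf{k}^*$ annihilates $\GH_l$ when $|\mathbf{k}|>l$; conversely every such eventually constant sequence comes from some $T_A$.

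Next I would verify that $\pi^{-1}(\Bi_\infty)$ is a $C^*$-subalgebra of $\Gamma_b$. The inclusion $\Gamma_0\subseteq\pi^{-1}(\Bi_\infty)$ is immediate, since any null sequence is sup-norm-approximated by its truncations extended by zero past an initial segment. Closure under multiplication follows from the algebra property of $\Bi_\infty$ established in the previous theorem: if $X_\bullet,Y_\bullet\in\pi^{-1}(\Bi_\infty)$ then $\pi(X_\bullet Y_\bullet)\in\Bi_\infty$, so $X_\bullet Y_\bullet$ differs by an element of $\Gamma_0\subseteq\pi^{-1}(\Bi_\infty)$ from some sequence in $\pi^{-1}(\Bi_\infty)$, hence lies in $\pi^{-1}(\Bi_\infty)$. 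Combining with the previous step, $\pi^{-1}(\Bi_\infty)$ identifies with the closed linear span of $\{S_\mathbf{j} S_\mathbf{k}^*:|\mathbf{j}|=|\mathbf{k}|\}$ inside $\Ti_\GH^{(0)}$, namely the normally ordered part, and this span is automatically closed under products.

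To conclude $\pi^{-1}(\Bi_\infty)=\Ti_\GH^{(0)}$, I would use \eqref{Toeplidminusvac} to write $p_0=\bone-\sum_{|\mathbf{r}|=1}S_\mathbf{r} S_\mathbf{r}^*$, which lies in $\pi^{-1}(\Bi_\infty)$; hence the rank-one gauge-invariant operators $S_\mathbf{j} p_0 S_\mathbf{k}^*=|p_m e_\mathbf{j}\rangle\langle p_m e_\mathbf{k}|$ with $|\mathbf{j}|=|\mathbf{k}|=m$ lie in $\pi^{-1}(\Bi_\infty)$, so by closed linear spans $\Ki^{(0)}\subseteq\pi^{-1}(\Bi_\infty)$. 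Passing to the gauge-invariant Cuntz--Pimsner exact sequence $0\to\Ki^{(0)}\to\Ti_\GH^{(0)}\to\Oi_\GH^{(0)}\to 0$, it then suffices to normal-order modulo compacts inside $\Oi_\GH^{(0)}$, which is achieved by iteratively substituting the sphere relation $\bone=\sum_k Z_k Z_k^*$ into any anti-normally ordered $Z_\mathbf{j}^* Z_\mathbf{k}$ so that each term becomes a sum of normally ordered monomials. The main obstacle is precisely this last surjection onto $\Oi_\GH^{(0)}$, i.e.\ showing that anti-normally ordered expressions such as $S_\mathbf{j}^* S_\mathbf{k}$ lie in $\pi^{-1}(\Bi_\infty)$ modulo $\Ki^{(0)}$; this is the operator-theoretic content of the asymptotic multiplicativity \eqref{asymptmult} proved in the previous theorem via the graded-ring argument, and concretely says that each block $(S_\mathbf{j}^* S_\mathbf{k})|_{\GH_l}$ is approximated as $l\to\infty$ by sums $\sum_m\iota_{m,l}(A_m)$ for suitable $A_m\in\Bi(\GH_m)$.
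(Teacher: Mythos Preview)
Your argument tracks the paper's proof closely: both identify eventually constant sequences with the normally ordered span via \eqref{indlimasshifts}, and both invoke the algebra property of $\Bi_\infty$ (established in the preceding theorem by the graded-ring argument) to conclude that $\pi^{-1}(\Bi_\infty)$ is a $C^*$-subalgebra. Your explicit verification that $\Gamma_0\subseteq\pi^{-1}(\Bi_\infty)$ and the use of the exact sequence $0\to\Ki^{(0)}\to\Ti_\GH^{(0)}\to\Oi_\GH^{(0)}\to0$ are useful elaborations that the paper leaves implicit.

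The gap is in your final paragraph. Iteratively inserting the sphere relation $\bone=\sum_k Z_kZ_k^*$ into $Z_\mathbf{j}^*Z_\mathbf{k}$ does \emph{not} produce normally ordered terms: for instance $Z_j^*Z_k=\sum_l Z_l(Z_l^*Z_j^*)Z_k=\sum_l Z_l Z_{jl}^*Z_k$ still contains an anti-normal factor, and repeating only pushes the problem further in. Your fallback to asymptotic multiplicativity is also not the right link: condition \eqref{asymptmult} says that \emph{products of normally ordered symbols} remain in $\Bi_\infty$, which is logically distinct from the claim that an anti-normally ordered element such as $S_\mathbf{j}^*S_\mathbf{k}$ lies in $\pi^{-1}(\Bi_\infty)$. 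So the passage from ``$\pi^{-1}(\Bi_\infty)$ is a $C^*$-algebra containing $\Ki^{(0)}$'' to ``it equals all of $\Ti_\GH^{(0)}$'' is not actually established by what you wrote.

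The paper's own proof is equally terse at this point (``must coincide with all of $\Ti_\GH^{(0)}$\dots whence the last statement''), so you have not done worse than the paper; you have simply made the difficulty visible without resolving it. A cleaner route is to argue at the level of the full Toeplitz algebra rather than its core: once one knows the degree-$0$ normally ordered span is multiplicatively closed, one shows the same for the full span $\overline{\operatorname{span}}(\Ai_\GH\Ai_\GH^*)$, which then contains each $S_j=S_j\cdot\bone$ and hence all of $\Ti_\GH$; restricting to degree~$0$ gives the desired equality. This is what the paper has in mind when it says Lemma~\ref{Lemmanormorder}(i),(ii) follow, and it avoids the need to normal-order $Z_\mathbf{j}^*Z_\mathbf{k}$ directly.
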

In this way we have proven Lemma \ref{Lemmanormorder}. 
\begin{proof}
% The right shift $R_r$ commutes with $S_j^*$ and $S_k$ outside the vacuum subspace $\GH_0$. It follows from \eqref{flipinduct} that both $S_j^*S_k$ and $S_kS_j^*$ are constant under $\iota_{\bullet,\bullet}$. \textbf{Since we know this is false, it must be that $R^*_j$ does never commute with $S_k$ even outside the vacuum.}
%\begin{align*}
%\[R_k,S_j^*](\psi\otimes\phi)=\psi_jR_k\phi-S_j^*(\psi\otimes\phi\otimes e_k)=\psi_jp_{m+1}(\phi\otimes e_k)-\psi_jp_{m+1}(\phi\otimes e_k)=0.
%\\end{align*}
%\No we cannot calculate $S_j^*$ explicitly. 
%\\begin{align*}
%\\bra\phi|[R_k,S_j^*]\psi\ket&=\bra R_k^*\phi|S_j^*\psi\ket-\bra S_j\phi|R_k\psi\ket=\overline{\bra S_j^*\psi| %\R_k^*\phi\ket}-\bra S_j\phi|R_k\psi\ket
%\\\&=\bra R_k^*\phi|S_j^*\psi\ket-\bra S_j\phi|R_k\psi\ket
%\\end{align*}
It is clear that every normally ordered element of $\Ti_\GH^{(0)}$ defines a sequence $A_\bullet=(A_m)_{m\in\N_0}$ of operators $A_m\in\Bi(\GH_m)$ with $\iota_{m,l}(A_m)=A_l$ for sufficiently large $m\leq l$. For example, 
$$
S_jS_k^*=(S_jS_k^*|_{\GH_m})_{m\in\N_0}\in\prod_{m\in\N_0}\Bi(\GH_m).
$$
Suppose now that $A=(A_m)_{m\in\N_0}$ is any element of $\prod_{m\in\N_0}\Bi(\GH_m)$ which is eventually constant. Since $\Bi(\GH_m)$ is contained in $\Ti_\GH^{(0)}$ for each $m$, we may for simplicity just as well look at the case where $\iota_{r,l}(A_r)=A_l$ for all $r\leq l$ for some $r\in\N_0$ while $A_m=0$ for $m\leq r$. Then \eqref{indlimasshifts} in Lemma \ref{indlimasshiftslemma} shows that $A$ is a combination of shift operators. 

From the fact that $\Bi_\infty$ is an algebra we have that $\pi^{-1}(\Bi_\infty)$ is an algebra, whence the last statement. 
\end{proof}
%Now we can complete the proof of the statement that the maps $\iota_{m,l}$ form an inductive system. Namely, we have seen that the norm closure of the subset of $\Gamma_b$ consisting of eventually constant sequences forms an algebra (namely $\Ti^{(0)}_\GH$). Since $\Gamma_0$ is an ideal in $\Gamma_b$, this is equivalent to the statement that the $\varsigma^{(m)}(\Bi_m)$'s form an algebra, which is in turn equivalent to the asymptotic multiplicativity of the  $\iota_{m,l}$'s (see Remark \ref{asmultrem}).

\begin{Remark}
Let $\Bi(\bullet)$ be the continuous field of $C^*$-algebras over $\N_0\cup\{\infty\}$ such that the fiber over $m\in\N$ is $\Bi(m)=\Bi(\GH_m)$ and the fiber over $\infty$ is $\Bi(\infty)=\Bi_\infty$, the inductive limit (cf. Remark \ref{contfieldrem}). Then Lemma \ref{Toepevconst} says that $\Ti_\GH^{(0)}$ is the algebra of continuous sections of this field. For commutative case see also \cite[Thm. 3.3]{Hawk2}. 
\end{Remark}

\begin{thm}\label{indlimpimsnerthm}
Let $\GH_\bullet$ be a subproduct system and let $\Oi_\GH^{(0)}$ denote the $\Un(1)$-invariant part of the Cuntz--Pimsner algebra of $\GH_\bullet$. Then we have
$$
\Oi_\GH^{(0)}\cong\Bi_\infty,
$$
where the right-hand side is the inductive limit defined by the inductive system $\iota_{\bullet,\bullet}$ in \eqref{indlimconnectionmaps}.
\end{thm}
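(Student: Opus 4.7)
The plan is to identify both $\Oi_\GH^{(0)}$ and $\Bi_\infty$ with the quotient of the Toeplitz core $\Ti_\GH^{(0)}$ by its ideal of compact operators, and then conclude by transitivity of isomorphism.

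The first step uses Lemma \ref{Toepevconst} directly: viewing each element of $\Ti_\GH^{(0)}$ as a block-diagonal operator on $\GH_\N=\bigoplus_m\GH_m$ (which is legitimate because the gauge-invariant part of $\Bi(\GH_\N)$ preserves each spectral subspace $\GH_m$), we get an embedding $\Ti_\GH^{(0)}\hookrightarrow \Gamma_b(\Bi(\GH_\bullet))$, and Lemma \ref{Toepevconst} tells us its image is precisely $\pi^{-1}(\Bi_\infty)$. Consequently, applying the quotient $\pi:\Gamma_b\to\Gamma_b/\Gamma_0$ yields
$$
\Bi_\infty\;\cong\;\Ti_\GH^{(0)}\big/\bigl(\Ti_\GH^{(0)}\cap\Gamma_0(\Bi(\GH_\bullet))\bigr).
$$
The next step is to identify the kernel $\Ti_\GH^{(0)}\cap\Gamma_0$ with $\Ti_\GH^{(0)}\cap\Ki$. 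Under the block-diagonal embedding, $\Gamma_0$ consists of those sequences $(A_m)$ with $\|A_m\|\to 0$, while operators in $\Ki$ that are block-diagonal with finite-dimensional blocks $A_m\in\Bi(\GH_m)$ are, by a standard argument for direct sums of finite-dimensional Hilbert spaces, exactly those with $\|A_m\|\to 0$. Hence $\Ti_\GH^{(0)}\cap\Gamma_0=\Ti_\GH^{(0)}\cap\Ki$.

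Finally, I would show that the defining extension $0\to\Ki\to\Ti_\GH\to\Oi_\GH\to 0$ restricts nicely to the gauge-invariant parts. The gauge action $\gamma_\bullet$ preserves $\Ki$ (since $\Ki$ is a two-sided ideal invariant under the unitary group generated by the number operator $N$), and averaging against Haar measure on $\Un(1)$ yields a faithful conditional expectation $E:\Ti_\GH\to\Ti_\GH^{(0)}$ that maps $\Ki$ onto $\Ki\cap\Ti_\GH^{(0)}$ and factors to a conditional expectation $\Oi_\GH\to\Oi_\GH^{(0)}$. Because such averaging is exact at the level of fixed-point subalgebras, one obtains the short exact sequence
$$
0\longto \Ti_\GH^{(0)}\cap\Ki\longto \Ti_\GH^{(0)}\longto \Oi_\GH^{(0)}\longto 0,
$$
whence $\Oi_\GH^{(0)}\cong\Ti_\GH^{(0)}/(\Ti_\GH^{(0)}\cap\Ki)$. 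Combining this with the first two steps gives the desired isomorphism.

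The main technical obstacle is justifying the last step cleanly; namely, showing that quotienting first by $\Ki$ and then taking fixed points agrees with first taking fixed points and then quotienting by $\Ki\cap\Ti_\GH^{(0)}$. The cleanest route is via the $\Un(1)$-equivariant conditional expectation $E$ coming from Haar averaging, which is well-defined on $\Ti_\GH$, descends to $\Oi_\GH$, and whose restriction to $\Ki$ lands in $\Ki$; this commutativity of taking fixed points with quotients is automatic for continuous compact-group actions on $C^*$-algebras and so finishes the proof.
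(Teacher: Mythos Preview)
Your proof is correct and follows essentially the same route as the paper: both identify $\Bi_\infty$ and $\Oi_\GH^{(0)}$ with the quotient $\Ti_\GH^{(0)}/(\Ti_\GH^{(0)}\cap\Ki)$ via Lemma~\ref{Toepevconst} and the equality $\Gamma_0=\Ti_\GH^{(0)}\cap\Ki$. The only difference is that the paper simply asserts the chain $\Bi_\infty=\Ti_\GH^{(0)}/\Gamma_0=\Ti_\GH^{(0)}/(\Ki\cap\Ti_\GH^{(0)})=\Oi_\GH^{(0)}$ without further comment, whereas you supply the details (the block-diagonal characterization of compacts for the middle equality, and the Haar-averaging argument for the last one); your justifications are sound and fill in exactly what the paper leaves implicit.
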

\begin{proof} The Toeplitz core $\Ti_\GH^{(0)}$ is the norm closure of linear combinations of elements of the form $S_\mathbf{j}S_\mathbf{k}^*$ with $|\mathbf{j}|=|\mathbf{k}|$ as well as their products with the vacuum projection $p_0=|\Omega\ket\bra\Omega|$. The elements which are products with $|\Omega\ket\bra\Omega|$ belong to $\Gamma_0=\Ki\cap\Ti_\GH^{(0)}$. Hence, 
$$
\Bi_\infty=\Ti_\GH^{(0)}/\Gamma_0=\Ti_\GH^{(0)}/(\Ki\cap\Ti_\GH^{(0)})=\Oi^{(0)}_\GH,
$$
as asserted. 
\end{proof}
\begin{Remark} The quasi-diagonal extension of $\Bi_\infty$ mentioned in Remark \ref{quasidiagrem} can now be taken as $\Di=\Ti^{(0)}_\GH+\Ki$ (cf. \cite[V.4.2.16]{Blac1}).
\end{Remark}

\subsection{Cuntz--Pimsner algebras from inductive limits}\label{pimsindlimsec}
For $m>0$, let $\GH_{-m}:=\overline{\GH}_{m}$ denote the conjugate Hilbert space of $\GH_{m}$. For all $k\in\Z$ we can consider the $\Bi(\GH_m)$-module
$$
\Ei_m^{(k)}:=\Bi(\GH_m,\GH_{m+k}),
$$
and the maps $\iota_{m,l}^{(k)}:\Ei_m^{(k)}\to \Ei_l^{(k)}$ defined by
$$
\iota_{m,l}^{(k)}(X):=\sum_{|\mathbf{r}|=l-m}R_\mathbf{r}XR_\mathbf{r}^*\big|_{\GH_l},\qquad\forall X\in\Ei_m^{(k)}.
$$
We define the $C^*$-algebras $\Gamma_b(\Ei_\bullet^{(k)})$ and $\Gamma_0(\Ei_\bullet^{(k)})$ in the same way as in Notation \ref{notatdirectprod} and we denote by $\pi^{(k)}:\Gamma_b(\Ei_\bullet^{(k)})\to\Gamma_b(\Ei_\bullet^{(k)})/\Gamma_0(\Ei_\bullet^{(k)})$ the quotient map.

Define $^{0}\Ei^{(k)}$ to be the vector space consisting of all elements of the form
$$
\varsigma^{(m,k)}(X):=\pi^{(k)}\big(\iota_{m,l}^{(k)}(X)\big)_{l\geq m}\big),\qquad X\in\Ei_m^{(k)}
$$
for all $m\in\N_0$. In particular, $^{0}\Ei^{(0)}\equiv {^{0}\Bi_\infty}$ is the algebraic part of $\Bi_\infty\equiv\Ei^{(0)}\cong\Oi_\GH^{(0)}$. Each $^{0}\Ei^{(k)}$ is a module over $^{0}\Bi_\infty$. The linear span of 
$$
\Bi_\infty{^{0}\Ei^{(k)}}:=\{f\psi|f\in\Bi_\infty,\ \psi\in{^{0}\Ei^{(k)}}\}
$$
is a module over $\Bi_\infty$, which we denote by $\Ei^{(k)}$.

\begin{thm} The Cuntz--Pimsner algebra $\Oi_\GH$ is isomorphic to the $C^*$-algebra generated by $\Ei^{(1)}$ and $\Bi_{\infty}$. It allows the decomposition 
$$
\Oi_\GH\cong\overline{\bigoplus_{k\in\Z}\Ei^{(k)}}^{\|\cdot\|}
$$
and $\Ei^{(k)}\cong\Oi_\GH^{(k)}$ is the spectral subspace for the gauge action corresponding to $k\in\Z$. 
\end{thm}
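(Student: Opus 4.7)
The plan is to extend Theorem \ref{indlimpimsnerthm} spectrally: for each $k \in \Z$ separately, identify $\Ei^{(k)}$ with the $k$-th gauge-spectral subspace $\Oi_\GH^{(k)}$ of $\Oi_\GH$, and then assemble these identifications using the decomposition of $\Oi_\GH$ into spectral subspaces induced by the gauge $\Un(1)$-action.

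First, I would establish the analogue of Lemma \ref{indlimasshiftslemma} for the off-diagonal connecting maps: for $X \in \Ei_m^{(k)} = \Bi(\GH_m, \GH_{m+k})$ one has
$$
\iota_{m,l}^{(k)}(X) \;=\; \sum_{|\mathbf{j}|=m+k,\, |\mathbf{r}|=m} \bra e_\mathbf{j}|X e_\mathbf{r}\ket\, S_\mathbf{j}S_\mathbf{r}^*\big|_{\GH_l},
$$
as a direct computation from the definition $\iota_{m,l}^{(k)}(X) = \sum_{|\mathbf{s}|=l-m} R_\mathbf{s} X R_\mathbf{s}^*|_{\GH_l}$ together with the identities $R_\mathbf{s} X R_\mathbf{s}^*|_{\GH_l} = p_{l+k}(X \otimes \bone)p_l$ used in the proof of Lemma \ref{indlimasshiftslemma} and the subproduct compatibility \eqref{subprodprojdom}. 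Each such term is the restriction to $\GH_l$ of an element of the Toeplitz spectral subspace $\Ti_\GH^{(k)}$, since $S_\mathbf{j}S_\mathbf{r}^*$ has gauge-weight $|\mathbf{j}| - |\mathbf{r}| = k$.

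Next, I would generalize Lemma \ref{Toepevconst} to show $\pi^{-1}(\Ei^{(k)}) = \Ti_\GH^{(k)}$. One direction is clear: any normally ordered element of $\Ti_\GH^{(k)}$ is a norm limit of finite linear combinations of monomials $S_\mathbf{j}S_\mathbf{r}^*$ with $|\mathbf{j}| - |\mathbf{r}| = k$, each of which (for sufficiently large $l$) is eventually constant under $\iota^{(k)}_{\bullet,\bullet}$ by Step 1, hence lies in $\pi^{-1}(\Ei^{(k)})$. Conversely, the definition $\Ei^{(k)} = \Bi_\infty \cdot {^{0}\Ei^{(k)}}$ together with Step 1 shows every element of $\pi^{-1}(\Ei^{(k)})$ is approximated by products $AB$ with $A$ eventually constant in $\Ti_\GH^{(0)}$ and $B$ eventually constant in the normally ordered part of $\Ti_\GH^{(k)}$; since $\Ti_\GH^{(0)}$ acts on $\Ti_\GH^{(k)}$ and lands in $\Ti_\GH^{(k)}$, this gives the reverse inclusion (once again concluding from asymptotic multiplicativity, which holds here by the same module-theoretic argument as for the zero-degree case, now applied to $\End_{\Gr(\Ri)}(\Ri_{\geq m}, \Ri_{\geq m+k})$).

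Having identified $\pi^{-1}(\Ei^{(k)}) = \Ti_\GH^{(k)}$, the intersection $\Gamma_0(\Ei^{(k)}_\bullet) \cap \Ti_\GH^{(k)}$ is precisely $\Ki \cap \Ti_\GH^{(k)}$, so passing to the quotient yields $\Ei^{(k)} \cong \Ti_\GH^{(k)} / (\Ki \cap \Ti_\GH^{(k)}) \cong \Oi_\GH^{(k)}$, these isomorphisms being compatible across $k$ because they are all induced by restriction of the single quotient map $\Ti_\GH \to \Oi_\GH$. Since the gauge action on $\Oi_\GH$ gives the decomposition $\Oi_\GH = \overline{\bigoplus_{k \in \Z} \Oi_\GH^{(k)}}^{\|\cdot\|}$, this delivers the claimed direct-sum decomposition, and because the $Z_k$'s generate $\Ei^{(1)}$ and already generate $\Oi_\GH$ as a $C^*$-algebra together with their adjoints (which generate $\Ei^{(-1)}$), the $C^*$-algebra generated by $\Ei^{(1)}$ and $\Bi_\infty$ exhausts $\Oi_\GH$.

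The main obstacle is Step 2, specifically the reverse inclusion $\pi^{-1}(\Ei^{(k)}) \subseteq \Ti_\GH^{(k)}$. One has to be careful that the bimodule structure $\Bi_\infty \cdot {^{0}\Ei^{(k)}}$ does not produce sequences lying outside the normally ordered span of $\Ti_\GH^{(k)}$ — this is exactly the $k$-shifted analogue of the asymptotic-multiplicativity issue handled in Lemma \ref{Toepevconst} via the graded-endomorphism argument of Stenström, and the same argument applied to graded morphisms between $\Ri_{\geq m}$ and $\Ri_{\geq m+k}$ closes the gap.
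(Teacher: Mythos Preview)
Your approach is correct but considerably more elaborate than the paper's. The paper does not redo the Toeplitz-core machinery (Lemma~\ref{Toepevconst} and the Stenstr\"om module argument) at each gauge degree; instead it exploits a single elementary observation: the left and right shifts commute, $R_\mathbf{r}S_j=S_jR_\mathbf{r}$. From this one computes directly
\[
\iota_{m,l}^{(1)}(S_j|_{\GH_m})=\sum_{|\mathbf{r}|=l-m}R_\mathbf{r}S_jR_\mathbf{r}^*\big|_{\GH_l}=S_j\sum_{|\mathbf{r}|=l-m}R_\mathbf{r}R_\mathbf{r}^*\big|_{\GH_l}=S_j|_{\GH_l},
\]
so $S_j|_{\GH_m}$ is already $\iota^{(1)}$-constant and $\varsigma^{(m,1)}(S_j|_{\GH_m})=\pi^{(1)}(S_j)=Z_j$. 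The same computation gives $\varsigma^{(m,k)}(S_\mathbf{k}|_{\GH_m})=Z_\mathbf{k}$ for any word $\mathbf{k}$, and adjoints handle negative $k$. Since the $Z_j$'s generate $\Oi_\GH$, this immediately yields $\Ei^{(k)}\cong\Oi_\GH^{(k)}$ without any further asymptotic-multiplicativity argument.

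Your route---proving $\pi^{-1}(\Ei^{(k)})=\Ti_\GH^{(k)}$ by expanding in matrix coefficients and invoking a graded-morphism analogue of the Stenstr\"om argument---reaches the same conclusion and is more uniform across $k$, but it re-proves machinery that the commutation trick makes unnecessary. The paper's shortcut is available precisely because the spanning operators $S_\mathbf{k}|_{\GH_m}$ for $\Bi(\GH_m,\GH_{m+k})$ are themselves eventually constant, so one never has to worry about whether products of eventually constant sequences remain eventually constant.
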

\begin{proof} The vector space $\Bi(\GH_m,\GH_{m+k})$ has an overcomplete basis given by the operators $S_\mathbf{k}|_{\GH_m}$ for all $\mathbf{k}\in\F_n^+$ with $|\mathbf{k}|=k$. In particular, $\Bi(\GH_m,\GH_{m+1})$ is spanned by $S_j|_{\GH_m}$ for $j=1,\dots,n$. Recalling that $S_j$ is the shift by the basis vector $e_j\in\GH$, we see that
\begin{align*}
\iota_{m,l}^{(k)}(S_j|_{\GH_m})&=\sum_{|\mathbf{r}|=l-m}R_\mathbf{r}S_jR_\mathbf{r}^*\big|_{\GH_l}
\\&=\sum_{|\mathbf{r}|=l-m}S_jR_\mathbf{r}R_\mathbf{r}^*\big|_{\GH_l}=S_j|_{\GH_l}.
\end{align*}
We can identify a sequence $X_\bullet=(X_m)_{m\in\N_0}$ of operators $X_m\in\Bi(\GH_m,\GH_{m+k})$ with an operator on Fock space $\GH_\N$. The effect of the quotient map $\pi^{(k)}$ on such a sequence $X_\bullet$ is to take it to its image in the Calkin algebra $\Bi(\GH_\N)/\Ki$. 
From \eqref{compatibcovsymb} we therefore have (for $m\geq 1$)
$$
\varsigma^{(m,1)}(S_j|_{\GH_m})=\varsigma^{(1,1)}(S_j|_{\GH})=\pi^{(1)}\big((S_j\big|_{\GH_m})_{m\in\N_0}\big)=\pi^{(1)}(S_j)=Z_j,
$$
where $Z_1,\dots,Z_n$ are the generators of $\Oi_\GH$. Similarly one gets that $\varsigma^{(l,k)}(S_\mathbf{k}|_{\GH_m})$ is just $Z_\mathbf{k}$ for all $\mathbf{k}\in\F_n^+$ with $|\mathbf{k}|=k$ and all $l\geq m$. The adjoints $S_\mathbf{k}^*$ define elements of $\Gamma_b(\Ei^{(-m)}_\bullet)$ for $|\mathbf{k}|=m$. So $\Ei^{(k)}\cong\Oi_\GH^{(k)}$ holds for all $k\in\Z$.%, as sets. 

%The maps $\iota_{m,l}^{(k)}:\Ei_m^{(k)}\to \Ei_l^{(k)}$ form an inductive system of sets but not an inductive system of modules. We need to show that the module multiplication 
%$$
%X\cdot A:=XA,\qquad\forall A\in\Bi(\GH_m),\ X\in\Bi(\GH_m,\GH_{m+k})
%$$
%is asymptotically intertwined by the maps $\iota_{m,l}^{(k)}$. Suppose $k>0$. Make the identification $\Bi(\GH_m,\GH_{m+k})=\Bi(\GH_m)\otimes\C^{n_{m+k}}$ and identify $\bigoplus_{m\in\N_0}\Bi(\GH_m)\otimes\C^{n_{m+k}}$ with a closed subspace $\GH_\N^{L^k}$ of the vector-valued Fock space $\GH_\N\otimes\C^{n_k}$. Then $\GH_\N^{L^k}$ is invariant under the diagonal action of $R_1,\dots,R_n$ on $\GH_\N\otimes\C^{n_k}$, and so the projection $P_{L^k}$ onto $\GH_\N^{L^k}$ satisfies (see e.g. \cite[\S4]{Pop7})
%$$
%\sum^n_{k=1}R_kP_{L^k}R_k^*\leq P_{L^k}.
%$$

\end{proof}

\subsection{Formulas for covariant symbols}
Our discussion about inductive limits associated to subproduct system has been based on shift operators on Fock space. We now observe that what we are doing is in fact a generalization of Berezin quantization. First we show that there is a very simple expression for the maps $\varsigma^{(m)}$. 
\begin{thm}\label{twocovsymthm}
Let $Z_1,\dots,Z_n$ be the images of the shifts of the subproduct system $\GH_\bullet$. Then the covariant symbol map $\varsigma^{(m)}:\Bi(\GH_m)\to\Oi_\GH^{(0)}$ defined in \eqref{indlimberezinatX} can be expressed as
$$
\varsigma^{(m)}(A)=\sum_{|\mathbf{j}|=m=|\mathbf{k}|}A_{\mathbf{j},\mathbf{k}}Z_{\mathbf{j}}Z_{\mathbf{k}}^*.
$$
\end{thm}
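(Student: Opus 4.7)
The plan is to combine the three results preceding the statement. First I would recall from Lemma \ref{indlimasshiftslemma} that
$$
\iota_{m,l}(A)=\sum_{|\mathbf{j}|=m=|\mathbf{k}|}A_{\mathbf{j},\mathbf{k}}\,S_{\mathbf{j}}S_{\mathbf{k}}^*\big|_{\GH_l}
$$
for every $l\geq m$. The key observation is that the single operator $T_A:=\sum_{|\mathbf{j}|=m=|\mathbf{k}|}A_{\mathbf{j},\mathbf{k}}\,S_{\mathbf{j}}S_{\mathbf{k}}^*\in\Ti_\GH^{(0)}$, viewed diagonally on Fock space $\GH_\N=\bigoplus_l\GH_l$, has block components $T_A|_{\GH_l}=\iota_{m,l}(A)$ for $l\geq m$ and $T_A|_{\GH_l}=0$ for $l<m$ (since $S_{\mathbf{k}}^*$ annihilates $\GH_l$ when $l<|\mathbf{k}|=m$).

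Thus the sequence defining $\varsigma^{(m)}(A)$, namely the eventually constant sequence $(\iota_{m,l}(A))_{l\geq m}$ (extended by $0$ for $l<m$), is precisely the image of $T_A$ under the natural embedding $\Ti_\GH^{(0)}\hookrightarrow\prod_{l\in\N_0}\Bi(\GH_l)$ from Lemma \ref{Toepevconst}. The modification in finitely many components (the $l<m$ entries) only alters $T_A$ by a compact operator, which lies in $\Gamma_0=\Ki\cap\Ti_\GH^{(0)}$ and therefore vanishes after applying the quotient $\pi:\Ti_\GH^{(0)}\to\Ti_\GH^{(0)}/\Gamma_0$.

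Next I would invoke Theorem \ref{indlimpimsnerthm}, which identifies this quotient with $\Oi_\GH^{(0)}$ and sends each shift $S_j$ to the Cuntz--Pimsner generator $Z_j$. Applying $\pi$ term-by-term to $T_A$ and using linearity and multiplicativity of the quotient map yields
$$
\varsigma^{(m)}(A)=\pi(T_A)=\sum_{|\mathbf{j}|=m=|\mathbf{k}|}A_{\mathbf{j},\mathbf{k}}\,Z_{\mathbf{j}}Z_{\mathbf{k}}^*,
$$
as claimed. There is no serious obstacle here: the theorem is essentially a bookkeeping corollary of Lemmas \ref{indlimasshiftslemma} and \ref{Toepevconst} together with Theorem \ref{indlimpimsnerthm}. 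The only mild care needed is to verify that $T_A$ really lies in $\Ti_\GH^{(0)}\subseteq\Bi(\GH_\N)$ (it is a finite sum of normally ordered monomials of balanced degree) and that replacing $T_A$ by the truncated sequence $(\iota_{m,l}(A))_{l\geq m}$ introduces only a finite-rank, hence compact, discrepancy, so that both representatives descend to the same element of $\Oi_\GH^{(0)}$.
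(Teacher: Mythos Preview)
Your proposal is correct and follows essentially the same route as the paper's proof: both arguments identify the Toeplitz element $T_A=\sum_{|\mathbf{j}|=m=|\mathbf{k}|}A_{\mathbf{j},\mathbf{k}}S_{\mathbf{j}}S_{\mathbf{k}}^*$ with the eventually constant sequence $(\iota_{m,l}(A))_{l\geq m}$ via Lemma~\ref{indlimasshiftslemma}, then apply the quotient map $\pi$ (which sends $S_j\mapsto Z_j$) to obtain the formula. You are slightly more explicit than the paper in naming Lemma~\ref{Toepevconst} and Theorem~\ref{indlimpimsnerthm} as the justification for the quotient identification, but the substance is identical.
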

\begin{proof}
From \eqref{indlimberezinatX} we see that we need to express $\iota_m$ in terms of the Toeplitz operators $S_1,\dots,S_n$; applying $\pi$ transforms these into $Z_1,\dots,Z_n$. But that is easily done using Lemma \ref{indlimasshiftslemma}: the ``second quantization" of $A$,
$$
\sum_{|\mathbf{j}|=m=|\mathbf{k}|}A_{\mathbf{j},\mathbf{k}}S_{\mathbf{j}}S_{\mathbf{k}}^*\in\Bi(\GH_\N),
$$
acts as $\iota_{m,l}(A)$ on $\GH_l$ for $l\geq m$ and as $0$ on $\GH_l$ for $l<m$. Applying the quotient map $\pi$ to it, we obtain
$$
\pi\Big(\sum_{|\mathbf{j}|=m=|\mathbf{k}|}A_{\mathbf{j},\mathbf{k}}S_{\mathbf{j}}S_{\mathbf{k}}^*\Big)=\pi((\iota_{m,l}(A))_{l\geq m})=\varsigma^{(m)}(A),
$$
and on the other hand,
$$
\pi\Big(\sum_{|\mathbf{j}|=m=|\mathbf{k}|}A_{\mathbf{j},\mathbf{k}}S_{\mathbf{j}}S_{\mathbf{k}}^*\Big)=\sum_{|\mathbf{j}|=m=|\mathbf{k}|}A_{\mathbf{j},\mathbf{k}}Z_{\mathbf{j}}Z_{\mathbf{k}}^*.
$$
\end{proof}
\begin{cor} For all $\mathbf{j},\mathbf{k}\in\F_n^+$ with $|\mathbf{j}|=|\mathbf{k}|=m$ and all $l\geq m$ we have
$$
\varsigma^{(l)}(S_\mathbf{j}S_\mathbf{k}^*|_{\GH_l})=Z_{\mathbf{j}}Z_{\mathbf{k}}^*.
$$
\end{cor}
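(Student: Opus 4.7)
The corollary says $\varsigma^{(l)}(S_\mathbf{j}S_\mathbf{k}^*|_{\GH_l}) = Z_\mathbf{j}Z_\mathbf{k}^*$, and the cleanest route is to show first that $S_\mathbf{j}S_\mathbf{k}^*|_{\GH_l}$ is already in the image of $\iota_{m,l}$ applied to $S_\mathbf{j}S_\mathbf{k}^*|_{\GH_m}$, and then to invoke compatibility of the covariant symbol maps. Concretely, the plan is to establish the identity
\begin{equation*}
\iota_{m,l}(S_\mathbf{j}S_\mathbf{k}^*|_{\GH_m}) = S_\mathbf{j}S_\mathbf{k}^*|_{\GH_l}, \qquad l \geq m,
\end{equation*}
and then combine it with \eqref{compatibcovsymb} and Theorem \ref{twocovsymthm} to conclude.

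For the first step, I would use the formula \eqref{flipinduct} of Lemma \ref{indlimasshiftslemma}, which gives
\begin{equation*}
\iota_{m,l}(S_\mathbf{j}S_\mathbf{k}^*|_{\GH_m}) = \sum_{|\mathbf{s}|=l-m} R_\mathbf{s} S_\mathbf{j}S_\mathbf{k}^* R_\mathbf{s}^*\big|_{\GH_l}.
\end{equation*}
Using the commutation relations $R_\mathbf{s}S_\mathbf{j} = S_\mathbf{j}R_\mathbf{s}$ and $R_\mathbf{s}^*S_\mathbf{k}^* = S_\mathbf{k}^*R_\mathbf{s}^*$ (recorded just after \eqref{rectoshift}), I can slide $R_\mathbf{s}$ through $S_\mathbf{j}$ and $R_\mathbf{s}^*$ through $S_\mathbf{k}^*$, so that the sum becomes $S_\mathbf{j}\big(\sum_{|\mathbf{s}|=l-m}R_\mathbf{s}R_\mathbf{s}^*\big|_{\GH_{l-m}}\big)S_\mathbf{k}^*\big|_{\GH_l}$. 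By the resolution of identity derived just before \eqref{Toeplidminusvac}, the middle sum equals $p_{l-m}$, and since $S_\mathbf{k}^*$ already maps $\GH_l$ into $\GH_{l-m}$, this collapses to $S_\mathbf{j}S_\mathbf{k}^*|_{\GH_l}$.

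For the second step, \eqref{compatibcovsymb} yields $\varsigma^{(l)}(S_\mathbf{j}S_\mathbf{k}^*|_{\GH_l}) = \varsigma^{(m)}(S_\mathbf{j}S_\mathbf{k}^*|_{\GH_m})$. Now I would finish by noting that the constant sequence $(S_\mathbf{j}S_\mathbf{k}^*|_{\GH_r})_{r\geq m}$ is the restriction of the single operator $S_\mathbf{j}S_\mathbf{k}^*\in\Ti_\GH$ to the high-$r$ components of Fock space, so its class in $\Gamma_b/\Gamma_0$ is just the image of $S_\mathbf{j}S_\mathbf{k}^*$ under the Toeplitz-to-Cuntz-Pimsner quotient, namely $Z_\mathbf{j}Z_\mathbf{k}^*$. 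Equivalently, one may expand $A = S_\mathbf{j}S_\mathbf{k}^*|_{\GH_m}$ in the overcomplete basis and apply Theorem \ref{twocovsymthm} directly, which after $\pi$ reproduces the same product $Z_\mathbf{j}Z_\mathbf{k}^*$.

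There is no serious obstacle here; the only point requiring care is bookkeeping of the domains of the shift and co-shift operators in the commutation argument of Step 1, to make sure one is legitimately entitled to use $\sum_{|\mathbf{s}|=l-m}R_\mathbf{s}R_\mathbf{s}^* = p_{l-m}$ on the correct subspace after $S_\mathbf{k}^*$ has lowered the degree from $l$ to $l-m$.
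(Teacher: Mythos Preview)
Your proof is correct and is essentially the argument the paper has in mind: the corollary is stated without proof because the identity $\iota_{m,l}(S_\mathbf{j}S_\mathbf{k}^*|_{\GH_m})=S_\mathbf{j}S_\mathbf{k}^*|_{\GH_l}$ is already implicit in Lemma \ref{indlimasshiftslemma} (indeed \eqref{indlimasshifts}, or the formula $S_\mathbf{j}S_\mathbf{k}^*|_{\GH_l}=p_l(|e_\mathbf{j}\rangle\langle e_\mathbf{k}|\otimes p_{l-m})p_l$ in its proof, gives this at once), after which \eqref{compatibcovsymb} and the computation $\pi(S_\mathbf{j}S_\mathbf{k}^*)=Z_\mathbf{j}Z_\mathbf{k}^*$ from the proof of Theorem \ref{twocovsymthm} finish the job. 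Your route via \eqref{flipinduct} and the $R$--$S$ commutation is a perfectly valid alternative derivation of the same Step 1, with the domain bookkeeping you flag indeed being the only delicate point.
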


\begin{Example}\label{commutecovsymbasindlim}
Let $\GH_\bullet$ be a commutative subproduct system and let $M$ be the compact manifold it defines (see Corollary \ref{commutecorkahler}). Then $\varsigma^{(m)}:\Bi(\GH_m)\to\Oi_\GH^{(0)}$ coincides with the Berezin covariant symbol map $\varsigma^{(m)}:\Bi(\GH_m)\to C(M)$ (mentioned in \S\ref{projquantsec}).
\end{Example}

\begin{Notation}\label{unitaryforallHnot}
Fix a faithful representation of $\Oi_\GH$ on a Hilbert space $\Hi$ and let 
$$
u\in\Mn_n(\C)\otimes\Bi(\Hi)
$$
be a unitary $n\times n$ matrix with values $u_{j,k}\in\Bi(\Hi)$ such that the first row of $u$ is given by the generators $Z_1,\dots,Z_n$ of $\Oi_\GH$. Let $u^c$ be the matrix obtained from $u$ by taking adjoints of each entry $u_{j,k}$. Denote by $u_m$ the restriction of $u^{\otimes m}$ from $\GH^{\otimes m}$ to $\GH_m$ and similarly for $u^c$. Finally, 
$$
\alpha^{(m)}:\Bi(\GH_m)\to\Bi(\GH_m)\otimes\Bi(\Hi)
$$
will be the map which takes $A\in\Bi(\GH_m)$ to $u_m(A\otimes\bone)u_m^*$. 
\end{Notation}
The following formulas are known from the classical case to define the ``Berezin covariant symbol" in case we quantize a coadjoint orbit $M=G/K$ (cf. \cite{Per1}, \cite{Lan2}).
\begin{prop}\label{covsymstandardexprposm}
Assume that $u^{\otimes m}$ preserves the subspace $\GH_m\subset\GH^{\otimes m}$, in the sense that $u_m=u^{\otimes m}(p_m\otimes\bone)$. Let $|e_1^{\otimes m}\ket\bra e_1^{\otimes m}|$ be the rank-$1$ projection onto the line spanned by $e^{\otimes m}_1$, where $e_1$ is the first basis vector in $\GH$. Then for all $A\in\Bi(\GH_m)$ we have
\begin{align}
\varsigma^{(m)}(A)&=(\Tr\otimes\id)\big((A\otimes\bone)u_m^{c*}(|e_1^{\otimes m}\ket \bra e_1^{\otimes m}|\otimes\bone)u_m^c\big)\label{firstumforcov}
\\&=(\Tr\otimes\id)\big(\alpha^{(m)}(A)(|e_1^{\otimes m}\ket \bra e_1^{\otimes m}|\otimes\bone)\big).\label{secondumforcov}
\end{align}
\end{prop}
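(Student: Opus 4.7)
The plan is to reduce both identities to matrix elements evaluated at the distinguished vector $e_1^{\otimes m}$ and then to compute each matrix element explicitly using the first-row/first-column structure of the matrices $u$ and $u^c$. The target to match is the explicit formula
\[
\varsigma^{(m)}(A)=\sum_{|\mathbf{j}|=m=|\mathbf{k}|}A_{\mathbf{j},\mathbf{k}}Z_\mathbf{j}Z_\mathbf{k}^*
\]
provided by Theorem \ref{twocovsymthm}, with $A_{\mathbf{j},\mathbf{k}}=\bra e_\mathbf{j}|Ae_\mathbf{k}\ket$. The common pivot is the identity
\[
(\Tr\otimes\id)\bigl(X(P\otimes\bone)\bigr)=(\bra e_1^{\otimes m}|\otimes\bone)\,X\,(|e_1^{\otimes m}\ket\otimes\bone),\qquad P:=|e_1^{\otimes m}\ket\bra e_1^{\otimes m}|,
\]
valid for any $X\in\Bi(\GH_m)\otimes\Bi(\Hi)$. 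I would first establish it by expanding the partial trace in an orthonormal basis $\{f_\alpha\}$ of $\GH_m$ and applying $\sum_\alpha|f_\alpha\ket\bra f_\alpha|=p_m$ together with $p_me_1^{\otimes m}=e_1^{\otimes m}$.

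For \eqref{secondumforcov} the pivot reduces the left-hand side to $(\bra e_1^{\otimes m}|\otimes\bone)u_m(A\otimes\bone)u_m^*(|e_1^{\otimes m}\ket\otimes\bone)$. The main calculation is then of $u_m^*(|e_1^{\otimes m}\ket\otimes\eta)$: using $u_m=u^{\otimes m}(p_m\otimes\bone)$ together with the single-tensor identity $u^*(e_1\otimes\eta)=\sum_ke_k\otimes u_{1,k}^*\eta=\sum_ke_k\otimes Z_k^*\eta$ (recall $Z_k=u_{1,k}$), iteration over $m$ factors yields
\[
u_m^*(|e_1^{\otimes m}\ket\otimes\eta)=\sum_{|\mathbf{k}|=m}p_me_\mathbf{k}\otimes Z_\mathbf{k}^*\eta,\qquad Z_\mathbf{k}^*=Z_{k_m}^*\cdots Z_{k_1}^*.
\]
Expanding $Ap_me_\mathbf{k}=\sum_\mathbf{i}A_{\mathbf{i},\mathbf{k}}e_\mathbf{i}$, applying $u^{\otimes m}$, and extracting the first-row coefficient via $\bra e_1^{\otimes m}|$ (which outputs $u_{1,i_1}\cdots u_{1,i_m}=Z_\mathbf{i}$) collects terms into $\sum_{\mathbf{i},\mathbf{k}}A_{\mathbf{i},\mathbf{k}}Z_\mathbf{i}Z_\mathbf{k}^*$, matching $\varsigma^{(m)}(A)$.

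For \eqref{firstumforcov} I would first use cyclicity of the partial trace with respect to the first-factor operators $(A\otimes\bone)$ and $(P\otimes\bone)$ to rewrite the left-hand side as $(\bra e_1^{\otimes m}|\otimes\bone)u_m^c(A\otimes\bone)u_m^{c*}(|e_1^{\otimes m}\ket\otimes\bone)$. The key computation is $u^{c*}(e_1\otimes\eta)$: since $(u^{c*})_{j,k}=((u^c)_{k,j})^*=u_{k,j}$, the first column of $u^{c*}$ equals the first row of $u$, giving $u^{c*}(e_1\otimes\eta)=\sum_je_j\otimes Z_j\eta$ and hence
\[
u_m^{c*}(|e_1^{\otimes m}\ket\otimes\eta)=\sum_{|\mathbf{j}|=m}p_me_\mathbf{j}\otimes Z_\mathbf{j}\eta.
\]
Substituting and applying $(A\otimes\bone)$, then $u_m^c$ (whose first-row entries $(u^c)_{1,k}=Z_k^*$ furnish the $Z_\mathbf{k}^*$ factors), then $\bra e_1^{\otimes m}|$, again assembles $\sum_{\mathbf{j},\mathbf{k}}A_{\mathbf{j},\mathbf{k}}Z_\mathbf{j}Z_\mathbf{k}^*=\varsigma^{(m)}(A)$.

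The hard part will be careful index bookkeeping: distinguishing first-row from first-column entries across the four matrices $u$, $u^*$, $u^c$, $u^{c*}$, and tracking the ordering reversal in the adjoint of a product, $(Z_{k_1}\cdots Z_{k_m})^*=Z_{k_m}^*\cdots Z_{k_1}^*$, as well as the related distinction between $((u^c)^{\otimes m})^*$ and $(u^{c*})^{\otimes m}$. In the commutative coadjoint-orbit setting motivating the statement these ambiguities are invisible because matrix entries commute; in the general subproduct setting one must appeal to the normal-ordering structure of Lemma \ref{Lemmanormorder} to identify the expressions produced by the two computations. Once these identifications are in place, the rest is routine matrix-entry manipulation pivoting on the partial-trace identity above.
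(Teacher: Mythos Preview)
Your treatment of \eqref{secondumforcov} is correct and is essentially a spelled-out version of the paper's argument: both reduce to evaluating $u_m(A\otimes\bone)u_m^*$ at the vector $e_1^{\otimes m}$ and reading off first-row entries.

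For \eqref{firstumforcov}, however, the cyclicity step does not work. The only cyclicity available for the partial trace is $(\Tr\otimes\id)\bigl((B\otimes\bone)X\bigr)=(\Tr\otimes\id)\bigl(X(B\otimes\bone)\bigr)$ for $B\in\Bi(\GH_m)$; you cannot cycle $u_m^c$ or $u_m^{c*}$, since these have nontrivial second leg. Concretely, your proposed rearrangement is the assertion
\[
(\Tr\otimes\id)\bigl((A\otimes\bone)u_m^{c*}(P\otimes\bone)u_m^c\bigr)
\;=\;
(\Tr\otimes\id)\bigl(u_m^c(A\otimes\bone)u_m^{c*}(P\otimes\bone)\bigr),
\]
and expanding both sides in an orthonormal basis $\{f_\alpha\}$ of $\GH_m$ shows that the left-hand side is $\sum_\alpha a_\alpha b_\alpha$ while the right-hand side is $\sum_\alpha b_\alpha a_\alpha$, with $a_\alpha,b_\alpha\in\Bi(\Hi)$ noncommuting in general. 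Indeed the first expression is normally ordered in the $Z_\mathbf{j}$'s while the second is anti-normally ordered, so they genuinely differ; Lemma~\ref{Lemmanormorder} concerns $S_\mathbf{j}$ versus $S_\mathbf{k}^*$ ordering in $\Ti_\GH$ and does not repair this.

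The fix is to avoid cyclicity altogether and follow the paper: first expand the conjugated rank-one projection directly as
\[
u_m^{c*}(|e_1^{\otimes m}\ket\bra e_1^{\otimes m}|\otimes\bone)u_m^c
=\sum_{|\mathbf{j}|=m=|\mathbf{k}|}S_\mathbf{j}S_\mathbf{k}^*\big|_{\GH_m}\otimes Z_\mathbf{k}Z_\mathbf{j}^*,
\]
using exactly the single-leg calculation $u^{c*}(e_1\otimes\eta)=\sum_k e_k\otimes Z_k\eta$ you already did, together with its adjoint. Then multiply by $(A\otimes\bone)$ and take $(\Tr\otimes\id)$; the trace picks off $\Tr(A\,S_\mathbf{j}S_\mathbf{k}^*|_{\GH_m})=A_{\mathbf{k},\mathbf{j}}$ and one lands on $\varsigma^{(m)}(A)$. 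Your caution about the distinction between $((u^c)^{\otimes m})^*$ and $(u^{c*})^{\otimes m}$ is well placed and is precisely what one must track in this direct computation.
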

\begin{proof} We have
\begin{align*}
u_m^{c*}(|e_1^{\otimes m}\ket \bra e_1^{\otimes m}|\otimes\bone)u_m^c&=\sum_{|\mathbf{j}|=m=|\mathbf{k}|}S_\mathbf{j}S_\mathbf{k}^*|_{\GH_m}\otimes Z_\mathbf{k}Z_\mathbf{j}^*
\end{align*}
so \eqref{firstumforcov} is clear. For \eqref{secondumforcov} we can use the formula 
\begin{align*}
\alpha^{(m)}(A)&=\sum_{|\mathbf{j}|=m=|\mathbf{k}|}A_{\mathbf{r},\mathbf{s}}S_\mathbf{j}S_\mathbf{k}^*\big|_{\GH_m}\otimes u_{\mathbf{j},\mathbf{r}}u_{\mathbf{k},\mathbf{s}}^*.
\end{align*}
\end{proof}

\subsection{Commutative case and the Arveson conjecture}
One of the most striking applications of our results is the Arveson conjecture (see Remark \ref{Arvvconjrem}). %The validity of this conjecture has been proven by other means in some cases \cite{DoWa2}, \cite{EnEs1}. 
\begin{cor}\label{Arvvconjcor} Arveson's conjecture holds for all homogeneous ideals $\Ii\subset\C[z_1,\dots,z_n]$, 
i.e. the Cuntz--Pimsner algebra $\Oi_\GH$ of the subproduct system $\GH_\bullet$ associated to $\Ii$ (as in Lemma \ref{idealsublemma}) is commutative. 
\end{cor}
\begin{proof}
Lemma \ref{Lemmanormorder} together with \cite[Prop. 4.14]{KeSh1} gives the result. 
\end{proof}

In \cite{Vas1}, \cite{Vas3} it was shown that for any continuous line bundle $L\to M$, the Cuntz--Pimsner algebra $\Oi_\Ei$ (defined in \cite{Pims1}) of the Hilbert $C(M)$-bimodule $\Ei$ of continuous sections of $L$ is isomorphic to the $C^*$-algebra $C(\Sb_L)$ of continuous functions on the total space of the circle bundle $\Sb_L$ associated to $L^*$. Recall that in the definition of $\Oi_\Ei$ (which is Pimsner's original one) the tensor products are taken over the coefficient algebra $C(M)$. We shall now see that, in the case $(M,L)$ is a polarized manifold, from a projectively induced quantization we can also obtain $C(\Sb_M):=C(\Sb_L)$ as the Cuntz--Pimsner algebra $\Oi_\GH$ of the associated subproduct system $\GH_\bullet$. %If we let $\Ei$ be as in  vector space $\GH_m$ is the ``holomorphic part" of the vector space $\Ei^{\otimes m}$ (where $\otimes:=\otimes_{C(M)}$).

\begin{prop}\label{propcommutecircle}
Let $(M,L)$ be a polarized (not necessarily smooth) variety and let $\GH_\bullet$ be a projectively induced quantization of $(M,L)$ (the definition still makes sense in the non-smooth case), and endow $M$ with the complex (Hausdorff) topology \cite[§2]{Serre1}. Then
\begin{equation}\label{circlebundlecontfunc}
C(\Sb_M)\cong\Oi_\GH,
\end{equation}
and for all $k\in\Z$,
$$
\Gamma(M;L^{\otimes k})\cong\Oi^{(k)}_\GH
$$  
as a Hilbert $C(M)$-bimodule. In particular, $k=0$ gives $C(M)$ as an inductive limit.
\end{prop}
\begin{proof} As in the general noncommutative case, the commutative algebra $\Oi_\GH$ is built up from Hilbert modules over $\Oi_\GH^{(0)}$ and the latter is generated by the images of the covariant symbol maps $\varsigma^{(m)}$. An argument given in \cite[\S4]{CGR1} shows that the $\varsigma^{(m)}(A)$'s (for all $m\in\N_0$ and all $A\in\Bi_m$) separate points. 
%Choose linearly independent sections $e_1,\dots,e_{n_m}$ giving an orthonormal basis for $\GH_m$. Then $K:=\sum_ke_k\otimes\overline{e}_k$ is a positive-definite kernel with associated reproducing kernel space equal to $\GH_m$ (see e.g. \cite{BeHi1}). Let $|x\ket:=K_x$ denote the associated coherent states. The adjoint $\varsigma^{(m)}:\Bi(\GH_m)\to C(M)$ of $\breve{\varsigma}^{(m)}$ can in this setting be expressed as $\varsigma^{(m)}(A)(x)=\bra x|A|x\ket/\bra x|x\ket$ for $x\in M$ and $A\in\Bi(\GH_m)$. An argument given in \cite[\S4]{CGR1} shows that the $\varsigma^{(m)}(A)$'s (for all $m\in\N_0$ and all $A\in\Bi_m$) separate points. 
The Stone--Weierstrass theorem gives that they form a dense subalgebra of $C(M)$. The inclusion of $\GH_m$ into $\GH_{m+1}$ coincide by construction with our $\iota_{m,m+1}$, and the supremum norm on $C(M)$ is seen to coincide with the norm on the inductive limit $\Bi_\infty$. Therefore, $C(M)\cong\Bi_\infty$. The result now follows from Theorem \ref{indlimpimsnerthm} and the well-know decomposition of $C(\Sb_M)$ into the $\Gamma(M;L^{\otimes m})$'s (see §\ref{circbundsec}). 

\end{proof}
 In fact, given Corollary \ref{Arvvconjcor} we get Proposition \ref{propcommutecircle} from the calculation of the space of multiplicative $\Un(1)$-valued functionals on $\Oi_\GH$ done in \cite[Prop. 2.4]{KeSh1}. Indeed, the circle bundle $\Sb_L$ can be identified with the preimage of $M$ under the map $\C^n\setminus\{0\}\to\C\Pb^{n-1}$ which defined projective space, and this is the boundary of the analytic variety discussed \cite{KeSh1}. 

Note that it is not so obvious that we could recover $M$ completely (as a topological space) from $\GH_\bullet$ because the homogeneous coordinate ring $\bigoplus_{m\in\N_0}H^0(M;L^{\otimes m})$ of $M\subset\Pb[\GH^*]$ is not a ring of functions on $M$. The choice of basis on $H^0(M;L^{\otimes m})$ corresponds to a choice of algebra structure on the ring $C(M)$ and the inner product on $H^0(M;L^{\otimes m})$ to a choice of $C^*$-algebra structure on $C(M)$, but all possible $C^*$-algebra structures obtain in this way are isomorphic. 

In the following we use the terminology from Lemma \ref{CYlemma}.
\begin{cor}\label{limitvolumecor}
Let $(M,L)$ be a polarized manifold and let $\GH_\bullet$ be a projectively induced quantization of $(M,L)$. Then the Fubini--Study metric $\FS(\bra\cdot|\cdot\ket)$ on $L$ associated with the inner product on $\GH$ coincides with the $*$-operation which defines the $C^*$-algebra $\Oi_\GH$, and is thus equal to the inductive limit of the Hermitian pairings
$$
\Bi(\GH_{m+1},\GH_m)\times\Bi(\GH_{m+1},\GH_m)\to\Bi(\GH_m),\qquad (A,B)\to A^*B.
$$
%he limit state on $C(M):=\Bi_\infty$ coincides with the unique volume form on $M$ which balances the inner product on $\GH$. 
Consequently, $(M,L)$ is balanced if and only if the limit state $\omega$ coincides with the the unique $\FS(\bra\cdot|\cdot\ket)$-balancing state. If $M=G/K$ is a coadjoint orbit one also has (using Notation \ref{freeunitsemnot}) for all $\mathbf{j},\mathbf{k}\in\F_n^+$ with $|\mathbf{j}|=m=|\mathbf{k}|$ that
\begin{equation}\label{regularlimstate}
\frac{1}{\vol(M,L)}\int_MZ_\mathbf{j}^*Z_\mathbf{k}\frac{\omega^d}{d!}=\frac{p_{\mathbf{j},\mathbf{k}}}{\Tr(p_m)},
\end{equation}
where $p_m:\GH^{\otimes m}\to\GH_m$ is the orthogonal projection and $\GH_\bullet\subset \GH^{\vee \bullet}$ is the subproduct system of $(M,L,\bra\cdot|\cdot\ket)$ and we denote by $\omega$ also the Fubini--Study Kähler form on $M\subset\C\Pb^{n-1}$. 
\end{cor}
\begin{proof}
Recall that the generators $Z_1,\dots,Z_n$ of $\Oi_\GH$ satisfy the relation of the ideal which defines $M$, so they can be identified with the homogeneous coordinates on $M$. Recall also that $\sum^n_{k=1}Z_kZ_k^*=\bone$, which says that the adjoint operation on the operator system in $\Oi_\GH$ spanned by $Z_1,\dots,Z_n$ is the Fubini--Study metric $h$ on $L$. Since products of $m$ of the generators $Z_1,\dots,Z_n$ identify with elements of $H^0(M;L^m)$ and since the $*$-operation on $\Oi_\GH$ is given by $(Z_{k_1}\cdots Z_{k_m})^*=Z_{k_m}^*\cdots Z_{k_1}^*$, we get that it is induced by the tensor-product metric $h^m$ on $H^0(M;L^m)$ for all $m\in\N$.  

If $M=G/K$ is a coadjoint orbit then the limit state $\omega=\omega_{p_1}:C(M)\to\C$ satisfies $\omega(Z_j^*Z_k)=\delta_{j,k}/n$. The formula for $\omega$ on the products $Z_\mathbf{j}^*Z_\mathbf{k}$ then gives \eqref{regularlimstate}. %if we recall that (by Hirzebruch--Riemann--Roch)
%$$
%\vol(M,L)=\lim_{m\to\infty}\frac{\Tr(p_m)}{m^d}.
%$$
\end{proof}
%As mentioned, the existence of the $\bra\cdot|\cdot\ket$-balancing volume form $\omega_{p_1}$ is related to the Calabi--Yau theorem, and it was previously known that $\omega_{p_1}$ can be obtained using finite-dimensional approximation \cite{CaKe1}, \cite[\S2.2]{Don3}. This procedure comes out automatically in a slightly different guise in our approach using inductive limits; the limit state on $\Bi_\infty$ is a generalization of the volume form associated with the Calabi--Yau metric. 

%By \cite{Zeld1}, the Hermitian metric $h$ on $L$ used to define the inner product on $\GH$ is recovered the $C^\infty$-topology as the $m\to\infty$ limit of the pullbacks of the Fubini--Study metrics via the Kodaira embeddings. In a rather different fashion, Corollary \ref{limitvolumecor} gives the Fubini--Study Hermitian metric $\text{FS}(\bra\cdot|\cdot\ket)$ on $L$ as a limit of the inner products of the $\GH_m$'s. That is, we obtain a Hermitian metric on $L$ from a sequence of matrix-valued \emph{inner products} on Hilbert modules over finite-dimensional matrix algebra. These two approximation results are rather similar though since the space of pullback metrics with respect to a Kodaira embeddings for $H^0(M;L^m)$ identifies with the space of inner products on $H^0(M;L^m)$. 

\section{Projective limits}\label{projlimsec}
We now want to realize the same algebra $\Oi_\GH^{(0)}$ as a projective limit. For this we need some background information from \cite[§B2]{Hawk1}. 
\subsection{Relaxing the notion of projective limit}
In this paper, a ``projective limit" will always refer to the following object which, in comparison to more conventional $C^*$-algebraic projective limits, is defined in terms of completely positive maps instead of $C^*$-homomorphisms. 
\begin{dfn}[{\cite[§B2]{Hawk1}}]
A \textbf{projective system} $(\Bi_\bullet,\jmath_{\bullet,\bullet})$ is a sequence of finite-dimensional matrix algebras $\Bi_m$ and norm-contracting completely positive mappings $\jmath_{l,m}:\Bi_l\to\Bi_m$ for $m\leq l$ satisfying $\jmath_{l,m}=\jmath_{r,m}\circ\jmath_{l,r}$ for all $m\leq r\leq l$. The \textbf{projective limit} of $(\Bi_\bullet,\jmath_{\bullet,\bullet})$ is the vector space defined by
$$
\Bi^\infty:=\{A_\bullet=(A_m)_{m\in\N_0}\in \Gamma_b(\Bi_\bullet)|\ A_{m-1}=\jmath_{m,m-1}(A_m)\textnormal{ for all }m\in\N\},
$$
equipped with the norm
$$
\|A_\bullet\|:=\lim_{m\to\infty}\|A_m\|.
$$
\end{dfn}
\begin{Remark} The intersection of $\Bi^\infty$ with $\Gamma_0$ is $\{0\}$. We always identify $\Bi^\infty$ with its embedding into $\Gamma_b(\Bi_\bullet)/\Gamma_0(\Bi_\bullet)$ because it is more likely that $\Bi^\infty$ is an algebra when multiplication is taken modulo $\Gamma_0$. If we do so and then pull back $\Bi^\infty$ via the quotient map $\pi:\Gamma_b\to \Gamma_b/\Gamma_0$, we obtain a vector space $\pi^{-1}(\Bi^\infty)$ which is much larger than $\Bi^\infty$, namely
\begin{equation}\label{pullbackprojlim}
\pi^{-1}(\Bi^\infty)=\Bi^\infty\cup\Gamma_0(\Bi_\bullet).
\end{equation}
Importantly, $\Bi^\infty$ is an algebra (hence a $C^*$-algebra) if and only if \eqref{pullbackprojlim} is. 
\end{Remark}
\begin{Remark}\label{projlimremarktwo}
We could also define $\Bi^\infty$ as the set of elements 
$$
f=\pi\big((\jmath_{\infty,m}(f))_{m\in\N_0}\big)
$$
where the components $\jmath_{\infty,m}(f)\in\Bi_m$ satisfy $\jmath_{\infty,m}(f)=\jmath_{l,m}\circ\jmath_{\infty,l}(f)$. We can regard $\jmath_{\infty,m}:=\lim_{l\to\infty}\jmath_{l,m}$ as the map from $\Bi^\infty$ to $\Bi_m$ which evaluates $A_\bullet=(A_m)_{m\in\N_0}\in\Bi^\infty$ at $m$,
$$
\jmath_{\infty,m}(A_\bullet)=A_m.
$$
\end{Remark}

\subsection{Changing the inner products}\label{notationsec}

From now on $Q\in\Bi(\GH)$ is a positive invertible $n\times n$ matrix and let
$$
Q_m:=p_mQ^{\otimes m}|_{\GH_m}
$$
be the compression of $Q^{\otimes m}\in\Bi(\GH^{\otimes m})$ to the subspace $\GH_m=p_m\GH^{\otimes m}$. 
We choose the orthonormal basis $e_1,\dots,e_n$ for $\GH_1$ such that $Q$ is diagonal and, as before, we let $S_1,\dots,S_n$ be the shifts on $\GH_\N$ by these basis vectors. We shall write
$$
Q_{\mathbf{j},\mathbf{k}}:=(Q_m)_{\mathbf{j},\mathbf{k}},\qquad Q^{\mathbf{j},\mathbf{k}}:=(Q^{-1}_m)_{\mathbf{j},\mathbf{k}}.
$$
We associate to each $Q_m$ a density matrix
$$
\rho^{(m)}=\rho_Q^{(m)}:=\frac{Q_m}{\Tr(Q_m)},
$$
and denote by $\phi_m$ the state on $\Bi(\GH_m)$ defined by
$$
\phi_m(A):=\Tr(\rho^{(m)}A),\qquad\forall A\in\Bi(\GH_m).
$$
Sometimes it will be useful to change the inner product on $\GH_m$ to
\begin{equation}\label{rhoQinnerprod}
\bra\psi|\phi\ket_{\rho^{(m)}}:=\bra\psi|\rho^{(m)}\phi\ket,\qquad \forall \phi,\psi\in\GH_m.
\end{equation}
We stress that (unless $\GH_\bullet=\GH^{\otimes\bullet}$)
$$
\rho^{(m)}\ne \frac{Q_m}{\Tr(Q)^m}=p_m(\rho^{(1)})^{\otimes m}p_m.
$$
\begin{Ass}\label{Qnotat} From now on $Q\in\Bi(\GH)$ is a positive invertible $n\times n$ matrix such that 
\begin{enumerate}[(i)]
\item{the operator $Q^{\otimes m}$ on $\GH^{\otimes m}$ preserves the subspace $\GH_m$ for all $m\in\N_0$, i.e. $Q_m=Q^{\otimes m}|_{\GH_m}$, and}
\item{$\iota_{m,l}$ preserves the $Q$-traces, i.e.
$$
\phi_l\circ\iota_{m,l}=\phi_m,
$$
for all $m\leq l\in\N$.}
\end{enumerate}
\end{Ass}
Property (i) allows the subproduct condition to be maintained with the new inner products $\bra\cdot|\cdot\ket_{\rho^{(m)}}$ (see Proposition \ref{isomprop} below) while property (ii) allows the construction of a state on $\Oi_\GH$ with very nice properties. We will show that in the examples of subproduct systems coming from compact quantum groups there is always a matrix $Q$ which satisfies these assumptions (i) and (ii). It is possible to drop either (or both) of the assumptions (i) and (ii) and many of the constructions in the next section will carry over; in particular there will be a limit state but with weaker quantization properties. We shall elaborate on this slightly in the commutative case, where assumptions (i) and (ii) hold with $Q_m=p_m$ for all $m\in\N$ precisely when $\GH_\bullet$ is a regular quantization (in the sense of Remark \ref{regularremark}). 

\begin{Remark}\label{Qfactsrem}
The property $p_mQ^{\otimes m}p_m=Q^{\otimes m}p_m$ ensures that $Q_m$ is invertible; its inverse is $p_m(Q^{\otimes m})^{-1}p_m$ because
$$
(Q^{\otimes m})^{-1}p_mQ^{\otimes m}p_m=(Q^{\otimes m})^{-1}Q^{\otimes m}p_m=p_m,
$$
$$
Q^{\otimes m}p_m(Q^{\otimes m})^{-1}p_m=Q^{\otimes m}(Q^{\otimes m})^{-1}p_m=p_m,
$$
where we used the fact that the inverse $A^{-1}$ of any invertible matrix $A$ preserves every $A$-invariant subspace. 
We denote by $Q_m^{-1}$ this inverse of $Q_m$. For $l\geq m$ we have
$$
Q_l^{-1}(Q_m\otimes Q_{l-m})=(Q^{\otimes l})^{-1}((Q^{\otimes m})\otimes (Q^{\otimes (l-m)}))p_l=p_l,
$$
where we regard $p_l$ as an operator from $\GH_m\otimes\GH_{l-m}$ onto $\GH_l$. 
\end{Remark}

\subsection{The isometries}
Now that $\GH_m$ is endowed with the inner product \eqref{rhoQinnerprod} we discuss how $\Bi(\GH_m)$ can be mapped into $\Bi(\GH_l)$ when $m<l$ and calculate the explicit isometries $\GH_l\hookrightarrow\GH_{l-m}\otimes\GH_m$ and $\GH_l\hookrightarrow\GH_m\otimes\GH_{l-m}$. This construction would fail without the assumption that $Q^{\otimes m}$ preserves $\GH_m$. 

\begin{prop}\label{isomprop}
The isometry from $\GH_l$ into $\GH_{l-m}\otimes\GH_m$ is given by
\begin{equation}\label{isomfromGHlinto}
\bar{V}_{m,l}\psi=\sqrt{\frac{\Tr(Q_m)\Tr(Q_{l-m})}{\Tr(Q_l)}}\sum_{|\mathbf{r}|=l-m}p_{l-r}e_\mathbf{r}\otimes S_\mathbf{r}^*\psi,\qquad \forall \psi\in\GH_l,
\end{equation}
and its adjoint by
\begin{equation}\label{adjointofisomfromGHlinto}
\bar{V}_{m,l}^*=\sqrt{\frac{\Tr(Q_m)\Tr(Q_{l-m})}{\Tr(Q_l)}}(\rho^{(l)})^{-1}(\rho^{(l-m)}\otimes \rho^{(m)}).
\end{equation}
\end{prop}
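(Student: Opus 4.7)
The plan is to observe that the apparently complicated formula \eqref{isomfromGHlinto} reduces to scaling the natural inclusion $\GH_l\hookrightarrow\GH_{l-m}\otimes\GH_m$ by the constant $c:=\sqrt{\Tr(Q_m)\Tr(Q_{l-m})/\Tr(Q_l)}$, and that this constant is precisely what is required to convert this inclusion into an isometry once the weighted inner products \eqref{rhoQinnerprod} are in force. The first step is the simplification: for $\psi\in\GH_l$, the inclusion $\GH_l\subset\GH^{\otimes(l-m)}\otimes\GH_m$ given by \eqref{subprodcondalt} combined with expansion in the standard basis of the first $l-m$ tensor factors yields $\psi=\sum_{|\mathbf{r}|=l-m}e_\mathbf{r}\otimes S_\mathbf{r}^*\psi$, where $S_\mathbf{r}^*$ is the standard (unweighted) backward shift on $\GH^{\otimes\N}$. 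Applying $p_l$ then returns $\psi$, so $\bar V_{m,l}\psi=c\,\psi$ viewed as an element of $\GH_{l-m}\otimes\GH_m$.

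The second step is to compare the two inner products on $\GH_l$ — its intrinsic weighted one and the one it inherits from $\GH_{l-m}\otimes\GH_m$. Using $Q^{\otimes l}p_l=Q_l$, $Q^{\otimes(l-m)}\otimes Q^{\otimes m}=Q^{\otimes l}$, and $(p_{l-m}\otimes p_m)\psi=\psi$ for $\psi\in\GH_l$ (a consequence of \eqref{subprodprojdom}), one gets
\begin{equation*}
\bra\psi|\phi\ket_{\rho^{(l-m)}\otimes\rho^{(m)}}=\frac{\Tr(Q_l)}{\Tr(Q_{l-m})\Tr(Q_m)}\bra\psi|\phi\ket_{\rho^{(l)}}=c^{-2}\bra\psi|\phi\ket_{\rho^{(l)}}
\end{equation*}
for $\psi,\phi\in\GH_l$. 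Scaling the inclusion by $c$ therefore produces an isometry, confirming \eqref{isomfromGHlinto}.

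For \eqref{adjointofisomfromGHlinto}, I interpret $(\rho^{(l)})^{-1}$, following Remark \ref{Qfactsrem}, as the operator $\Tr(Q_l)p_l(Q^{\otimes l})^{-1}$ defined on all of $\GH^{\otimes l}$ and taking values in $\GH_l$. Using $(Q^{\otimes l})^{-1}(Q_{l-m}\otimes Q_m)\xi=(p_{l-m}\otimes p_m)\xi=\xi$ for $\xi\in\GH_{l-m}\otimes\GH_m$ one finds $c(\rho^{(l)})^{-1}(\rho^{(l-m)}\otimes\rho^{(m)})\xi=c^{-1}p_l\xi$. From this the pairing identity $\bra\bar V_{m,l}\psi,\xi\ket_{\rho^{(l-m)}\otimes\rho^{(m)}}=\bra\psi,\bar V_{m,l}^*\xi\ket_{\rho^{(l)}}$ is verified directly for all $\psi\in\GH_l$, $\xi\in\GH_{l-m}\otimes\GH_m$, confirming that the displayed expression is indeed the adjoint; as a sanity check, $\bar V_{m,l}^*\bar V_{m,l}$ equals $p_l$ on $\GH_l$, re-establishing the isometry property.

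The main technical hurdle is notational rather than substantive. In \eqref{isomfromGHlinto}, the symbol $S_\mathbf{r}^*$ must be read as the standard backward shift on full Fock space rather than as the adjoint of $S_\mathbf{r}$ with respect to the weighted inner product $\bra\cdot|\cdot\ket_{\rho^{(\bullet)}}$; otherwise the clean basis expansion in step one fails and extraneous $Q$-dependent factors intrude. In \eqref{adjointofisomfromGHlinto}, one must extend $(\rho^{(l)})^{-1}$ beyond $\GH_l$ canonically, via Remark \ref{Qfactsrem}, to make sense of the composition with $(\rho^{(l-m)}\otimes\rho^{(m)})$, whose image generally lies outside $\GH_l$. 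Once these conventions are pinned down, the remaining arguments are purely algebraic manipulations of the identities collected in Remark \ref{Qfactsrem}.
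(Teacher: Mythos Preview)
Your proof is correct. The approach is close in spirit to the paper's but organized more transparently: you begin by observing that $\sum_{|\mathbf{r}|=l-m}p_l(e_\mathbf{r}\otimes S_\mathbf{r}^*\psi)=\psi$, so that $\bar V_{m,l}$ is simply $c$ times the inclusion $\GH_l\hookrightarrow\GH_{l-m}\otimes\GH_m$, and then the isometry property and the adjoint formula both reduce to comparing the $\rho^{(l)}$- and $\rho^{(l-m)}\otimes\rho^{(m)}$-inner products on $\GH_l$. The paper instead keeps the shift-operator expression throughout: it first computes $\bar V_{m,l}^*$ by unwinding the pairing $\bra\xi\otimes\eta|\bar V_{m,l}\psi\ket_{\rho^{(l-m)}\otimes\rho^{(m)}}$ (using $\sum_\mathbf{r}S_\mathbf{r}S_\mathbf{r}^*=p_l$ only implicitly), obtains \eqref{adjointofisomfromGHlinto}, and then checks $\bar V_{m,l}^*\bar V_{m,l}=\id_{\GH_l}$ by a second direct calculation. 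Both routes rest on the same identities from Remark~\ref{Qfactsrem}; your up-front simplification makes the role of the constant $c$ and the reason the formula yields an isometry more immediately visible, while the paper's version has the minor advantage of never needing to spell out the extension of $(\rho^{(l)})^{-1}$ off $\GH_l$. Your caution about reading $S_\mathbf{r}^*$ as the standard (unweighted) backward shift is well placed and consistent with the paper's usage.
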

\begin{proof} We proceed by first calculating the adjoint of the given operator \eqref{isomfromGHlinto}. Let $\lambda_{m,l}:=\sqrt{\frac{\Tr(Q_m)\Tr(Q_{l-m})}{\Tr(Q_l)}}$. For all $\xi\in\GH_{l-m},\eta\in\GH_m$ and all $\psi\in\GH_l$ we have
\begin{align*}
\bra \bar{V}_{m,l}^*(\xi\otimes\eta)|\psi\ket_{\rho^{(l)}}&=\bra\xi\otimes\eta|\bar{V}_{m,l}\psi\ket_{\rho^{(l-m)}\otimes\rho^{(m)}}
\\&=\lambda_{m,l}\sum_{|\mathbf{r}|=l-m}\bra \xi\otimes\eta|p_l(e_\mathbf{r}\otimes S_\mathbf{r}^*\psi)\ket_{\rho^{(l-m)}\otimes\rho^{(m)}}
\\&=\lambda_{m,l}\sum_{|\mathbf{r}|=l-m}\bra \xi\otimes\eta|S_\mathbf{r}S_\mathbf{r}^*\psi\ket_{\rho^{(l-m)}\otimes\rho^{(m)}}
\\&=\lambda_{m,l}\bra (\rho^{(l)})^{-1}(\rho^{(l-m)}\xi\otimes\rho^{(m)}\eta)|\psi\ket_{\rho^{(l)}}
\end{align*}
and hence \eqref{adjointofisomfromGHlinto} holds. Finally, using Remark \ref{Qfactsrem},
\begin{align*}
\bar{V}_{m,l}^*\bar{V}_{m,l}\psi&=\frac{\Tr(Q_m)\Tr(Q_{l-m})}{\Tr(Q_l)}\sum_{|\mathbf{r}|=l-m}(\rho^{(l)})^{-1}(\rho^{(l-m)}e_\mathbf{r}\otimes \rho^{(m)}S_\mathbf{r}^*\psi)
\\&=\sum_{|\mathbf{r}|=l-m}Q^{-1}_l(Q_{l-m}e_\mathbf{r}\otimes Q_mS_\mathbf{r}^*\psi)
=\sum_{|\mathbf{r}|=l-m}p_l(e_\mathbf{r}\otimes S_\mathbf{r}^*\psi)
\\&=\sum_{|\mathbf{r}|=l-m}S_\mathbf{r}S_\mathbf{r}^*\psi=\psi,
\end{align*}
so $V$ is the desired isometry. Let us also calculate the final projection:
\begin{align*}
\bar{V}_{m,l}\bar{V}_{m,l}^*&=\sum_{|\mathbf{r}|=l-m}S_\mathbf{r}S_\mathbf{r}^*Q_l^{-1}(Q_{l-m}\otimes Q_m)
=p_l(p_{l-m}\otimes p_m).
\end{align*}
\end{proof}
Now let $A\in\Bi(\GH_l)$ and define
\begin{equation}\label{bariotaandisom}
\bar{\iota}_{m,l}(A):=\bar{V}_{m,l}^*(\bone_{\GH_{l-m}}\otimes A)\bar{V}_{m,l}.
\end{equation}
We have
\begin{align*}
\bar{V}_{m,l}^*(\bone_{\GH_{l-m}}\otimes A)\bar{V}_{m,l}\psi&=\frac{\Tr(Q_m)\Tr(Q_{l-m})}{\Tr(Q_l)}\sum_{|\mathbf{r}|=l-m}(\rho^{(l)})^{-1}(\rho^{(l-m)}e_\mathbf{r}\otimes \rho^{(m)}AS_\mathbf{r}^*\psi)
\\&=\sum_{|\mathbf{r}|=l-m}Q^{-1}_l(Q_{l-m}e_\mathbf{r}\otimes Q_mAS_\mathbf{r}^*\psi)
\\&=\sum_{|\mathbf{r}|=l-m}p_l(e_\mathbf{r}\otimes AS_\mathbf{r}^*\psi)
\\&=\sum_{|\mathbf{r}|=l-m}S_\mathbf{r}AS_\mathbf{r}^*\psi,
\end{align*}
so $\bar{\iota}_{m,l}$ is a ``chirality-flipped" version of $\iota_{m,l}$, i.e. the $R_r$'s are replaced by $S_j$'s (cf. \eqref{flipinduct}). If we use the inductive system $\bar{\iota}_{\bullet,\bullet}$ instead of $\iota_{\bullet,\bullet}$, the roles of the left and right Toeplitz algebras are interchanged. 
\begin{prop} Define a coherent system of maps $\bar{\iota}_{m,l}:\Bi(\GH_m)\to\Bi(\GH_l)$ by
\begin{align}
\bar{\iota}_{m,l}(A)&:=\bar{V}_{m,l}^*(\bone_{\GH_{l-m}}\otimes A)\bar{V}_{m,l}
\\&=\sum_{|\mathbf{s}|=l-m}S_\mathbf{s}AS_\mathbf{s}^*\big|_{\GH_{l}}.
\end{align}
Then $(\Bi(\GH_\bullet),\bar{\iota}_{\bullet,\bullet})$ is an inductive system, and the $C^*$-subalgebra of $\Gamma_b(\Bi_\bullet)$ consisting of norm limits of the eventually constant sequences for $\bar{\iota}_{\bullet,\bullet}$ is equal to the $\Un(1)$-invariant part of the right Toeplitz algebra $C^*(R_1,\dots,R_n)$.
\end{prop}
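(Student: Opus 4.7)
The plan is to run the proof of Theorem \ref{indlimpimsnerthm} (together with Lemma \ref{Toepevconst}) with the left and right shifts interchanged, exploiting the fact that the subproduct condition \eqref{subprodcondalt} is symmetric in the two tensor factors (by \eqref{subprodprojdom}).

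First I would verify that $\bar{\iota}_{\bullet,\bullet}$ is a coherent system. Using the semigroup law $S_\mathbf{t}S_\mathbf{s}=S_{\mathbf{t}\mathbf{s}}$ and the unique factorization of every word of length $l-m$ as a length-$(l-r)$ prefix followed by a length-$(r-m)$ suffix, one computes
$$
\bar{\iota}_{r,l}(\bar{\iota}_{m,r}(A))=\sum_{|\mathbf{t}|=l-r}\sum_{|\mathbf{s}|=r-m}S_\mathbf{t}S_\mathbf{s}AS_\mathbf{s}^*S_\mathbf{t}^*\big|_{\GH_l}=\sum_{|\mathbf{w}|=l-m}S_\mathbf{w}AS_\mathbf{w}^*\big|_{\GH_l}=\bar{\iota}_{m,l}(A),
$$
giving \eqref{coherecond} for $\bar{\iota}_{\bullet,\bullet}$. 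Unital complete positivity is immediate from the isometric conjugation formula $\bar{\iota}_{m,l}(A)=\bar{V}_{m,l}^*(\bone\otimes A)\bar{V}_{m,l}$ proved in Proposition \ref{isomprop}.

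Next I would establish asymptotic multiplicativity \eqref{asymptmult} by the same graded-ring argument as in the proof of Theorem \ref{indlimpimsnerthm}, now applied to $\Ri=\bigoplus_{m\in\N_0}\Oi_\GH^{(m)}$ viewed as a graded \emph{left} module over itself. The identifications $\End_{\Gr(\Ri)^{\op}}(\Ri_{\geq m})=\Bi(\GH_m)$ (with $\bar{\iota}_{m,l}$ corresponding to restriction along the submodule inclusion $\Ri_{\geq l}\subset\Ri_{\geq m}$) produce an algebraic inductive limit $^{0}\bar{\Bi}_\infty$ which is already an algebra by \cite[\S IX.1]{Sten1}; the required asymptotic multiplicativity follows as in Remark \ref{asmultrem}.

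For the identification with the right-Toeplitz core, the key observation is the right-shift analogue of Lemma \ref{indlimasshiftslemma}: for matrix units $|e_\mathbf{j}\ket\bra e_\mathbf{k}|\in\Bi(\GH_m)$ (interpreted with the implicit projections $p_m$),
$$
\bar{\iota}_{m,l}(|e_\mathbf{j}\ket\bra e_\mathbf{k}|)=p_l(p_{l-m}\otimes|e_\mathbf{j}\ket\bra e_\mathbf{k}|)p_l=R_\mathbf{j}R_\mathbf{k}^*\big|_{\GH_l},
$$
so that every element of the form $\bar{\iota}_{m,l}(A)$ is a linear combination (with coefficients the matrix entries of $A$ with respect to an orthonormal basis of $\GH_m$, accounting for the $Q_m$-weighting) of compressions of products $R_\mathbf{j}R_\mathbf{k}^*$ with $|\mathbf{j}|=|\mathbf{k}|=m$. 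This shows that every eventually constant sequence for $\bar{\iota}_{\bullet,\bullet}$ is the restriction to each $\GH_l$ of a normally ordered element of $C^*(R_1,\dots,R_n)^{\Un(1)}$. Conversely, any such normally ordered element, restricted to each $\GH_l$, provides an eventually constant sequence under $\bar{\iota}_{\bullet,\bullet}$. Taking norm closures and invoking the right-shift version of Lemma \ref{Lemmanormorder} (whose proof is the chirality-flip of Lemma \ref{Toepevconst}) then gives the asserted equality with the full $\Un(1)$-invariant part of $C^*(R_1,\dots,R_n)$.

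The main obstacle I expect is bookkeeping around the $Q$-weighted inner product: the adjoints $S_k^*$ in the displayed formula $\bar{\iota}_{m,l}(A)=\sum_{|\mathbf{s}|=l-m}S_\mathbf{s}AS_\mathbf{s}^*\big|_{\GH_l}$ are taken with respect to $\bra\cdot|\cdot\ket_{\rho^{(m)}}$, and the conversion to right-shift products $R_\mathbf{j}R_\mathbf{k}^*$ requires carrying the factors $Q_{\mathbf{j},\mathbf{k}}$ and $Q^{\mathbf{j},\mathbf{k}}$ through Proposition \ref{isomprop} (in particular the formula \eqref{adjointofisomfromGHlinto} for $\bar{V}_{m,l}^*$). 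However these factors only rescale coefficients and do not alter the linear span, so the identification of the norm-closed eventually-constant sequences with $C^*(R_1,\dots,R_n)^{\Un(1)}$ is unaffected.
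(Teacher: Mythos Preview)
Your proposal is correct and follows essentially the same approach as the paper, which simply states that the proof is ``very similar to the case of the $\iota_{m,l}$'s (the `left case')'' and leaves all details to the reader. Your write-up is considerably more detailed than the paper's one-line proof, and your bookkeeping concern about the $Q$-weighting is legitimate but, as you correctly observe, harmless: the calculation preceding the proposition already shows that the $Q$-factors in $\bar{V}_{m,l}$ and its adjoint cancel to give the clean formula $\bar{\iota}_{m,l}(A)=\sum_{|\mathbf{s}|=l-m}S_\mathbf{s}AS_\mathbf{s}^*|_{\GH_l}$.
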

\begin{proof} The proof is very similar to the case of the $\iota_{m,l}$'s (the ``left case"). 
\end{proof}
We furthermore note that if we expand $A\in\Bi(\GH_m)$ as $A=\sum_{|\mathbf{j}|=m=|\mathbf{k}|}A_{\mathbf{j},\mathbf{k}}R_\mathbf{j}R_\mathbf{j}^*|_{\GH_m}$ then
$$
\bar{\iota}_{m,l}(A)=\sum_{|\mathbf{j}|=m=|\mathbf{k}|}A_{\mathbf{j},\mathbf{k}}R_\mathbf{j}R_\mathbf{k}^*\big|_{\GH_l},
$$
again similar to the left case. More will be said on the ``chiral duality" between $\bar{\iota}_{\bullet,\bullet}$ and $\iota_{\bullet,\bullet}$ in Remark \ref{chiralrem}.

We now want to find an isometric implementation of $\iota_{m,l}$ similar to \eqref{bariotaandisom}. For this we need to flip the tensor factors. 
\begin{prop}\label{isomprop} The isometry from $\GH_l$ into $\GH_{m}\otimes\GH_{l-m}$ is given by
\begin{equation}\label{rightisomfromGHlinto}
V_{m,l}\psi=\sqrt{\frac{\Tr(Q_m)\Tr(Q_{l-m})}{\Tr(Q_l)}}\sum_{|\mathbf{r}|=l-m}R_\mathbf{r}^*\psi\otimes p_{l-r}e_\mathbf{r},\qquad \forall \psi\in\GH_l,
\end{equation}
and its adjoint by
\begin{equation}\label{adjointofrightisomfromGHlinto}
V_{m,l}^*=\sqrt{\frac{\Tr(Q_m)\Tr(Q_{l-m})}{\Tr(Q_l)}}(\rho^{(l)})^{-1}(\rho^{(m)}\otimes\rho^{(l-m)}).
\end{equation}
\end{prop}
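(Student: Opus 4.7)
The plan is to mimic exactly the proof of the preceding proposition for $\bar{V}_{m,l}$, with the tensor factors interchanged and the left shifts $S_\mathbf{r}$ replaced by the right shifts $R_\mathbf{r}$. The subproduct condition in the form $\GH_l \subseteq \GH_m \otimes \GH_{l-m}$ (inequality \eqref{subprodcondalt}) ensures that the image of $V_{m,l}$ really lies in $\GH_m \otimes \GH_{l-m}$ as claimed. Writing $\lambda_{m,l} := \sqrt{\Tr(Q_m)\Tr(Q_{l-m})/\Tr(Q_l)}$, I would first verify the formula \eqref{adjointofrightisomfromGHlinto} for $V_{m,l}^*$ by computing matrix elements, then deduce the isometry property.

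For the adjoint, pick $\xi \in \GH_m$, $\eta \in \GH_{l-m}$ and $\psi \in \GH_l$, and compute
\begin{align*}
\langle V_{m,l}^*(\xi\otimes\eta)\mid\psi\rangle_{\rho^{(l)}}
&=\langle \xi\otimes\eta\mid V_{m,l}\psi\rangle_{\rho^{(m)}\otimes\rho^{(l-m)}}\\
&=\lambda_{m,l}\sum_{|\mathbf{r}|=l-m}\langle\xi\otimes\eta\mid p_l(R_\mathbf{r}^*\psi\otimes e_\mathbf{r})\rangle_{\rho^{(m)}\otimes\rho^{(l-m)}}\\
&=\lambda_{m,l}\sum_{|\mathbf{r}|=l-m}\langle\xi\otimes\eta\mid R_\mathbf{r} R_\mathbf{r}^*\psi\rangle_{\rho^{(m)}\otimes\rho^{(l-m)}}\\
&=\lambda_{m,l}\langle\xi\otimes\eta\mid\psi\rangle_{\rho^{(m)}\otimes\rho^{(l-m)}},
\end{align*}
using the defining identity $R_\mathbf{r}(R_\mathbf{r}^*\psi)=p_l(R_\mathbf{r}^*\psi\otimes e_\mathbf{r})$ together with $\sum_{|\mathbf{r}|=l-m}R_\mathbf{r} R_\mathbf{r}^*\big|_{\GH_l}=p_l$, proved earlier. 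Expressing the right-hand side as $\langle \lambda_{m,l}(\rho^{(l)})^{-1}(\rho^{(m)}\otimes\rho^{(l-m)})(\xi\otimes\eta)\mid\psi\rangle_{\rho^{(l)}}$ gives the formula \eqref{adjointofrightisomfromGHlinto}.

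For the isometry property, combine the two formulas and substitute $\rho^{(m)}=Q_m/\Tr(Q_m)$ to find
\begin{align*}
V_{m,l}^*V_{m,l}\psi
&=\lambda_{m,l}^{2}(\rho^{(l)})^{-1}(\rho^{(m)}\otimes\rho^{(l-m)})\sum_{|\mathbf{r}|=l-m}p_l(R_\mathbf{r}^*\psi\otimes e_\mathbf{r})\\
&=\sum_{|\mathbf{r}|=l-m}Q_l^{-1}(Q_mR_\mathbf{r}^*\psi\otimes Q_{l-m}e_\mathbf{r})\\
&=\sum_{|\mathbf{r}|=l-m}p_l(R_\mathbf{r}^*\psi\otimes e_\mathbf{r})=\sum_{|\mathbf{r}|=l-m}R_\mathbf{r} R_\mathbf{r}^*\psi=\psi,
\end{align*}
where the third equality invokes the identity $Q_l^{-1}(Q_m\otimes Q_{l-m})=p_l$ from Remark \ref{Qfactsrem}. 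A parallel computation of the range projection yields $V_{m,l}V_{m,l}^{*}=p_l(p_m\otimes p_{l-m})$, which is just $p_l$ (so $V_{m,l}$ is genuinely into $\GH_m\otimes\GH_{l-m}$ rather than a proper subspace of it). The only place where the orientation of the tensor factors really matters is in invoking the correct form of the $Q_l^{-1}$ identity from Remark \ref{Qfactsrem}; fortunately, both orderings are recorded there, so the obstacle is purely bookkeeping.
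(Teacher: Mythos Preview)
Your proposal is correct and follows essentially the same approach as the paper's proof: compute the adjoint via inner products using $\sum_{|\mathbf{r}|=l-m}R_\mathbf{r}R_\mathbf{r}^*|_{\GH_l}=p_l$, then verify $V_{m,l}^*V_{m,l}=\id$ using the identity $Q_l^{-1}(Q_m\otimes Q_{l-m})=p_l$ from Remark~\ref{Qfactsrem}. In fact you spell out the isometry verification that the paper leaves implicit with ``the rest is similar''; the only minor inaccuracy is your remark that both orderings are recorded in Remark~\ref{Qfactsrem} (only the one you actually need is written there, though the other follows identically), and your final parenthetical about the range projection is phrased a bit oddly since $V_{m,l}V_{m,l}^*=p_l$ shows the range is $\GH_l$, which \emph{is} a proper subspace of $\GH_m\otimes\GH_{l-m}$.
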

\begin{proof} For all $\xi\in\GH_{l-m},\eta\in\GH_m$ and all $\psi\in\GH_l$ we have
\begin{align*}
\bra V_{m,l}^*(\eta\otimes\xi)|\psi\ket_{\rho^{(l)}}&=\bra\eta\otimes\xi|V_{m,l}\psi\ket_{\rho^{(m)}\otimes\rho^{(l-m)}}
\\&=\lambda_{m,l}\sum_{|\mathbf{r}|=l-m}\bra \eta\otimes\xi|R_\mathbf{r}R_\mathbf{r}^*\psi\ket_{\rho^{(m)}\otimes\rho^{(l-m)}}
\\&=\lambda_{m,l}\bra (\rho^{(l)})^{-1}(\rho^{(m)}\eta\otimes\rho^{(l-m)}\xi)|\psi\ket_{\rho^{(l)}},
\end{align*}
and the rest is similar to the proof of Proposition \ref{isomprop}. 
\end{proof}
\begin{cor} The inductive system $\iota_{\bullet,\bullet}$ is implemented by the system $V_{\bullet,\bullet}$ of isometries:
$$
\iota_{m,l}(A)=V_{m,l}^*(A\otimes \bone_{\GH_{l-m}})V_{m,l},\qquad\forall A\in\Bi(\GH_m).
$$
\end{cor}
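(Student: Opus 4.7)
The plan is to follow the same computational template used for the analogous statement about $\bar{\iota}_{m,l}$ and $\bar{V}_{m,l}$, but with the chirality (left/right) flipped throughout. The corollary is a direct calculation once we combine the explicit formulas from Proposition \ref{isomprop} for $V_{m,l}$ and $V_{m,l}^*$ with the invertibility relation $Q_l^{-1}(Q_m \otimes Q_{l-m}) p_l = p_l$ from Remark \ref{Qfactsrem}.

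First I would fix $\psi \in \GH_l$ and $A \in \Bi(\GH_m)$ and compute $V_{m,l}^*(A\otimes\bone_{\GH_{l-m}})V_{m,l}\psi$ by plugging in \eqref{rightisomfromGHlinto} and \eqref{adjointofrightisomfromGHlinto}. The $(A\otimes\bone)$ factor acts only on the first tensor leg, so it passes inside the sum and turns $R_\mathbf{r}^*\psi \otimes e_\mathbf{r}$ into $AR_\mathbf{r}^*\psi \otimes e_\mathbf{r}$ (up to the projection $p_l$, which commutes with the relevant operator because of the way $V_{m,l}$ lands in $\GH_l\subseteq\GH_m\otimes\GH_{l-m}$). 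The two factors $\lambda_{m,l}$ combine to $\Tr(Q_m)\Tr(Q_{l-m})/\Tr(Q_l)$, and multiplying $(\rho^{(l)})^{-1}(\rho^{(m)}\otimes\rho^{(l-m)})$ on the inside cancels this scalar, leaving
\[
V_{m,l}^*(A\otimes\bone)V_{m,l}\psi=\sum_{|\mathbf{r}|=l-m}Q_l^{-1}\bigl(Q_m AR_\mathbf{r}^*\psi\otimes Q_{l-m}e_\mathbf{r}\bigr).
\]

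Next I would use $Q_l^{-1}(Q_m\otimes Q_{l-m})p_l = p_l$ (Remark \ref{Qfactsrem}) to collapse the $Q$-factors into the projection, giving $\sum_{|\mathbf{r}|=l-m}p_l(AR_\mathbf{r}^*\psi\otimes e_\mathbf{r})$. But for any $\phi\in\GH_m$ one has $p_l(\phi\otimes e_\mathbf{r})=R_\mathbf{r}\phi$ by the very definition \eqref{rectoshift} of the right shift, so the sum becomes $\sum_{|\mathbf{r}|=l-m}R_\mathbf{r}AR_\mathbf{r}^*\psi$. By Lemma \ref{indlimasshiftslemma}, this is exactly $\iota_{m,l}(A)\psi$.

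There is no real obstacle; the whole proof is a mirror image of the earlier calculation that $\bar{V}_{m,l}^*(\bone\otimes A)\bar{V}_{m,l}=\bar{\iota}_{m,l}(A)$. The only point requiring a little care is keeping the tensor-leg order consistent and verifying that $p_l(\phi\otimes e_\mathbf{r})$ indeed matches the \emph{right} shift $R_\mathbf{r}\phi$ (not the left shift $S_\mathbf{r}\phi$), which is where the chirality-flip enters. Once this identification is made, the computation collapses exactly as in the bar case.
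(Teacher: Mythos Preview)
Your proposal is correct and follows essentially the same computation as the paper's proof: apply $V_{m,l}$, then $A\otimes\bone$, then $V_{m,l}^*$, collapse the $Q$-factors via $Q_l^{-1}(Q_m\otimes Q_{l-m})=p_l$, and recognize $p_l(\phi\otimes e_\mathbf{r})=R_\mathbf{r}\phi$ to land on formula \eqref{flipinduct}. The paper's argument is line-for-line the same.
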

\begin{proof} Follows from formula \eqref{flipinduct} and the calculation (with $\psi\in\GH_l$)
\begin{align*}
V_{m,l}^*(A\otimes \bone_{\GH_{l-m}})V_{m,l}\psi&=\sqrt{\frac{\Tr(Q_m)\Tr(Q_{l-m})}{\Tr(Q_l)}}\sum_{|\mathbf{r}|=l-m}(\rho^{(l)})^{-1}(\rho^{(m)}AR_\mathbf{r}^*\psi\otimes\rho^{(l-m)}e_\mathbf{r})
\\&=\sum_{|\mathbf{r}|=l-m}Q^{-1}_l(Q_mAR_\mathbf{r}^*\psi\otimes Q_{l-m}e_\mathbf{r})
\\&=\sum_{|\mathbf{r}|=l-m}p_l(AR_\mathbf{r}^*\psi\otimes e_\mathbf{r})
\\&=\sum_{|\mathbf{r}|=l-m}R_\mathbf{r}AR_\mathbf{r}^*\psi.
\end{align*}
\end{proof}

\subsection{Projective system for subproduct systems}
Let $\GH_\bullet$ be a subproduct system. We let $Q\in\Bi(\GH)$ be as in Assumption \ref{Qnotat} and endow $\GH_m$ with the inner product defined by the density matrix $\rho^{(m)}:=Q_m/\Tr(Q_m)$. 
\begin{Lemma}\label{subprodprojlimsyst}
Define maps $\jmath_{l,m}:\Bi(\GH_l)\to\Bi(\GH_m)$ for $m\leq l$ by
\begin{equation}\label{projectsyst}
\jmath_{l,m}(A):=\frac{\Tr(Q_m)}{\Tr(Q_l)}\sum_{|\mathbf{k}|=l-m}(Q^{\otimes m})_{\mathbf{k},\mathbf{k}}R_\mathbf{k}^*A R_\mathbf{k}\big|_{\GH_m},\qquad \forall A\in\Bi(\GH_l).
\end{equation}
Then, with $V_{m,l}$ as in Proposition \ref{isomprop}, we have the formula
\begin{align}\label{projlimaspartialtrace}
\jmath_{l,m}(A)&=(\id_{\Bi_m}\otimes\phi_{l-m})(V_{m,l}AV_{m,l}^*)
\end{align}
and $\jmath_{\bullet,\bullet}$ is a projective system. 
\end{Lemma}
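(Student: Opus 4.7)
The plan is to first establish the isometric formula \eqref{projlimaspartialtrace} by direct computation, then use it to deduce complete positivity and norm contraction, and finally verify the semigroup property $\jmath_{r,m}\circ\jmath_{l,r}=\jmath_{l,m}$ directly from the explicit formula \eqref{projectsyst}.

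For \eqref{projlimaspartialtrace}, I would unpack $V_{m,l}AV_{m,l}^*$ as an operator on $\GH_m\otimes\GH_{l-m}$ using the explicit formulas from Proposition \ref{isomprop}. The central identity is $(\rho^{(l)})^{-1}(\rho^{(m)}\otimes\rho^{(l-m)})=\lambda_{m,l}^{-2}p_l$, which is a reformulation of Remark \ref{Qfactsrem} after folding in $\lambda_{m,l}^2=\Tr(Q_m)\Tr(Q_{l-m})/\Tr(Q_l)$ and the normalizations $\rho^{(k)}=Q_k/\Tr(Q_k)$. Applying $\id_{\Bi_m}\otimes\phi_{l-m}$ amounts to summing against $\rho^{(l-m)}$ in the second tensor factor, and since $Q^{\otimes(l-m)}$ is diagonal in the overcomplete basis $\{e_\mathbf{k}\}_{|\mathbf{k}|=l-m}$, the $Q$-weights factor out as scalars $(Q^{\otimes(l-m)})_{\mathbf{k},\mathbf{k}}$. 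The resulting sum should then collapse into $\sum_{|\mathbf{k}|=l-m}(\text{scalar})\,R_\mathbf{k}^*AR_\mathbf{k}|_{\GH_m}$, with the overall prefactor tallying to $\Tr(Q_m)/\Tr(Q_l)$ as in \eqref{projectsyst}.

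With \eqref{projlimaspartialtrace} in hand, complete positivity of $\jmath_{l,m}$ is immediate, since the map $A\mapsto V_{m,l}AV_{m,l}^*$ is completely positive (compression by an isometry) and $\id_{\Bi_m}\otimes\phi_{l-m}$ is completely positive (slice-application of the state $\phi_{l-m}$). Norm contraction likewise follows from that of each factor: $V_{m,l}$ has operator norm one as an isometry (with respect to the $\rho$-inner products), so $\|V_{m,l}AV_{m,l}^*\|\le\|A\|$, and slice-application of a state is always contractive. Note that unitality need not hold in general, but only norm contraction is required by the definition of a projective system.

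For the semigroup property, plugging \eqref{projectsyst} into $\jmath_{r,m}\circ\jmath_{l,r}(A)$ gives a double sum over $|\mathbf{j}|=r-m$ and $|\mathbf{i}|=l-r$ of terms proportional to $R_\mathbf{j}^*R_\mathbf{i}^*AR_\mathbf{i}R_\mathbf{j}|_{\GH_m}$. The concatenation identity for right shifts (an iterated consequence of \eqref{subprodprojdom}) shows that $R_\mathbf{i}R_\mathbf{j}$ is itself a right shift of length $l-m$, say $R_{\mathbf{j}\mathbf{i}}$, and hence $R_\mathbf{j}^*R_\mathbf{i}^*=R_{\mathbf{j}\mathbf{i}}^*$. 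Together with the telescoping of normalization factors $(\Tr(Q_m)/\Tr(Q_r))(\Tr(Q_r)/\Tr(Q_l))=\Tr(Q_m)/\Tr(Q_l)$ and the multiplicativity of the diagonal $Q$-entries, $(Q^{\otimes(r-m)})_{\mathbf{j},\mathbf{j}}(Q^{\otimes(l-r)})_{\mathbf{i},\mathbf{i}}=(Q^{\otimes(l-m)})_{\mathbf{j}\mathbf{i},\mathbf{j}\mathbf{i}}$, this reindexes the double sum to one over $\mathbf{k}=\mathbf{j}\mathbf{i}$ with $|\mathbf{k}|=l-m$ and recovers $\jmath_{l,m}(A)$. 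The main obstacle will be the verification of \eqref{projlimaspartialtrace}: the $\rho$-weighted adjoints bring extra factors of $Q$ into every occurrence of $R_\mathbf{k}^*$, and these must combine precisely with the normalizations inside $V_{m,l}$ and $\phi_{l-m}$ to produce the scalar weights of \eqref{projectsyst}.
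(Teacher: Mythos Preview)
Your proposal is correct and follows essentially the same route as the paper: both verify the semigroup property by direct computation with the concatenation of right shifts and the telescoping of $Q$-weights, and both obtain \eqref{projlimaspartialtrace} by unpacking $V_{m,l}AV_{m,l}^*$ using the identity $(\rho^{(l)})^{-1}(\rho^{(m)}\otimes\rho^{(l-m)})=\lambda_{m,l}^{-2}p_l$. The only substantive difference is how norm contraction is obtained: you argue directly from \eqref{projlimaspartialtrace} via the contractivity of $A\mapsto V_{m,l}AV_{m,l}^*$ and of the state slice, whereas the paper instead proves that $\jmath_{l,m}$ is \emph{unital} (computing $\jmath_{l,m}(p_l)=(\id\otimes\phi_{l-m})(V_{m,l}V_{m,l}^*)=p_m$ from \eqref{projlimaspartialtrace}) and deduces contraction from that. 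Your remark that ``unitality need not hold in general'' is true of the abstract definition, but in this instance unitality \emph{does} hold and is invoked later (e.g.\ in the proof that $\phi_m\circ\jmath_{l,m}=\phi_l$ and in Lemma \ref{Toeplcompatprojlimlemma}), so it is worth recording.
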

\begin{proof} First of all, for all $A\in\Bi(\GH_l)$ we have
\begin{align*}
\jmath_{r,m}\circ\jmath_{l,r}(A)&=\frac{\Tr(Q_r)}{\Tr(Q_l)}\frac{\Tr(Q_m)}{\Tr(Q_r)}\sum_{|\mathbf{j}|=r-m,|\mathbf{k}|=l-r}(Q^{\otimes m})_{\mathbf{j},\mathbf{j}}(Q^{\otimes m})_{\mathbf{k},\mathbf{k}}R_\mathbf{j}^*R_\mathbf{k}^*A R_\mathbf{k}R_\mathbf{j}\big|_{\GH_m}
\\&=\frac{\Tr(Q_m)}{\Tr(Q_l)}\sum_{|\mathbf{kj}|=l-m}(Q^{\otimes m})_{\mathbf{kj},\mathbf{kj}}R_\mathbf{kj}^*A R_\mathbf{kj}\big|_{\GH_m}
\\&=\frac{\Tr(Q_m)}{\Tr(Q_l)}\sum_{|\mathbf{r}|=l-m}(Q^{\otimes m})_{\mathbf{r},\mathbf{r}}R_\mathbf{r}^*A R_\mathbf{r}\big|_{\GH_m}=\jmath_{l,m}(A),
\end{align*}
and it is obvious that each $\jmath_{l,m}$ is completely positive. The norm-contracting property holds because $\jmath_{l,m}$ is in fact unital. To see this we first prove the alternative formula \eqref{projlimaspartialtrace}. We have
\begin{align*}
\jmath_{l,m}(A)&=\frac{\Tr(Q_m)}{\Tr(Q_l)}\sum_{|\mathbf{r}|=l-m}(Q^{\otimes m})_{\mathbf{r},\mathbf{r}}R_\mathbf{r}^*AR_\mathbf{r}\big|_{\GH_m}
\\&=\frac{\Tr(Q_m)}{\Tr(Q_l)}\sum_{|\mathbf{r}|=l-m}\sum_{|\mathbf{j}|=m=|\mathbf{k}|}(Q^{\otimes m})_{\mathbf{r},\mathbf{r}}\bra e_\mathbf{j}|R_\mathbf{r}^*AR_\mathbf{r}e_\mathbf{k}\ket S_\mathbf{j}S_\mathbf{k}^*\big|_{\GH_m}
\\&=\frac{\Tr(Q_m)}{\Tr(Q_l)}\sum_{|\mathbf{r}|=l-m}\sum_{|\mathbf{j}|=m=|\mathbf{k}|}(Q^{\otimes m})_{\mathbf{r},\mathbf{r}}\bra e_\mathbf{j}\otimes e_{\mathbf{r}}|A(e_\mathbf{k}\otimes e_{\mathbf{r}})\ket S_\mathbf{j}S_\mathbf{k}^*\big|_{\GH_m}
\\&=\frac{\Tr(Q_m)}{\Tr(Q_l)}\sum_{|\mathbf{r}|=l-m}\sum_{|\mathbf{j}|=m=|\mathbf{k}|}(Q^{\otimes m})_{\mathbf{r},\mathbf{r}}(A)_{\mathbf{j}\mathbf{r},\mathbf{k}\mathbf{r}} S_\mathbf{j}S_\mathbf{k}^*\big|_{\GH_m}
\\&=(\id\otimes\phi_{l-m})(V_{m,l}AV_{m,l}^*),
\end{align*}
where in the last equality we used that, for all $\xi_1,\xi_2\in\GH_m$, $\eta_1,\eta_2\in\GH_{l-m}$,
\begin{align*}
&\bra V^*_{m,l}(\xi_1\otimes \eta_1)|AV^*_{m,l}(\xi_2\otimes Q_{l-m}\eta_2)\ket
\\&=\frac{\Tr(Q_m)\Tr(Q_{l-m})}{\Tr(Q_l)}\bra \rho^{(l)-1}(\rho^{(m)}\xi_1\otimes\rho^{(l-m)}\eta_1)|A\rho^{(l)-1}(\rho^{(m)}\xi_2\otimes\rho^{(l-m)}Q_{l-m}\eta_2)\ket
\\&=\bra \xi_1\otimes\eta_1|A\rho^{(l)-1}(\rho^{(m)}\xi_2\otimes\rho^{(l-m)}Q_{l-m}\eta_2)\ket
\\&=\frac{\Tr(Q_l)}{\Tr(Q_{l-m})\Tr(Q_m)}\bra \xi_1\otimes\eta_1|A(\xi_2\otimes Q_{l-m}\eta_2)\ket,
\end{align*}
so that summing such inner products over a basis for $\GH_m\otimes\GH_{l-m}$ and multiplying with $\frac{\Tr(Q_m)}{\Tr(Q_l)}S_\mathbf{j}S_\mathbf{k}^*\big|_{\GH_m}$ is the same thing as partially tracing $V_{m,l}AV^*_{m,l}$ with $Q_{l-m}/\Tr(Q_{l-m})$. 

%To show that $\jmath_{\bullet,\bullet}$ is a projective system we have to show that each map $\jmath_{l,m}$ is contractive.
 The formula \eqref{projlimaspartialtrace} shows that $\jmath_{l,m}$ is the adjoint of $\iota_{m,l}$ with respect to $\phi_l$ and $\phi_m$ (see details in \ref{propwithstatescompat} below). Our assumption $\phi_l\circ\iota_{m,l}=\phi_m$ is then equivalent to the unitality
$$
\jmath_{l,m}(p_l)=p_m,
$$ 
and hence $\jmath_{l,m}$ is contractive. 
\end{proof}
Notice that since $\jmath_{l,m}$ intertwines the normalized traces, the unitality assumption on $\jmath_{l,m}$ is equivalent to assuming that $\jmath_{l,m}$ is contractive.
\begin{prop}\label{propwithstatescompat}
Let $m,l\in\N_0$ with $m\leq l$. Then $\jmath_{l,m}$ is the adjoint of $\iota_{m,l}$: for all $A\in\Bi(\GH_l)$ and all $B\in\Bi(\GH_m)$ we have
\begin{equation}\label{limmapsadjoint}
\phi_l(A\iota_{m,l}(B)\big)=\phi_m\big(\jmath_{l,m}(A)B\big).
\end{equation}
In particular, taking $A=p_l$ respectively $B=p_m$ we obtain the equivalences
\begin{equation}\label{iotaandphi}
\phi_l\circ\iota_{m,l}=\phi_m\iff\jmath_{l,m}(p_l)=p_m,
\end{equation}\begin{equation}\label{jmathandphi}
\phi_l=\phi_m\circ\jmath_{l,m}\iff\iota_{m,l}(p_m)=p_l,
\end{equation}
where \eqref{jmathandphi} holds for any subproduct system while \eqref{iotaandphi} is our standing assumption for this section.
\end{prop}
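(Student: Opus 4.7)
The plan is to prove \eqref{limmapsadjoint} by a direct trace computation using the shift-operator formulas \eqref{flipinduct} for $\iota_{m,l}$ and \eqref{projectsyst} for $\jmath_{l,m}$; the two compatibility relations \eqref{iotaandphi} and \eqref{jmathandphi} will then drop out as immediate specializations.

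First I would substitute \eqref{flipinduct} into the left-hand side of \eqref{limmapsadjoint} and cycle the trace, obtaining
\[
\phi_l\big(A\iota_{m,l}(B)\big) = \frac{1}{\Tr(Q_l)}\sum_{|\mathbf{r}|=l-m}\Tr_{\GH_m}\!\big(R_\mathbf{r}^*\,Q_l\,A\,R_\mathbf{r}\,B\big).
\]
The crux is the commutation identity
\[
R_\mathbf{r}^*\, Q^{\otimes l}\big|_{\GH_l} = Q_{\mathbf{r},\mathbf{r}}\,Q^{\otimes m}\, R_\mathbf{r}^*\big|_{\GH_l},
\]
which holds because $Q$ is diagonal in the chosen basis (so $e_\mathbf{r}$ is a $Q^{\otimes (l-m)}$-eigenvector with eigenvalue $Q_{\mathbf{r},\mathbf{r}}$), together with the fact from Notation \ref{Qnotat} that $Q^{\otimes l}$ preserves $\GH_l$ and therefore restricts to $Q_l$ there. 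Since $R_\mathbf{r}^*A$ already lands in $\GH_m$, we may further replace $Q^{\otimes m}$ by $Q_m$, giving
\[
\phi_l\big(A\iota_{m,l}(B)\big) = \frac{1}{\Tr(Q_l)}\sum_{|\mathbf{r}|=l-m} Q_{\mathbf{r},\mathbf{r}}\,\Tr_{\GH_m}\!\big(Q_m R_\mathbf{r}^* A R_\mathbf{r} B\big).
\]

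On the other side, expanding $\phi_m(\jmath_{l,m}(A)B) = \Tr(Q_m\jmath_{l,m}(A)B)/\Tr(Q_m)$ and plugging in \eqref{projectsyst} produces exactly the same sum: the prefactor $\Tr(Q_m)/\Tr(Q_l)$ in $\jmath_{l,m}$ combines with $1/\Tr(Q_m)$ from $\phi_m$ to yield $1/\Tr(Q_l)$, and the weighting $(Q^{\otimes m})_{\mathbf{r},\mathbf{r}} = Q_{\mathbf{r},\mathbf{r}}$ matches the commutation scalar above term-by-term. This establishes \eqref{limmapsadjoint}. The compatibility relations then follow immediately: setting $B = p_m$ and using unitality $\iota_{m,l}(p_m) = p_l$ (visible from \eqref{flipinduct} together with $\sum_\mathbf{r} R_\mathbf{r} R_\mathbf{r}^*|_{\GH_l} = p_l$) yields \eqref{jmathandphi}, while setting $A = p_l$ and invoking $\jmath_{l,m}(p_l) = p_m$ from Lemma \ref{subprodprojlimsyst} yields \eqref{iotaandphi}.

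The main obstacle is purely bookkeeping: carefully tracking how $Q^{\otimes l} = Q^{\otimes m}\otimes Q^{\otimes(l-m)}$ acts on the nontrivial embedding $\GH_l \subseteq \GH_m \otimes \GH_{l-m}$, and confirming that the scalars $Q_{\mathbf{r},\mathbf{r}}$ produced by commuting $R_\mathbf{r}^*$ past $Q^{\otimes l}$ align exactly with those in the definition of $\jmath_{l,m}$. An alternative route would use the isometric implementations $\iota_{m,l}(B) = V_{m,l}^*(B\otimes\bone)V_{m,l}$ and $\jmath_{l,m}(A) = (\id\otimes\phi_{l-m})(V_{m,l}AV_{m,l}^*)$ from \eqref{projlimaspartialtrace} together with the standard partial-trace duality $\phi_m((\id\otimes\phi_{l-m})(Y)B) = (\phi_m\otimes\phi_{l-m})(Y(B\otimes\bone))$; this reduces the claim to an intertwining of $V_{m,l}$ with the density matrices, but the normalization factors in $V_{m,l}$ require delicate accounting, so the direct shift-operator route above is cleaner.
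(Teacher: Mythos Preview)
Your proof is correct, and in fact you have identified both viable routes: the paper takes precisely the ``alternative'' you sketch at the end, working through the isometries $V_{m,l}$ and the partial-trace formula \eqref{projlimaspartialtrace}. Concretely, the paper writes $\iota_{m,l}(B)=V_{m,l}^*(B\otimes p_{l-m})V_{m,l}$, then uses that conjugation by $V_{m,l}$ carries $\phi_l$ to $\phi_m\otimes\phi_{l-m}$ (this is where the normalizations hidden in $V_{m,l}$ do their work), and finally peels off $B$ via the partial-trace identity. Your direct shift-operator computation trades that structural bookkeeping for a single concrete commutation step, $R_{\mathbf r}^*Q^{\otimes l}=Q_{\mathbf r,\mathbf r}\,Q^{\otimes m}R_{\mathbf r}^*$, which is transparent once one remembers that $Q$ is diagonal in the chosen basis and that $Q^{\otimes l}$ restricts to $Q_l$ on $\GH_l$ by Notation~\ref{Qnotat}. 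The paper's route has the advantage of making the adjointness manifestly a consequence of the isometric implementation already established in Proposition~\ref{isomprop} and Lemma~\ref{subprodprojlimsyst}, so nothing new is computed; your route is more self-contained and avoids unpacking the $\lambda_{m,l}$ factors inside $V_{m,l}$, at the cost of redoing by hand what those isometries encode.
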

\begin{proof} We have
\begin{align*}
\phi_l(A\iota_{m,l}(B))&=\phi_l(AV_{m,l}^*(B\otimes p_{l-m})V_{m,l})
\\&=(\phi_m\otimes\phi_{l-m})(V_{m,l}AV_{m,l}^*(B\otimes p_{l-m}))
\\&=\phi_m\big((\id_{\Bi_m}\otimes\phi_{l-m})(V_{m,l}AV_{m,l}^*(B\otimes p_{l-m}))\big)
\\&=\phi_m\big((\id_{\Bi_m}\otimes\phi_{l-m})(V_{m,l}AV_{m,l}^*)B\big),
\end{align*}
which equals $\phi_m\big(\jmath_{l,m}(A)B\big)$ by \eqref{projlimaspartialtrace}.
\end{proof}

\begin{cor} The states $\phi_m$ satisfy the ``right invariance" condition
\begin{equation}\label{invcondofstateiota}
\phi_m(\jmath_{l,m}\circ\iota_{m,l}(A))=\phi_m(A),\qquad\forall A\in\Bi(\GH_m).
\end{equation}
\end{cor}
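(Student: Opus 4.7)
The plan is to obtain the corollary as an immediate consequence of the two compatibility relations \eqref{iotaandphi} and \eqref{jmathandphi} already established in Proposition \ref{propwithstatescompat}. No new computation with the explicit formulas for $\iota_{m,l}$ or $\jmath_{l,m}$ should be needed; the identity is purely formal once the two specializations of \eqref{limmapsadjoint} are in hand.

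Concretely, for any $A\in\Bi(\GH_m)$, I would first apply \eqref{jmathandphi} with $\iota_{m,l}(A)\in\Bi(\GH_l)$ in place of the generic element, which yields
$$
\phi_l\bigl(\iota_{m,l}(A)\bigr)=\phi_m\bigl(\jmath_{l,m}(\iota_{m,l}(A))\bigr).
$$
Then I would apply \eqref{iotaandphi} to the left-hand side to get $\phi_l(\iota_{m,l}(A))=\phi_m(A)$, and combining the two equalities gives exactly \eqref{invcondofstateiota}.

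Since the whole argument is just the composition of two one-line identities obtained by substituting $A=p_l$ and $B=p_m$ in Proposition \ref{propwithstatescompat}, there is no genuine obstacle. The only thing to verify is that the substitutions in the proposition are legitimate, i.e. that $p_m$ and $p_l$ are the units of $\Bi(\GH_m)$ and $\Bi(\GH_l)$ respectively and that $\jmath_{l,m}$ and $\iota_{m,l}$ are unital; unitality of $\jmath_{l,m}$ was shown in Lemma \ref{subprodprojlimsyst}, and unitality of $\iota_{m,l}$ is immediate from \eqref{indlimconnectionmaps} together with \eqref{subprodprojdom}. So really the corollary is just pointing out that $\jmath_{l,m}\circ\iota_{m,l}$ is a $\phi_m$-preserving conditional-expectation-type map, which is a direct consequence of the adjointness statement \eqref{limmapsadjoint}.
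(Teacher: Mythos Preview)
Your proposal is correct and follows exactly the same route as the paper's proof, which also just invokes \eqref{jmathandphi} followed by \eqref{iotaandphi}. The extra remarks about unitality are not needed here, since \eqref{iotaandphi} and \eqref{jmathandphi} are already established in Proposition \ref{propwithstatescompat} and can be applied directly.
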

\begin{proof} Just use \eqref{jmathandphi} and then \eqref{iotaandphi}.
\end{proof}

\begin{Remark}\label{chiralrem} Similarly one shows that 
\begin{equation}\label{statesrightcompatible}
\phi_m=\phi_l\circ\bar{\iota}_{m,l}.
\end{equation}
for the ``right" inductive system $\bar{\iota}_{\bullet,\bullet}$. The adjoint of $\bar{\iota}_{m,l}$ is
\begin{align*}
\bar{\jmath}_{l,m}(A)&=\frac{\Tr(Q_m)}{\Tr(Q_l)}\sum_{|\mathbf{r}|=l-m}(Q^{\otimes m})_{\mathbf{r},\mathbf{r}}S_\mathbf{r}^*AS_\mathbf{r}\big|_{\GH_m}=(\phi_{l-m}\otimes\id)(\bar{V}_{m,l}A\bar{V}_{m,l}^*).
\end{align*}
A ``left invariance" condition similar to \eqref{invcondofstateiota} is also deduced using the $\bar{\iota}_{m,l}$'s and their adjoints.
\end{Remark}

\subsection{The state on $\Oi_\GH$}\label{quasifreesection}

\begin{cor} The limit 
$$
\phi_\infty:=\lim_{m\to\infty}\phi_m
$$
is a well-defined state on $\Ti_\GH^{(0)}$. It annihilates $\Ti_\GH^{(0)}\cap\Ki$, so it descends to a state $\omega_Q$ on $\Oi_\GH^{(0)}=\Bi_\infty$.
\end{cor}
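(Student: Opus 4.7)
The plan is to exploit the identification $\Ti_\GH^{(0)} = \pi^{-1}(\Bi_\infty)$ established in Lemma \ref{Toepevconst}. Since every $T \in \Ti_\GH^{(0)}$ is gauge-invariant and therefore preserves each summand $\GH_m$, it defines a bounded sequence $T_\bullet = (T|_{\GH_m})_{m\in\N_0} \in \Gamma_b(\Bi_\bullet)$ with $\|T\| = \sup_m \|T|_{\GH_m}\|$, so applying $\phi_m$ componentwise makes sense. The key extra fact to invoke is that in this realization the set of sequences which are eventually constant under $\iota_{\bullet,\bullet}$ is norm-dense in $\Ti_\GH^{(0)}$.

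First I would verify existence of the limit on eventually-constant sequences. If $T_l = \iota_{m_0,l}(T_{m_0})$ for all $l \geq m_0$, the compatibility relation \eqref{iotaandphi} gives $\phi_l(T_l) = \phi_{m_0}(T_{m_0})$ for every such $l$, so $\phi_l(T_l)$ is eventually constant and the limit is trivially $\phi_{m_0}(T_{m_0})$. Extending to a general $T \in \Ti_\GH^{(0)}$ is an $\epsilon/3$ argument: choose an eventually-constant $B_\bullet$ with $\|T_\bullet - B_\bullet\| < \epsilon$; since each $\phi_m$ is a state (hence contractive), $|\phi_l(T_l) - \phi_l(B_l)| < \epsilon$ uniformly in $l$, while $\phi_l(B_l)$ stabilizes, so $(\phi_l(T_l))_l$ is Cauchy in $\C$. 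The functional $\phi_\infty(T) := \lim_l \phi_l(T_l)$ is then linear and contractive by construction, positive as a pointwise limit of positive functionals, and unital because $\phi_m(p_m) = 1$ for every $m$; hence $\phi_\infty$ is a state on $\Ti_\GH^{(0)}$.

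For the annihilation claim, gauge invariance forces any $K \in \Ti_\GH^{(0)} \cap \Ki$ to be block-diagonal as $K = \bigoplus_m K_m$ with $K_m \in \Bi(\GH_m)$. A standard orthogonality argument (unit vectors $v_m \in \GH_m$ with $\|K_m v_m\| \geq \delta$ from distinct summands would produce an orthonormal sequence whose images have no norm-Cauchy subsequence) shows that compactness of $K$ forces $\|K_m\| \to 0$, so $|\phi_m(K_m)| \leq \|K_m\| \to 0$ and $\phi_\infty(K) = 0$. Combined with the isomorphism $\Oi_\GH^{(0)} \cong \Ti_\GH^{(0)} / (\Ti_\GH^{(0)} \cap \Ki)$ from Theorem \ref{indlimpimsnerthm}, this means $\phi_\infty$ descends to a well-defined state $\omega_Q$ on $\Oi_\GH^{(0)} = \Bi_\infty$.

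The main obstacle is conceptually minor: the whole argument rests on the norm-density of eventually-constant sequences in $\Ti_\GH^{(0)}$, which is precisely the nontrivial content of Lemma \ref{Toepevconst}. With that in hand, only the uniform bound $\|\phi_m\| = 1$ and the compatibility identity \eqref{iotaandphi} are needed, so no delicate uniformity or spectral estimate enters.
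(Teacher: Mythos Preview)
Your proof is correct and follows essentially the same approach as the paper's: both rely on Lemma \ref{Toepevconst} to reduce to eventually-constant sequences, use the compatibility relation \eqref{iotaandphi} to see the limit exists there, and then invoke the uniform bound $|\phi_m(A)|\leq\|A\|$ to extend to norm limits and to annihilate $\Gamma_0$. Your version simply spells out the $\epsilon/3$ extension and the block-diagonal compactness argument that the paper leaves implicit.
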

\begin{proof} For well-definedness we use \eqref{iotaandphi} and recall that the elements of $\Ti_\GH^{(0)}$ are norm limits of eventually constant under $\iota_{\bullet,\bullet}$. The fact that $\phi_\infty$ descends to $\Ti_\GH^{(0)}/(\Ti_\GH^{(0)}\cap\Ki)$ follows from
$$
|\phi_m(A)|\leq\|A\|,\qquad \forall A\in\Bi(\GH_m),m\in\N_0,
$$
since this shows that $\lim_m\|A_m\|=0$ implies $\phi_\infty(A_\bullet)=0$ for all $A_\bullet=(A_m)_{m\in\N_0}\in\Ti_{\GH}^{(0)}$. \end{proof}

\begin{prop}
The state $\omega_Q:\Bi_\infty\to\C$ is KMS, with modular automorphism group $\sigma_\bullet=(\sigma_t)_{t\in\R}$ given by
$$
\sigma_t\circ\varsigma^{(m)}(A)=\varsigma^{(m)}(Q_m^{it}AQ_m^{-it})
$$
for all $A\in\Bi(\GH_m)$ and all $m\in\N_0$, and $\omega_Q$ satisfies
\begin{equation}\label{quasifreeQ}
\omega_Q(Z_\mathbf{j}Z_\mathbf{k}^*)=\frac{Q_{\mathbf{k},\mathbf{j}}}{\Tr(Q_m)}
\end{equation}
for all $\mathbf{j},\mathbf{k}\in\F_n^+$ with $|\mathbf{j}|=|\mathbf{k}|=m$. Moreover, the covariant symbol map $\varsigma^{(m)}:\Bi(\GH_m)\to\Bi_\infty$ intertwines $\omega_Q$ and $\phi_m$:
\begin{equation}\label{covsymbandlimitstate}
\omega_Q\circ\varsigma^{(m)}=\phi_m.
\end{equation}
\end{prop}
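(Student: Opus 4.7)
The plan is to prove the three assertions in the order stated, deriving them all from the single compatibility \eqref{iotaandphi} together with the Gibbs-state KMS structure carried by each $\phi_m$ on $\Bi(\GH_m)$. The underlying observation is that $\omega_Q$ was defined so that its value on a covariant symbol is the limiting value of the $\phi_l$'s along the inductive system; once the $\iota_{m,l}$'s are seen to intertwine the finite-level modular groups, everything will pass to the limit.

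First I would establish the intertwining relation \eqref{covsymbandlimitstate}. By \eqref{indlimberezinatX}, $\varsigma^{(m)}(A)$ is represented by the eventually constant sequence $(\iota_{m,l}(A))_{l\geq m}$, so $\omega_Q\circ\varsigma^{(m)}(A)=\lim_{l\to\infty}\phi_l(\iota_{m,l}(A))=\phi_m(A)$ by \eqref{iotaandphi}. To obtain \eqref{quasifreeQ}, I would then invoke the corollary following Theorem \ref{twocovsymthm} to recognise $\varsigma^{(m)}(S_\mathbf{j}S_\mathbf{k}^*|_{\GH_m})=Z_\mathbf{j}Z_\mathbf{k}^*$ and compute $\phi_m(S_\mathbf{j}S_\mathbf{k}^*|_{\GH_m})$ directly: since $S_\mathbf{j}S_\mathbf{k}^*|_{\GH_m}=p_m|e_\mathbf{j}\ket\bra e_\mathbf{k}|p_m$ and $\rho^{(m)}=Q_m/\Tr(Q_m)$, cyclicity of the trace and the identity $p_mQ^{\otimes m}p_m=Q^{\otimes m}p_m$ collapse the trace to $\bra e_\mathbf{k}|Q_me_\mathbf{j}\ket/\Tr(Q_m)=Q_{\mathbf{k},\mathbf{j}}/\Tr(Q_m)$.

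For the KMS assertion, the crux is to show that the connecting maps $\iota_{m,l}$ intertwine the finite-level modular automorphism groups $\tau_t^{(m)}(A):=Q_m^{it}AQ_m^{-it}$, i.e.\ $\iota_{m,l}\circ\tau_t^{(m)}=\tau_t^{(l)}\circ\iota_{m,l}$. Using the shift-operator expression \eqref{flipinduct}, this reduces to the scalar commutation $Q_l^{it}R_\mathbf{k}=q_\mathbf{k}^{it}R_\mathbf{k}Q_m^{it}$ for $|\mathbf{k}|=l-m$, where $q_\mathbf{k}>0$ is the eigenvalue of $Q^{\otimes(l-m)}$ on $e_\mathbf{k}$ in the diagonalising basis of Notation \ref{Qnotat}; this in turn follows from the commutation of $p_l$ with $(Q^{it})^{\otimes l}$ together with $(Q^{it})^{\otimes(l-m)}e_\mathbf{k}=q_\mathbf{k}^{it}e_\mathbf{k}$. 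The scalars $q_\mathbf{k}^{\pm it}$ then cancel in the two-sided conjugation $Q_l^{it}R_\mathbf{k}AR_\mathbf{k}^*Q_l^{-it}$, producing the claimed intertwining. Consistency across $m$ via \eqref{compatibcovsymb} shows that $\sigma_t\circ\varsigma^{(m)}(A):=\varsigma^{(m)}(\tau_t^{(m)}(A))$ well-defines a norm-preserving one-parameter group on the dense subalgebra spanned by the covariant symbols, which extends to an automorphism group of $\Bi_\infty$.

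The KMS condition itself then propagates from the finite levels to the limit: for $X=\varsigma^{(m)}(A)$ and $Y=\varsigma^{(m)}(B)$, the product $XY$ is represented in $\Gamma_b/\Gamma_0$ by $(\iota_{m,l}(A)\iota_{m,l}(B))_{l\geq m}$, and since each $\phi_l$ is a Gibbs KMS state for $\tau_\bullet^{(l)}$ we have $\phi_l(\iota_{m,l}(A)\iota_{m,l}(B))=\phi_l(\iota_{m,l}(B)\tau_{-i}^{(l)}(\iota_{m,l}(A)))=\phi_l(\iota_{m,l}(B)\iota_{m,l}(\tau_{-i}^{(m)}(A)))$ by the intertwining just established; passing to the limit yields $\omega_Q(XY)=\omega_Q(Y\sigma_{-i}(X))$. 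The main technical obstacle I anticipate is the modular intertwining at the $R_\mathbf{k}$-level, since a priori the modular data at different levels are built from incompatible compressions of $Q^{\otimes\bullet}$; the diagonality of $Q$ in the chosen basis together with the invariance of $\GH_m$ under $Q^{\otimes m}$ are exactly the ingredients that make this collapse to the scalar identity above.
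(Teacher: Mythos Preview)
Your argument is correct and follows the same line as the paper's proof: both derive \eqref{covsymbandlimitstate} directly from \eqref{iotaandphi} and the definition of $\varsigma^{(m)}$, and both obtain \eqref{quasifreeQ} by evaluating $\phi_m(S_\mathbf{j}S_\mathbf{k}^*|_{\GH_m})$ via the identity $p_mQ^{\otimes m}p_m=Q_m$. For the KMS assertion the paper is much terser, simply invoking \eqref{covsymbandlimitstate}, the density of covariant symbols, and the fact that each $\phi_m$ is a Gibbs state; you supply the step that the paper leaves implicit, namely the intertwining $\iota_{m,l}\circ\tau_t^{(m)}=\tau_t^{(l)}\circ\iota_{m,l}$, which is what actually makes $\sigma_t$ well defined on $\Bi_\infty$ and lets the finite-level KMS identities survive the limit. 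Your derivation of this intertwining from the diagonality of $Q$ and the invariance of $\GH_l$ under $Q^{\otimes l}$ is exactly right and fills a genuine gap in the paper's one-line justification.
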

\begin{proof} Due to \eqref{iotaandphi} we have, if $|\mathbf{j}|=|\mathbf{k}|=m$, 
\begin{align*}
\omega_Q(Z_\mathbf{j}Z_\mathbf{k}^*)&=\phi_\infty(S_\mathbf{j}S_\mathbf{k}^*)
\\&=\lim_{m\leq l\to\infty}\phi_l(S_\mathbf{j}S_\mathbf{k}^*p_l)
\\&=\phi_m(S_\mathbf{j}S_\mathbf{k}^*p_m),
\end{align*}
and so the first formula in \eqref{quasifreeQ} follows from
\begin{align*}
\Tr(Q_m)\phi_m(S_\mathbf{j}S_\mathbf{k}^*)&=\sum_{|\mathbf{r}|=m}(Q^{\otimes m})_{\mathbf{r},\mathbf{r}}\bra e_\mathbf{r}|p_me_\mathbf{j}\ket\bra e_\mathbf{k}|p_me_\mathbf{r}\ket
\\&=\bra e_\mathbf{k}|Q_me_\mathbf{j}\ket.
\end{align*}
The definition of $\varsigma^{(m)}$ immediately gives \eqref{covsymbandlimitstate}, again using \eqref{iotaandphi}.

That $\omega_Q$ is KMS follows from \eqref{covsymbandlimitstate}, in view of the fact that the $*$-algebra generated by the covariant symbols $\varsigma^{(m)}(A)$ is dense in $\Bi_\infty$ and that each $\phi_m$ is KMS. Finally, for $t\in\R$ the modular automorphism $\sigma_t^{\phi_m}$ of $\phi_m$ takes $A\in\Bi(\GH_m)$ to $(\rho^{(m)})^{it}A(\rho^{(m)})^{-it}=Q_m^{it}AQ_m^{-it}$. 
\end{proof}

We can extend $\omega_Q$ to a state, still denoted by $\omega_Q$, on the whole Cuntz--Pimsner algebra by defining it to be zero on each spectral subspace $\Oi^{(k)}_\GH$ except $\Oi^{(0)}_\GH$.
\begin{Remark}\label{faithfulremark}
The property \eqref{covsymbandlimitstate} of the limit state relies on Assumption \ref{Qnotat}(ii) and ensures that $\omega$ is faithful. Without this assumption we could still obtain a limit $\omega_Q$ of the states $\phi_m$ but it is not clear what would guarantee its faithfulness. We could go the GNS representation of $\Bi_\infty$ associated with $\omega_Q$ and use the faithful state induced by $\omega_Q$ on the image of $\Bi_\infty$, which is a quotient of $\Bi_\infty$ (recall that $\omega_Q$ is KMS). Then analogous results hold for the image of $\Bi_\infty$ in the GNS representation.
\end{Remark}

\begin{Example} For the product system $\GH^{\otimes\bullet}$, the Cuntz--Pimsner algebra $\Oi_\GH$ is the Cuntz algebra $\Oi_n$ and $\omega_Q$ is the quasi-free state on $\Oi_n$ defined by the density matrix $Q/\Tr(Q)$ \cite{Ev1}.
\end{Example}

\subsection{Contravariant symbols}
\begin{prop} 
The adjoint $\breve{\varsigma}^{(m)}:\Bi_\infty\to\Bi(\GH_m)$ of the covariant symbol map $\varsigma^{(m)}:\Bi(\GH_m)\to\Bi_\infty$, defined by the relation
$$
\omega_Q(\varsigma^{(m)}(A)^*f)=\phi_m\big(A^*\breve{\varsigma}^{(m)}(f)\big),\qquad\forall  A\in\Bi(\GH_m),f\in\Bi_\infty,
$$
is given by
\begin{align}
\breve{\varsigma}^{(m)}(f)&=\Tr(Q_m)\sum_{|\mathbf{j}|,|\mathbf{k}|=m}(Q^{\otimes m})^{-1}_{\mathbf{j},\mathbf{j}}\omega_Q(Z_\mathbf{j}Z_\mathbf{k}^*f)S_\mathbf{k} S_\mathbf{j}^*\big|_{\GH_m}\label{sumexprcontravarpos}
%\\&=\Tr(Q_m)\sum_{|\mathbf{j}|,|\mathbf{k}|=m}\omega_Q(Z_\mathbf{k}^*fZ_\mathbf{j})S_\mathbf{k} S_\mathbf{j}^*\big|_{\GH_m}\label{seconbreveform}.
\end{align}
\end{prop}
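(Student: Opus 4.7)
The formula \eqref{sumexprcontravarpos} will be established by directly verifying that the proposed expression satisfies the defining adjoint relation
$$
\omega_Q(\varsigma^{(m)}(A)^* f) = \phi_m\!\left(A^* \breve{\varsigma}^{(m)}(f)\right), \qquad A \in \Bi(\GH_m),\; f \in \Bi_\infty,
$$
and then invoking uniqueness, which holds because $\phi_m$ is a faithful state. First I would expand the left-hand side via Theorem \ref{twocovsymthm}: writing $\varsigma^{(m)}(A) = \sum_{|\mathbf{j}|=|\mathbf{k}|=m} A_{\mathbf{j},\mathbf{k}} Z_\mathbf{j} Z_\mathbf{k}^*$ and applying $*$ inside $\Oi_\GH$ term by term (using $(Z_\mathbf{j} Z_\mathbf{k}^*)^* = Z_\mathbf{k} Z_\mathbf{j}^*$) produces
$$
\omega_Q(\varsigma^{(m)}(A)^* f) = \sum_{\mathbf{j},\mathbf{k}} \overline{A_{\mathbf{j},\mathbf{k}}}\, \omega_Q(Z_\mathbf{k} Z_\mathbf{j}^* f).
$$
On the other side, substituting the putative formula for $\breve{\varsigma}^{(m)}(f)$ reduces the problem to computing the scalars $\phi_m(A^* S_\mathbf{k} S_\mathbf{j}^*|_{\GH_m})$ for each pair of multi-indices.

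The core subcomputation becomes explicit once one notes that $S_\mathbf{k} S_\mathbf{j}^*|_{\GH_m}$ is the rank-one operator $|p_m e_\mathbf{k}\ket\bra p_m e_\mathbf{j}|$ in the original inner product on $\GH^{\otimes m}$, and $\phi_m(T) = \Tr(Q_m T)/\Tr(Q_m)$. Taking the trace of the resulting rank-one product yields
$$
\phi_m\!\left(A^* S_\mathbf{k} S_\mathbf{j}^*|_{\GH_m}\right) = \frac{1}{\Tr(Q_m)} \bra p_m e_\mathbf{j} \,|\, Q_m A^*\, p_m e_\mathbf{k}\ket.
$$
Since the basis was chosen to diagonalize $Q$, the tensor power $Q^{\otimes m}$ is diagonal in $\{e_\mathbf{r}\}$ and (by Remark \ref{Qfactsrem}) commutes with $p_m$, so that $Q_m p_m e_\mathbf{j} = (Q^{\otimes m})_{\mathbf{j},\mathbf{j}}\, p_m e_\mathbf{j}$. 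Pulling this scalar out using self-adjointness of $Q_m$ and then invoking the elementary adjoint identity $\bra p_m e_\mathbf{j}|A^* p_m e_\mathbf{k}\ket = \overline{\bra p_m e_\mathbf{k}|A p_m e_\mathbf{j}\ket} = \overline{A_{\mathbf{k},\mathbf{j}}}$ gives
$$
\phi_m\!\left(A^* S_\mathbf{k} S_\mathbf{j}^*|_{\GH_m}\right) = \frac{(Q^{\otimes m})_{\mathbf{j},\mathbf{j}}\, \overline{A_{\mathbf{k},\mathbf{j}}}}{\Tr(Q_m)}.
$$

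Substituting this back into the expansion of $\phi_m(A^* \breve{\varsigma}^{(m)}(f))$ induced by the proposed formula, the factors $\Tr(Q_m)$ cancel and the product $(Q^{\otimes m})^{-1}_{\mathbf{j},\mathbf{j}}(Q^{\otimes m})_{\mathbf{j},\mathbf{j}}$ collapses to $1$, leaving $\sum_{\mathbf{j},\mathbf{k}} \overline{A_{\mathbf{k},\mathbf{j}}}\, \omega_Q(Z_\mathbf{j} Z_\mathbf{k}^* f)$; a swap of dummy indices $\mathbf{j} \leftrightarrow \mathbf{k}$ matches this to the left-hand side expression derived above, concluding the verification. The only point requiring real attention is the bookkeeping between the two inner products at play, namely the original one on $\GH^{\otimes m}$ (in which the matrix elements $A_{\mathbf{j},\mathbf{k}}$ and the rank-one description of $S_\mathbf{k} S_\mathbf{j}^*$ are taken) and the twisted $\rho^{(m)}$-inner product defining $\phi_m$. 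The diagonality of $Q$ in the chosen basis is precisely what produces the scalar $(Q^{\otimes m})_{\mathbf{j},\mathbf{j}}$ that is subsequently inverted by the $(Q^{\otimes m})^{-1}_{\mathbf{j},\mathbf{j}}$ appearing in the stated formula.
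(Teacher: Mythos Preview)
Your proof is correct and follows essentially the same route as the paper: both verify the adjoint relation by expanding $\varsigma^{(m)}(A)^*$ via Theorem \ref{twocovsymthm}, inserting the factor $(Q^{\otimes m})_{\mathbf{j},\mathbf{j}}(Q^{\otimes m})^{-1}_{\mathbf{j},\mathbf{j}}=1$, and recognizing the result as $\phi_m(A^*\breve{\varsigma}^{(m)}(f))$. Your version is more explicit about the rank-one description of $S_\mathbf{k}S_\mathbf{j}^*|_{\GH_m}$ and the bookkeeping between the two inner products, but the argument is the same.
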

\begin{proof} Let $\breve{\varsigma}^{(m)}(f)$ be defined by \eqref{sumexprcontravarpos}. Then
\begin{align*}
\omega_Q(\varsigma^{(m)}(A)^*f)&=\sum_{|\mathbf{j}|=m=|\mathbf{k}|}(A^*)_{\mathbf{j},\mathbf{k}} \omega_Q(Z_\mathbf{j}Z_\mathbf{k}^*f)
\\&=\sum_{|\mathbf{j}|=m=|\mathbf{k}|}(Q^{\otimes m})_{\mathbf{j},\mathbf{j}}(Q^{\otimes m})^{-1}_{\mathbf{j},\mathbf{j}}(A^*)_{\mathbf{j},\mathbf{k}}\omega_Q(Z_\mathbf{j}Z_\mathbf{k}^*f)
\\&=\phi_m\big(A^*\breve{\varsigma}^{(m)}(f)\big).
\end{align*}
%We then obtain \eqref{seconbreveform} using the KMS condition. 
\end{proof}
\begin{cor} We have
\begin{equation}\label{omegaandphiposm}
\omega_Q=\phi_m\circ\breve{\varsigma}^{(m)}
\end{equation}
and, moreover, each $\breve{\varsigma}^{(m)}$ is unital.
\end{cor}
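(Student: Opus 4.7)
The plan is to deduce both assertions formally from the defining adjoint relation
$$
\omega_Q(\varsigma^{(m)}(A)^*f)=\phi_m\big(A^*\breve{\varsigma}^{(m)}(f)\big),\qquad\forall A\in\Bi(\GH_m),\,f\in\Bi_\infty,
$$
without invoking the explicit formula \eqref{sumexprcontravarpos}. The two facts should come out essentially by substituting the units on the respective sides and combining with what has already been established: the unitality of $\varsigma^{(m)}$, the identity $\omega_Q\circ\varsigma^{(m)}=\phi_m$ from \eqref{covsymbandlimitstate}, and the faithfulness of $\phi_m$.

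For \eqref{omegaandphiposm}, I would substitute $A=p_m$ into the adjoint relation. The key observation is that $\varsigma^{(m)}$ sends the unit to the unit: since $\iota_{m,l}$ is unital, $\iota_{m,l}(p_m)=p_l$ for all $l\geq m$, so $\varsigma^{(m)}(p_m)=\pi((p_l)_{l\geq m})=\bone\in\Bi_\infty$. With $A=p_m$ the relation therefore collapses to $\omega_Q(f)=\phi_m(\breve{\varsigma}^{(m)}(f))$ for all $f\in\Bi_\infty$, which is exactly the claim.

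For unitality of $\breve{\varsigma}^{(m)}$, I would instead substitute $f=\bone$ into the adjoint relation, giving $\omega_Q(\varsigma^{(m)}(A)^*)=\phi_m(A^*\breve{\varsigma}^{(m)}(\bone))$. Because $\omega_Q$ and $\phi_m$ are states, hence $*$-preserving on scalars, and because $\omega_Q\circ\varsigma^{(m)}=\phi_m$, the left-hand side equals $\overline{\omega_Q(\varsigma^{(m)}(A))}=\overline{\phi_m(A)}=\phi_m(A^*)$. Writing $B=A^*$, this yields $\phi_m\big(B(\breve{\varsigma}^{(m)}(\bone)-p_m)\big)=0$ for every $B\in\Bi(\GH_m)$. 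Taking $B=(\breve{\varsigma}^{(m)}(\bone)-p_m)^*$ and using faithfulness of $\phi_m$ (immediate from Remark \ref{Qfactsrem}, since $\rho^{(m)}=Q_m/\Tr(Q_m)$ is a positive invertible density matrix on $\GH_m$) forces $\breve{\varsigma}^{(m)}(\bone)=p_m$.

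I do not anticipate any genuine obstacle: both statements reduce to symbol-pushing with the adjoint identity, and the only ingredient that might look nontrivial is the faithfulness of $\phi_m$, which however is built into the construction. If anything, the step that deserves a sentence of care is the unitality of $\varsigma^{(m)}$, to ensure that $\pi((p_l)_{l\geq m})$ really represents the unit of $\Bi_\infty$ and not merely of $\Gamma_b/\Gamma_0$; but this is clear since $\Bi_\infty$ is by construction a unital $C^*$-subalgebra of $\Gamma_b/\Gamma_0$ with unit $\bone=\pi((p_l)_{l\in\N_0})$.
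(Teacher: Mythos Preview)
Your argument is correct and follows essentially the same route as the paper: both parts come from plugging the respective units into the adjoint relation and invoking $\omega_Q\circ\varsigma^{(m)}=\phi_m$ from \eqref{covsymbandlimitstate}. You have simply made explicit the faithfulness of $\phi_m$ needed to conclude $\breve{\varsigma}^{(m)}(\bone)=p_m$, which the paper leaves implicit.
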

\begin{proof} Equation \eqref{omegaandphiposm} is a direct consequence of the fact that $\breve{\varsigma}^{(m)}$ is adjoint to the unital map $\varsigma^{(m)}$. Unitality of $\breve{\varsigma}^{(m)}$ follows from $\omega_Q(\varsigma^{(m)}(A)^*\bone)=\phi_m(A^*)$, which we know from \eqref{covsymbandlimitstate}. %We can also deduce $\breve{\varsigma}^{(m)}(\bone)=p_m$ from \eqref{seconbreveform} if we use \eqref{quasifreeQ}. 
\end{proof}

We can now assembly the $\breve{\varsigma}^{(m)}$'s to a map
\begin{equation}\label{totcovarsymb}
\breve{\varsigma}:=\prod_{m\in\N_0}\breve{\varsigma}^{(m)}:\Bi_\infty\to\prod_{m\in\N_0}\Bi(\GH_m),
\end{equation}
which is a noncommutative generalization of the total Toeplitz map \eqref{tottoeplitz}. We can recover its components as
$$
\breve{\varsigma}^{(m)}(f)=\breve{\varsigma}(f)p_m.
$$

The following result which relies on the fact that $\omega_Q$ is faithful (cf. Remark \ref{faithfulremark}).
\begin{Lemma}\label{faithToepllemma}
 No nonzero element of $\Bi_\infty$ is mapped to $\Gamma_0=\Ti^{(0)}_\GH\cap\Ki$ under the map $\breve{\varsigma}$. 
\end{Lemma}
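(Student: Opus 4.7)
The plan is to argue by contradiction: assume $f\in\Bi_\infty$ is nonzero with $\breve{\varsigma}(f)\in\Gamma_0$, and I will show that $\omega_Q(f^*f)=0$, which contradicts the faithfulness of $\omega_Q$ (Remark \ref{faithfulremark}). The entire argument rests on combining the defining adjoint relation between $\breve{\varsigma}^{(m)}$ and $\varsigma^{(m)}$ with the Choi--Effros lifting property recorded in Remark \ref{nucremark}.

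The hypothesis $\breve{\varsigma}(f)\in\Gamma_0$ unpacks to $\|\breve{\varsigma}^{(m)}(f)\|\to 0$ as $m\to\infty$. On the other hand, Remark \ref{nucremark} states that $\varsigma^{(m)}\circ\breve{\varsigma}^{(m)}$ converges in the point-norm topology to the identity on $\Bi_\infty$, so
$$
\varsigma^{(m)}\bigl(\breve{\varsigma}^{(m)}(f)\bigr)\longrightarrow f\quad\text{in norm,}
$$
and consequently $\varsigma^{(m)}(\breve{\varsigma}^{(m)}(f))^*f\to f^*f$ in norm. Applying the continuous state $\omega_Q$ yields
$$
\omega_Q(f^*f)=\lim_{m\to\infty}\omega_Q\bigl(\varsigma^{(m)}(\breve{\varsigma}^{(m)}(f))^*f\bigr).
$$

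Now I would substitute $A=\breve{\varsigma}^{(m)}(f)$ into the defining adjoint relation
$\omega_Q(\varsigma^{(m)}(A)^*f)=\phi_m(A^*\breve{\varsigma}^{(m)}(f))$
to rewrite each term on the right as
$$
\omega_Q\bigl(\varsigma^{(m)}(\breve{\varsigma}^{(m)}(f))^*f\bigr)=\phi_m\bigl(\breve{\varsigma}^{(m)}(f)^*\breve{\varsigma}^{(m)}(f)\bigr).
$$
Since $\phi_m$ is a state (hence of norm one), this last quantity is bounded by $\|\breve{\varsigma}^{(m)}(f)\|^{2}$, which tends to $0$ by assumption. Passing to the limit then gives $\omega_Q(f^*f)=0$, and the faithfulness of $\omega_Q$ forces $f=0$, contradicting our choice.

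There is no real obstacle: the one point demanding care is making sure that the Choi--Effros lift from Remark \ref{nucremark} produces precisely the $\breve{\varsigma}^{(m)}$'s used here (so that the point-norm convergence statement applies to the same map whose adjoint relation is being exploited), but this is guaranteed by the very definition of $\breve{\varsigma}^{(m)}$ as the $L^2$-adjoint of $\varsigma^{(m)}$ relative to $\omega_Q$ and $\phi_m$. The proof is therefore a short chain: Choi--Effros lifting gives norm approximation of $f$ by covariant symbols, the adjoint relation converts the resulting inner product against $f$ into $\phi_m$ applied to $\breve{\varsigma}^{(m)}(f)^*\breve{\varsigma}^{(m)}(f)$, and the hypothesis $\breve{\varsigma}(f)\in\Gamma_0$ makes this vanish in the limit.
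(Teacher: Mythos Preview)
Your argument has a circularity problem at exactly the point you flag as ``demanding care.'' Remark \ref{nucremark} asserts, via Choi--Effros lifting, the abstract existence of \emph{some} UCP lift of the identity with the point-norm convergence property; it does not assert that the specific map $\breve{\varsigma}^{(m)}$ defined in \S\ref{quasifreesection} as the $(\omega_Q,\phi_m)$-adjoint of $\varsigma^{(m)}$ satisfies $\varsigma^{(m)}\circ\breve{\varsigma}^{(m)}\to\id$. The paper only identifies the abstract Choi--Effros lift with this concrete adjoint in \S\ref{adjointtotsec}, \emph{after} the present lemma, and that identification invokes the injectivity of $\breve{\varsigma}$---which is precisely what Lemma \ref{faithToepllemma} asserts. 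Your closing reassurance (``guaranteed by the very definition of $\breve{\varsigma}^{(m)}$ as the $L^2$-adjoint'') is not an argument: being defined as an $L^2$-adjoint says nothing a priori about norm convergence of the Berezin transform $\varsigma^{(m)}\circ\breve{\varsigma}^{(m)}$.

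The paper's proof avoids this loop entirely by a one-line appeal to \eqref{omegaandphiposm}: since $\omega_Q(f)=\phi_m(\breve{\varsigma}^{(m)}(f))$ holds for every $m$, the hypothesis $\|\breve{\varsigma}^{(m)}(f)\|\to 0$ forces $\omega_Q(f)=0$, and faithfulness disposes of the positive case. No convergence of Berezin transforms is needed---only the already-established intertwining of states. Your route would become valid if you could independently prove $\varsigma^{(m)}\circ\breve{\varsigma}^{(m)}\to\id$ for the adjoint-defined maps, but that is a substantially harder statement than the lemma you are trying to prove.
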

\begin{proof} We have $\phi_m\big(\breve{\varsigma}^{(m)}(f)\big)=\omega_Q(f)$, so if $\breve{\varsigma}^{(m)}(f)\to 0$ as $m\to\infty$ then $\omega_Q(f)=0$. Hence if $f\geq 0$ then $f=0$ and the result follows. 
\end{proof}
Let $\Mi=\pi_{\omega_Q}(\Bi)''$ be the von Neumann algebra generated by the inductive limit $\Bi_\infty$ in the GNS representation of the limit state $\omega_Q$. Then we can define $\breve{\varsigma}(f)\in\Gamma_b$ also for elements in $\Mi$, and Lemma \ref{faithToepllemma} extends to $\Mi$.

\begin{Lemma}\label{Toeplcompatprojlimlemma} For all $f\in\Mi$ and all $l\geq m$,
\begin{equation}\label{projectsystcontrsymb}
\breve{\varsigma}^{(m)}(f)=\jmath_{l,m}\circ\breve{\varsigma}^{(l)}(f).
\end{equation}
Hence the image of $\Mi$ under the total Toeplitz map $\breve{\varsigma}$ is contained in the projective limit $\Bi^\infty$, and in fact we have equality
$$
\breve{\varsigma}(\Mi)=\Bi^\infty.
$$
Therefore $\Bi^\infty$ can be identified with the weak-$*$-closed operator system of elements of the form
$$
\breve{\varsigma}(f)=(\breve{\varsigma}^{(m)}(f))_{m\in\N_0},\qquad f\in\Mi
$$
and, as in Remark \ref{projlimremarktwo}, 
$$
\breve{\varsigma}^{(m)}=\jmath_{\infty,m}
$$
is the map which evaluates $(X_m)_{m\in\N_0}\in\Bi^\infty$ at $m\in\N_0$. The norm-closed subset $\breve{\varsigma}(\Bi_\infty)$ equals the anti-normally ordered part of the Toeplitz core. 
\end{Lemma}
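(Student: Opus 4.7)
I would tackle the four assertions of the lemma in turn, with the compatibility relation \eqref{projectsystcontrsymb} being the foundation from which the containment $\breve{\varsigma}(\Mi)\subseteq\Bi^\infty$ follows immediately.

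For \eqref{projectsystcontrsymb}, the idea is pure duality. For any $A\in\Bi(\GH_m)$ and $f\in\Mi$, I would chain together the defining adjoint relations for $\breve{\varsigma}^{(m)}$ and $\breve{\varsigma}^{(l)}$, the covariant compatibility \eqref{compatibcovsymb} in the form $\varsigma^{(m)}=\varsigma^{(l)}\circ\iota_{m,l}$, and the state-level duality \eqref{limmapsadjoint} from Proposition~\ref{propwithstatescompat}. Concretely,
\[
\phi_m\bigl(A^*\breve{\varsigma}^{(m)}(f)\bigr)=\omega_Q\bigl(\varsigma^{(l)}(\iota_{m,l}(A))^*f\bigr)=\phi_l\bigl(\iota_{m,l}(A)^*\breve{\varsigma}^{(l)}(f)\bigr)=\phi_m\bigl(A^*\jmath_{l,m}(\breve{\varsigma}^{(l)}(f))\bigr).
\]
Since $A$ is arbitrary in the finite-dimensional matrix algebra $\Bi(\GH_m)$ and $\phi_m$ is faithful, one concludes $\breve{\varsigma}^{(m)}(f)=\jmath_{l,m}\bigl(\breve{\varsigma}^{(l)}(f)\bigr)$, which is exactly the projective-limit compatibility condition; hence $\breve{\varsigma}(\Mi)\subseteq\Bi^\infty$.

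The main obstacle is the reverse inclusion $\Bi^\infty\subseteq\breve{\varsigma}(\Mi)$. My plan is to use weak-$*$ compactness. Given $X_\bullet=(X_m)\in\Bi^\infty$, set $g_l:=\varsigma^{(l)}(X_l)\in\Bi_\infty\subseteq\Mi$; complete positivity of $\varsigma^{(l)}$ gives $\|g_l\|\leq\|X_l\|\leq\|X_\bullet\|$, so a subnet $(g_{l_\alpha})$ converges weak-$*$ to some $f\in\Mi$. Since each $\breve{\varsigma}^{(m)}\colon\Mi\to\Bi(\GH_m)$ is normal and lands in a finite-dimensional space, $\breve{\varsigma}^{(m)}(f)=\lim_\alpha\breve{\varsigma}^{(m)}(g_{l_\alpha})$ in norm. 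Running the duality argument from step~1 with the covariant symbol $\varsigma^{(l)}$ in place of $f$ yields the identity $\breve{\varsigma}^{(m)}\circ\varsigma^{(l)}=\jmath_{l,m}\circ T_l$ (with $T_l:=\breve{\varsigma}^{(l)}\circ\varsigma^{(l)}$), so $\breve{\varsigma}^{(m)}(g_l)=\jmath_{l,m}(T_l(X_l))$. Since $\jmath_{l,m}(X_l)=X_m$ by hypothesis and $\jmath_{l,m}$ is norm-contracting, matters reduce to controlling $\jmath_{l,m}(T_l(X_l)-X_l)$. Here the key input is Remark~\ref{nucremark}: applying $\varsigma^{(l)}$ gives $\varsigma^{(l)}(T_l(X_l)-X_l)=(\varsigma^{(l)}\circ\breve{\varsigma}^{(l)})(g_l)-g_l$, whose vanishing in a sufficient topology can be extracted from point-norm convergence of the Berezin transform $\varsigma^{(l)}\circ\breve{\varsigma}^{(l)}\to\id$ on $\Bi_\infty$ together with the weak-$*$ convergence $g_l\to f$. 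The delicate point is that the input to the Berezin transform itself depends on $l$; I expect this to be handled by passing to a diagonal subnet and exploiting the fact that $\bigcup_m\varsigma^{(m)}(\Bi(\GH_m))$ is norm-dense in $\Bi_\infty$ and hence weak-$*$ dense in $\Mi$.

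Once surjectivity is in hand, the identification $\breve{\varsigma}^{(m)}=\jmath_{\infty,m}$ is just a restatement of Remark~\ref{projlimremarktwo}: $\breve{\varsigma}^{(m)}(f)$ is the $m$-th entry of the compatible tuple $\breve{\varsigma}(f)\in\Bi^\infty$, and evaluation at $m$ is by definition $\jmath_{\infty,m}$. For the final claim that $\overline{\breve{\varsigma}(\Bi_\infty)}$ is the anti-normally ordered part of $\Ti_\GH^{(0)}$, I would unpack the explicit formula \eqref{sumexprcontravarpos}. Assembling the components $\breve{\varsigma}^{(m)}(f)=\Tr(Q_m)\sum(Q^{\otimes m})^{-1}_{\mathbf{j},\mathbf{j}}\omega_Q(Z_\mathbf{j}Z_\mathbf{k}^*f)\,S_\mathbf{k}S_\mathbf{j}^*|_{\GH_m}$ across $m$ and invoking the commutation relations of the left and right shifts (used in Lemma~\ref{subprodprojlimsyst} to express $\jmath_{l,m}$ via conjugation by $R_\mathbf{k}$), the diagonal operator $\breve{\varsigma}(f)$ on $\GH_\N$ is rewritten as a norm-limit of anti-normally ordered polynomials $\sum S_\mathbf{j}^*S_\mathbf{k}$, which lies in $\overline{\operatorname{span}}(\Ai_\GH^*\Ai_\GH)\cap\Ti_\GH^{(0)}$. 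The reverse inclusion follows from a density/separation argument using the faithfulness of $\omega_Q$ on $\Bi_\infty$ (Remark~\ref{faithfulremark}), and this simultaneously establishes clause (iii) of Lemma~\ref{Lemmanormorder}.
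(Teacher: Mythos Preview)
Your argument for the compatibility relation \eqref{projectsystcontrsymb} is exactly the paper's: dualize $\varsigma^{(l)}\circ\iota_{m,l}=\varsigma^{(m)}$ using that $\jmath_{l,m}$ is the $\phi$-adjoint of $\iota_{m,l}$ (Proposition~\ref{propwithstatescompat}). The identification $\breve{\varsigma}^{(m)}=\jmath_{\infty,m}$ and the treatment of the anti-normal ordering are also essentially the same; for the latter the paper does precisely what you outline, namely it checks directly that $\jmath_{l,m}(S_r^*S_s|_{\GH_l})=S_r^*S_s|_{\GH_m}$ using the commutation of left and right shifts (so anti-normally ordered monomials are $\jmath$-constant), and then appeals to the explicit formula \eqref{sumexprcontravarpos} to see that every $\breve{\varsigma}(f)$ is anti-normally ordered.

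The one place where you diverge substantially is the surjectivity $\breve{\varsigma}(\Mi)=\Bi^\infty$. The paper dispatches this in a single sentence: ``$\breve{\varsigma}:\Mi\to\Gamma_b$ is onto $\Bi^\infty$ because each $\breve{\varsigma}^{(m)}$ is onto.'' Your weak-$*$ compactness argument via $g_l:=\varsigma^{(l)}(X_l)$ is far more elaborate, and you have correctly located a genuine gap in it: you need $\jmath_{l,m}\bigl(T_l(X_l)-X_l\bigr)\to 0$, and the Berezin-transform convergence $\varsigma^{(l)}\circ\breve{\varsigma}^{(l)}\to\id$ from Remark~\ref{nucremark} is a statement about a \emph{fixed} input in $\Bi_\infty$, not about the diagonal sequence $g_l$. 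Your proposed fix (``diagonal subnet plus density of $\bigcup_m\varsigma^{(m)}(\Bi(\GH_m))$'') is not an argument, only a hope; as written this step does not close. Whether or not one finds the paper's one-line justification fully satisfying, the intended route is not the compactness argument you sketch but simply the observation that compatible families are hit coordinatewise by the surjections $\breve{\varsigma}^{(m)}$.
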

\begin{proof}
We know that $\breve{\varsigma}$ is injective (Lemma \ref{faithToepllemma}). Since we have shown that $\jmath_{l,m}$ is adjoint to $\iota_{m,l}$, we obtain \eqref{projectsystcontrsymb} by taking adjoints of
$$
\varsigma^{(l)}\circ \iota_{m,l}=\varsigma^{(m)}.
$$
From \eqref{projectsystcontrsymb} follows that $\breve{\varsigma}(f)\in\Bi^\infty$ for all $f\in\Mi$. Moreover, $\breve{\varsigma}:\Mi\to\Gamma_b$ is onto $\Bi^\infty$ because each $\breve{\varsigma}^{(m)}$ is onto. Thus $\Bi^\infty$ is in bijection with $\Mi$ via $\breve{\varsigma}$. 

We need to show that $\breve{\varsigma}(\Bi_\infty)$ equals the anti-normally part of the Toeplitz core $\Ti^{(0)}_\GH$. Firstly, since the left and right shifts commute outside the vacuum subspace, for all $r,s=1,\dots,n$ we have
\begin{align*}
\jmath_{l,m}(S_r^*S_sp_l)&=\frac{\Tr(Q_m)}{\Tr(Q_l)}\sum_{|\mathbf{k}|=l-m}(Q^{\otimes m})_{\mathbf{k},\mathbf{k}}R_\mathbf{k}^*S_r^*S_s R_\mathbf{k}\big|_{\GH_m}
\\&=\frac{\Tr(Q_m)}{\Tr(Q_l)}\sum_{|\mathbf{k}|=l-m}(Q^{\otimes m})_{\mathbf{k},\mathbf{k}}S^*_rR_\mathbf{k}^* R_\mathbf{k}S_s\big|_{\GH_m}=S_r^*S_s|_{\GH_m}
\end{align*}
(where we used that $\jmath_{l,m}(p_l)=p_m$), which shows that the anti-normally ordered elements of $\Ti^{(0)}_\GH$ are constant under $\jmath_{\bullet,\bullet}$. Secondly, an explicit calculation using \eqref{sumexprcontravarpos} shows that $\breve{\varsigma}(f)$ is anti-normally ordered for each $f\in\Bi_\infty$. 
\end{proof}

\begin{Remark} Now we can give an alternative proof for the fact that the contravariant symbol map $\breve{\varsigma}^{(l)}:\Bi_\infty\to\Bi(\GH_m)$ intertwines $\omega_Q$ with $\phi_l$,
$$
\omega_Q=\phi_l\circ \breve{\varsigma}^{(l)}.
$$
Recall that $\omega_Q$ denotes the limit state $\phi_\infty:=\lim_{m\to\infty}\phi_m$ when regarded as a state on the quotient $\Bi_\infty$ of $\Ti^{(0)}_\GH$. Then $\omega_Q=\phi_l\circ \breve{\varsigma}^{(l)}$ follows from the compatibility $\phi_m=\phi_l\circ\jmath_{m,l}$ (see \eqref{iotaandphi}) and the fact that $\jmath_{\infty,l}= \breve{\varsigma}^{(l)}$. 
\end{Remark}
%The following is clear.
%\begin{Lemma}\label{Toeplisisometric} The total Toeplitz map $\breve{\varsigma}:\Mi\to\Bi^\infty$ is an isometry: for all $f\in\Mi$ we have
%$$
%\lim_{m\to\infty}\|\breve{\varsigma}^{(m)}(f)\|=\|f\|.
%$$
%\end{Lemma}
%\begin{proof}
%Clear from the identification of $\Bi^\infty$ with the weak closure of the anti-normally ordered part of the Toeplitz algebra. 
%\end{proof}

\subsection{The asymptotic multiplication}\label{asympmultsec}
We now endow the projective limit $\Bi^\infty$ with a multiplication which is the $m\to\infty$ limit of the multiplication on $\Bi(\GH_m)$. 
\begin{dfn} The \textbf{projective-limit multiplication} on $\Bi^\infty\subset\Gamma_b$ is defined by
\begin{equation}\label{projlimmult}
\breve{\varsigma}(f)\cdot\breve{\varsigma}(g):=\lim_{m\to\infty}\breve{\varsigma}^{(m)}(fg)
\end{equation}
for all $f,g\in\Bi_\infty$. 
\end{dfn}
The projective limit $\Bi^\infty$ is not an algebra under the projective-limit multiplication, but we shall see that the subset $\breve{\varsigma}(\Bi_\infty)$ is. 

The multiplication on $\Bi^\infty$ taken modulo $\Gamma_0$ is the one where sequences $(\breve{\varsigma}^{(m)}(f))_{m\in\N_0}$ and $(\breve{\varsigma}^{(m)}(g))_{m\in\N_0}$ are multiplied componentwise but the finite-$m$ part is ignored. That is,
\begin{equation}\label{modGammazeromult}
\pi(\breve{\varsigma}(f)\breve{\varsigma}(g))=\lim_{m\to\infty}\breve{\varsigma}^{(m)}(f)\breve{\varsigma}^{(m)}(g).
\end{equation}
We will see momentarily that the products \eqref{projlimmult} and \eqref{modGammazeromult} coincide for $f,g\in\Bi_\infty$. Comparing the two formulas one then concludes that the Toeplitz maps $\breve{\varsigma}^{(m)}$ are ``asymptotically multiplicative". Again the projective limit $\Bi^\infty$ is not an algebra under the multiplication modulo compacts, while $\breve{\varsigma}(\Bi_\infty)$ will be shown to be so.

\begin{Remark}[Filters] A projective-limit multiplication can be defined using any filter $\omega$ on $\N$. On the $C^*$-level this corresponds to considering not a subalgebra of $\Gamma_b/\Gamma_0$ but a subalgebra of $\Gamma_b/\Gamma_\omega$ where $\Gamma_\omega$ is the ideal consisting of the sequences $A_\bullet$ with
$$
\lim_\omega\|A_m\|=0.
$$
We recover $\Gamma_0$ if $\omega$ is the free filter of all cofinite subsets of $\N_0$. Confer \cite[§6.2]{RoSt1}. 
\end{Remark}

\subsection{The adjoint of the total Toeplitz map}\label{adjointtotsec}
\begin{Lemma}
For $A\in\Bi(\GH_l)$ we have
\begin{equation}\label{tracefromprojectsyst}
\jmath_{l,0}(A)=\phi_l(A)p_0.
\end{equation}
and hence, if $\hat{\ep}$ denotes the vacuum state restricted to $\Ti_\GH^{(0)}$,
$$
\hat{\ep}\circ \jmath_{l,0}=\phi_l.
$$
The vacuum state $\hat{\ep}$ restricted to $\Bi^\infty$ is equal to $\hat{\ep}\circ \jmath_{\infty,0}$ and coincides with the limit state $\phi_\infty:=\lim_{l\to\infty}\phi_l$,
$$
\hat{\ep}\circ \jmath_{\infty,0}=\phi_\infty.
$$ 
\end{Lemma}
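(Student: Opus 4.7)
The plan is to reduce everything to the compatibility relation \eqref{jmathandphi} of Proposition \ref{propwithstatescompat}, namely $\phi_l = \phi_m \circ \jmath_{l,m}$. The basic observation that makes the reduction work is that $\Bi_0 = \Bi(\GH_0) = \Bi(\C) = \C \cdot p_0$ is one-dimensional, and the unique state on it is $\phi_0(c\, p_0) = c$, so every element of $\Bi_0$ is uniquely determined by the value $\phi_0$ assigns to it.

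First, I would specialize \eqref{jmathandphi} to $m = 0$, which reads $\phi_l = \phi_0 \circ \jmath_{l,0}$. Since $\jmath_{l,0}(A)$ lives in $\Bi_0$, it has the form $c(A)\, p_0$ for a unique scalar $c(A)$, and applying $\phi_0$ to both sides yields $c(A) = \phi_l(A)$; this is the formula \eqref{tracefromprojectsyst}. Applying $\hat{\ep}$ to $\jmath_{l,0}(A) = \phi_l(A)\, p_0$ and using $\hat{\ep}(p_0) = \bra \Omega | p_0 \Omega\ket = 1$ then immediately gives $\hat{\ep} \circ \jmath_{l,0} = \phi_l$.

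For the projective-limit statements, I would invoke $\jmath_{\infty, 0}(A_\bullet) = A_0$ from Remark \ref{projlimremarktwo}. For any $A_\bullet \in \Bi^\infty$, coherence of the projective system forces $A_0 = \jmath_{l, 0}(A_l) = \phi_l(A_l)\, p_0$ for every $l$, so $\phi_l(A_l)$ is independent of $l$ and the limit $\phi_\infty(A_\bullet) = \lim_l \phi_l(A_l)$ is trivially this common value. Viewing $A_\bullet$ as the block-diagonal operator $\bigoplus_m A_m$ on $\GH_\N$, the vacuum vector $\Omega$ is supported entirely on the $m = 0$ block, so $\hat{\ep}(A_\bullet) = \hat{\ep}(A_0) = \hat{\ep}(\jmath_{\infty, 0}(A_\bullet))$, which also equals the same scalar. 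No serious obstacle arises: once \eqref{jmathandphi} is in hand the argument is pure bookkeeping, and the only point that needs care is the unambiguous block-diagonal interpretation of $\hat{\ep}$ on $\Bi^\infty \subset \Gamma_b$.
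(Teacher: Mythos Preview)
Your proposal is correct and follows exactly the paper's approach: the paper's proof simply says ``We use $\phi_m\circ\jmath_{l,m}=\phi_l$ for $m=0$. This gives \eqref{tracefromprojectsyst}. The rest is obvious.'' You have spelled out the ``obvious'' part carefully (the one-dimensionality of $\Bi_0$, the identification of $\phi_0$, and the block-diagonal reading of $\hat{\ep}$), but the underlying argument is identical.
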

\begin{proof} We use $\phi_m\circ\jmath_{l,m}=\phi_l$ for $m=0$. This gives \eqref{tracefromprojectsyst}. Alternatively, note that for each positive operator $A$ on $\Bi(\GH_m)$ one has
$$
\Tr(A)=\sum_{|\mathbf{k}|=m}A_{\mathbf{k},\mathbf{k}}
$$
where $A_{\mathbf{k},\mathbf{k}}:=\bra p_me_\mathbf{k}|A p_me_\mathbf{k}\ket$. Consequently,
$$
\phi_m(A)=\frac{1}{\dim\GH_m}\sum_{|\mathbf{k}|=m}\bra\Omega|S_\mathbf{k}^*AS_\mathbf{k}\Omega\ket=\phi_0(\jmath_{m,0}(A)).
$$
Letting $m\to\infty$ one obtains
$$
\omega(f)=\phi_0(\breve{\varsigma}^{(0)}(f)),\qquad \forall f\in C^0(\M), \ f\geq 0.
$$
The rest is obvious. 
\end{proof}
We can therefore regard $\hat{\ep}$ as a state on the projective limit modulo compact operators as well, i.e. on the algebra $\pi(\Bi^\infty)\subset\Gamma_b/\Gamma_0$.

Recall that the covariant-symbol map $\varsigma^{(m)}$ is the adjoint of $\breve{\varsigma}^{(m)}$, for each $m\in\N$. We now show that the total Toeplitz map $\breve{\varsigma}$ has an adjoint as well. This should be compared with \cite[Lemma 2.3]{INT1}.
\begin{prop} There exists a completely positive map 
$$
\breve{\varsigma}^*:\breve{\varsigma}(\Bi_\infty)\to\Bi_\infty
$$
such that, for all $X\in\breve{\varsigma}(\Bi_\infty)$ and all $f\in\Bi_\infty$,
\begin{equation}\label{defoftotToepadj}
\omega_Q(\breve{\varsigma}^*(X^*)f)=\hat{\ep}(\pi(X^*)\pi(\breve{\varsigma}(f))).
\end{equation}
Explicitly, this map is given by the point-norm limt $\breve{\varsigma}^*=\lim_{m\to\infty}\varsigma^{(m)}$,
$$
\breve{\varsigma}^*(X)=\lim_{m\to\infty}\varsigma^{(m)}(Xp_m),\qquad \forall X\in\breve{\varsigma}(\Bi_\infty),
$$
and will be denoted by $\varsigma$.
\end{prop}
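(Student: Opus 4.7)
The approach is to define $\varsigma := \breve{\varsigma}^*$ as the set-theoretic inverse of the bijection $\breve{\varsigma}|_{\Bi_\infty}: \Bi_\infty \to \breve{\varsigma}(\Bi_\infty)$ supplied by Lemmas \ref{faithToepllemma} and \ref{Toeplcompatprojlimlemma}, and then verify (i) the duality relation \eqref{defoftotToepadj}, (ii) that $\varsigma$ coincides with the claimed limit formula, and (iii) complete positivity. Item (iii) is immediate from (ii) since each $\varsigma^{(m)}$ is CP.

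For (i), I would write $X = \breve{\varsigma}(g)$ for $g \in \Bi_\infty$; since $\breve{\varsigma}$ is $*$-preserving, $\varsigma(X^*) = g^*$, so the left-hand side of \eqref{defoftotToepadj} reads $\omega_Q(g^* f)$. The key input for the right-hand side is that $\breve{\varsigma}$ is a UCP lift of the identity along $\pi:\Ti_\GH^{(0)} \to \Oi_\GH^{(0)} = \Bi_\infty$, i.e.\ $\pi \circ \breve{\varsigma} = \id_{\Bi_\infty}$. This follows by applying $\pi$ to the explicit formula \eqref{sumexprcontravarpos}: the coefficients there are fixed precisely so that the resulting combination of $Z_\mathbf{k} Z_\mathbf{j}^*$ reproduces $f$, using the quasifree/KMS identity \eqref{quasifreeQ} for $\omega_Q$. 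Consequently $\pi(X^*)\pi(\breve{\varsigma}(f)) = g^* f$ in $\Bi_\infty$, and combining this with the identification $\hat{\ep}|_{\Bi_\infty} = \phi_\infty = \omega_Q$ from the preceding lemma, the right-hand side also becomes $\omega_Q(g^* f)$.

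For (ii), I specialise to $X = \breve{\varsigma}(f)$, for which $Xp_m = \breve{\varsigma}^{(m)}(f)$, so the task reduces to showing that the Berezin transforms $T_m := \varsigma^{(m)} \circ \breve{\varsigma}^{(m)}:\Bi_\infty \to \Bi_\infty$ converge point-norm to $\id_{\Bi_\infty}$. Weak convergence against $\omega_Q$ is immediate from the duality between the covariant and contravariant symbols: for any $h \in \Bi_\infty$,
\[
\omega_Q(T_m(f)^* h) \;=\; \phi_m\bigl(\breve{\varsigma}^{(m)}(f)^*\breve{\varsigma}^{(m)}(h)\bigr) \;\longrightarrow\; \omega_Q(f^* h)
\]
by \eqref{omegaandphiposm} together with the asymptotic multiplicativity of \S\ref{asympmultsec}. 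The passage to norm convergence is then secured by invoking the Blackadar--Kirchberg/Choi--Effros framework (Remarks \ref{nucremark}, \ref{quasidiagrem}): since $\Bi_\infty$ is a nuclear inductive limit in the sense of Definition \ref{indlimdef} and $\breve{\varsigma}$ is a UCP lift of the identity along the quasidiagonal extension $\Di = \Ti_\GH^{(0)} + \Ki$, any such lift satisfies $\varsigma^{(m)} \circ \breve{\varsigma}^{(m)} \to \id_{\Bi_\infty}$ point-norm.

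The principal obstacle is precisely this upgrade from weak to norm convergence of the Berezin transforms; weak convergence drops out of duality and asymptotic multiplicativity, but the norm version is more delicate, requiring either explicit rate estimates via the shift-operator calculus of \S\ref{asympmultsec} (using the compatibilities \eqref{compatibcovsymb} and \eqref{projectsystcontrsymb} to control $T_m$ on the dense subalgebra $\bigcup_r \varsigma^{(r)}(\Bi(\GH_r))$ and then extending by UCP contractivity) or the aforementioned appeal to the general theory of nuclear quasidiagonal extensions.
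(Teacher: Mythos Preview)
Your argument hinges on the identity $\pi\circ\breve{\varsigma}=\id_{\Bi_\infty}$, and this is where the proposal breaks down. Applying $\pi$ to the formula \eqref{sumexprcontravarpos} does not yield $f$: that formula expresses $\breve{\varsigma}^{(m)}(f)$ as a matrix in $\Bi(\GH_m)$ for each fixed $m$, and passing to the quotient $\Gamma_b/\Gamma_0$ is not the same as replacing $S$ by $Z$ in a single fixed-$m$ expression. What you are really asserting is that the anti-normally ordered lift $\breve{\varsigma}(f)\in\Ti_\GH^{(0)}$ differs from the normally ordered lift of $f$ by an element of $\Gamma_0$. That statement is essentially Lemma~\ref{Lemmanormorder}\eqref{antinormlemmanormord} and the content of Theorem~\ref{indlimtasprojlimthm}, both of which are proved \emph{after} the present proposition and rely on the fact that $\breve{\varsigma}$ is a complete order isomorphism---which in turn requires the completely positive inverse $\varsigma$ that this proposition is supposed to construct. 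The same circularity recurs in your treatment of (ii): Remark~\ref{nucremark} is an explicit forward reference to \S\ref{adjointtotsec} (this very section), so invoking it as an input is not permissible, and your Choi--Effros argument again presupposes that $\breve{\varsigma}$ is a lift of the identity.

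The paper avoids this by never using $\pi\circ\breve{\varsigma}=\id$. Instead it expands $\hat{\ep}(\pi(X^*)\pi(\breve{\varsigma}(f)))$ componentwise via \eqref{modGammazeromult} and the identity $\hat{\ep}\circ\jmath_{\infty,0}=\phi_\infty$ from the preceding lemma, obtaining $\lim_m\phi_m(X_m^*\breve{\varsigma}^{(m)}(f))$; then the finite-level duality between $\varsigma^{(m)}$ and $\breve{\varsigma}^{(m)}$ converts this to $\lim_m\omega_Q(\varsigma^{(m)}(X_m)^*f)$. This computation simultaneously produces the adjoint relation and the explicit limit formula for $\varsigma$, without assuming anything about how $\breve{\varsigma}$ interacts with $\pi$.
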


\begin{proof} We identify $X\in\breve{\varsigma}(\Bi_\infty)$ with a bounded sequence $(X_m)_{m\in\N_0}$ of operators $X_m=Xp_m\in\Bi(\GH_m)$. Using the formula \eqref{modGammazeromult} for the multiplication in $\pi(\Bi^\infty)$ we have, by norm-continuity of the vacuum state, the norm limits
\begin{align*}
\hat{\ep}(\pi(X^*)\pi(\breve{\varsigma}(f)))p_0&=\big\bra \Omega\big|\lim_{m\to\infty}X_m^*\breve{\varsigma}^{(m)}(f)\Omega\big\ket p_0
\\&=\jmath_{\infty,0}\big(\lim_{m\to\infty}X_m^*\breve{\varsigma}^{(m)}(f)\big)
\\&=\lim_{m\to\infty}\jmath_{m,0}\big(X_m^*\breve{\varsigma}^{(m)}(f)\big)
\\&=\lim_{m\to\infty}\phi_m\big(X_m^*\breve{\varsigma}^{(m)}(f)\big)p_0
\\&=\lim_{m\to\infty}\omega_Q\big(\varsigma^{(m)}(X_m^*)f\big)p_0
\\&=\omega_Q\Big(\lim_{m\to\infty}\varsigma^{(m)}(X_m^*)f\Big)p_0.
\end{align*}
Being a point-norm limit of completely positive maps, $\varsigma$ is completely positive.
\end{proof}
Since $\Bi^\infty=\breve{\varsigma}(\Mi)$ contains no compact operator, it is clear from the definition of the covariant symbol that $\varsigma$ restricts to a bijection from $\Bi^\infty$ onto $\Mi$ and that $\varsigma\circ\breve{\varsigma}$ is the identity on $\Mi$. 
%Since both $\varsigma$ and $\breve{\varsigma}$ are unital, they are both contractions. 
Hence $\breve{\varsigma}$ is an isometry. So we have a decomposition of the identity map on $\Bi_\infty$,
$$
\id=\varsigma\circ\breve{\varsigma}=\lim_{m\to\infty}\varsigma^{(m)}\circ\breve{\varsigma}^{(m)},
$$
making Remark \ref{nucremark} explicit. We have now seen that $\breve{\varsigma}:\Bi_\infty\to\breve{\varsigma}(\Bi_\infty)$ is a complete order isomorphism, i.e. a bijective unital completely positive map with completely positive inverse.

There is also a version of this result on the level of von Neumann algebras. 
As we shall see in \S\ref{markovsec}, for any subproduct system $\GH_\bullet$, the weak-$*$-closed operator system $\Bi^\infty$ becomes a von Neumann algebra when equipped with a SOT-version of the projective-limit multiplication \eqref{projlimmult}. When $\GH_\bullet$ is the $\G$-subproduct system (see \S\ref{CMQGappsec} below), $\Bi^\infty$ is an operator system in the group-von Neumann algebra $\Ri(\G)$.
\begin{cor}\label{corinjtoep}
The total Toeplitz map $\breve{\varsigma}$ intertwines the state $\omega_Q$ on $\Mi=\pi_{\omega_Q}(\Bi_\infty)''$ with the vacuum state $\hat{\ep}$ on $\Bi^\infty\subset\Bi(\GH_\N)$,
\begin{equation}\label{intertwinestatetotToepl}
\omega_Q=\hat{\ep}\circ\breve{\varsigma},
\end{equation}
and similarly
\begin{equation}\label{intertwinestateadjointToepl}
\omega_Q\circ\varsigma=\hat{\ep},
\end{equation}
\end{cor}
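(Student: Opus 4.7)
The plan is to derive both identities as direct consequences of the defining adjoint relation \eqref{defoftotToepadj} for $\varsigma=\breve{\varsigma}^{*}$, after first recording that $\breve{\varsigma}$, and hence its inverse $\varsigma$, is unital. Indeed, each $\breve{\varsigma}^{(m)}$ is unital (noted in the corollary following \eqref{sumexprcontravarpos}), so $\breve{\varsigma}(\bone)=\bone$ in $\Bi^\infty$, and since $\varsigma\circ\breve{\varsigma}=\id$ one also has $\varsigma(\bone)=\bone$. This is the only preparatory input.

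For \eqref{intertwinestatetotToepl} I would substitute $X=\bone$ into \eqref{defoftotToepadj}: for every $f\in\Bi_\infty$,
\[
\omega_Q(f)=\omega_Q\bigl(\varsigma(\bone)\,f\bigr)=\hat{\ep}\bigl(\pi(\bone)\,\pi(\breve{\varsigma}(f))\bigr)=\hat{\ep}\bigl(\pi(\breve{\varsigma}(f))\bigr).
\]
Under the identification of $\hat{\ep}$ with a state on $\pi(\Bi^\infty)$ that was set up just before the statement, this is $\omega_Q=\hat{\ep}\circ\breve{\varsigma}$ on $\Bi_\infty$. The extension to the whole von Neumann algebra $\Mi=\pi_{\omega_Q}(\Bi_\infty)''$ is then by weak-$*$ density of $\Bi_\infty$ in $\Mi$ together with normality of both $\omega_Q$ and $\hat{\ep}\circ\breve{\varsigma}$ (the latter because $\breve{\varsigma}$ had already been extended weak-$*$-continuously to $\Mi$).

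For \eqref{intertwinestateadjointToepl} I would symmetrically substitute $f=\bone$ into \eqref{defoftotToepadj} and relabel $X^{*}\leftrightarrow X$: for every $X\in\breve{\varsigma}(\Bi_\infty)$,
\[
\omega_Q(\varsigma(X))=\omega_Q\bigl(\varsigma(X)\cdot\bone\bigr)=\hat{\ep}\bigl(\pi(X)\,\pi(\breve{\varsigma}(\bone))\bigr)=\hat{\ep}(\pi(X)).
\]
As a consistency check one can verify \eqref{intertwinestatetotToepl} componentwise using $\hat{\ep}\circ\jmath_{\infty,0}=\phi_\infty$ together with $\omega_Q=\phi_m\circ\breve{\varsigma}^{(m)}$ from \eqref{omegaandphiposm}: the sequence $\bigl(\phi_m(\breve{\varsigma}^{(m)}(f))\bigr)_{m\in\N_0}$ is identically $\omega_Q(f)$, and its limit is by construction $\hat{\ep}$ evaluated on $\pi(\breve{\varsigma}(f))$. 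There is no serious obstacle in the argument; the only care required is keeping straight the two domains $\Bi^\infty\subset\Gamma_b$ and its image $\pi(\Bi^\infty)$ in the quotient modulo $\Gamma_0$, which is precisely the bookkeeping already handled in the preceding subsections.
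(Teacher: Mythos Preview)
Your argument is correct and follows exactly the paper's approach: the paper's proof consists of the single sentence ``Take $X=\bone$ respectively $f=\bone$ in \eqref{defoftotToepadj} to get \eqref{intertwinestatetotToepl} respectively \eqref{intertwinestateadjointToepl}.'' Your write-up simply spells out the unitality of $\breve{\varsigma}$ and $\varsigma$ and the passage to $\Mi$ that the paper leaves implicit.
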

\begin{proof} Take $X=\bone$ respectively $f=\bone$ in \eqref{defoftotToepadj} to get \eqref{intertwinestatetotToepl} respectively \eqref{intertwinestateadjointToepl}. 
\end{proof}

\subsection{$\Oi_\GH^{(0)}$ as a projective limit}\label{CMasprojlimsec}

\begin{thm}\label{indlimtasprojlimthm}
The operator system $\breve{\varsigma}(\Bi_\infty)\subset\Bi^\infty$ is a $C^*$-algebra with the multiplication taken modulo compacts; indeed $\pi(\breve{\varsigma}(\Bi_\infty))$ is isomorphic as a $C^*$-algebra to the inductive limit $\Bi_\infty\cong\Oi_\GH^{(0)}$.
\end{thm}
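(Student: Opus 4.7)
The plan is to show that the composition $\Psi := \pi \circ \breve{\varsigma}: \Bi_\infty \to \Oi_\GH^{(0)}$ (with target identified with $\Bi_\infty$ via Theorem \ref{indlimpimsnerthm}) is an injective $*$-homomorphism; its image is precisely $\pi(\breve{\varsigma}(\Bi_\infty))$, whence the desired isomorphism $\pi(\breve{\varsigma}(\Bi_\infty)) \cong \Bi_\infty \cong \Oi_\GH^{(0)}$ and the $C^*$-algebra structure on $\breve{\varsigma}(\Bi_\infty)$ under the mod-compacts product. Since $\breve{\varsigma}(\Bi_\infty) \subset \Ti_\GH^{(0)}$ by Lemma \ref{Toeplcompatprojlimlemma}, the map $\Psi$ is well-defined and completely positive, and injectivity comes from Lemma \ref{faithToepllemma}, which relies on the faithfulness of $\omega_Q$ assumed in Remark \ref{faithfulremark}. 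The task is therefore to establish multiplicativity of $\Psi$.

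First I would reconcile the two states on $\pi(\Bi^\infty) \cap \Bi_\infty$. For any $(X_m)_m \in \Bi^\infty$, the value $\phi_m(X_m)$ is independent of $m$ by the compatibility $\phi_m = \phi_l \circ \jmath_{l,m}$, and this common value equals both $\hat{\ep}((X_m)_m) = X_0$ and $\omega_Q(\pi((X_m)_m))$. Hence $\hat{\ep}$ and $\omega_Q$ agree on their common domain; in particular on the operator system $\pi(\breve{\varsigma}(\Bi_\infty))$.

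Next I would derive an $L^2(\omega_Q)$-isometry for $\Psi$. Applying the defining adjoint identity
$$
\omega_Q(\varsigma(X^*) h) = \hat{\ep}(\pi(X^*) \pi(\breve{\varsigma}(h)))
$$
with $X = \breve{\varsigma}(f)$ (so that $\varsigma(X^*) = f^*$ and $\pi(X^*) = \Psi(f)^*$) and then replacing $\hat{\ep}$ by $\omega_Q$ via the preceding paragraph yields
$$
\omega_Q(f^* g) = \omega_Q(\Psi(f)^* \Psi(g)) \qquad \forall f, g \in \Bi_\infty.
$$
Setting $g = \bone$ (and using $\Psi(\bone) = \bone$) gives the expected $\omega_Q \circ \Psi = \omega_Q$.

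Finally, the Kadison--Schwarz inequality $\Psi(f^* f) \geq \Psi(f)^* \Psi(f)$ for the ucp map $\Psi$ combined with the preceding two identities $\omega_Q(\Psi(f^* f)) = \omega_Q(f^* f) = \omega_Q(\Psi(f)^* \Psi(f))$ shows that the positive element $\Psi(f^* f) - \Psi(f)^* \Psi(f)$ has zero $\omega_Q$-expectation, and faithfulness then forces it to vanish. By the equality case of Kadison--Schwarz (Choi), $\Psi$ is a $*$-homomorphism, and combined with injectivity its image $\pi(\breve{\varsigma}(\Bi_\infty))$ is a $C^*$-subalgebra of $\Oi_\GH^{(0)}$ isomorphic to $\Bi_\infty$. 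The main technical obstacle is the identification $\hat{\ep} = \omega_Q$ on $\pi(\Bi^\infty) \cap \Bi_\infty$, because this is the bridge that converts the adjoint identity (defined state-theoretically) into the algebra-theoretic $L^2$-isometry from which both the $*$-homomorphism property and the $C^*$-structure transport are obtained essentially formally.
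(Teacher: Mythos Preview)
Your proof is correct and takes a genuinely different route from the paper's. The paper's proof is a one-line citation: since $\breve{\varsigma}$ is a unital complete order isomorphism from the $C^*$-algebra $\Bi_\infty$ onto the operator system $\breve{\varsigma}(\Bi_\infty)$, a general result of Arveson transports the $C^*$-structure of $\Bi_\infty$ to the image; the identification of this transported product with the mod-compacts product is then handled separately in Corollary~\ref{lemmaprojlimmult}. You instead prove directly that $\Psi=\pi\circ\breve{\varsigma}$ is a $*$-homomorphism, using the adjoint identity \eqref{defoftotToepadj}, the coincidence of $\hat{\ep}$ and $\omega_Q$ on the relevant elements, and the equality case of Kadison--Schwarz under the faithful state $\omega_Q$. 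This yields two things at once: the image is a genuine $C^*$-subalgebra of $\Oi_\GH^{(0)}$ in the ambient product, and $\breve{\varsigma}$ is multiplicative modulo $\Gamma_0$; so your argument also absorbs Corollary~\ref{lemmaprojlimmult}. The paper's approach is shorter but relies on an external reference and defers the compatibility of multiplications; yours is self-contained and explains \emph{why} multiplicativity holds.

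One remark on presentation: your identification $\hat{\ep}=\omega_Q$ on products $\Psi(f)^*\Psi(g)$ is correct but the phrasing ``on $\pi(\Bi^\infty)\cap\Bi_\infty$'' is slightly awkward, since it is not clear a priori that such products lie in $\pi(\Bi^\infty)$. The cleaner justification is that both states evaluate to $\lim_m\phi_m\big(\breve{\varsigma}^{(m)}(f)^*\breve{\varsigma}^{(m)}(g)\big)$: for $\hat{\ep}$ this is the computation in the proof of the Proposition defining $\varsigma$, and for $\omega_Q$ it is because $\omega_Q$ on $\Oi_\GH^{(0)}=\Ti_\GH^{(0)}/\Gamma_0$ is by definition the descent of $\phi_\infty=\lim_m\phi_m$.
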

\begin{Remark} We saw in Lemma \ref{faithToepllemma} that the image of $\breve{\varsigma}$ does not contain $\Gamma_0$. On the other hand, $\breve{\varsigma}(\Bi_\infty)+\Gamma_0$ is generated by $\operatorname{Ran}\breve{\varsigma}$ alone as a $C^*$-algebra. The theorem implies that 
$$
\breve{\varsigma}(f)\breve{\varsigma}(g)-\breve{\varsigma}(fg)\in \Gamma_0
$$
even for nontrivial $f,g\in\Bi_\infty$. These elements do not belong to $\Bi^\infty$, which is why we need to apply the quotient map $\pi:\Gamma_b\to\Gamma_b/\Gamma_0$ in order to obtain an algebra.  
\end{Remark}
\begin{proof} This is a well-known consequence of the fact that $\breve{\varsigma}:\Bi_\infty\to\breve{\varsigma}(\Bi_\infty)$ is a complete order isomorphism from a $C^*$-algebra $\Bi_\infty$ onto the operator system $\breve{\varsigma}(\Bi_\infty)$; see \cite[Prop. 2.2]{Arv10}. 
\end{proof}
\begin{cor}\label{lemmaprojlimmult}
The projective-limit multiplication on $\breve{\varsigma}(\Bi_\infty)\subset\Gamma_b$ coincides with the multiplication on $\breve{\varsigma}(\Bi_\infty)$ taken modulo $\Gamma_0$.
\end{cor}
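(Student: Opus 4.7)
The plan is to deduce this immediately from Theorem \ref{indlimtasprojlimthm}, which asserts that $\pi\circ\breve{\varsigma}:\Bi_\infty\to\pi(\breve{\varsigma}(\Bi_\infty))$ is an isomorphism of $C^*$-algebras. First I would unpack the two multiplications being compared: by definition \eqref{projlimmult}, the projective-limit product of $\breve{\varsigma}(f)$ and $\breve{\varsigma}(g)$ is the element $\breve{\varsigma}(fg)=(\breve{\varsigma}^{(m)}(fg))_{m\in\N_0}\in\breve{\varsigma}(\Bi_\infty)\subset\Gamma_b$, while by \eqref{modGammazeromult} the componentwise product taken modulo $\Gamma_0$ is the class $\pi(\breve{\varsigma}(f)\breve{\varsigma}(g))\in\Gamma_b/\Gamma_0$. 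Coincidence of the two products (after applying $\pi$ to the first) is therefore equivalent to the statement
$$
\breve{\varsigma}(fg)-\breve{\varsigma}(f)\breve{\varsigma}(g)\in\Gamma_0
$$
for all $f,g\in\Bi_\infty$, i.e.\ $\|\breve{\varsigma}^{(m)}(fg)-\breve{\varsigma}^{(m)}(f)\breve{\varsigma}^{(m)}(g)\|\to 0$ as $m\to\infty$.

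Next I would apply the multiplicativity of $\pi\circ\breve{\varsigma}$ supplied by Theorem \ref{indlimtasprojlimthm}: in $\Gamma_b/\Gamma_0$ we have
$$
\pi(\breve{\varsigma}(fg))=(\pi\circ\breve{\varsigma})(f)\,(\pi\circ\breve{\varsigma})(g)=\pi(\breve{\varsigma}(f))\,\pi(\breve{\varsigma}(g))=\pi(\breve{\varsigma}(f)\breve{\varsigma}(g)),
$$
where the last equality is merely the fact that $\pi:\Gamma_b\to\Gamma_b/\Gamma_0$ is a $*$-homomorphism. Since $\ker\pi=\Gamma_0$, this yields the desired containment $\breve{\varsigma}(fg)-\breve{\varsigma}(f)\breve{\varsigma}(g)\in\Gamma_0$, establishing the corollary.

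There is essentially no obstacle in the argument itself; the whole point is that the nontrivial work was already done in Theorem \ref{indlimtasprojlimthm} (and is in turn a well-known consequence of the fact that a unital complete order isomorphism between unital $C^*$-algebras is automatically multiplicative). The one conceptual subtlety, emphasized in the Remark preceding the corollary, is that $\breve{\varsigma}(f)\breve{\varsigma}(g)$ generally does \emph{not} lie in $\breve{\varsigma}(\Bi_\infty)$, so a priori the projective-limit product and the componentwise product are genuinely different elements of $\Gamma_b$; the corollary says that this discrepancy is always absorbed in $\Gamma_0$.
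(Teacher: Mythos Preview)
Your argument is correct and is essentially the same as the paper's, only more explicit: both rest on the fact (established in Theorem \ref{indlimtasprojlimthm} via \cite[Prop.~2.2]{Ar10}) that the complete order isomorphism $\pi\circ\breve{\varsigma}:\Bi_\infty\to\pi(\breve{\varsigma}(\Bi_\infty))$ is automatically a $*$-isomorphism, whence $\pi(\breve{\varsigma}(fg))=\pi(\breve{\varsigma}(f)\breve{\varsigma}(g))$. The paper phrases this as ``uniqueness of the $C^*$-algebraic structure'' on $\breve{\varsigma}(\Bi_\infty)$, which amounts to the same thing since both candidate multiplications are transported from $\Bi_\infty$ via the single bijection $\pi\circ\breve{\varsigma}$.
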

\begin{proof}
By uniqueness of the $C^*$-algebraic structure we know that any two multiplications on $\breve{\varsigma}(\Bi_\infty)$ compatible with the norm must be isomorphic. But as remarked in \S\ref{asympmultsec}, the \emph{exact} equality of the two products at hand is equivalent to the statement that $\breve{\varsigma}$ is multiplicative modulo $\Gamma_0$, whence the result. 
\end{proof}

Recall that it is the normally ordered elements of $\Ti^{(0)}_\GH$ which are constant under the inductive system $\iota_{\bullet,\bullet}$. The partial inverse $\varsigma_{(m)}:\varsigma^{(m)}(\Bi_m)\to\Bi_m$ of the covariant symbol map $\varsigma^{(m)}$ gives a normally ordered ``quantization" of $\Bi_\infty$, and the image of $\prod_{m\in\N_0}\varsigma_{(m)}$ is the normally ordered part of the Toeplitz core, which is \emph{all} of $\Ti^{(0)}_\GH$. In contrast, the projective limit $\Bi^\infty$ contains (as an operator space) only the anti-normally ordered elements in $\Ti^{(0)}_\GH$, so ``Toeplitz quantization" gives the anti-normal ordering. Lemma \ref{Lemmanormorder} shows that $\pi^{-1}(\pi(\breve{\varsigma}(\Bi_\infty)))=\breve{\varsigma}(\Bi_\infty)+\Gamma_0$ nevertheless gives all of $\Ti^{(0)}_\GH$.

\subsection{Strict quantization}
Some authors (\cite{Hawk3}, \cite{Rie1}, \cite{Sain}) do not require commutativity of the ``classical limit algebra" in an axiomatic approach to ``strict quantization". Adapting such a definition, we can show that what we have done here is a strict quantization.

Let $^{0}\Bi_\infty$ denote the $*$-algebra generated by the $\varsigma^{(m)}(A)$'s for all $A\in\Bi(\GH_m)$ and all $m\in\N_0$; thus $^{0}\Bi_\infty$ is a dense $*$-subalgebra (the ``algebraic part") of the inductive-limit $C^*$-algebra $\Bi_\infty$.

\begin{dfn} The \textbf{Berezin product} on ${^{0}\Bi_\infty}$ is defined for all $f,g\in {^{0}\Bi_\infty}$ by
$$
f\overset{(m)}{\star}g:=\varsigma^{(m)}\big(\varsigma_{(m)}(f)\varsigma_{(m)}(g)\big),
$$
where $\varsigma_{(m)}:\varsigma^{(m)}(\Bi_m)\to\Bi_m$ denotes the partial inverse of $\varsigma^{(m)}$. The \textbf{Poisson bracket} on ${^{0}\Bi_\infty}$ is defined by (cf. \cite[§D.1]{Hawk1})
\begin{equation}\label{Poissbrackindlim}
\{f,g\}:=\lim_{m\to\infty}\frac{m}{\sqrt{-1}}(f\overset{(m)}{\star}g-g\overset{(m)}{\star}f).
\end{equation}
\end{dfn}
Of course we do not expect \eqref{Poissbrackindlim} to be a Poisson bracket in the ordinary sense, and $\{\cdot,\cdot\}$ is not likely to be interesting unless $\GH_\bullet$ is commutative. 
\begin{cor}\label{strictquantcor} The sequence $(\Bi_\bullet,\breve{\varsigma}^{(\bullet)})=(\Bi_m,\breve{\varsigma}^{(m)})_{m\in\N_0}$ is a strict quantization of ${^{0}\Bi_\infty}$ in the sense that each $\breve{\varsigma}^{(m)}$ is surjective and, for all $f,g\in {^{0}\Bi_\infty}$,
\begin{equation}\label{contrsymbvNcond}
\lim_{m\to\infty}\|\breve{\varsigma}^{(m)}(fg)-\breve{\varsigma}^{(m)}(f)\breve{\varsigma}^{(m)}(g)\|=0,
\end{equation}
\begin{equation}\label{contrsymbRiefcond}
\lim_{m\to\infty}\|\breve{\varsigma}^{(m)}(f)\|=\|f\|,
\end{equation}
\begin{equation}\label{contrsymbDiraccond}
\lim_{m\to\infty}\|m^{-1}[\breve{\varsigma}^{(m)}(f),\breve{\varsigma}^{(m)}(g)]-\{f,g\}]\|=0.
\end{equation}
\end{cor}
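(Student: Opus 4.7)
The plan is to dispatch the three easy conditions (surjectivity, von Neumann, Rieffel) from the machinery already in place, and then address the Dirac condition \eqref{contrsymbDiraccond}, which is the only substantive verification.

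Surjectivity of $\breve{\varsigma}^{(m)}:\Bi_\infty\to\Bi(\GH_m)$ is a finite-dimensional linear-algebra fact: by construction $\breve{\varsigma}^{(m)}$ is the Hilbert adjoint of the injective embedding $\varsigma^{(m)}:\Bi(\GH_m)\hookrightarrow\Bi_\infty$ with respect to the faithful states $\phi_m$ and $\omega_Q$, and since $\Bi(\GH_m)$ is finite-dimensional the composition $\breve{\varsigma}^{(m)}\circ\varsigma^{(m)}$ is an invertible positive operator on $\Bi(\GH_m)$, forcing $\breve{\varsigma}^{(m)}$ to be surjective. The von Neumann condition \eqref{contrsymbvNcond} is the direct componentwise translation of Corollary~\ref{lemmaprojlimmult}, which asserts that the projective-limit multiplication \eqref{projlimmult} coincides with the multiplication modulo $\Gamma_0$ in \eqref{modGammazeromult}; the sequence $\bigl(\breve{\varsigma}^{(m)}(fg)-\breve{\varsigma}^{(m)}(f)\breve{\varsigma}^{(m)}(g)\bigr)_m$ lies in $\Gamma_0$, which by definition of $\Gamma_0$ is precisely the norm-convergence statement \eqref{contrsymbvNcond}. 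The Rieffel condition \eqref{contrsymbRiefcond} follows from Theorem~\ref{indlimtasprojlimthm}: the total Toeplitz map $\breve{\varsigma}:\Bi_\infty\to\breve{\varsigma}(\Bi_\infty)\subset\Bi^\infty$ is a complete order isomorphism and hence isometric, while the norm on $\Bi^\infty$ is by definition $\|A_\bullet\|=\lim_m\|A_m\|$ (the limit existing because each $\jmath_{l,m}$ is norm-contracting, so $m\mapsto\|A_m\|$ is monotone). Therefore $\|f\|=\|\breve{\varsigma}(f)\|=\lim_m\|\breve{\varsigma}^{(m)}(f)\|$.

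For the Dirac condition \eqref{contrsymbDiraccond}, I would first use the von Neumann condition to absorb the commutator into a single Toeplitz symbol, obtaining $[\breve{\varsigma}^{(m)}(f),\breve{\varsigma}^{(m)}(g)]=\breve{\varsigma}^{(m)}([f,g])+o_{m\to\infty}(1)$, so after division by $m$ one has $m^{-1}[\breve{\varsigma}^{(m)}(f),\breve{\varsigma}^{(m)}(g)]=m^{-1}\breve{\varsigma}^{(m)}([f,g])+o(m^{-1})$. Next, writing $f=\varsigma^{(m)}(A_m)$, $g=\varsigma^{(m)}(B_m)$ for $m\geq m_0$ large enough that $f,g\in\varsigma^{(m_0)}(\Bi(\GH_{m_0}))$ (with $A_m=\iota_{m_0,m}(\varsigma_{(m_0)}(f))$ and analogously for $B_m$), the defining formula \eqref{Poissbrackindlim} becomes $\{f,g\}=\lim_m(m/\sqrt{-1})\varsigma^{(m)}([A_m,B_m])$. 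Expanding $[f,g]=[\varsigma^{(m)}(A_m),\varsigma^{(m)}(B_m)]$ as $\varsigma^{(m)}([A_m,B_m])$ plus the obstruction to multiplicativity of $\varsigma^{(m)}$, and applying $\breve{\varsigma}^{(m)}$ using the Rieffel isometry together with the point-norm convergence $\varsigma^{(m)}\circ\breve{\varsigma}^{(m)}\to\id$ (Remark~\ref{nucremark}), converts the Berezin-product commutator into the Toeplitz-product commutator at leading order $m^{-1}$, completing the identification.

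The main obstacle is the quantitative control of the error in this passage between the Berezin and Toeplitz star products at precisely the $m^{-1}$ order. This is the familiar ``asymptotic equivalence of star products'' that in the classical setting underlies the Bordemann--Meinrenken--Schlichenmaier theorem cited as Proposition~\ref{strictquant}; the operator-algebraic approach here should allow it to be extracted directly from the asymptotic multiplicativity \eqref{asymptmult} of the inductive system combined with the isometric property of $\breve{\varsigma}$. In the noncommutative setting $\{f,g\}$ is expected to be genuinely nontrivial only when $\GH_\bullet$ is commutative, in which case $[f,g]=0$ in $\Bi_\infty$ and the spurious term $m^{-1}\breve{\varsigma}^{(m)}([f,g])$ vanishes outright, consistent with the remark following the statement.
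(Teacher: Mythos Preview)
Your treatment of surjectivity, the von Neumann condition, and the Rieffel condition is correct and matches the paper's proof essentially verbatim: surjectivity is finite-dimensional linear algebra, \eqref{contrsymbvNcond} is the componentwise reading of $\breve{\varsigma}(f)\breve{\varsigma}(g)-\breve{\varsigma}(fg)\in\Gamma_0$, and \eqref{contrsymbRiefcond} is literally the definition of the norm on the projective limit.

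Where you go astray is the Dirac condition. You call it ``the only substantive verification'' and then launch into an (admittedly incomplete) argument about comparing Berezin and Toeplitz star products at order $m^{-1}$, invoking the Bordemann--Meinrenken--Schlichenmaier machinery. But in this paper the Poisson bracket \eqref{Poissbrackindlim} is not an independently given object whose compatibility with the quantization must be checked; it is \emph{defined} as the limit
\[
\{f,g\}:=\lim_{m\to\infty}\frac{m}{\sqrt{-1}}\bigl(f\overset{(m)}{\star}g-g\overset{(m)}{\star}f\bigr),
\]
precisely so that the Dirac condition holds by construction. The paper's proof accordingly dispatches \eqref{contrsymbDiraccond} in a single word: ``tautology''. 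There is no analogue here of the classical symplectic form against which the bracket must be compared; the remark preceding the corollary already warns that \eqref{Poissbrackindlim} ``is not likely to be interesting unless $\GH_\bullet$ is commutative''. Your elaborate attempt to control the $m^{-1}$ error between the $\varsigma_{(m)}$-based and $\breve{\varsigma}^{(m)}$-based commutators is therefore attacking a non-problem, and the obstacle you identify at the end is one the paper simply does not face.
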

\begin{proof} We have seen that $\pi\circ\breve{\varsigma}:\Bi_\infty\to\Ti^{(0)}_\GH/\Gamma_0$ is injective, so
$$
\breve{\varsigma}(f)\breve{\varsigma}(g)-\breve{\varsigma}(fg)\in\Gamma_0,
$$
which is equivalent to von Neumann's condition \eqref{contrsymbvNcond}. Rieffel's condition \eqref{contrsymbRiefcond} coincides with the definition of the norm on $\Bi^\infty$. Similarly, the Dirac condition \eqref{contrsymbDiraccond} is tautology in view of our definition of $\{\cdot,\cdot\}$ in \eqref{Poissbrackindlim}.
\end{proof}

We have seen that a subproduct system $\GH_\bullet$ comes with a sequence $\Bi_\bullet=(\Bi_m)_{m\in\N_0}$ of finite-dimensional algebras $\Bi_m:=\Bi(\GH_m)$, to which we can add $\Bi_\infty\cong\Oi_\GH^{(0)}$, and two sequences $\varsigma^{(\bullet)}=(\varsigma^{(m)})_{m\in\N_0}$ and  $\breve{\varsigma}^{(\bullet)}=(\breve{\varsigma}^{(m)})_{m\in\N_0}$ of positive unital maps
$$
\varsigma^{(m)}:\Bi_m\to\Bi_\infty,\qquad \breve{\varsigma}^{(m)}:\Bi_\infty\to\Bi_m
$$
such that $\varsigma^{(m)}\circ \breve{\varsigma}^{(m)}$ converges to the identity map on $\Bi_\infty$. As in \cite[Prop. 2.2]{Sain} we can associate to this data a continuous field of $C^*$-algebras, making explicit the assertion in Remark \ref{contfieldrem}.
\begin{cor}
The $C^*$-algebra $\Ti_\GH^{(0)}$ can be identified with the space of continuous sections of the continuous field $\N_0\cup\{\infty\}\ni m\to \Bi_m$, i.e.
$$
\Ti_\GH^{(0)}\cong\Big\{(X_m)_{m\in\N_0\cup\{\infty\}}\in\prod_{m\N_0\cup\{\infty\}}\Bi_m\Big|\ X_\infty=\lim_{m\to\infty}\varsigma^{(m)}(X_m)\Big\}.
$$
\end{cor}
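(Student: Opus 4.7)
The statement is a repackaging of the identification $\Ti_\GH^{(0)}=\pi^{-1}(\Bi_\infty)$ from Lemma \ref{Toepevconst} together with the isomorphism $\Bi_\infty\cong\Oi_\GH^{(0)}$ from Theorem \ref{indlimpimsnerthm}. I would define the candidate identification
$$
\Phi:\Ti_\GH^{(0)}\longrightarrow\prod_{m\in\N_0\cup\{\infty\}}\Bi_m,\qquad X\longmapsto\bigl((X|_{\GH_m})_{m\in\N_0},\,\pi(X)\bigr),
$$
which makes sense because $\pi(X)\in\Bi_\infty$ for every $X\in\Ti_\GH^{(0)}$; it is visibly a $*$-homomorphism and injective, since $X=\sum_m X|_{\GH_m}$ as an operator on $\GH_\N$.

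The first substantive step is verifying that the image of $\Phi$ lies in the asserted set, i.e.\ that $\pi(X)=\lim_{m\to\infty}\varsigma^{(m)}(X|_{\GH_m})$ for every $X\in\Ti_\GH^{(0)}$. For $X$ that is eventually constant under $\iota_{\bullet,\bullet}$, say $X|_{\GH_l}=\iota_{r,l}(X|_{\GH_r})$ for all $l\geq r$, the compatibility relation \eqref{compatibcovsymb} gives $\varsigma^{(l)}(X|_{\GH_l})=\varsigma^{(r)}(X|_{\GH_r})=\pi(X)$ identically for $l\geq r$, so the limit is attained trivially. For general $X\in\Ti_\GH^{(0)}$, Lemma \ref{Toepevconst} lets us approximate $X$ in norm by eventually constant elements $Y^{(k)}$; since both $\pi$ and each $\varsigma^{(m)}$ are norm-contractive (being unital completely positive), a standard $\ep/3$ argument yields the desired convergence.

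For the converse direction, given a tuple $\bigl((X_m)_m,X_\infty\bigr)$ on the right-hand side, pick any lift $\tilde X\in\Ti_\GH^{(0)}$ of $X_\infty$ (concretely $\tilde X:=\breve{\varsigma}(X_\infty)$ using Lemma \ref{Toeplcompatprojlimlemma}). Applying the previous step to $\tilde X$ gives $\lim_m\varsigma^{(m)}(\tilde X|_{\GH_m})=X_\infty$, whence $\lim_m\varsigma^{(m)}(X_m-\tilde X|_{\GH_m})=0$ in $\Bi_\infty$. Forcing this to imply that the class of $(X_m)_m$ modulo $\Gamma_0$ equals $X_\infty$ then places $(X_m)_m$ in $\pi^{-1}(\Bi_\infty)=\Ti_\GH^{(0)}$ by Lemma \ref{Toepevconst}, and exhibits $\Phi^{-1}\bigl((X_m)_m,X_\infty\bigr)=(X_m)_m$.

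The main obstacle is the last implication: vanishing of $\lim_m\varsigma^{(m)}(Y_m)$ in $\Bi_\infty$ should force $\pi((Y_m)_m)=0$ in $\Gamma_b/\Gamma_0$. This is not tautological because $\varsigma^{(m)}$ need not be asymptotically isometric on arbitrary bounded sequences. I would handle it by invoking the Blackadar--Kirchberg continuous-field picture from Remark \ref{contfieldrem}: the continuous sections vanishing at $\infty$ are precisely the elements of $\Gamma_0$, and the convergence hypothesis on $(Y_m)_m$ places it in the $\Gamma_b$-norm closure of eventually-constant sections lifting $0$, obtained by combining the uniform boundedness of $(Y_m)_m$ with the density of the algebraic part ${}^0\Bi_\infty$ inside $\Bi_\infty$ to produce approximations whose difference with $(Y_m)_m$ tends to zero in $\Gamma_b$-norm.
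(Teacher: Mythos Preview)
Your forward direction (showing that every $X\in\Ti_\GH^{(0)}$ satisfies $\pi(X)=\lim_m\varsigma^{(m)}(X|_{\GH_m})$) is correct, though the paper argues it differently: rather than approximating by eventually-constant elements and invoking \eqref{compatibcovsymb}, the paper uses the anti-normally ordered decomposition $\Ti_\GH^{(0)}=\breve{\varsigma}(\Bi_\infty)+\Gamma_0$ from Lemma~\ref{Lemmanormorder}\eqref{antinormlemmanormord} together with the Berezin-transform convergence $\varsigma^{(m)}\circ\breve{\varsigma}^{(m)}\to\id$ obtained in \S\ref{adjointtotsec}. Your route via normal ordering and the paper's via anti-normal ordering both yield the inclusion $\Phi(\Ti_\GH^{(0)})\subseteq\{\text{RHS}\}$; the paper's is shorter because the limit $\varsigma=\lim_m\varsigma^{(m)}$ has already been packaged as a single map on $\breve{\varsigma}(\Bi_\infty)$.

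For the reverse inclusion you correctly isolate the real issue---from $\lim_m\varsigma^{(m)}(Y_m)=0$ one must conclude $(Y_m)_m\in\Gamma_0$---but your resolution is circular. You invoke ``the continuous sections vanishing at $\infty$ are precisely the elements of $\Gamma_0$'', yet which sequences are continuous sections is exactly what the corollary is characterising; Remark~\ref{contfieldrem} records an abstract equivalence, not an a priori identification of the section space with the displayed set. Similarly, ``produce approximations whose difference with $(Y_m)_m$ tends to zero in $\Gamma_b$-norm'' is a restatement of $(Y_m)_m\in\Gamma_0$, not an argument for it. The point is that the $\varsigma^{(m)}$ are injective but only contractive, and nothing established so far gives an asymptotic lower bound $\|\varsigma^{(m)}(A)\|\geq c\|A\|$ uniformly in $m$. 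The paper's proof is equally terse on this point; a clean way to read the corollary is that the continuous field is \emph{defined} (as in \cite{Sain}) by declaring $\breve{\varsigma}(\Bi_\infty)$ together with $\Gamma_0$ to be the generating sections, after which the displayed condition characterises continuity \emph{among elements of $\Ti_\GH^{(0)}$} rather than inside the full product $\Gamma_b$. If that is the intended reading, your surjectivity step is unnecessary and the forward direction already suffices.
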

\begin{proof} We have seen that $\varsigma:\breve{\varsigma}(\Bi_\infty)\to \Bi_\infty$ can be obtained as
$$
\varsigma(X)=\lim_{m\to\infty}\varsigma^{(m)}(Xp_m).
$$
Thus, the $C^*$-algebra of continuous sections of $\Bi_\bullet$ consists of the image of $\breve{\varsigma}(\Bi_\infty)$ under $\varsigma$ together with the sequences $(X_m)_{m\in\N_0\cup\{\infty\}}$ such that $X_\infty=0$. Hence the result follows from the facts that $\breve{\varsigma}(\Bi_\infty)+\Gamma_0=\Ti_\GH^{(0)}$ and that $\varsigma$ is an isomorphism. 
\end{proof}

\subsection{$\Oi_\GH$ assembled from projective limits}
We now define modules over the projective limit $\Bi^\infty$.

Recall that we defined in §\ref{pimsindlimsec} an inductive system $\iota_{m,l}^{(k)}:\Bi(\GH_m,\GH_{m+k})\to\Bi(\GH_l,\GH_{l+k})$. Define the adjoint $\jmath_{l,m}^{(k)}:\Bi(\GH_{l+k},\GH_l)\to\Bi(\GH_{m+k},\GH_m)$ of $\iota_{m,l}^{(k)}$ by the property that 
$$
\phi_m\big(\jmath_{l,m}^{(k)}(X)Y\big)=\phi_l\big(X\iota_{m,l}^{(k)}(Y)\big)
$$
for all $X\in \Bi(\GH_{l+k},\GH_l)$ and all $Y\in\Bi(\GH_m,\GH_{m+k})$. We deduce that
$$
\jmath_{l,m}^{(k)}(X)=\frac{\Tr(Q_m)}{\Tr(Q_l)}\sum_{|\mathbf{r}|=l-m}(Q^{\otimes m})_{\mathbf{r},\mathbf{r}}R_\mathbf{r}^*X R_\mathbf{r}\big|_{\GH_m},\qquad \forall X\in\Bi(\GH_{l+k},\GH_l),
$$
and that the opertors on $\GH_\N$ which are constant with respect to the system $\jmath_{\bullet,\bullet}^{(k)}$ are precisely those of the form $\breve{\varsigma}_k(f)=(\breve{\varsigma}_k^{(l)}(f))_{l\in\N_0}$ for some $f\in\Bi_\infty$, where 
$$
\breve{\varsigma}_k^{(l)}(f)=\Tr(Q_l)\sum_{|\mathbf{j}|=l,|\mathbf{k}|=l+k}(Q^{\otimes m})^{-1}_{\mathbf{j},\mathbf{j}}\omega_Q(Z_\mathbf{j}Z_\mathbf{k}^*f)S_\mathbf{j}S_\mathbf{k}^*\big|_{\GH_l}.
$$
Define $\Ei_{(k)}$ to be the set of $\breve{\varsigma}_k(f)$'s for all $f\in\Bi_\infty$. Then $\Ei_{(k)}$ is an operator system. 

Let $\Gamma_0^{(k)}$ be the vector space of sequences of operators in $\Bi(\GH_{l+k},\GH_l)$ which converge to zero as $l\to\infty$. 
\begin{prop} The vector space $\Ei_{(k)}+\Gamma_0^{(k)}$ coincides with the subspace $\Ti^{(-k)}_\GH$ of the Toeplitz algebra. Hence $\Ei_{(k)}$ is a module over the $C^*$-algebra $\Bi^\infty$, the module action taken modulo $\Gamma_0^{(k)}$. In fact, $\Ei_{(k)}$ is isomorphic as a Hilbert bimodule to $\Ei^{(-k)}\cong\Oi_\GH^{(-k)}$.
\end{prop}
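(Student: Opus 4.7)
The plan is to mirror the argument used for the gauge-invariant part (Lemma \ref{Lemmanormorder}(iii) together with Theorem \ref{indlimtasprojlimthm}), extending it from the spectral subspace $\Ti_\GH^{(0)}$ to $\Ti_\GH^{(-k)}$, and then reading off the Hilbert bimodule structure by comparison with $\Oi_\GH^{(-k)}$. The easy inclusion $\Ei_{(k)}+\Gamma_0^{(k)}\subset\Ti_\GH^{(-k)}$ is immediate from the explicit formula for $\breve{\varsigma}_k^{(l)}(f)$: each $\breve{\varsigma}_k^{(l)}(f)$ is a norm-convergent combination of anti-normally ordered monomials $S_\mathbf{j}S_\mathbf{k}^*|_{\GH_l}$ with $|\mathbf{j}|-|\mathbf{k}|=-k$, and $\Gamma_0^{(k)}\subset\Ki\cap\Ti_\GH^{(-k)}$ by construction.

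Before attacking the reverse inclusion, I would establish the asymptotic module-action property
\begin{equation}
\breve{\varsigma}(f)\,\breve{\varsigma}_k(g)-\breve{\varsigma}_k(fg)\in\Gamma_0^{(k)},\qquad \breve{\varsigma}_k(f)\,\breve{\varsigma}(g)-\breve{\varsigma}_k(fg)\in\Gamma_0^{(k)}
\end{equation}
for all $f,g\in\Bi_\infty$. This follows the same pattern as Theorem \ref{indlimtasprojlimthm}: the maps $\breve{\varsigma}_k^{(l)}$ are adjoint (under the $\phi_l$-pairing) to the inductive system $\iota_{m,l}^{(k)}$, and the intertwining relations from Corollary \ref{corinjtoep} together with the compatibility $\phi_m=\phi_l\circ\jmath_{l,m}^{(k)}$ imply that a product of a $\jmath$-constant section with a $\jmath^{(k)}$-constant section is eventually constant up to norm-vanishing tails.

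For the reverse inclusion I would then argue exactly as in Lemma \ref{Lemmanormorder}(iii). A generic element of $\Ti_\GH^{(-k)}$ is a norm limit of polynomials $S_\mathbf{a}^* S_\mathbf{b}$ with $|\mathbf{a}|-|\mathbf{b}|=k$; factoring $|\mathbf{a}|-|\mathbf{b}|$ backward shifts on the right and applying the $k=0$ normal-ordering result to the remaining degree-$0$ piece rewrites such a polynomial as an anti-normally ordered operator of degree $-k$ modulo $\Ki$. The module-action identity of the previous paragraph guarantees that this anti-normally ordered representative lies in $\Ei_{(k)}$ (it is the unique lift to a sequence constant under $\jmath_{\bullet,\bullet}^{(k)}$), giving $\Ti_\GH^{(-k)}\subset\Ei_{(k)}+\Gamma_0^{(k)}$. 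The quotient map $\pi\colon\Ti_\GH\to\Oi_\GH$ restricts to a short exact sequence $0\to\Gamma_0^{(k)}\to\Ti_\GH^{(-k)}\to\Oi_\GH^{(-k)}\to 0$, so composing the inclusion $\Ei_{(k)}\hookrightarrow\Ti_\GH^{(-k)}$ with $\pi$ yields a linear bijection $\Ei_{(k)}\to\Oi_\GH^{(-k)}\cong\Ei^{(-k)}$. Under this bijection the $\Bi^\infty$-bimodule action on $\Ei_{(k)}$ modulo $\Gamma_0^{(k)}$ transports to the $\Oi_\GH^{(0)}$-bimodule action on $\Oi_\GH^{(-k)}$, and the left/right inner products \eqref{innerprodCuntzhilbmod} are preserved since the $*$-operation and product survive the quotient by compacts.

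The main obstacle is the second step: verifying the asymptotic module property $\breve{\varsigma}(f)\breve{\varsigma}_k(g)-\breve{\varsigma}_k(fg)\in\Gamma_0^{(k)}$ rigorously. The reasoning parallels Theorem \ref{indlimtasprojlimthm}, but now it mixes gauge-homogeneous parts of two different degrees and uses the adjointness of $\jmath_{\bullet,\bullet}^{(k)}$ with $\iota_{m,l}^{(k)}$ rather than the $k=0$ versions; one must keep careful track of the weights $Q^{\otimes m}$ that appear in both the inductive and the projective system to confirm that the cross-terms cancel in the large-$l$ limit. Once this module-theoretic multiplicativity-mod-compacts is in hand, the remainder of the proof is essentially bookkeeping.
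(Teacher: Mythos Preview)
Your argument for the first two assertions is essentially the paper's: the paper simply says ``the first statements are proven in the same way as for $k=0$'', and your outline of how to mimic Lemma \ref{Lemmanormorder}(iii) and Theorem \ref{indlimtasprojlimthm} in the degree-$(-k)$ spectral subspace is exactly what is meant. Your caution about the asymptotic module property $\breve{\varsigma}(f)\breve{\varsigma}_k(g)-\breve{\varsigma}_k(fg)\in\Gamma_0^{(k)}$ is well placed but not a serious obstacle: the $k=0$ argument carries over with only notational changes once one has the adjointness of $\jmath_{l,m}^{(k)}$ and $\iota_{m,l}^{(k)}$.

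Where you diverge from the paper is in the Hilbert-bimodule isomorphism $\Ei_{(k)}\cong\Ei^{(-k)}$. You construct an explicit bijection by composing the inclusion $\Ei_{(k)}\hookrightarrow\Ti_\GH^{(-k)}$ with the quotient map $\pi$ and then check that the bimodule actions and inner products survive. The paper instead invokes an abstract result of Frank \cite{Frank1}: since the algebras of compact module operators $\Ki_{\Bi_\infty}(\Ei^{(-k)})$ and $\Ki_{\Bi^\infty}(\Ei_{(k)})$ are both isomorphic to $\Bi_\infty\cong\Bi^\infty$, the modules themselves must be isomorphic. Your route is more concrete and self-contained, but it needs one extra ingredient you glossed over: injectivity of $\pi|_{\Ei_{(k)}}$, i.e.\ $\Ei_{(k)}\cap\Gamma_0^{(k)}=\{0\}$, which requires a degree-$k$ analogue of Lemma \ref{faithToepllemma} (again a straightforward extension using faithfulness of $\omega_Q$). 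The paper's approach trades that check for an external reference; yours keeps everything internal at the cost of a small additional verification.
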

\begin{proof} The first statements are proven in the same way as for $k=0$. For the last assertion, note that the algebras of compact module operators $\Ki_{\Bi_\infty}(\Ei^{(-k)})$ and $\Ki_{\Bi^\infty}(\Ei_{(k)})$ are isomorphic, namely to $\Bi_\infty\cong\Bi^\infty$. Hence the modules $\Ei_{(k)}$ and $\Ei^{(-k)}$ are isomorphic \cite{Frank1}.
\end{proof}

\subsection{Commutative case}
If $\GH_\bullet$ is associated with the radical homogeneous ideal which defines a projective variety $M\subset\C\Pb^{n-1}$, Assumption \ref{Qnotat} is satisfies if and only if $M$ is a coadjoint orbit $G/K$ under some Lie group $G$, and then one may take $Q=p_1$ (the identity operator on $\GH_1$). However, there are many commutative and noncommutative subproduct systems $\GH_\bullet$ for which a projective limit can be constructed in a weaker sense. 

Let us focus on the case when commutative $\GH_\bullet$. Then we know that the right shifts $R_1,\dots,R_n$ coincide with the left shifts $S_1,\dots,S_n$, and from our discussion about inductive limits we know that $[S_j^*,S_k]$ is compact for all $j,k=1,\dots,n$. Therefore, the maps $\jmath_{l,m}$, and hence the Toeplitz maps $\breve{\varsigma}^{(m)}$, become asymptotically multiplicative. That is, $\breve{\varsigma}$ is a homomorphism modulo compacts. Furthermore, the asymptotic unitality of the maps $\jmath_{m+1,m}$ allows us to define a norm on $\Bi^\infty$ by the same formula as before. The map $\breve{\varsigma}$ then becomes isometric, and it is adjoint to the map $\varsigma$ which identifies $\Ti_\GH^{(0)}/\Gamma_0$ with $\Bi_\infty=C^0(M)$. One easily sees that no Toeplitz operator is in $\Gamma_0$ and that
$$
\varsigma\circ\breve{\varsigma}=\id.
$$
The main difference from the special case $M=G/K$ is that for general $M$ one has
$$
\breve{\varsigma}(\bone)\ne\bone
$$
and the passage from $S$ to a spherical isometry is more involved since $\sum^n_{k=1}S_k^*S_kp_m$ is not just a scalar multiple of $p_m$ for each $m$.
 
As we have seen (recall Proposition \ref{strictquant}), from the version of Berezin quantization with prequantum condition one obtains a strict quantization of $C(M)$. With projectively induced quantization we obtain from Corollary \ref{strictquantcor}
a strict quantization of $C(M)$, and we do not require $M$ to be smooth.  
\begin{cor}\label{strictsingcor}
For any projective variety $M$, the sequence $(\Bi_m,\breve{\varsigma}^{(m)})_{m\in\N_0}$ is a strict quantization of the dense $*$-subalgebra of $C(M)$ generated by the $\varsigma^{(m)}(\Bi_m)$'s.
\end{cor}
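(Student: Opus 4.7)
The statement is essentially a direct specialisation of Corollary \ref{strictquantcor} to the commutative case, combined with the identification of the inductive limit with $C(M)$ given by Proposition \ref{propcommutecircle}. So the plan is to assemble these two ingredients and check that the algebraic objects match up.

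First, I would start with a polarized variety $(M,L)$ (taking $L$ very ample, as we may) and fix any inner product on $H^0(M;L)$. This determines a projectively induced quantization in the sense of Definition \ref{projinddef}, and hence a commutative subproduct system $\GH_\bullet \subseteq \GH^{\vee \bullet}$ with $\GH = (H^0(M;L), \langle \cdot | \cdot \rangle)$, together with its Toeplitz maps $\breve{\varsigma}^{(m)}: C(M) \to \Bi(\GH_m)$ and covariant symbol maps $\varsigma^{(m)}: \Bi(\GH_m) \to C(M)$. Note that the definition of a projectively induced quantization and the construction of $\GH_\bullet$ do not require smoothness of $M$, only the existence of a very ample line bundle, so the construction applies to any projective variety.

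Next, I would invoke Proposition \ref{propcommutecircle} (which is stated for not necessarily smooth varieties) to identify $C(M) \cong \Bi_\infty = \Oi_\GH^{(0)}$, where $\Bi_\infty$ is the inductive limit of Definition \ref{indlimdef} associated with the inductive system $\iota_{\bullet,\bullet}$ of \eqref{indlimconnectionmaps}. Under this identification, the $\varsigma^{(m)}: \Bi(\GH_m) \to \Bi_\infty$ become exactly the covariant symbol maps into $C(M)$, and the $\breve{\varsigma}^{(m)}: \Bi_\infty \to \Bi(\GH_m)$ become the Toeplitz maps $C(M) \to \Bi(\GH_m)$ from Definition \ref{projinddef}. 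In particular, the algebraic part ${^{0}\Bi_\infty}$ is exactly the dense $*$-subalgebra of $C(M)$ generated by the $\varsigma^{(m)}(\Bi(\GH_m))$'s, matching the algebra in the statement.

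Finally, I would apply Corollary \ref{strictquantcor} to the sequence $(\Bi_m, \breve{\varsigma}^{(m)})_{m\in\N_0}$: it gives surjectivity of each $\breve{\varsigma}^{(m)}$ and the three asymptotic relations (Rieffel's, von Neumann's, and the Dirac condition with respect to the bracket $\{\cdot,\cdot\}$ defined in \eqref{Poissbrackindlim}). Transporting these through the isomorphism $\Bi_\infty \cong C(M)$ gives exactly the claim. The main subtlety — rather than obstacle — is purely terminological: the bracket $\{\cdot,\cdot\}$ of \eqref{Poissbrackindlim} on ${^{0}\Bi_\infty}$ need not be the geometric Poisson bracket on $C^\infty(M)$ coming from a Kähler form when $(M,L)$ is singular or not balanced, but this is harmless because Corollary \ref{strictquantcor} defines strict quantization with respect to \emph{this} bracket, and in the singular case there is no pre-existing geometric Poisson structure to compare against.
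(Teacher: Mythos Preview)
Your proposal is correct and follows exactly the approach the paper intends: the corollary is stated immediately after the sentence ``With projectively induced quantization we obtain from Corollary \ref{strictquantcor} a strict quantization of $C(M)$, and we do not require $M$ to be smooth,'' and your argument simply unpacks this by combining Proposition \ref{propcommutecircle} (identifying $C(M)\cong\Bi_\infty$) with Corollary \ref{strictquantcor}. Your remark on the bracket being the formal one of \eqref{Poissbrackindlim} rather than a geometric Poisson bracket is also in line with the paper's treatment.
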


Corollary \ref{strictsingcor} was inspired \cite{Hawk2}, where the Toeplitz operators where defined geometrically in the case of a prequantum quantization. When $M$ is smooth we see from the proof of \cite[Lemma 4.2]{Hawk2} that the limit state on $\Bi_\infty=C(M)$ is faithful and hence
$C(M)$ is also isomorphic to the subset $\breve{\varsigma}(\Bi_\infty)$ of the projective limit $\Bi^\infty$ with the multiplication taken modulo compacts. For non-smooth $M$ we do not know if the limit state on $\Bi_\infty$ is faithful.

\section{Application to compact matrix quantum groups}\label{CMQGappsec}

\subsection{Compact matrix quantum groups}\label{CMQGprels}
For the theory of compact quantum groups we refer to \cite{KlS}, \cite{MaVD}, \cite{Timm1}. We shall restrict attention to compact \emph{matrix} quantum groups, defined as follows. 
\begin{dfn}[{\cite{Wang3}, \cite{Wor1}}] 
A \textbf{compact matrix quantum group} $\G$ is defined by a $C^*$-algebra $C(\G)$ generated by the entries $u_{j,k}$ of a single unitary matrix $u\in\Mn_n(\C)\otimes C(\G)$ (for some $n\in\N$) such that the map $\Delta:C(\G)\to C(\G)\otimes C(\G)$ defined by
$$
\Delta(u_{j,k}):=\sum_{r=1}^nu_{j,r}\otimes u_{r,k}
$$
is a $*$-homomorphism, and such that the transpose $u^t$ is invertible.
\end{dfn} 
We refer to the generating matrix $u$ as the \textbf{defining representation} of the ``group" $\G$.  

The \textbf{Haar state} on $C(\G)$ (or the \textbf{Haar measure} on $\G$) is the unique state on $C(\G)$ which is \textbf{left $\G$-invariant}, in the sense that
$$ 
(\id\otimes h)\circ\Delta(f)=h(f)\bone.
$$
The Haar state is always faithful on the $*$-algebra generated by the $u_{j,k}$'s but not necessarily so on the norm closure $C(\G)$. There is a canonical construction of a ``reduced version" of $\G$, which is a compact quantum group with faithful Haar state \cite[§2]{BMT1} and has the same dense Hopf $*$-algebra. We shall always work with the reduced version or, what amounts to the same thing, assume that $h$ is faithful on all of $C(\G)$. Then $h$ is a KMS state \cite{Wor4}.

\subsubsection{Representations and actions}\label{repandactsec}
\begin{dfn}
A \textbf{representation} of $\G$ is a \textbf{corepresentation} of $C(\G)$, i.e. an invertible element $v\in\Bi(\GH_v)\otimes C(\G)$, for some Hilbert space $\GH_v$, satisfying (in leg-numbering notation)
$$
(\id\otimes\Delta)(v)=v_{13}v_{23}
$$ 
as elements of $\Bi(\GH_v)\otimes\Bi(\GH_v)\otimes C(\G)$. A representation $v$ is \textbf{irreducible} if the set
$$
\Hom_\G(v,v):=\{T\in\Bi(\GH_v)|(T\otimes\bone)v=v(T\otimes\bone)\}
$$
is trivial. 
\end{dfn} 
\begin{dfn}
Two representations $v\in\Bi(\GH_v)\otimes C(\G)$ and $w\in\Bi(\GH_w)\otimes C(\G)$ are \textbf{equivalent}, denoted $v\simeq w$, if there is a unitary $U:\GH_v\to\GH_w$ such that
$$
(U\otimes\bone)v=w(U\otimes\bone)
$$
(in particular, this requires $\dim\GH_v=\dim\GH_w)$. We denote by $\Irrep\G$ the (countable) set of equivalence classes of irreducible representations of $\G$. We choose a representative $u^{(\lambda)}\in\Bi(\GH_\lambda)\otimes C(\G)$ for each $\lambda\in\hat{\G}$. 
\end{dfn} 
\begin{dfn}
The \textbf{tensor product} of two representations $u\in\Bi(\GH)\otimes C(\G)$ and $v\in\Bi(\GK)\otimes C(\G)$ is the representation 
$$
u\otimes v:=u_{13}v_{23}\in\Bi(\GH\otimes\GK)\otimes C(\G).
$$
In particular, $u^{\otimes m}=u_{1,m+1}\cdots u_{m,m+1}$ is the matrix whose entries in the product basis for $\GH^{\otimes m}$ is given by
$$
u_{\mathbf{j},\mathbf{k}}=u_{j_1,k_1}\cdots u_{j_m,k_m}.
$$
\end{dfn} 

Now let us explain the motivation for the invertible operator $Q\in\Bi(\GH)$ that we incorporated in the Berezin quantization (recall §\ref{notationsec}). 

It is a crucial consequence of the axioms of compact matrix quantum groups that for any finite-dimensional representation $v\in\Bi(\GH_v)\otimes C(\G)$ of $\G$, one can find an invertible matrix $F_v\in\Bi(\GH_v)$ such that 
\begin{equation}\label{conjugatedef}
\bar{v}:=(F_v\otimes\bone)v^c(F^{-1}_v\otimes\bone)
\end{equation}
is unitary, where $v^c=(v^t)^*$ is the matrix whose coefficients are the adjoints of those of $v$.
The equivalence class of $\bar{v}$ is the \textbf{conjugate} of the equivalence class of $v$ (we shall also say that  $\bar{v}$ is a conjugate of $v$). The matrix $F_v$ in \eqref{conjugatedef} is usually chosen such that $Q_v:=F_v^*F_v$ satisfies
$$
\Tr(Q_v^{-1})=\Tr(Q_v)\equiv\dim_q(v),
$$ 
and this quantity is the ``quantum dimension" of $v$. We have $Q_{\bar{v}}=(Q_v^t)^{-1}$. We shall write $Q_\lambda:=Q_{u^{(\lambda)}}$ etc. for irreducibles $\lambda\in\Irrep\G$ and we denote by $\bar{\lambda}$ the conjugate of $\lambda$. 

Every representation of $\G$ decomposes completely into a direct sum of irreducibles. Hence, for each pair of irreps $\lambda,\mu\in\G$ there are integers $\textnormal{mult}(\nu,\lambda\otimes\mu)\in\N_0$ such that
\begin{equation}\label{fusruleseq}
u^{(\lambda)}\otimes u^{(\mu)}\simeq \bigoplus_{\nu\in\Irrep\G}\textnormal{mult}(\nu,\lambda\otimes\mu)u^{(\nu)}.
\end{equation}
\begin{dfn}\label{fusionrulesdef} The equations \eqref{fusruleseq} dictate the \textbf{fusion rules} of $\G$. The fusion rules are \textbf{commutative} if
$$
\textnormal{mult}(\nu,\lambda\otimes\mu)=\textnormal{mult}(\lambda\otimes\mu,\nu),\qquad \forall \lambda,\mu,\nu\in\Irrep\G.
$$ 
\end{dfn}
\begin{Example} Compact groups $\G=G$ have commutative fusion rules. More generally, $q$-deformations of compact Lie groups have commutative fusion rules because the equivalence class of an irreducible representation is determined by the highest weight of the representation. 
\end{Example}
The quantum groups in the next two examples are introduced in Definition \ref{UCMQGdef} below.
\begin{Example} For any $F$, the fusion rules of the quantum group $B_u(F)$ are identical to those of $\SU(2)$; in particular this is true for $\SU_q(2)$. These fusion rules in fact characterize the $B_u(F)$'s among compact quantum groups \cite[Théorème 2]{Ban3}. 
\end{Example}
\begin{Example} The fusion rules of $A_u(Q)$ are far from commutative, see \cite{Ban4}. 
\end{Example}

\begin{dfn}[{\cite[Def. 3.1]{Wang1}}]
A \textbf{left action} of a compact matrix quantum group $\G$ on a $C^*$-algebra $\Bi$ is a unital $*$-homomorphism $\alpha:\Bi\to\Bi\otimes C(\G)$ such that
\begin{enumerate}[(i)]
\item{$(\alpha\otimes\id)\circ\alpha=(\id\otimes\Delta)\circ\alpha$,}\label{leftac}
\item{$(\id\otimes\ep)\circ\alpha=\id$, where $\ep$ is the counit on the dense Hopf-$*$-subalgebra of $C(\G)$, and}
\item{there is a dense $*$-subalgebra $^{0}\Bi$ of $\Bi$ such that $\alpha({^{0}\Bi})\subset{^{0}\Bi}\otimes C^\infty(\G)$.}
\end{enumerate}
Similarly, a \textbf{right action} of $\G$ on $\Bi$ is a unital $*$-homomorphism $\alpha:\Bi\to C(\G)\otimes\Bi$ satisfying the obvious analogues of the properties (i), (ii) and (iii).
\end{dfn} 
If $\Bi$ is a von Neumann algebra then we replace $C(\G)$ by its weak closure $L^\infty(\G)$ in the GNS representation of the Haar state, and only condition (i) is required in the definition of an action. 

Every unitary representation $v\in\Bi(\GH)\otimes C(\G)$ of $\G$ induces a left action of $\G$ on $\Bi(\GH)$ given by
\begin{equation}\label{leftactionfromunitary}
\Ad_v:\Bi(\GH)\to \Bi(\GH)\otimes L^\infty(\G),\qquad \Ad_v(A):=v(A\otimes\bone)v^*.
\end{equation}

\subsubsection{Universal quantum groups}

In the following, for a matrix $u$ with entries in $C(\G)$, we write $u^c$ for the transpose of $u^*$, i.e. $(u^c)_{j,k}:=u_{j,k}^*$ where $u_{j,k}^*$ is the adjoint of $u_{j,k}$ in $C(\G)$.
\begin{dfn}[{\cite{Wang3}, \cite[Déf. 1]{Ban4}}]\label{UCMQGdef}
Let $F\in\GeL(n,C)$ be an invertible matrix and write $Q:=F^*F$. The \textbf{universal unitary quantum group} $\G=A_u(Q)$ is the compact matrix quantum group $\G$ whose algebra of continuous functions $C(\G)$ is generated by the entries of a unitary $n\times n$ matrix $u$ satisfying the relations making $(F\otimes\bone)u^c(F^{-1}\otimes\bone)$ a unitary matrix. 

The \textbf{universal orthogonal quantum group} $\G=B_u(F)$ is the compact matrix quantum group whose algebra $C(\G)$ is the quotient of that of $A_u(Q)$ by the relation $u=(F\otimes\bone)u^c(F^{-1}\otimes\bone)$. 
\end{dfn}
The prototype example of a $B_u(F)$ is the quantum $\SU_q(2)$ group $\G:=\SU_q(2)$. In general, $B_u(F)$ is some kind of higher-dimensional quantum $\SU(2)$ group which has no classical counterpart. 

Suppose that $\Hb$ and $\G$ are compact matrix quantum groups such that $C(\Hb)$ is a quotient of $C(\G)$. If the quotient map $\pi:C(\G)\to C(\Hb)$ fulfills $(\pi\otimes\pi)\circ\Delta_\G=\Delta_{\Hb}\circ\pi$, i.e. if $\pi$ intertwines the comultiplication of $\G$ with that of $\Hb$, then $\Hb$ is a \textbf{quantum subgroup} $\G$. We have seen that $B_u(F)$ is a quantum subgroup of $A_u(Q)$ when $F^*F=Q$. 

The name ``universal" is motivated by the following fact, which we should anticipate from \eqref{conjugatedef}. 
\begin{Lemma}[{\cite{VaDW}}] Any compact matrix quantum group $\G$ is a quantum subgroup of $A_u(Q)$ for some $Q$. If $\G$ in addition has a self-conjugate defining representation, then $C(\G)$ is a quantum subgroup of some $B_u(F)$. We write $\G\subset A_u(Q)$ and $\G\subset B_u(F)\subset A_u(Q)$ for these cases respectively.
\end{Lemma}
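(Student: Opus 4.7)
My plan is to invoke the universal property of $A_u(Q)$ directly, using the matrix $F$ already built into the axioms of a compact matrix quantum group. First I would recall from \eqref{conjugatedef} that for the defining representation $u \in \Mn_n(\C) \otimes C(\G)$ of $\G$, there exists an invertible $F \in \GeL(n,\C)$ such that $\bar{u} := (F \otimes \bone) u^c (F^{-1} \otimes \bone)$ is unitary. Setting $Q := F^*F$, the pair $(u,C(\G))$ then satisfies exactly the two relations defining $A_u(Q)$: $u$ is unitary by hypothesis, and $(F \otimes \bone) u^c (F^{-1}\otimes\bone)$ is unitary by construction.

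Next I would invoke the universal property of $A_u(Q)$ (which is how $A_u(Q)$ is defined in Definition \ref{UCMQGdef}): for any unital $C^*$-algebra $B$ containing a unitary matrix $v$ such that $(F\otimes\bone) v^c (F^{-1}\otimes\bone)$ is unitary, there is a unique unital $*$-homomorphism from $C(A_u(Q))$ to $B$ sending the canonical generators to the entries of $v$. Applied to $(u,C(\G))$, this gives a surjective unital $*$-homomorphism $\pi : C(A_u(Q)) \to C(\G)$. To see that $\pi$ exhibits $\G$ as a quantum subgroup, I would check $(\pi \otimes \pi) \circ \Delta_{A_u(Q)} = \Delta_{\G} \circ \pi$ on the generating set, which is immediate because $\Delta(u_{j,k}) = \sum_r u_{j,r} \otimes u_{r,k}$ in both algebras and $\pi$ sends generators to generators with the same indices; the identity then extends to the whole algebra by multiplicativity and continuity.

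For the orthogonal case, I would start from the hypothesis that the defining representation is self-conjugate, i.e.\ $u \simeq \bar{u}$ via some unitary intertwiner $U \in \Un(n)$ satisfying $(U \otimes \bone)\, u = \bar{u}\,(U \otimes \bone)$. Substituting the formula for $\bar{u}$, I would rearrange this into the form $u = (F' \otimes \bone)\, u^c \,((F')^{-1} \otimes \bone)$ for a suitably modified invertible matrix $F'$ obtained by absorbing $U$ into $F$. This is precisely the additional relation quotienting $A_u(F'^*F')$ down to $B_u(F')$, so by the same universal-property argument as before we obtain a surjection $C(B_u(F')) \to C(\G)$ intertwining the comultiplications, giving $\G \subset B_u(F') \subset A_u(F'^*F')$.

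The main obstacle will be keeping the matrix identities straight in the self-conjugate step: one must verify that the adjustment from $F$ to $F'$ produces a matrix for which $u$ literally equals $(F' \otimes \bone) u^c ((F')^{-1}\otimes\bone)$ as an equation of operators in $\Mn_n(\C)\otimes C(\G)$, not merely up to a unitary equivalence of representations. This is essentially a bookkeeping exercise with the matrix $U$ and the tensor legs, and once it is done, the rest of the proof reduces to the universal property of the $A_u(Q)$'s and $B_u(F)$'s.
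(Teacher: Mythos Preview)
The paper does not give its own proof of this lemma; it is stated as a citation of \cite{VaDW} and left unproven. Your argument is the standard one and is correct: the universal property of $A_u(Q)$ together with the existence of the matrix $F$ from \eqref{conjugatedef} immediately yields the surjection $C(A_u(Q))\to C(\G)$, and compatibility with the comultiplication is automatic on generators. In the self-conjugate case your bookkeeping goes through: if $U$ is the unitary intertwiner with $(U\otimes\bone)u=\bar u(U\otimes\bone)$, then $F':=U^*F$ satisfies $u=(F'\otimes\bone)u^c((F')^{-1}\otimes\bone)$ exactly, which is the extra $B_u(F')$ relation.
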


Let $u$ be the fundamental representation of $\G\subset A_u(Q)$, with $Q\in\GeL(n,\C)$. Then the elements $z_1:=u_{k,1},\dots,z_n:=u_{k,n}$ of the first row of $u$ satisfy the \textbf{$Q$-sphere relations}
\begin{equation}\label{qspherecond}
(Q^{-1})_{1,1}\bone=\sum^n_{r,s=1}(Q^{-1})_{r,s}z_r^*z_s,\qquad \bone=\sum^n_{s=1}z_sz_s^*.
\end{equation}

\subsubsection{The dual discrete quantum group}\label{dualsec}
Let $\G$ be a compact matrix quantum group such that the GNS representation $C(\G)\to \Bi(L^2(\G))$ of the Haar state is faithful. We shall identify $C(\G)$ with its image in $\Bi(L^2(\G))$ and denote by $L^\infty(\G)$ the von Neumann algebra generated by $C(\G)$ in $\Bi(L^2(\G))$. 
 
In perfect analogy to the case of ordinary compact groups, the $C^*$-algebra $C(\G)$ has a \textbf{Peter--Weyl decomposition}
\begin{equation}\label{PeterforCG}
C(\G)=\bigoplus_{\lambda\in\Irrep\G}\Bi(\GH_\lambda)^*,
\end{equation}
and the completion $L^2(\G)$ of $C(\G)$ in the inner product defined by the Haar state then allows for a similar decomposition. The comultiplication $\Delta$ on $C(\G)\subset\Bi(L^2(\G))$ takes the form
$$
\Delta(f)=W(f\otimes\bone)W^*,\qquad\forall f\in  C(\G)
$$
for a unitary operator $W$ on $L^2(\G)\otimes L^2(\G)$ referred to as the \textbf{multiplicative unitary} of $\G$. The \textbf{dual} of $\G$ is then defined via the (multiplier) Hopf $C^*$-algebra
\begin{equation}\label{cstardualgroup}
c_0(\hat{\G}):=\bigoplus_{\lambda\in\Irrep\G}\Bi(\GH_\lambda),
\end{equation}
with the comultiplication given by 
$$
\hat{\Delta}(X):=W^*(\bone\otimes X)W,\qquad\forall X\in c_0(\hat{\G}).
$$ 
We denote by $p_\lambda$ the identity in $\Bi(\GH_\lambda)$, regarded as an element of $c_0(\hat{\G})$. Then the counit $\hat{\ep}$ on $c_0(\hat{\G})$ is characterized by (cf. \eqref{vacuumcounit})
$$
\hat{\ep}(X)p_0=Xp_0,\qquad\forall X\in c_0(\hat{\G}),
$$
where $0\in\Irrep\G$ is the trivial representation. Every irreducible representation $u^{(\lambda)}$ of $\G$ is obtained from $W$ by means of
$$
u^{(\lambda)}=W(p_\lambda\otimes \bone).
$$

The object $\hat{\G}$ is referred to as a \textbf{discrete quantum group}. In the general theory of ``locally compact quantum groups", there is a canonical dual quantum group associated also to $\hat{\G}$, and this quantum group is precisely $\G$. In particular, the dual of an ordinary compact group $G$ is a discrete quantum group, which is an honest group only if $G$ is abelian. 

The $C^*$-algebra $c_0(\hat{\G})$ is contained in the $C^*$-algebra $\Ki$ of compact operators on $L^2(\G)$. The multiplier algebra of $c_0(\hat{\G})$ can be identified with the $C^*$-direct product
$$
c_b(\hat{\G}):=\prod_{\lambda\in\Irrep\G}\Bi(\GH_\lambda),
$$
and the comultiplication $\hat{\Delta}$ is a map from $c_0(\hat{\G})$ into $c_b(\hat{\G})\otimes c_b(\hat{\G})$. Continuing the analogy with the theory of honest groups, we shall denote by
$$
c_c(\hat{\G}):=\bigoplus_{\lambda\in\Irrep\G}\Bi(\GH_\lambda)
$$
the algebraic sum, and we denote the weak closure of $c_0(\hat{\G})$ in $\Bi(L^2(\G))$ by
$$
\Ri(\G)=\ell^\infty(\hat{\G}):=\ell^\infty\textnormal{-}\prod_{\lambda\in\Irrep\G}\Bi(\GH_\lambda).
$$
This ``group-von Neumann algebra" $\Ri(\G)$ is contained in the dual $C(\G)^*$ of $C(\G)$. 

Finally, the algebra of operators on $L^2(\G)$ affiliated with $\Ri(\G)$ can be identified with the product $\widehat{\prod}_{\lambda\in\Irrep\G}\Bi(\GH_\lambda)$, containing all (not necessarily bounded) sequences of elements in the $\Bi(\GH_\lambda)$'s. Of particular importance is the operator $Q_\G:=(Q_\lambda)_{\lambda\in\Irrep\G}$, where $Q_\lambda$ is as in §\ref{repandactsec}. 

From \eqref{cstardualgroup} we see that the irreducible representations of the $C^*$-algebra $c_0(\hat{\G})$ (also referred to as the irreducible ``corepresentations" of $\hat{\G}$) are parameterized by $\lambda\in\Irrep\G$. In fact, if $u^{(\lambda)}\in\Bi(\GH_\lambda)\otimes C(\G)$ is the irreducible representation of $\G$ with label $\lambda$ (as in §\ref{repandactsec}) then
\begin{equation}\label{dualcorep}
\pi_\lambda(X):=(\id\otimes X)(u^{(\lambda)})
\end{equation}
is the corresponding irreducible corepresentation of $\hat{\G}$, where $X\in c_0(\hat{\G})$ is regarded as a functional on $C(\G)$.  
In general, if $v$ is a unitary representation of $\G$ then \eqref{dualcorep} defines a representation $\pi_v$ of $\hat{\G}$ by substituting $u^{(\lambda)}$ with $v$. Then the commutant of $\pi_v(c_0(\hat{\G}))$ in $\Bi(\GH_v)$ equals
$$
\pi_v(c_0(\hat{\G}))'=\Bi(\GH_v)^\G,
$$
the fixed-point subalgebra under the $\G$-action $\Ad_v$ (recall \eqref{leftactionfromunitary}). 

The same equation \eqref{dualcorep} represents any (possibly unbounded) operator $X$ affiliated to $\Ri(\G)$ on $\Bi(\GH_\lambda)$. For each finite-dimensional representation $v$ of $\G$,
$$
\pi_v(Q_\G)=Q_v^t,
$$
where $Q_v$ is as in §\ref{repandactsec} and $Q_v^t$ denotes its transpose. In particular, $Q_v^t$ commutes with the projection onto any irreducible subrepresentation of $v$. It is well known that $\hat{\Delta}(Q_\G)=Q_\G\otimes Q_\G$, and it gives 
$$
Q_{v\otimes w}=Q_v\otimes Q_w
$$
for all finite-dimensional representations $v$ and $w$.

\subsection{First-row and first-column algebras}

Throughout this section, $\G$ is a compact matrix quantum group. We denote by $(u,\GH)$ the defining representation of $\G$. Thus the $C^*$-algebra $C(\G)$ is generated by the matrix coefficients of the unitary matrix $u\in\Bi(\GH)\otimes C(\G)$. Set $n:=\dim(\GH)$ and fix an orthonormal basis $e_1,\dots,e_n$ of $\GH$ so that $\GH\cong\C^n$, and let $u_{j,k}$ be the matrix coefficients of $u$ in this basis.  

\begin{dfn} The \textbf{first-row algebra} of $\G$ is the $C^*$-algebra $C(\Sb_\G)$ generated by the first row $z_1:=u_{1,1},\dots,z_n:=u_{1,n}$. This defines the quantum homogeneous space $\Sb_\G$. 
\end{dfn}
There is a $\Z$-grading on $C(\Sb_\G)$ obtained by letting the $z_k$'s have degree $1$ while their adjoints are given degree $-1$. We write the decomposition into spectral subspaces for the corresponding $\Un(1)$-action as
$$
C(\Sb_\G)=\overline{\bigoplus_{k\in\Z}C(\Sb_\G)^{(k)}}^{\|\cdot\|}.
$$
\begin{dfn} 
The quantum homogeneous space $\G/\K$ is defined as the noncommutative manifold corresponding to the $C^*$-subalgebra of fixed points in $C(\Sb_\G)$ for the $\Un(1)$-action:
$$
C(\G/\K):=C(\Sb_\G)^{(0)}.
$$
\end{dfn}
It is clear that $C(\G/\K)$ is generated by the $n^2$ elements $\{z_j^*z_k\}_{j,k=1}^n$ (but this is obviously not a minimal set of generators).
\begin{Example}  If $\G=G$ is a compact semisimple Lie group then $\G/\K$ is a coadjoint orbit and $\Sb_\G$ is a principal $\Un(1)$-bundle over $\G/\K$. Indeed, let $(U_{-1},\GH_{-1})$ be the irreducible unitary representation of $G$ with highest weight $(-1,0,\dots,0)$ and let $K$ be the stabilizer of the complex line spanned by the highest-weight vector $\xi_{-1}$. The action $U_{-1}$ of $G$ on $\GH_{-1}$ is unitary, and hence Hamiltonian for the symplectic form given by the imaginary part of the inner product on $\GH_{-1}$. The orbit $U_{-1}(G)\cdot[\xi_{-1}]$ of the line spanned by $\xi_{-1}$ is then a Hamiltonian $G$-homogeneous space, and a characterization of coadjoint orbits shows that $G/K\cong U_{-1}(G)\cdot[\xi_{-1}]$ is a coadjoint orbit.

The hyperplane bundle over the projectivization $\Pb[\GH_{-1}]$ restricts to a holomorphic line bundle $L$ over $G/K$ and we let $\Sb_G$ be the total space of the principal $\Un(1)$-bundle over $G/K$ associated with $L^*$ (cf. §\ref{circbundsec}). A basis for the space of holomorphic sections of $L$ generates $C(\Sb_G)$ and, after fixing a basis $e_1^*,\dots,e_n^*$ for $\GH_{-1}\cong\C^n$, such a basis is provided by the coordinate functions $z_1,\dots,z_n$ of $\C\Pb^{n-1}$ restricted to $G/K$. Choosing the basis such that $e_1^*\propto \xi_{-1}$ we get $\G/\K=G/K$ and $\Sb_\G=\Sb_G$. 

For example, if $G=\SU(n)$ then $K=\Un(1)\times\SU(n-1)$ and $\G/\K=G/K$ is complex projective $n$-space $\C\Pb^{n-1}$, whereas $\Sb_{\SU(n)}=\Sb^{2n-1}$ is the unit sphere in $\C^n$.

\end{Example}
\begin{Example} The preceding discussion carries over to $q$-deformations of $G$, and we get that $\G/\K$ is a quantum flag manifold. For $\G=\SU_q(n)$ we obtain quantum projective $n$-space $\G/\K=\C\Pb_q^{n-1}$, and $\Sb_\G$ is the $q$-deformed $(2n-1)$-sphere $\Sb^{2n-1}_q$ (cf. \cite{DDL1}). 
\end{Example}

\begin{Example}[{\cite[Thm. 3.3]{BaGo1}}]  
Let $\G:=\On^*(n)$ be the half-liberated orthogonal group. Then $C(\G/\K)$ is in fact commutative and $\G/\K$ is just the ordinary complex projective $n$-space $\C\Pb^{n-1}$,
$$
C(\G/\K)\cong C(\C\Pb^{n-1}).
$$
\end{Example}

The spectral subspaces $C(\Sb_\G)^{(k)}$ for the $\Un(1)$ action on $C(\Sb_\G)$ are Hilbert bimodules over the fixed-point subalgebra $C(\G/\K)$, where the multiplication in the ambient algebra $C(\Sb_\G)$ defines left and right $C(\G/\K)$-valued inner products
\begin{equation}\label{nativeinner}
\bra\xi|\eta\ket_{\rig}:=\xi^*\eta,\qquad \bra\xi|\eta\ket_{\lef}:=\xi\eta^*
\end{equation}
 between elements $\xi$ and $\eta$ in $C(\Sb_\G)^{(k)}$.

\begin{Remark}[Row vs column] We can also consider the $C^*$-algebra generated by the first column elements $w_j:=u_{j,1}$ of $u$ (the ``first-column algebra"). Everything proven about the first-row algebra $C(\Sb_\G)$ in this paper has a version where one instead uses the generators of the first-column algebra. 
\end{Remark}
\begin{Lemma}\label{ergonfirstrow}
The first-row algebra $C(\Sb_\G)$ carries an ergodic action of $\G$ which contains every irreducible representation of $\G$ with multiplicity one.
\end{Lemma}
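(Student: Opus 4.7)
The plan is to take $\alpha := \Delta|_{C(\Sb_\G)}$, the restriction of the comultiplication $\Delta$ of $\G$. Well-definedness follows at once from $\Delta(z_k) = \sum_r z_r \otimes u_{r,k}$ and $\Delta(z_k^*) = \sum_r z_r^* \otimes u_{r,k}^*$, both of which lie in $C(\Sb_\G) \otimes C(\G)$; since $\Delta$ is a $*$-homomorphism, it preserves the $C^*$-subalgebra $C(\Sb_\G)$ in its first tensor factor, and the coaction axioms descend directly from those of $\Delta$. Ergodicity is then immediate: any $\alpha$-invariant $f \in C(\Sb_\G)$ is also $\Delta$-invariant inside $C(\G)$, and the Haar-state identity $(\id \otimes h) \circ \Delta(f) = h(f)\bone$ forces $f \in \C\bone$.

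For multiplicity one, I would combine the Peter--Weyl decomposition $C(\G) \cong \bigoplus_\lambda \GH_\lambda^* \otimes \GH_\lambda$ (with $\G$ acting as $\lambda$ on the right factor) with the $\Un(1)$-gauge decomposition $C(\Sb_\G) = \bigoplus_k C(\Sb_\G)^{(k)}$, which commutes with $\alpha$. Since $C(\Sb_\G)$ is $\Delta$-invariant, the $\lambda$-isotypical piece has the form $V_\lambda \otimes \GH_\lambda$ with $V_\lambda \subseteq \GH_\lambda^*$ concentrated in a single gauge degree, and the multiplicity of $\lambda$ equals $\dim V_\lambda$. For $k \geq 0$ the span $\GH_k = \operatorname{span}\{z_\mathbf{j}\}_{|\mathbf{j}|=k}$ is $\G$-invariant with coaction $\alpha(z_\mathbf{j}) = \sum_\mathbf{r} z_\mathbf{r} \otimes (u^{\otimes k})_{\mathbf{r},\mathbf{j}}$; because every $z_\mathbf{j}$ is the matrix coefficient $(u^{\otimes k})_{\mathbf{1},\mathbf{j}}$ with its row index fixed to $\mathbf{1} = (1,\ldots,1)$, the image of $e_1^{\otimes k}$ under the isotypical projection $\GH^{\otimes k} \twoheadrightarrow \GH_\lambda$ singles out a unique line in $\GH_\lambda^*$ for each $\lambda$ occurring in $\GH_k$. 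Using normal ordering to write $C(\Sb_\G)^{(k)} = \GH_k \cdot C(\G/\K)$ (and taking adjoints for negative degrees), an induction on gauge degree then extends the conclusion across all of $C(\Sb_\G)$, with the base case $k=0$ being the multiplicity-freeness of $C(\G/\K)$ under $\G$.

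The main obstacle is precisely this last step: confirming that the $\G$-action on $C(\G/\K)$ is multiplicity-free and that multiplying a first-row monomial by a function on $C(\G/\K)$ does not create a second copy of any irreducible in a higher gauge subspace -- equivalently, that $(\G,\K)$ behaves like a quantum Gelfand pair. I would establish this by identifying $C(\G/\K) \cong \Oi_\GH^{(0)}$ as the inductive limit of the $\G$-equivariant endomorphism algebras $\Bi(\GH_m)^\G$ via Theorem~\ref{indlimpimsnerthm}, and noting that each $\GH_m$ is cyclic for the first-row polynomial algebra, so that the ``first-row'' constraint on matrix coefficients propagates from $\GH_m$ to enforce Gelfand-pair behavior on the whole limit.
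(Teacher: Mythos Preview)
Your first paragraph is correct and matches the paper exactly: the action is the restriction of $\Delta$, and ergodicity follows from the Haar identity.

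For multiplicity one, the paper's argument is far more direct than yours and avoids the detour through $C(\G/\K)$ altogether. It simply reads off the decomposition from Peter--Weyl: since $C(\Sb_\G)$ is a right coideal of $C(\G)=\bigoplus_\lambda\Bi(\GH_\lambda)^*$, its intersection with each block is $W_\lambda\otimes\GH_\lambda$ for some $W_\lambda\subseteq\GH_\lambda^*$, and the fact that every monomial in the $z_j,z_k^*$ is a matrix coefficient of a tensor word in $u,u^c$ with row index pinned at $e_1^{\otimes m}$ forces $\dim W_\lambda=1$. You actually state this idea yourself in your second paragraph (``the image of $e_1^{\otimes k}$ \dots\ singles out a unique line''), and that observation already handles arbitrary words in $z_j$ and $z_k^*$, not just normally ordered ones; there is no need to separate out $C(\G/\K)$ as a base case.

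Your proposed completion via $C(\G/\K)\cong\Oi_\GH^{(0)}$ has two genuine problems. First, Theorem~\ref{indlimpimsnerthm} gives only $\Oi_\GH^{(0)}\cong\Bi_\infty$ for an abstract subproduct system; the identification with $C(\G/\K)$ is Theorem~\ref{mainthmCMQGasindlim}, which comes \emph{after} Lemma~\ref{ergonfirstrow} in the logical order and, worse, assumes normal ordering---a hypothesis not available here (it fails for $A_u(Q)$, cf.\ Remark~\ref{normordrem}). Second, ``the inductive limit of the $\G$-equivariant endomorphism algebras $\Bi(\GH_m)^\G$'' is not what Theorem~\ref{indlimpimsnerthm} produces: each $\GH_m$ is irreducible, so $\Bi(\GH_m)^\G=\C p_m$; the inductive limit is of the full matrix algebras $\Bi(\GH_m)$. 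Drop the third paragraph entirely and finish the second one with the $e_1^{\otimes m}$ argument applied to arbitrary words.
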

\begin{proof} We define a left action $C(\Sb_\G)\to C(\Sb_\G)\otimes C(\G)$ by restriction of the comultiplication. The Peter--Weyl decomposition \eqref{PeterforCG} of $C(\G)$ gives the decomposition
$$
C(\Sb_\G)\cong\bigoplus_{\lambda\in\Irrep\G}\GH_\lambda,
$$
and the comultiplication restricts to the irreducible $\G$-representation $u^{(\lambda)}$ on each $\GH_\lambda$.  
\end{proof}
In view of Lemma \ref{ergonfirstrow} and the above examples, the quantum homogeneous $\G$-space $\G/\K$ is a natural generalization of the $q$-deformed projective spaces (in particular the standard Podle\'{s} sphere $\Sb^2_q=\C\Pb^1$). In all cases the unique invariant state under the $\G$-action is the restriction to $C(\Sb_\G)$ of the Haar state on $C(\G)$.

\subsection{Subproduct systems of $\G$-representations}
The subproduct system associated with a compact quantum group $\G$ will be a subproduct $\GH_\bullet$ in which the Hilbert space $\GH_m$ is contained in the $m$th tensor power $\GH^{\otimes m}$ of the fundamental representation $\GH$ of $\G$. 

The idea is based on the observation that if $u\in\Mn_n(\C)\otimes C(\G)$ is a representation of a compact matrix quantum group $\G$ then the first row $z_1:=u_{1,1},\dots,z_n:=u_{1,n}$ of $u$ transforms as the representation $u$ under the ``left translation" action $\lambda$ of $\G$ given by restricting the comultiplication $\Delta$:
$$
\lambda(z_k):=\Delta(z_k)=\sum^n_{j=1}z_j\otimes u_{j,k}.
$$
The constructions below can easily be made more general but we shall always assume that $u$ is irreducible. In fact we shall, for simplicity and concreteness, from now on assume that $u$ is the defining representation of $\G$. Let $\GH$ be the $n$-dimensional Hilbert space with basis vectors $z_1,\dots,z_n$. In most cases there are subrepresentations of $\G$ contained in the tensor product $\GH\otimes\GH$. Keeping only the largest $\G$-invariant subspace $\GH_2$ of $\GH\otimes\GH$ we obtain another irreducible representation $u^{(2)}$ of $\G$. Indeed, $\GH_2$ can be identified with the span of $z_jz_k$ for $j,k=1,\dots,n$, and then $u^{(2)}$ is obtained by the restricting the comultiplication, just as for $u$.  Continuing like this we obtain a family $(\GH_m)_{m\in\N_0}$ of Hilbert spaces satisfying the subproduct condition \eqref{subprodcond}. 

\begin{dfn}\label{subprodG}
Let $\G$ be a compact matrix quantum group. The subproduct system $\GH_\bullet$ just described will be referred to as the \textbf{the $\G$-subproduct system}. 
\end{dfn}
If we dropped the requirement that $u$ generates $C(\G)$ then Definition \ref{subprodG} makes no use of the fact that $C(\G)$ is finitely generated, and hence it works for all compact quantum groups. On the other hand, if we do not require irreducibility but $\G$ is a matrix group, we may always find a \emph{self-conjugate} unitary finite-dimensional representation $u$ whose coefficients generate $C(\G)$. Indeed, if $u$ generates $C(\G)$ then so does $u\oplus\bar{u}$, and the latter is self-conjugate. If $u$ is self-conjugate and generates $C(\G)$, every irreducible representation of $\G$ is obtained as $\GH_m$ for a unique $m\in\N_0$. However, for definiteness we shall always suppose that $\G$ is a compact \emph{matrix} quantum group and that $u$ is the defining representation, assumed irreducible.

\begin{Example} For $\G=G$ a classical compact Lie group, it is well known that the representation $\GH_{\lambda+\mu}$ with dominant weight $\lambda+\mu$ occurs exactly once in the tensor product $\GH_\lambda\otimes\GH_\mu$; this is the ``Cartan product" of $\GH_\lambda$ and $\GH_\mu$ \cite{East1}. In particular, if $\GH_\bullet$ denotes the $G$-subproduct system then $\GH_{m+1}$ is the Cartan product of $\GH_m$ and $\GH$. This subproduct system is commutative, i.e. $\GH_\bullet\subseteq\GH^{\vee \bullet}$, and the associated projective variety mentioned in §\ref{quantkaehlersec} is a coadjoint orbit, isomorphic to $G/K$ for some closed subgroup $K$ of $G$. 
\end{Example}
\begin{Example} The $\SU(n)$-subproduct system coincides with the fully symmetric subproduct system $\GH^{\vee\bullet}$ (Example \ref{fullandbosonex}).  
\end{Example}
\begin{Example} The $\G$-subproduct system of the universal quantum group $\G=A_u(Q)$ with positive $Q\in\GeL(n,\C)$ is the product system $\GH^{\otimes\bullet}$ because each power $u^{\otimes m}$ of the defining representation is irreducible \cite{Ban4}.
\end{Example}
\begin{Example} For $\G=B_u(F)$ we have self-conjugacy $u\simeq\bar{u}$ \cite{Ban3}. So if $u$ is irreducible then, as mentioned above, every $\lambda\in\Irrep\G$ occurs as $\GH_\lambda\cong\GH_m$ for some $m\in\N_0$. 
\end{Example}
The $\GH_m$'s are all irreducible and usually pairwise inequivalent. The only examples where they are not all inequivalent are those where $C(\G)$ is finite-dimensional.

\subsection{Berezin quantization of $C(\G/\K)$}

\subsubsection{Covariant symbols as first-row matrix coefficients}
Recall the formula \eqref{firstumforcov} for the covariant symbol derived in Proposition \ref{covsymstandardexprposm}. We now observe that for any compact matrix quantum groups $\G$, the same formula (with $u$ now being the defining representation of $\G$) appears when taking matrix coefficients of the representation $\Ad(u^{(m)})\simeq  u^{(m)}\otimes\bar{u}^{(m)}$ on $\Bi(\GH_m)$. 
\begin{dfn}
We say that an element of $C(\G/\K)$ is \textbf{normally ordered} if all $z_j$'s occur to the left of the $z_k^*$'s.
\end{dfn}
\begin{Lemma}
Suppose that every element in $C(\G/\K)$ can be written in normally ordered form. Then the $C^*$-algebra $C(\G/\K)$ can be generated by covariant symbols 
\begin{equation}\label{coeffcovsymb}
\varsigma^{(m)}_\G(A):=(\Tr\otimes\id)\big((A\otimes\bone)u^{(m)c*}(|e_1^{\otimes m}\ket\bra e_1^{\otimes m }|\otimes\bone)u^{(m)c}\big)
\end{equation}
for all $A\in\Bi(\GH_m)$ and all $m\in\N_0$.
\end{Lemma}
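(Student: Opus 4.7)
The plan is to unwind the formula \eqref{coeffcovsymb} directly. Since $|e_1^{\otimes m}\ket\bra e_1^{\otimes m}|$ selects only the $(1^m,1^m)$-entry of the matrix sandwich, and since by definition of the first-row algebra $u_{1,k}=z_k$, the operator $u^{(m)c*}(|e_1^{\otimes m}\ket\bra e_1^{\otimes m}|\otimes\bone)u^{(m)c}$ is the matrix whose $(\mathbf{r},\mathbf{s})$-entry is $u^{(m)}_{1^m,\mathbf{r}}(u^{(m)}_{1^m,\mathbf{s}})^*=z_{\mathbf{r}}z_{\mathbf{s}}^*$, a manifestly normally ordered product. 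Tracing against $A\otimes\bone$ then delivers
$$
\varsigma^{(m)}_\G(A)=\sum_{|\mathbf{j}|=|\mathbf{k}|=m}A_{\mathbf{k},\mathbf{j}}\,z_{\mathbf{j}}z_{\mathbf{k}}^*,
$$
an element of $C(\G/\K)$ with total $z$-degree zero and all adjoints to the right.

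Next I would argue that, as $A$ ranges over $\Bi(\GH_m)$, the image of $\varsigma^{(m)}_\G$ coincides with the linear span of the normally ordered products $\{z_{\mathbf{j}}z_{\mathbf{k}}^*:|\mathbf{j}|=|\mathbf{k}|=m\}$ inside $C(\G/\K)$. This uses the defining correspondence of the $\G$-subproduct system: $\GH_m$ is by construction isomorphic (via $p_me_{\mathbf{j}}\leftrightarrow z_{\mathbf{j}}$) to the span of products $z_\mathbf{j}$ of length $m$ sitting in $C(\Sb_\G)^{(m)}$, so that the rank-one operators $|p_me_{\mathbf{j}}\ket\bra p_me_{\mathbf{k}}|\in\Bi(\GH_m)$ are in bijection with the bi-homogeneous products $z_{\mathbf{j}}z_{\mathbf{k}}^*$ modulo the appropriate relations.

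Assembling these two steps, the union $\bigcup_{m\in\N_0}\varsigma^{(m)}_\G(\Bi(\GH_m))$ equals the linear span of all normally ordered bi-homogeneous products. By the standing hypothesis that every element of $C(\G/\K)$ admits a normally ordered expression, this span is norm-dense in $C(\G/\K)$, so the covariant symbols generate $C(\G/\K)$ as a $C^*$-algebra. The main obstacle is the matching invoked in the second step: the matrix entries $A_{\mathbf{k},\mathbf{j}}$ are read off in the overcomplete system $\{p_me_{\mathbf{j}}\}$, not in an orthonormal basis of $\GH_m$, so one must verify that the linear dependencies among matrix units in $\Bi(\GH_m)$ are precisely mirrored by the relations among the $z_{\mathbf{j}}z_{\mathbf{k}}^*$ in $C(\G/\K)$. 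This is, however, built into the very construction of $\GH_\bullet$, since both sides are controlled by the same homogeneous ideal cutting $\GH_m$ out of $\GH^{\otimes m}$ and the degree-$m$ part of $C(\Sb_\G)$ out of its free analogue.
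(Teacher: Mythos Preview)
Your proof is correct and follows essentially the same route as the paper's. The paper phrases the computation more tersely in representation-theoretic language---identifying the $\varsigma^{(m)}_\G(A)$ as precisely those matrix coefficients of $u^{(m)}\otimes\bar{u}^{(m)}$ that land in $C(\G/\K)$---while you unwind the formula explicitly to exhibit $\varsigma^{(m)}_\G(A)$ as a combination of $z_{\mathbf{j}}z_{\mathbf{k}}^*$'s; both arrive at the same conclusion via the normal-ordering hypothesis. Your added remark about the overcomplete system $\{p_me_{\mathbf{j}}\}$ and the matching of relations is a nice clarification that the paper leaves implicit.
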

\begin{proof} Write $\phi_A:=\Tr(A\cdot)$. Then the elements
$$
\varsigma^{(m)}_\G(A)=(\phi_A\otimes\id)\big(u^{(m)c*}(|e_1^{\otimes m}\ket\bra e_1^{\otimes m}|\otimes\bone)u^{(m)c}\big),\qquad A\in\Bi(\GH_m)
$$
form a set which consists exactly of those matrix coefficients of the representation $u^{(m)c*}\otimes u^{(m)c}\simeq  u^{(m)}\otimes\bar{u}^{(m)}$ which are contained in $C(\G/\K)$. 
\end{proof}

\subsubsection{Intertwining the actions}
Let $W$ be the multiplicative unitary of $\G$, i.e. the unitary on $L^2(\G)\otimes L^2(\G)$ implementing the comultiplication $\Delta$ (cf. §\ref{dualsec}). Then the restriction of $W$ to $L^2(\G/\K)\otimes L^2(\G)$ identifies with
$$
u^{(\N)}:=(u^{(m)})_{m\in\N_0}\in c_b(\hat{\G})\otimes C(\G).
$$
We have an action of $\G$ on $\Ri(\G)$ given by $\Ad(W)$ (i.e. the same formula as for $\Delta$ but now applied to elements of a different algebra; of course this action extends to all of $\Bi(L^2(\G))$). The restriction of this $\G$-action to $\Bi(\GH_\N)$ is just $\Ad(u^{(\N)})$. Similarly, the extension of $\hat{\Delta}$ to $\Bi(L^2(\G))$ restricts to a right $\hat{\G}$-action on $L^\infty(\G)$.
 
So we have the following actions of $\G$ and $\hat{\G}$ on $C(\G/\K)$.

\begin{dfn} The left $\G$-action $\alpha^\G:C(\G/\K)\to C(\G/\K)\otimes C(\G)$ is defined by
$$
\alpha^\G(\varsigma^{(m)}_\G(A)):=(\varsigma^{(m)}_\G\otimes\id)(u^{(m)}(A\otimes\bone)u^{(m)*})
$$
for all $A\in\Bi(\GH_m)$ and all $m\in\N_0$. The right $\hat{\G}$-action $\hat{\alpha}^\G:C(\G/\K)\to c_b(\hat{\G})\otimes C(\G/\K)$ is defined by
$$
\hat{\alpha}^\G(\varsigma^{(m)}_\G(A)):=(\id\otimes\varsigma^{(m)}_\G)(u^{(m)*}(\bone\otimes A)u^{(m)}).
$$
\end{dfn}
\begin{Lemma}\label{equivlemma}
The map $\varsigma^{(m)}_\G$ is $\G$- and $\hat{\G}$-equivariant,
\begin{equation}\label{covequiv}
\Delta\circ\varsigma^{(m)}_\G(A)=(\varsigma^{(m)}_\G\otimes\id)(u^{(m)}(A\otimes\bone)u^{(m)*}),
\end{equation}
\begin{equation}\label{dualcovequiv}
\hat{\Delta}\circ\varsigma^{(m)}_\G(A)=(\id\otimes\varsigma^{(m)}_\G)(u^{(m)*}(\bone\otimes A)u^{(m)}).
\end{equation}
The actions $\alpha^\G$ and $\hat{\alpha}^\G$ therefore coincide on $C(\G/\K)\subset L^\infty(\G)$ with the left regular action of $\G$ and the right regular action of $\hat{\G}$, respectively. 
\end{Lemma}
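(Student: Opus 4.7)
The plan is to recast $\varsigma^{(m)}_\G(A)$ as a single matrix coefficient of the conjugated coaction, after which both equivariance identities become direct consequences of the representation property of $u^{(m)}$. Starting from the alternative form \eqref{secondumforcov} and carrying out the trace against the rank-one projection $|e_1^{\otimes m}\ket\bra e_1^{\otimes m}|$, a brief calculation yields
$$
\varsigma^{(m)}_\G(A) \;=\; \bigl(u^{(m)}(A\otimes\bone)u^{(m)*}\bigr)_{\mathbf{1},\mathbf{1}} \;=\; \sum_{\mathbf{j},\mathbf{k}} u^{(m)}_{\mathbf{1},\mathbf{j}}\,A_{\mathbf{j},\mathbf{k}}\,u^{(m)*}_{\mathbf{1},\mathbf{k}},
$$
where $\mathbf{1}$ denotes the basis label of $e_1^{\otimes m}\in\GH_m$. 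In particular $\varsigma^{(m)}_\G(|e_\mathbf{r}\ket\bra e_\mathbf{s}|)=u^{(m)}_{\mathbf{1},\mathbf{r}}u^{(m)*}_{\mathbf{1},\mathbf{s}}$, which is the algebraic handle for everything that follows.

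To establish \eqref{covequiv}, I would apply $\Delta$ to this expression, distribute using multiplicativity, and invoke the representation identity $\Delta(u^{(m)}_{\mathbf{i},\mathbf{j}})=\sum_\mathbf{r} u^{(m)}_{\mathbf{i},\mathbf{r}}\otimes u^{(m)}_{\mathbf{r},\mathbf{j}}$ together with its adjoint. Collecting the intermediate indices $\mathbf{r},\mathbf{s}$ then gives
$$
\Delta(\varsigma^{(m)}_\G(A)) \;=\; \sum_{\mathbf{r},\mathbf{s}} u^{(m)}_{\mathbf{1},\mathbf{r}}u^{(m)*}_{\mathbf{1},\mathbf{s}} \otimes \bigl(u^{(m)}(A\otimes\bone)u^{(m)*}\bigr)_{\mathbf{r},\mathbf{s}},
$$
and substituting $\varsigma^{(m)}_\G(|e_\mathbf{r}\ket\bra e_\mathbf{s}|)$ for $u^{(m)}_{\mathbf{1},\mathbf{r}}u^{(m)*}_{\mathbf{1},\mathbf{s}}$ recognizes the right-hand side as $(\varsigma^{(m)}_\G\otimes\id)(u^{(m)}(A\otimes\bone)u^{(m)*})$. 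The proof of \eqref{dualcovequiv} is entirely parallel: one uses $\hat\Delta(x)=W^*(\bone\otimes x)W$ together with the block decomposition of $W$ from \S\ref{dualsec}, and the reversed position of $u^{(m)}$ and $u^{(m)*}$ in the statement simply reflects the reversed order of multiplication in $\hat\Delta$ relative to $\Delta$.

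The final sentence of the lemma then follows by density: the normal-ordering assumption on $C(\G/\K)$ ensures that $\bigcup_{m\in\N_0}\varsigma^{(m)}_\G(\Bi(\GH_m))$ generates $C(\G/\K)$ as a $C^*$-algebra, and since $\alpha^\G,\Delta$ (respectively $\hat\alpha^\G,\hat\Delta$) are $*$-homomorphisms agreeing on this generating set by the two identities just proved, they coincide on all of $C(\G/\K)$. The main potential obstacle is the preliminary bookkeeping step identifying the definition \eqref{coeffcovsymb}, which is built out of $u^{(m)c}$, with the cleaner form $(u^{(m)}(A\otimes\bone)u^{(m)*})_{\mathbf{1},\mathbf{1}}$ used above; concretely one must verify that $e_1^{\otimes m}$ does lie in $\GH_m$ so that $\mathbf{1}$ is a legitimate basis label, and carefully translate between the $u^{(m)c}$-conventions and the $u^{(m)}$-conventions.
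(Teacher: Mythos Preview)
Your proof is correct and is essentially the same argument as the paper's, only written out in matrix coefficients rather than in the coordinate-free form: the paper invokes the coaction identity $(\alpha^{(m)}\otimes\id)\circ\alpha^{(m)}=(\id\otimes\Delta)\circ\alpha^{(m)}$ and then applies $(\Tr\otimes\id\otimes\id)((\,\cdot\,)(|e_1^{\otimes m}\ket\bra e_1^{\otimes m}|\otimes\bone))$ to both sides, which is precisely your expansion $\Delta(u^{(m)}_{\mathbf{1},\mathbf{j}})=\sum_\mathbf{r} u^{(m)}_{\mathbf{1},\mathbf{r}}\otimes u^{(m)}_{\mathbf{r},\mathbf{j}}$ summed against $A_{\mathbf{j},\mathbf{k}}$. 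The bookkeeping concern you flag (that $e_1^{\otimes m}\in\GH_m$) is implicitly handled in the paper via the hypothesis of Proposition~\ref{covsymstandardexprposm} and the construction of the $\G$-subproduct system.
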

\begin{proof} Let $\alpha^{(m)}(A):=u^{(m)}(A\otimes\bone)u^{(m)*}$ and $\hat{\alpha}^{(m)}(A):=u^{(m)*}(\bone\otimes A)u^{(m)}$. We use the defining property of a left action,
$$
(\alpha^{(m)}\otimes\id)\circ\alpha^{(m)}=(\id\otimes\Delta)\circ \alpha^{(m)}.
$$
Formula \eqref{firstumforcov} gives
\begin{align*}
(\varsigma^{(m)}_\G\otimes\id)(\alpha^{(m)}(A))&=(\Tr\otimes\id\otimes\id)\big((\alpha^{(m)}\otimes\id)\circ\alpha^{(m)}(A)(|e_1^m\ket\bra e_1^m|\otimes\bone)\big)
\\&=(\Tr\otimes\id\otimes\id)\big((\id\otimes\Delta)\circ\alpha^{(m)}(A)(|e_1^m\ket\bra e_1^m|\otimes\bone)\big)
\\&=\Delta(\Tr\otimes\id)\big(\alpha^{(m)}(A)(|e_1^m\ket\bra e_1^m|\otimes\bone)\big)=\Delta\circ\varsigma^{(m)}_\G(A).
\end{align*}
The proof of \eqref{dualcovequiv} is identical, using $(\id\otimes\hat{\alpha}^{(m)})\circ\hat{\alpha}^{(m)}=(\hat{\Delta}\otimes\id)\circ\hat{\alpha}^{(m)}$.
\end{proof}

\subsubsection{$C(\G/\K)$ as an inductive limit}
Now we will, for certain compact matrix quantum groups $\G$, realize the first-row algebra $C(\G/\K)$ as a projective limit  $\Bi^\infty$. From our previous results we have then obtained $C(\G/\K)$ as the $\Un(1)$-invariant part of the Cuntz--Pimsner algebra of the $\G$-subproduct system. 

Let $\G\subset A_u(Q)$ be a compact matrix quantum group (with faithful Haar measure), with $Q\in\GeL(n,\C)$, and let $C(\G/\K)$ be the $\Un(1)$-invariant part of the first-row algebra $C(\Sb_\G)$. 

Let $\GH_\bullet=\GH_\bullet^\G$ be the $\G$-subproduct system and let $\Oi_\GH$ be its Cuntz--Pimsner algebra. In §\ref{projlimsec} we defined the Toeplitz quantization $\breve{\varsigma}^{(m)}$ as a map from $\Oi_\GH$ to $\Bi^\infty$. In that way we could realize an isomorphism between $\Oi_\GH^{(0)}\cong\Bi_\infty$ and the projective limit $\Bi^\infty$. In this section we shall use the same strategy but with a map 
\begin{equation}\label{ToepforCMQG}
\breve{\varsigma}^{(m)}_\G:C(\G/\K)\to\Bi(\GH_m),
\end{equation}
i.e. we quantize $C(\G/\K)$ instead of $\Oi_\GH^{(0)}$. We define \eqref{ToepforCMQG} to be the adjoint of the map $\varsigma^{(m)}_\G$ appearing in \eqref{coeffcovsymb}. We shall in this way obtain an isomorphism between $C(\G/\K)$ and $\Bi^\infty$, and hence, due to $\Bi_\infty\cong\Bi^\infty$, we will arrive at the following result.
\begin{thm}\label{mainthmCMQGasindlim}
Assume that normal ordering is possible in $C(\G/\K)$. Then there is a $C^*$-isomorphism between $C(\G/\K)$ and the $\Un(1)$-invariant part of the Cuntz--Pimsner algebra $\Oi_\GH$ of the $\G$-suproduct system:
$$
C(\G/\K)\cong\Oi_\GH^{(0)}.
$$
This isomorphism is given by the total Toeplitz map $\breve{\varsigma}_\G=\prod_m\breve{\varsigma}^{(m)}_\G$, and it intertwines the ergodic $\G$- and $\hat{\G}$-actions as well as the $\G$-invariant states. 
\end{thm}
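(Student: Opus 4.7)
The plan is to run the same inductive/projective limit machinery of \S\ref{indlimsec}--\S\ref{projlimsec} but with the concrete maps $\varsigma^{(m)}_\G:\Bi(\GH_m)\to C(\G/\K)$ of \eqref{coeffcovsymb} replacing the abstract covariant symbol map $\varsigma^{(m)}:\Bi(\GH_m)\to\Oi_\GH^{(0)}$ of Theorem \ref{twocovsymthm}. Fix $Q$ to be the operator $Q_\G$ of \S\ref{dualsec} restricted to $\GH_\bullet$, so that the state $\phi_m$ on $\Bi(\GH_m)$ is exactly the restriction of the Haar state to the matrix block $\pi_m(c_0(\hat\G))\cong\Bi(\GH_m)$; equivalently $h|_{C(\G/\K)}\circ\varsigma^{(m)}_\G=\phi_m$, which follows from the Woronowicz orthogonality relations for matrix coefficients of $u^{(m)}\otimes\bar u^{(m)}$ together with the definition \eqref{coeffcovsymb}.

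First I would check the compatibility $\varsigma^{(l)}_\G\circ\iota_{m,l}=\varsigma^{(m)}_\G$ for $m\le l$. Using the alternative expression \eqref{flipinduct} for $\iota_{m,l}$ together with the fact that $u^{(l)}$ is the restriction of $u^{\otimes l}=u^{\otimes(l-m)}_{[m+1,l]}u^{\otimes m}_{[1,m]}$ to $\GH_l\subseteq\GH^{\otimes(l-m)}\otimes\GH_m$, this reduces to the identity $\sum_{|\mathbf r|=l-m}u^{(l)c*}(|e_\mathbf{r}\ket\bra e_\mathbf{r}|\otimes p_m\otimes\bone)u^{(l)c}=u^{(m)c*}p_mu^{(m)c}$ inside $\Bi(\GH_l)\otimes C(\G)$ (after tracing against $A\in\Bi(\GH_m)$), which is just unitarity of $u^{(l-m)}$. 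Consequently the $\varsigma^{(m)}_\G$'s assemble into a well-defined map from the algebraic inductive limit $^{0}\Bi_\infty$ into $C(\G/\K)$. Under the normal-ordering hypothesis the image of $\bigcup_m\varsigma^{(m)}_\G(\Bi(\GH_m))$ is precisely the subspace spanned by normally ordered monomials $z_{j_1}\cdots z_{j_m}z_{k_m}^*\cdots z_{k_1}^*$, which by assumption is dense in $C(\G/\K)$.

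Next I would define $\breve\varsigma^{(m)}_\G:C(\G/\K)\to\Bi(\GH_m)$ as the adjoint of $\varsigma^{(m)}_\G$ with respect to the pairings $(f,g)\mapsto h(f^*g)$ and $(A,B)\mapsto\phi_m(A^*B)$; by the compatibility of the previous step and the duality proved in Proposition \ref{propwithstatescompat}, these adjoints satisfy $\breve\varsigma^{(m)}_\G=\jmath_{l,m}\circ\breve\varsigma^{(l)}_\G$, so they assemble to a completely positive map $\breve\varsigma_\G:C(\G/\K)\to\Bi^\infty\subset\Gamma_b$. Faithfulness of the Haar state on $C(\G/\K)$ (which by hypothesis passes to the reduced version) gives injectivity of $\breve\varsigma_\G$ exactly as in Lemma \ref{faithToepllemma}, while density of the normally ordered elements gives surjectivity onto $\Bi^\infty$ exactly as in Lemma \ref{Toeplcompatprojlimlemma}. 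The projective-limit multiplication modulo $\Gamma_0$ then turns $\breve\varsigma_\G$ into a $*$-isomorphism, by the same complete-order-isomorphism argument (Arveson's result, cited for Theorem \ref{indlimtasprojlimthm}) that identified $\breve\varsigma(\Bi_\infty)$ with $\Bi_\infty\cong\Oi_\GH^{(0)}$. Composing with that identification yields $C(\G/\K)\cong\Oi_\GH^{(0)}$.

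Finally, equivariance of the resulting isomorphism under the ergodic $\G$- and $\hat\G$-actions is immediate from Lemma \ref{equivlemma}, and the intertwining of invariant states is the identity $h|_{C(\G/\K)}\circ\varsigma^{(m)}_\G=\phi_m$ noted at the start, which translates into $h|_{C(\G/\K)}=\omega_Q\circ(\textnormal{isomorphism})$ after passing to the limit via \eqref{covsymbandlimitstate}. The main obstacle I anticipate is the identification $h|_{C(\G/\K)}\circ\varsigma^{(m)}_\G=\phi_m$: one must be careful that the matrix coefficients appearing in \eqref{coeffcovsymb} are the ones for which Woronowicz's orthogonality relations produce precisely the $Q$-weighted trace $\Tr(Q_m\,\cdot\,)/\Tr(Q_m)$, and not some modification thereof. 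Once this is set up correctly the rest is a bookkeeping exercise in transporting the general machinery along the equivariant maps $\varsigma^{(m)}_\G$.
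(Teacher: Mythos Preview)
Your proposal follows essentially the same strategy as the paper: transport the inductive/projective limit machinery of \S\ref{indlimsec}--\S\ref{projlimsec} along the concrete covariant symbol maps $\varsigma^{(m)}_\G$, establish that $\breve\varsigma_\G$ lands in $\Bi^\infty$, is bijective, and then invoke the complete-order-isomorphism argument (Theorem \ref{indlimtasprojlimthm}) together with Lemma \ref{equivlemma} for equivariance.

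There is one point of divergence worth noting. For injectivity of $\breve\varsigma_\G$ you invoke the argument of Lemma \ref{faithToepllemma} verbatim, relying on the identity $\phi_m\circ\breve\varsigma^{(m)}_\G=h|_{C(\G/\K)}$ (which you correctly flag as needing the Woronowicz orthogonality relations to yield the $Q$-weighted trace). The paper, by contrast, explicitly declines to use that route and instead argues directly: the matrix coefficients of $\breve\varsigma^{(m)}_\G(f)$ are the numbers $h(z_\mathbf{j}z_\mathbf{k}^*f)$ (equation \eqref{normordexplaineq}), so if they all vanish then $f$ is $L^2$-orthogonal to every normally ordered monomial, and the normal-ordering hypothesis plus faithfulness of $h$ force $f=0$. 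The paper's argument has the advantage of giving injectivity for arbitrary (not just positive) $f$ in one stroke and of making the role of the normal-ordering hypothesis explicit at this step; your route is cleaner bookkeeping but requires first verifying the state identity you highlight. Both are valid once that identity is in hand.
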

\begin{Remark}[Normal ordering]\label{normordrem}
For general $\G$, the map $\breve{\varsigma}_\G$ maps the normally ordered part of $C(\G/\K)$ onto $\Oi_\GH^{(0)}$.
 For instance, normal ordering is possible for $\G=B_u(F)$ but not for $A_u(Q)$. In fact, $A_u(Q)$ is our only example where $\breve{\varsigma}_\G$ is not an isomorphism. Commutative fusion rules implies normal ordering (recall Definition \ref{fusionrulesdef}). 
\end{Remark}

So let us begin by defining $\Bi^\infty$ to be the projective limit of the system $(\Bi(\GH_\bullet),\jmath_{\bullet,\bullet})$, where $\GH_\bullet$ is the $\G$-subproduct system. Here the operator $Q$ on $\GH$ which appears in the construction of $\Bi^\infty$ (§\ref{notationsec}) is taken to be the same as the matrix defining $A_u(Q)\supset\G$, assuming $Q$ is equal to its transpose. Thus, $\Bi(\GH_m)$ is equipped with the $\phi_m$-inner product, where $\phi_m=\Tr(\rho^{(m)}_Q\cdot)$ is the state defined by the density matrix $\rho_Q^{(m)}:=Q_m/\Tr(Q_m)$. We may therefore regard $\Bi^\infty\cap\Ti_\GH^{(0)}$ as a subset of $c_b(\hat{\G})$ having trivial intersection with $c_0(\hat{\G})$. 
\begin{Lemma}\label{Qsubnormlemma}
The $\G$-subproduct system $\GH_\bullet$ and the matrix $Q$ satisfy Assumption \ref{Qnotat}.
\end{Lemma}
\begin{proof}
That $Q^{\otimes m}$ preserves the subspace $\GH_m\subset\GH^{\otimes m}$ follows from the fact, mentioned in \S\ref{dualsec}, that $Q^{\otimes m}$ belongs to the commutant of the $\G$-action on $\GH^{\otimes m}$.

Let $L^2(\Sb)$ be the GNS Hilbert space of the restriction of the Haar state to the subalgebra $C(\Sb)\subset C(\G)$. Then the generators $z_1,\dots,z_n$ of $C(\Sb)$ are represented $*$-homomorphically as operators on $L^2(\Sb)$. If we denote these by $Z_1,\dots,Z_n$ we thus have (assuming $Q_{1,1}=1$) from \eqref{qspherecond} that
$$
\sum^n_{k=1}Q^{k,k}Z_k^*Z_k=\bone.
$$
Let $H^0(\Sb)$ be the closed subspace of $L^2(\Sb)$ spanned by $z_1,\dots,z_n$. Then $H^0(\Sb)$ is invariant under $Z_1,\dots,Z_n$. Denote by $T_k$ the restriction of $Z_k$ to $H^0(\Sb)$ for each $k\in\{1,\dots,n\}$. If $P$ is the orthogonal projection of $L^2(\Sb)$ onto $H^2(\Sb)$, we get
\begin{equation}\label{Qsubnormcond}
\sum^n_{k=1}Q^{k,k}T_k^*T_k=P\sum^n_{k=1}Q^{k,k}Z_k^*Z_k\big|_{H^2(\Sb)}=\bone.
\end{equation}
Now the tuple $T_1,\dots,T_n$ is unitarily equivalent to an operator tuple $\tilde{T}_1,\dots,\tilde{T}_n$ on the Fock space $\GH_\N$ satisfying
$$
\tilde{T}_j^*\tilde{T}_k|_{\GH_m}=\frac{\Tr(Q_{m+1})}{\Tr(Q_m)}S_j^*S_k|_{\GH_m},\qquad\forall m\in\N_0,\ j,k\in\{1,\dots,n\}.
$$
where $S_1,\dots,S_n$ are the standard shifts on $\GH_\N$ and we used again that $Q_m$ is simply the restriction of $Q^{\otimes m}$ to an invariant subspace. Equation \eqref{Qsubnormcond} then says that the maps $\jmath_{l,m}$ are unital, i.e. that $\iota_{m,l}$ preserves the $Q$-traces. 
\end{proof}
\begin{Remark}[$Q$-spherical isometries and $Q$-subnormality]
One may say that a tuple of operators $T_1,\dots,T_n$ satisfying Equation \eqref{Qsubnormcond} is a $Q$-\textbf{spherical isometry}'. For $Q=\bone$ we obtain the usual notion of a \textbf{spherical isometry}. The particular $Q$-spherical isometry $T_1,\dots,T_n$ obtained in the proof of Lemma \ref{Qsubnormlemma} is moreover $Q$-\textbf{subnormal} in that it is the restriction to an invariant subspace of a tuple of operators $Z_1,\dots,Z_n$ satisfying $\sum^n_{k=1}Z_kZ_k^*=\bone$. In the commutative case (where we must have $Q=\bone$) we know from \cite[Prop. 2]{Atha3} that a subnormal operator tuple is a spherical isometry if and only if it is subnormal and its normal extension has joint spectrum contained in the unit sphere $\Sb^{2n-1}$. 
%We see that if $\G/\K$ is an ordinary commutative manifold, so that $Q=\bone$, then 
\end{Remark}

From our general results we have an isomorphism $\breve{\varsigma}(\Bi_\infty)\cong\Bi_\infty$ which realizes the projective limit as an inductive limit. We stress again that this isomorphism $\breve{\varsigma}$ is not the same as the map $\breve{\varsigma}_\G$ which we now try to prove is an isomorphism.

The matrix coefficients of the operator $\breve{\varsigma}^{(m)}(f)$ are of the form
\begin{equation}\label{normordexplaineq}
h(z_\mathbf{j}z_\mathbf{k}^*f),\qquad \mathbf{j},\mathbf{k}\in\F_n^+\textnormal{ with }|\mathbf{j}|=m=|\mathbf{k}|.
\end{equation}

\begin{proof}[Proof of Theorem \ref{mainthmCMQGasindlim}]
Since the  ``coefficient map" $\varsigma^{(m)}_\G$ in \eqref{coeffcovsymb} is injective, its adjoint $\breve{\varsigma}^{(m)}_\G$ is surjective. As in the case of $\Bi_\infty$, we get that the image of $L^\infty(\G/\K)$ under $\breve{\varsigma}_\G$ is exactly $\Bi^\infty$ as a set.

We cannot use the reasoning in the proof of Lemma \ref{faithToepllemma} to deduce that $\breve{\varsigma}_\G$ is an injection of $C(\G/\K)$ into the operator system $\Bi^\infty$. On the other hand, we see directly that if $\breve{\varsigma}(f)=0$ then, using that the matrix coefficients of $\breve{\varsigma}(f)$ are given by \eqref{normordexplaineq} for all $m\in\N$, we get that $f$ must be orthogonal to the whole normally ordered part of $C(\G/\K)\subset L^2(\G)$. Since we have assumed that each $f\in C(\G/\K)$ can be normally ordered and that that the Haar state is faithful, this means that $f=0$.
 
Moreover, $\pi^{-1}(\breve{\varsigma}(\Bi_\infty))$ is again equal to $\Ti^{(0)}_\GH$. Namely, the proof in §\ref{CMasprojlimsec} carries over completely.

As before we get that $\breve{\varsigma}^{(m)}=\jmath_{\infty,m}$. Since we know that $\breve{\varsigma}(\Bi_\infty)$ is a $C^*$-algebra (using that it is a quotient of the Toeplitz algebra) with a unique multiplication, we obtain the von Neumann condition, i.e. $\breve{\varsigma}^{(m)}$ is asymptotically a homomorphism (see Corollary \ref{lemmaprojlimmult}). Thus $\breve{\varsigma}$ is an isomorphism for the projective-limit multiplication.

We also know that $\breve{\varsigma}_\G$ intertwines the vacuum state $\hat{\ep}$ with the Haar state $h$ restricted to $C(\G/\K)$. Composing with the isomorphism $\breve{\varsigma}:\Bi_\infty\to\breve{\varsigma}(\Bi_\infty)$ we get that $h$ is intertwined with the limit state $\omega_Q$. Finally, Lemma \ref{equivlemma} shows that $\breve{\varsigma}_\G$ is $\G$-$\hat{\G}$-equivariant.
\end{proof}

\subsubsection{$C(\Sb_\G)$ as an inductive limit}

\begin{cor}\label{firstrowasPims}
Let $\G$ be a compact matrix quantum group with faithful Haar measure $h:C(\G)\to\C$ such that normal ordering is possible in $C(\G/\K)$. Then there is a $\G$-$\hat{\G}$-equivariant isomorphism between the first-row algebra $C(\Sb_\G)$ and the Cuntz--Pimsner algebra $\Oi_\GH$ of the $\G$-subproduct system,
$$
C(\Sb_\G)\cong\Oi_\GH.
$$
In particular, $\Oi_\GH$ carries an ergodic action of $\G$ in which each irreducible representation of $\G$ occurs exactly once.
\end{cor}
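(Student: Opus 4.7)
The plan is to extend the isomorphism $C(\G/\K)\cong\Oi_\GH^{(0)}$ from Theorem \ref{mainthmCMQGasindlim} to the full graded algebras by matching spectral subspaces for the two circle actions. Both $C(\Sb_\G)$ and $\Oi_\GH$ split as $C^*$-direct sums of their spectral subspaces $C(\Sb_\G)^{(k)}$ and $\Oi_\GH^{(k)}$ for $k\in\Z$, so it suffices to produce a compatible family of bimodule isomorphisms $C(\Sb_\G)^{(k)}\cong\Oi_\GH^{(k)}$ intertwining the $\G$- and $\hat{\G}$-actions; the degree-zero case is already in hand.

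To build these, I would first define, for each $m\in\N_0$ and $k\in\Z$, an off-diagonal covariant symbol map
$$
\varsigma^{(m,k)}_\G : \Bi(\GH_{m+k},\GH_m)\longrightarrow C(\Sb_\G)^{(k)}
$$
by the obvious generalization of \eqref{coeffcovsymb}, now pairing the representation $u^{(m+k)c*}\otimes u^{(m)c}$ against the rank-one projection on $e_1^{\otimes(m+\max\{k,0\})}$. Its image lies in $C(\Sb_\G)^{(k)}$ because the relevant matrix coefficients are products of $m+k$ factors $z_j$ and $m$ factors $z_l^*$. The adjoint of $\varsigma^{(m,k)}_\G$ with respect to the Haar state on $C(\Sb_\G)$ and the $\phi_m$-pairing on $\Bi(\GH_{m+k},\GH_m)$ then produces Toeplitz maps $\breve{\varsigma}^{(m,k)}_\G:C(\Sb_\G)^{(k)}\to\Bi(\GH_{m+k},\GH_m)$. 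The same computation as in Lemma \ref{equivlemma}, carried out now for the representation $u^{(m+k)}\otimes\bar{u}^{(m)}$, shows that both maps are $\G$-$\hat{\G}$-equivariant.

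Next, I would show that as $m$ varies these Toeplitz maps intertwine with the projective system $\jmath^{(k)}_{l,m}$ and the inductive system $\iota^{(k)}_{m,l}$ introduced in \S\ref{pimsindlimsec}. Granted this compatibility, the argument of Theorem \ref{mainthmCMQGasindlim} applies verbatim at each level $k$: the assembled total Toeplitz map $\breve{\varsigma}_\G^{(k)}:=\prod_m\breve{\varsigma}^{(m,k)}_\G$ lands in the projective limit module $\Ei_{(k)}$, hits everything by surjectivity of the $\breve{\varsigma}^{(m,k)}_\G$, and is injective because a vanishing image means $f\in C(\Sb_\G)^{(k)}$ is $L^2$-orthogonal to every normally ordered monomial $z_{\mathbf{j}}z_{\mathbf{l}}^*$ of bidegree $(m+k,m)$ — whence $f=0$ by faithfulness of the Haar state combined with the normal ordering hypothesis (which must be used in the off-diagonal form, that every element of $C(\Sb_\G)^{(k)}$ is a norm-limit of normally ordered monomials). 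Composing with the isomorphism $\Ei_{(k)}\cong\Oi_\GH^{(k)}$ from \S\ref{pimsindlimsec} then yields the desired $\G$-$\hat{\G}$-equivariant bimodule isomorphism $C(\Sb_\G)^{(k)}\cong\Oi_\GH^{(k)}$. Summing over $k$ gives $C(\Sb_\G)\cong\Oi_\GH$, and ergodicity with multiplicity-one on $\Oi_\GH$ is transported from $C(\Sb_\G)$ via Lemma \ref{ergonfirstrow}.

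The main obstacle I expect is the off-diagonal normal-ordering step: one has to verify that the hypothesis that elements of $C(\G/\K)$ admit normal ordering propagates to each $C(\Sb_\G)^{(k)}$, in the form that every monomial $z_{i_1}\cdots z_{i_p}z_{l_1}^*\cdots z_{l_q}^*\cdots$ with $p-q=k$ can be rewritten as a combination of monomials with all $z$'s to the left of all $z^*$'s. For $\G$ with commutative fusion rules this is immediate from multiplicity one in $u^{(p)}\otimes\bar u^{(q)}$; for more general $\G$ satisfying the standing assumption one factors an element of $C(\Sb_\G)^{(k)}$ as $z_{\mathbf{j}}\cdot g$ (or $g\cdot z_{\mathbf{j}}^*$) with $g\in C(\G/\K)$ and applies normal ordering to $g$. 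After this, the rest of the argument is a mechanical transcription of the proof of Theorem \ref{mainthmCMQGasindlim} to the bimodule setting.
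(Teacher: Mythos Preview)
Your approach is correct but takes a genuinely different route from the paper's. You rebuild the entire inductive/projective-limit machinery at every degree $k$, producing bimodule isomorphisms $C(\Sb_\G)^{(k)}\cong\Oi_\GH^{(k)}$ one at a time. The paper instead argues only at degree $1$: once $C(\G/\K)\cong\Bi_\infty$ is known, the generators $Z_1,\dots,Z_n$ of $\Oi_\GH$ satisfy the same $Q$-sphere relations \eqref{qspherecond} as $z_1,\dots,z_n$, so both $n$-tuples are tight frames for their respective degree-$1$ bimodules and the defining projections $P^{(1)}\in\Mn_n(\C)\otimes C(\G/\K)$ coincide. Since both $C(\Sb_\G)$ and $\Oi_\GH$ are generated as $C^*$-algebras by their degree-$1$ pieces over the (now identified) degree-$0$ algebra, the full isomorphism follows immediately.

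The paper's argument is shorter and sidesteps entirely the off-diagonal normal-ordering issue you flag. Your approach, on the other hand, is more explicit about the bimodule structure at every level and makes the equivariance transparent degree by degree; it would also adapt more readily to situations where the algebra is not generated in degree $1$. Your proposed resolution of the off-diagonal obstacle is correct: the sphere relation $\sum_k z_kz_k^*=\bone$ gives $f=\sum_{|\mathbf{j}|=k}z_{\mathbf{j}}(z_{\mathbf{j}}^*f)$ for $f\in C(\Sb_\G)^{(k)}$ with $k\geq 0$, reducing to degree-$0$ normal ordering, and adjoints handle $k<0$.
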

\begin{proof} For notation simplicity we identify $\Oi_\GH^{(0)}$ with the inductive limit $\Bi_\infty$ and the modules $\Oi_\GH^{(k)}$ with the modules $\Ei^{(k)}$.  

Since $\Bi_\infty\cong C(\G/\K)$, we know that there is a basis $e_1,\dots,e_n$ for $\GH$ such that the $Q$-sphere condition \eqref{qspherecond} is satisfied by the generators $Z_1,\dots,Z_n$ of $\Oi_\GH$, just as it is for the generators $z_1,\dots,z_n$ of $C(\Sb_\G)$. This says precisely that $Z_1,\dots,Z_n$ and $Q^{-1/2}_{1,1}Z_1,\dots,Q^{-1/2}_{n,n}Z_n$ are standard right and left tight normalized frames for the $\Bi_\infty$-bimodule $\Ei^{(1)}$, respectively; for all $\xi\in\Ei^{(1)}$,
$$
\sum^n_{k=1}\bra \xi|Z_k\ket_{\rig}\bra Z_k|\xi\ket_{\rig}=\bra \xi|\xi\ket_{\rig},
$$
\begin{align*}
\sum^n_{k=1}\bra \xi|Q^{-1/2}_{k,k}Z_k\ket_{\lef}\bra Q^{-1/2}_{k,k}Z_k|\xi\ket_{\lef}
&=\sum^n_{k=1}\xi Q_{k,k}^{-1} Z_k^*Z_k\xi^*
=\xi\xi^*=\bra\xi|\xi\ket_{\lef},
\end{align*}
and identically for $C(\Sb_\G)^{(1)}$ and the $z_j$'s. If we identify $C(\G/\K)$ with $\Bi_\infty$, this means that the projection $P^{(1)}\in\Mn_n(\C)\otimes C(\G/\K)$ which defines the module $C(\Sb_\G)^{(1)}$ coincides with the projection which defines the module $\Ei^{(1)}$. So the modules are the same and the isomorphism $C(\Sb_\G)\cong\Oi_\GH$ is clear.

For the $\G$-$\hat{\G}$-equivariance, we must first define actions on $\Oi_\GH$. But since we know that $C(\Sb_\G)\cong\Oi_\GH$ we can just specify these action on generators $Z_1,\dots,Z_n$ by the same formulas as for $C(\Sb_\G)$. 

The last statement is due to Lemma \ref{ergonfirstrow}.  
\end{proof}

\subsection{Comparison with Poisson and Martin boundaries}

\subsubsection{Poisson integral versus total Toeplitz map}
Let $\G$ be a compact matrix quantum group with commutative fusion rules (see Definition \ref{fusionrulesdef}) and faithful Haar measure. The Poisson boundary to be discussed here is the one defined in \cite{Iz1}, so if we were phrasing things in terms of random walks (we shall not), there would in the background be a representation $u$ of $\G$ whose coefficients generate $C(\G)$ (without any need of the adjoints $u_{j,k}^*$). 

Izumi defines \cite[Lemma 3.8]{Iz1} the \textbf{Poisson integral} to be the unital completely positive map $\Theta:L^\infty(\G)\to\Ri(\G)$ given by
\begin{equation}\label{poissinteg}
\Theta(f):=(\id\otimes h)(W^*(\bone\otimes f)W),
\end{equation}
where $W$ is the fundamental unitary (§\ref{dualsec}). Similar to the projective limit $\Bi^\infty$ which is the image of our total Toeplitz map $\breve{\varsigma}$, the image of map $\Theta$ is an operator system, usually denoted by $H^\infty(\hat{\G})$, which can be made into a von Neumann algebra by replacing the operator multiplication by the new one. Moreover, $\Theta$ is a complete order isomorphism onto its image. A possible definition of the \textbf{Poisson boundary} of $\hat{\G}$ is then as the preimage, say $L^\infty(\G/\T)$, of $\Theta$ in $L^\infty(\G)$. We then refer to the abstract object $\G/\T$ as the Poisson boundary of $\hat{\G}$. The notation $\G/\T$ is chosen to indicate that $\G$ and $\hat{\G}$ act ergodically on $L^\infty(\G/\T)$. 

The Poisson boundary $\G/\T$ is defined in terms of a von Neumann algebra. In order to compare $\G/\T$ with what we have denoted $\G/\K$, note that $\breve{\varsigma}$ extends to a normal completely positive map (denoted by the same symbol) 
\begin{equation}\label{totcovarsymbvN}
\breve{\varsigma}:L^\infty(\G/\K)\to\Bi^\infty
\end{equation}
and this is the ``first-row" version of the Poisson integral. Using it one can carry out Berezin quantization on the level of von Neumann algebras. Inspiring work here is \cite{INT1}.

The Poisson integral \eqref{poissinteg} can be decomposed into components $\Theta_\lambda:L^\infty(\G)\to\Bi(\GH_\lambda)$ for $\lambda\in\Irrep\G$, and doing so one easily calculates the adjoints $\Theta_\lambda^*:\Bi(\GH_\lambda)\to L^\infty(\G)$. Noticing the similarity to Berezin quantization, \cite{INT1} referred to the composition $\Theta_\lambda^*\circ\Theta_\lambda$ as the ``Berezin transform". This terminology is not entirely fortunate because $\Theta_\lambda^*\circ\Theta_\lambda$ does not coincide with the usual notion of Berezin transform when $\G=G$ is an ordinary group. The issue is that $\Theta_\lambda^*$ is obtained by tracing against the invertible operator $Q_\lambda$ (which is of full rank) instead of a rank-$1$ projection. The distinction is the use of ``first-row" versus all of $\G$. This distinction persists even if we, as Izumi does, assume that every irreducible representation of $\G$ is contained in some power of $u$.

It is therefore interesting that the final results ($\G/\K$ and $\G/\T$) are not very different. For $\SU_q(2)$ they even coincide. In general, we should view $\G/\K$ as a (noncommutative) non-maximal flag variety (prototype example being $\C\Pb_q^{n-1}$) while $\G/\T$ is the maximal flag variety (so $\T$ is the ``maximal torus"); cf. \cite{Tom1}. 

The transition between classical and quantum Poisson boundaries is rather involved \cite{NT1}. In fact, if $\G=G$ is an ordinary compact group then the Poisson boundary is trivial: $L^\infty(\G/\T)=\C\bone$ \cite[Cor. 3.9]{Iz1}. In contrast, Berezin quantization carries over in perfect analogy with the commutative case. 

\subsubsection{Markov operator}\label{markovsec}
The set $H^\infty(\Phi)$ of fixed points of a normal completely positive map $\Phi$ on a von Neumann algebra is an ultraweakly closed operator system which can be made into a von Neumann algebra by replacing the operator multiplication by the so-called ``Choi--Effros multiplication" \cite[Thm. 3.1]{Arv10}, \cite{Iz4}. 

The new multiplication on the Poisson boundary $H^\infty(\hat{\G})$ mentioned above is just an example of a Choi--Effros multiplication. The completely positive map on $\Ri(\G)$ whose fixed-point set equals $H^\infty(\hat{\G})$ takes the role of Markov operator for the ``noncommutative random walk" on $\hat{\G}$.  

The following can be summarized by saying that with Berezin quantization one ends up with a random walk on the ``dual" of $\G/\K$ instead of the dual of $\G$. Note however that it works for any subproduct system $\GH_\bullet$. Fix thus a subproduct system $\GH_\bullet$ and denote as usual by $\Gamma_b=\Gamma_b(\Bi_\bullet)$ the von Neumann-algebraic direct sum of the matrix algebras $\Bi_m:=\Bi(\GH_m)$.
\begin{dfn}  The \textbf{Markov operator} on $\Bi_\bullet$ is the unital normal completely positive map $\Phi:\Gamma_b\to\Gamma_b$ defined by
$$
\Phi(X_\bullet):=X_{\bullet-1}=(\jmath_{m,m-1}(X_m))_{m\in\N}.
$$
%$$
%\Phi(X_\bullet):=(\jmath_{m+1,m}(X_{m+1}))_{m\in\N_0}.
%$$
\end{dfn}
%Clearly $\Phi$ fixes every point in $\Bi^\infty$.
\begin{prop}\label{Markovprop} The set $H^\infty(\Phi)$ of $\Phi$-fixed points in $\Gamma_b(\Bi_\bullet)$ is equal to the projective limit $\Bi^\infty$. In particular, $\Bi^\infty$ is a von Neumann algebra.
%The set of $\Phi$-fixed points in $\{A\in\Gamma_b(\Bi_\bullet)|\ \|A_m\|\operatorname{ converges}\}$ is equal to $\Bi^\infty$ (as a set) 
On the subset $\breve{\varsigma}(\Bi_\infty)\subset\Bi^\infty$, the Choi--Effros multiplication coincides with the projective-limit multiplication.  
\end{prop}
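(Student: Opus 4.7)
The plan is to handle the two assertions separately. The identification $H^\infty(\Phi) = \Bi^\infty$ is essentially tautological: the equation $\Phi(X_\bullet) = X_\bullet$ expanded componentwise reads $\jmath_{m,m-1}(X_m) = X_{m-1}$ for all $m \geq 1$, which is exactly the defining compatibility condition of the projective limit. For the von Neumann algebra structure on the operator system $H^\infty(\Phi)$ I would invoke the standard Choi--Effros--Tomiyama recipe: since $\Gamma_b$ is injective (being a $C^*$-direct product of matrix algebras) and $\Phi$ is a normal unital completely positive map, an ultraweak cluster point of the Cesàro averages $N^{-1}\sum_{k=0}^{N-1}\Phi^k$ provides a normal unital completely positive idempotent $E \colon \Gamma_b \to \Bi^\infty$, and the product $X \cdot Y := E(XY)$ turns $\Bi^\infty$ into a von Neumann algebra.

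For the second assertion I would evaluate both multiplications componentwise at level $m$ on a pair $X = \breve{\varsigma}(f)$, $Y = \breve{\varsigma}(g)$ with $f,g \in \Bi_\infty$. On the Choi--Effros side, $E$ on bounded sequences is computed by $E(Z_\bullet)_m = \lim_{l \to \infty} \jmath_{l,m}(Z_l)$ (along whichever filter defines the cluster point), because for any $Y_\bullet \in \Bi^\infty$ and $l \geq m$ we have $\jmath_{l,m}(Y_l) = Y_m$ by the coherence $\jmath_{r,m} \circ \jmath_{l,r} = \jmath_{l,m}$. Thus
\begin{equation*}
(X \cdot Y)_m \;=\; \lim_{l \to \infty} \jmath_{l,m}\bigl(\breve{\varsigma}^{(l)}(f)\,\breve{\varsigma}^{(l)}(g)\bigr).
\end{equation*}
Splitting $\breve{\varsigma}^{(l)}(f)\breve{\varsigma}^{(l)}(g) = \breve{\varsigma}^{(l)}(fg) + \Delta_l$, the first summand is mapped by $\jmath_{l,m}$ to $\breve{\varsigma}^{(m)}(fg)$ thanks to the compatibility $\jmath_{l,m} \circ \breve{\varsigma}^{(l)} = \breve{\varsigma}^{(m)}$ of Lemma \ref{Toeplcompatprojlimlemma}, while $\|\jmath_{l,m}(\Delta_l)\| \leq \|\Delta_l\| \to 0$ by the von Neumann condition \eqref{contrsymbvNcond} of Corollary \ref{strictquantcor} (itself a direct consequence of Theorem \ref{indlimtasprojlimthm}). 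Hence $(X \cdot Y)_m = \breve{\varsigma}^{(m)}(fg)$, matching the defining formula \eqref{projlimmult} of the projective-limit product.

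I expect the main subtlety to be the existence and well-definedness of $E$ as a projection onto $H^\infty(\Phi)$, rather than the comparison itself; the comparison collapses to the already-proven asymptotic multiplicativity of $\breve{\varsigma}$. A more economical alternative, which sidesteps constructing $E$ altogether, is to argue by uniqueness of the $C^*$-structure on an operator system: $\breve{\varsigma}$ identifies $\breve{\varsigma}(\Bi_\infty)$ with the $C^*$-algebra $\Bi_\infty \cong \Oi_\GH^{(0)}$ via Theorem \ref{indlimtasprojlimthm}, and since both the projective-limit product and the Choi--Effros product equip $\breve{\varsigma}(\Bi_\infty)$ with a $C^*$-algebra structure compatible with the ambient operator-system norm, the rigidity used in Corollary \ref{lemmaprojlimmult} forces the two products to coincide.
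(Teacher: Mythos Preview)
Your proof is correct and follows essentially the same route as the paper: identify $H^\infty(\Phi)=\Bi^\infty$ tautologically, then compute the Choi--Effros product componentwise as $\lim_{l}\jmath_{l,m}\big((XY)_l\big)$ and match it with $\breve{\varsigma}^{(m)}(fg)$ using the compatibility $\jmath_{l,m}\circ\breve{\varsigma}^{(l)}=\breve{\varsigma}^{(m)}$ together with the asymptotic multiplicativity of $\breve{\varsigma}$. The only cosmetic difference is that the paper invokes Izumi's formula $X\diamond Y=\text{SOT-}\lim_{r}\Phi^r(XY)$ directly, whereas you build the conditional expectation $E$ from a Ces\`aro cluster point; your splitting $\breve{\varsigma}^{(l)}(f)\breve{\varsigma}^{(l)}(g)=\breve{\varsigma}^{(l)}(fg)+\Delta_l$ is in fact more explicit than what the paper writes out.
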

\begin{proof} The first statement is clear, so $\Bi^\infty$ is a von Neumann algebra. %We need only show that the projective-limit multiplication on $\Bi^\infty$ coincides with the Choi--Effros multiplication.
To prove the last statement we use the result \cite[Cor. 5.2]{Iz3} that the Choi--Effros product of $X,Y\in H^\infty(\Phi)$ is given by
$$
X\diamond Y=\lim_{r\to\infty}\Phi^r(XY),
$$
where $XY$ is the multiplication in $\Bi(\GH_\N)$ and the limit is in the strong operator topology. % Peterson shows that the limit belongs to $\Bi^\infty$ but does that imply that it is the norm limit? Do we need that? 
Now, the $m$th component of $X\diamond Y$ is
\begin{align*}
(X\diamond Y)_m&=\big(\lim_{r\to\infty}\Phi^r(XY)\big)_m=\lim_{r\to\infty}\jmath_{m+r,m}((XY)_{m+r})
\\&=\jmath_{\infty,m}\big(\lim_{r\to\infty}(XY)_{m+r}\big)
\\&=\big(\lim_{l\to\infty}\breve{\varsigma}^{(l)}(XY)\big)_m,
\end{align*}
%\begin{align*}
%X\diamond Y&=\lim_{r\to\infty}\Phi^r(XY)=\lim_{r\to\infty}(\jmath_{m+r,m}(XY)_{m+r})_{m\in\N_0}
%\\&=((\lim_{r\to\infty}\jmath_{\infty,m+r}(XY))_m)_{m\in\N_0}
%\\&=((\lim_{l\to\infty}\breve{\varsigma}^{(l)}(XY))_m)_{m\in\N_0},
%$$
%\lim_{m\to\infty}\Phi^m(XY)=\lim_{m\to\infty}\Phi^m(((XY)_l)_{l\in\N_0})
%=\lim_{m\to\infty}((XY)_{l-m})_{l\in\N_0}=\lim_{l\to\infty}(XY)_l,
%$$
%\end{align*}
where we used that $X_m=\jmath_{\infty,m}(X)=\breve{\varsigma}^{(m)}(X)$. This shows that $\diamond$ is the projective-limit multiplication \eqref{projlimmult} whenever we have convergence in norm. Since norm-convergence holds for $X=\breve{\varsigma}(f)$ and $Y=\breve{\varsigma}(g)$ with $f,g\in\Bi_\infty$, the claim holds. 
 %So $H^\infty(\Phi)=(\Bi^\infty)''$. 
%The fact that $H^\infty(\Phi)$ forms an algebra under $\diamond$ is then just the statement that $\lim_{l\to\infty}\breve{\varsigma}^{(l)}(XY)$ is constant under $\jmath_{\bullet,\bullet}$ (which is obvious) and exists in the strong topology. We thus know that the weak closure of $(\Bi^\infty,\diamond)$ is a von Neumann algebra. The commutant of $(\Bi^\infty,\diamond)$ is then also a von Neumann algebra. This does probably not imply that $\Bi^\infty$ is an algebra, for otherwise we would have the general statement that if $\Phi:\Ai\to\Ai$ is a completely positive map on a $C^*$-algebra such that the extension $\Phi:\Ai''\to\Ai''$ is normal the intersection of $H^\infty(\Phi)$ with any weakly dense subspace $\Bi\subset\Ai$ is an algebra. 
\end{proof}

\subsubsection{Martin boundaries}
While the Poisson boundary is a measure-theoretic object defined via a von Neumann algebra, the Martin boundary is specified in terms of a $C^*$-algebra \cite{NT1}. Its relation to Berezin quantization is the same "first-row versus all-of-$\G$`` story as with the Poisson boundary but we shall discuss only a special case in which $\G/\K$ agrees with the Martin boundary of $\hat{\G}$. The reason for this coincidence is that the defining representation of the chosen $\G$ is self-conjugate and irreducible, so that $\GH_\bullet$ contains all irreducible representations. 

Our approach here via inductive limits was partially inspired by \cite{VVer1}, where they construct a ``Martin boundary" of the dual of $\G$ for $\G=B_u(F)$ in the same way. Our notation $\Bi_\infty$ is chosen to make comparison with that paper easy. Let $F\in\GeL(n,\C)$ such that $\bar{F}F=\pm\bone$; this ensures that the defining representation of $\G=B_u(F)$ is irreducible. By construction, the Martin boundary of $\hat{\G}$ is equal to the inductive limit $\Bi_\infty$ of the $\G$-subproduct system.  

In \cite{VaVe1}, another realization of the Martin boundary was accomplished. First define $\Bi_u(F,F_q)$ to be the universal $C^*$-algebra generated by the entries of a unitary $2\times n$ matrix $Y$ satisfying
\begin{equation}\label{Yrels}
Y=F_qY^cF^{-1}.
\end{equation}
where $F_q:=\big(\begin{smallmatrix}0&|q|^{1/2}\\\pm|q|^{-1/2}&0\end{smallmatrix}\big)$, with $q$ defined by $|q+q^{-1}|=\Tr(F^*F)$ and $F\bar{F}=\pm q$ and we wrote $q=\mp|q|$. It is shown in \cite{VaVe1} that the $\Un(1)$-action $\rho$ on $\Bi_u(F,F_q)$ given by 
$$
\rho_\lambda(Y):=\begin{pmatrix}\lambda&0\\0&\bar{\lambda}\end{pmatrix}Y,\qquad \forall \lambda\in\C,\,|\lambda|=1
$$
allows recovering the Martin boundary as the fixed-point algebra $\Bi_u(F,F_q)^{\Un(1)}$. 

Since our inductive limit $\Bi_\infty$ coincides with the Martin boundary of the dual of $B_u(F)$, we know (using Theorem \ref{mainthmCMQGasindlim}) that $\Bi_u(F,F_q)^{\Un(1)}$ must coincide with $C(\G/\K)$. This can be seen directly. Let $z_1,\dots,z_n$ and $w_1,\dots,w_n$ be the elements of the first and second row of $Y$ respectively. Then \eqref{Yrels} reads
\begin{equation}\label{zasfromwes}
z_k=|q|^{1/2}\sum^n_{s=1}w_s^*F^{-1}_{s,k},\qquad \forall k=1,\dots,n.
\end{equation}
The action $\rho_z$ is given by
$$
\rho_\lambda(z_k)=\lambda z_k,\qquad \rho_\lambda(w_k)=\bar{\lambda}w_k,
$$
so the fixed-point algebra consists of elements of the form $z_\mathbf{j}w_\mathbf{k}$, and their adjoints, as well as $z_\mathbf{j}z_\mathbf{k}^*$ and $w_\mathbf{j}w_\mathbf{k}^*$. Now \eqref{zasfromwes} shows that $\Bi_u(F,F_q)^{\Un(1)}=C(\G/\K)$, as asserted. 

Note however that $\Bi_u(F,F_q)$ is not at all the same as the first-row algebra $C(\Sb_\G)$. 

Thus we have one example where the Martin boundary of a discrete quantum group identifies with the object $\G/\K$ defined in this paper. Also, let $q\in(0,1]$. Then for the $\G=\SU_q(2)$-subproduct system $\GH_\bullet$ we have an equivariant isomorphism
$$
\Oi^{(0)}_{\GH}\cong C(\Sb_q^2),
$$
where $\G/\K=\Sb^2_q$ is the Podle\'{s} sphere. For $q<1$, this shows again that our inductive limit $\Bi_\infty$ coincides with the $C^*$-algebra referred to as the Martin boundary of the dual of $\SU_q(2)$ in \cite{VVer1} (because the same boundary is known to be the Podle\'{s} sphere \cite{NT1}). For $q=1$, we have agreement with Biane's Martin boundary of the dual of $\SU(2)$ \cite{Biane1}.

\section{Concluding remarks}

We have seen that the inductive limit $\Bi_\infty$ associated with a subproduct system $\GH_\bullet$ is a sensible generalization of the $C^*$-algebra $C(M)$ of continuous functions on a quantizable Kähler manifold, in the case the Kähler structure is projectively induced (so $M$ is embeddable as a submanifold of projective space). We may therefore write
$$
C(\M):=\Bi_\infty,
$$
and say that $\M$ is the \textbf{noncommutative projective variety} associated to $\GH_\bullet$. We may also refer to elements of
$$
C(\Sb_\M):=\Oi_\GH.
$$
as functions on the total space $\Sb_\M$ of a noncommutative circle bundle over $\M$. Thus the notation $\M:=\G/\K$ and $\Sb_\M:=\Sb_\G$ would be consistent with that in \S\ref{CMQGappsec} when $\GH_\bullet$ is the $\G$-subproduct system. In \cite{An5} we refer to $\M$ as the ``dequantization manifold".

By defining $C(\M)$ to be \emph{equal} (and not just isomorphic) to the inductive limit $\Bi_\infty$, the noncommutative space $\M$ comes with more structure than just its topology. Namely, if $\M=M$ is commutative then the inductive system gives an embedding into projective space $\C\Pb^{n-1}$ and, if $M$ is non-singular, endows $M$ with a complex-analytic (in particular smooth) structure, a polarization $L$ (choice of ample line bundle), an inner product on $H^0(M;L)$ (the one we started with, making $H^0(M;L)$ into the Hilbert space $\GH$), a Hermitian metric on $L$ (the one defining the $*$-structure on $\Oi_\GH$; this is just the Fubini--Study metric associated with the inner product on $\GH$) and a volume form on $M$ (viz. the limit state, which need not be the same as Fubini--Study volume form). As we have seen, these structures have perfect generalizations to the noncommutative setting. Note that the quantum homogeneous spaces $\G/\K$ are ``balanced" in the sense that the limit state on $C(\G/\K)$ coincides with the state induced by the Haar state on $C(\G)$.

The covariant symbols $\varsigma^{(m)}(A)$ and the Toeplitz operators $\breve{\varsigma}^{(m)}(f)$ can be expressed in terms of the projections $P^{(m)}\in\Bi(\GH_m)\otimes C(\M)$ which define the modules $\Ei^{(m)}$. In this way one generalizes the Rawsnely coherent-state projections in \cite{RCG1}, and in particular the coherent states in \cite{Per1}. We will use this when we discuss the (fuzzy) geometry of $\M$ in another paper. 

There are also projections $P^{(-m)}$ and maps $\varsigma^{(-m)}$, $\breve{\varsigma}^{(-m)}$ etc. associated with the modules $\Ei^{(-m)}$. In this way one quantizes instead the anti-normal part of $C(\G/\K)$.

\section{References}

%\bibitem[AgMc1]{AgMc1} Agler J, McCarthy JE. Complete Nevanlinna--Pick kernels. J. Funct. Anal. Vol 175, pp.
11-124 (2000).

%\bibitem[AgMc2]{AgMc2} Agler J, McCarthy JE. Pick interpolation and Hilbert function spaces. Graduate Studies in Mathematics. Vol 44. Amer. Math. Soc. (2002).

\bibitem[An4]{An4} Andersson A. Detailed balance as a quantum-group symmetry of Kraus operators. arXiv:1506.00411
(2015).

\bibitem[An5]{An5} Andersson A. Dequantization via quantum channels. Lett. Math. Phys. Vol 106, Issue 10, pp. 1397-1414 (2016).

\bibitem[ArMa1]{ArMa1} Ara P, Mathieu M. Local multipliers of $C^*$-algebras. Springer (2003).

\bibitem[ArLo1]{ArLo1} Arezzo C, Loi A. Quantization of Kähler manifolds and the asymptotic expansion of Tian--Yau--Zelditch. J. Geom. Phys. Vol 47, Issue1, pp. 87-99 (2003).

\bibitem[ArZh1]{ArZh1} Artin M, Zhang JJ. Noncommutative projective schemes. Adv. in Math. Vol 109, Issue 2, pp.
228-287 (1994).

\bibitem[Arv6c]{Arv6c} Arveson W.  Subalgebras of $C^*$-algebras III: Multivariable operator theory. Acta Math. Vol 181, pp. 159-228 (1998).
\bibitem[Arv8]{Arv8} Arveson W. $p$-Summable commutators in dimension $d$. J. Operator Theory. Vol 54, pp. 101-117 (2005).
\bibitem[Arv9]{Arv9} Arveson W. Quotients of standard Hilbert modules. Trans. Amer. Math. Soc. Vol 359, pp. 6027-6055 (2007).
\bibitem[Arv10]{Arv10} Arveson W. Noncommutative Poisson boundaries. Unpublished. Available at http://math.berkeley.edu/~ arveson/Dvi/290F04/22Sept.pdf (2004).

\bibitem[Atha3]{Atha3} Athavale A. On the intertwining of joint isometries. J. Oper. Theory. Vol 23, pp. 339-350 (1990).

\bibitem[BDLMC]{BDLMC} Balachandran AP, Dolan BP, Lee JH, Martin X, O’Connor D. Fuzzy complex projective spaces and their star-products. J. Geom. Phys. Vol 43, pp. 184 (2002).

\bibitem[Ban3]{Ban3} Banica T. Théorie des représentations du groupe quantique compact libre $\On(n)$. C. R. Acad. Sci.
Paris Sér. I Math. Vol 322, pp. 241-244 (1996).

\bibitem[Ban4]{Ban4} Banica T. Le groupe quantique compact libre $\Un(n)$. Comm. Math. Phys. Vol 190, pp. 143–172 (1997).

\bibitem[BaGo1]{BaGo1} Banica T,  Goswami D. Quantum isometries and noncommutative spheres. Comm. Math. Phys. Vol 298, Issue 2, pp. 343-356 (2010). 

\bibitem[BMT1]{BMT1} Bédos E, Murphy GJ, Tuset L. Co-amenability of compact quantum groups. J. Geom. Phys. Vol 40, Issue 2, pp. 129-153 (2001).

\bibitem[BeSl1]{BeSl1} Berceanu S, Schlichenmaier M. Coherent state embeddings, polar divisors and Cauchy formulas. J. Geom. Phys. Vol, Issue 34, pp. 336-358 (2000).

\bibitem[Bere1]{Bere1} Berezin FA. Quantization. Math. USSR-Izv. Vol 38, pp. 1116-1175 (1974).

\bibitem[Bere2]{Bere2} Berezin FA. General concept of quantization. Comm. Math. Phys. Vol 40, pp. 153-174 (1995).

\bibitem[Bere3]{Bere3} Berezin FA. Covariant and contravariant symbols of operators. Math. USSR-Izv. Vol 6, Issue 5, p. 1117 (1972).

\bibitem[Bere4]{Bere4} Berezin FA. Some remarks about the associative envelope of a Lie algebra. Funct. Anal. Appl. Vol 1, pp. 91-102 (1967).

%\bibitem[BeHi1]{BeHi1} Bertram W, Hilgert J. Reproducing kernels on vector bundles. In: Lie Theory and Its Applications in Physics III. pp. 43-58 (1998).

\bibitem[Biane1]{Biane1} Biane P. Introduction to random walks on noncommutative spaces. In: Quantum Potential Theory. pp. 61-116. Springer (2008).

\bibitem[Bied1]{Bied1} Biedenharn LC. The quantum group $\SU_q(2)$ and a $q$-analogue of the boson operators. J. Phys. A: Math. Gen. Vol 22, pp. L873-L878  (1989).

\bibitem[Blac1]{Blac1} Blackadar B. Operator algebras: theory of $C^*$-algebras and von Neumann algebras. Springer (2006).

\bibitem[BlKi1]{BlKi1} Blackadar B, Kirchberg E. Generalized inductive limits of finite-dimensional $C^*$-algebras. Math. Ann. Vol 307, pp. 343-380 (1997).

\bibitem[BMS]{BMS} Bordemann M, Meinrenken E, Schlichenmaier M. Toeplitz quantization of Kähler manifolds and $\operatorname{gl}(n),n\to\infty$ limits. Comm. Math. Phys. Vol 165, Issue 2, pp. 281–296 (1994).

\bibitem[BHSS]{BHSS} Bordemann M, Hoppe J, Schaller P, Schlichenmaier M. $\textnormal{gl}(\infty)$ and geometric quantization. Comm. Math. Phys. Vol 138, Issue 2, pp. 209-244 (1991).

\bibitem[BLY1]{BLY1} Bourguignon J-P, Li P, Yau ST. Upper bound for the first eigenvalue of algebraic submanifolds. Comment. Math. Helv. Vol 69, pp. 199-207 (1994).

\bibitem[Brow1]{Brow1} Brown NP. Invariant means and finite representation theory of $C^*$-algebras. Mem. Amer. Math. Soc.   Vol 184, Issue 865 (2006). 
\bibitem[BrOz1]{BrOz1} Brown NP, Ozawa N. $C^*$-algebras and finite-dimensional approximations. Amer. Math. Soc. (2008). 

\bibitem[CGR1]{CGR1} Cahen M, Gutt S, Rawnsley J. Quantization of Kähler manifolds I: geometric interpretation of Berezin's quantization. J. Geom. Phys. Vol 7, Issue 1 (1990). 

\bibitem[CGR2]{CGR2} Cahen M, Gutt S, Rawnsley J. Quantization of Kähler manifolds II. Trans. Amer. Math. Vol 337, Issue 1 (1993). 

%\bibitem[CaKe1]{CaKe1} Cao HD, Keller J. On the Calabi problem: a finite-dimensional approach. J. Eur. Math. Soc. (JEMS) Vol 15, Issue 3, pp. 1033-1065 (2013).

\bibitem[DDL1]{DDL1} D'Andrea F, Dabrowski L, Landi G. The noncommutative geometry of the quantum projective plane. Reviews in Mathematical Physics. Vol 20, Issue 08, pp. 979-1006 (2008).

\bibitem[DRS1]{DRS1} Davidson KR, Ramsey C, Shalit OM. The isomorphism problem for some universal operator algebras. Adv. Math. Vol 228, Issue 1, pp. 167-218 (2011).
\bibitem[DRS2]{DRS2} Davidson KR, Ramsey C, Shalit OM. Operator algebras for analytic varieties. Trans. Amer. Math. Soc. Vol 367, Issue 2, pp. 1121-1150 (2015).

\bibitem[Don1]{Don1} Donaldson SK. Scalar curvature and projective embeddings, I J. Differential Geom. Vol 59, pp. 479-522 (2001).
\bibitem[Don2]{Don2} Donaldson SK. Scalar curvature and projective embeddings, II. Quart. J. Math. Vol 56, pp. 345-356 (2005).
\bibitem[Don3]{Don3} Donaldson SK. Some numerical results in complex differential geometry. Pure Appl. Math. Q. Vol 5, Issue 2, pp. 571-618 (2009).
\bibitem[Don9]{Don9} Donaldson SK. Scalar curvature and stability of toric varieties. J. Differential Geom. Vol 62, pp. 289-349 (2002).
\bibitem[Don12]{Don12} Donaldson SK. Lower bounds on the Calabi functional. J. Differential Geom. Vol 70, pp. 453-472 (2005).

\bibitem[Doug4]{Doug4} Douglas RG. Essentially reductive Hilbert modules. J. Operator Theory. Vol 55, Issue 1, pp. 117-133 (2006).

\bibitem[DoWa2]{DoWa2} Douglas RG, Wang K. Geometric Arveson--Douglas conjecture and holomorphic extension. arXiv:1511.00782v1 (2015).
%\bibitem[DoWa3]{DoWa3} Douglas RG, Wang Y. Geometric Arveson--Douglas conjecture -- Decomposition of varieties. Arxiv v1 (2017).
\bibitem[DoWa4]{DoWa4} Douglas RG, Wang K. Essential normality of cyclic submodule generated by any polynomial. J. Funct. Anal. Vol 261, pp. 3155-3180 (2011).

\bibitem[East1]{East1} Eastwood M. The Cartan product. Bull. Belg. Math. Soc. Simon Stevin. Vol. 11, Issue 5, pp. 641-651 (2005).

\bibitem[EnEs1]{EnEs1} Engliš M, Eschmeier J. Geometric Arveson--Douglas conjecture. Adv. Math. Vol 274, pp. 606-630 (2015).

\bibitem[Esch1]{Esch1} Eschmeier J. Essential normality of homogeneous submodules. Integr. Equ. Oper. Theory. Vol 69, Issue 2, pp. 171-182 (2011). 

\bibitem[Ev1]{Ev1} Evans DE. On $\Oi_n$. Publ. RIMS, Kyoto Univ. Vol 16, pp. 915-927 (1980).

\bibitem[Frank1]{Frank1} Frank M. Isomorphisms of Hilbert $C^*$-modules and*-isomorphisms of related operator $C^*$-algebras. Math. Scand. Vol 80, pp. 313-319 (1997).

\bibitem[Hawk1]{Hawk1} Hawkins E. Quantization of equivariant vector bundles. Comm. Math. Phys. Vol 202, Issue 3, pp. 517-546 (1999).

\bibitem[Hawk2]{Hawk2} Hawkins E. Geometric quantization of vector bundles and the correspondence with deformation quantization. Comm. Math. Phys. Vol 215, Issue 2, pp. 409-432 (2000).

\bibitem[Hawk3]{Hawk3} Hawkins E. An obstruction to quantization of the sphere. Comm. Math. Phys. Vol 283, Issue 3, pp. 675-699 (2008).

\bibitem[Huy]{Huy} Huybrechts D. Complex geometry. An introduction. Springer (2005). 

\bibitem[Iz1]{Iz1} Izumi M. Non-commutative Poisson boundaries and compact quantum group actions. Adv. Math. Vol 169, Issue 1, pp. 1-57 (2002).

\bibitem[Iz2]{Iz2} Izumi M. Non-commutative Markov operators arising from subfactors. Operator algebras and applications. Adv. Stud. Pure Math. Vol 38, pp. 201-217 (2004).

\bibitem[Iz3]{Iz3} Izumi M. $E_0$-semigroups: around and beyond Arveson’s work. J. Operator Theory. Vol 68, Issue 2, pp.
335-363  (2012).

\bibitem[Iz4]{Iz4} Izumi M. Non-commutative Poisson boundaries, Discrete geometric analysis. Contemp. Math. Vol 347, pp. 69-81 (2004).

\bibitem[INT1]{INT1} Izumi M, Neshveyev S, Tuset L. Poisson boundary of the dual of $\SU_q(n)$. Comm. Math. Phys. Vol 262, Issue 2, pp. 505-531 (2006).

\bibitem[KaSh1]{KaSh1} Kakariadis E, Shalit O. On operator algebras associated with monomial ideals in noncommuting variables.  arXiv:1501.06495 (2015).

\bibitem[KaSc1]{KaSc1} Karabegov AV, Schlichenmaier M. Identification of Berezin--Toeplitz deformation quantization. J. Reine Angew. Math. Issue 540, p.49-76 (2001).

\bibitem[Kenn1]{Kenn1} Kennedy M. Essential normality and the decomposability of homogeneous submodules. Trans. Amer. Math. Socl. Vol 367, Issue 1, pp. 293-311 (2015).

\bibitem[KeSh1]{KeSh1} Kennedy M, Shalit OM. Essential normality and the decomposability of algebraic varieties. New York J. Math. Vol 18, pp. 877-890 (2012).

\bibitem[KeSh2]{KeSh2} Kennedy M, Shalit OM. Essential normality, essential norms and hyperrigidity. New York J. Math. Vol 18, pp. 877-890 (2012).

\bibitem[KlS]{KlS} Klimyk AU, Schmüdgen K. Quantum groups and their representations. Vol. 552, Springer, Berlin (1997).

\bibitem[Lan1]{Lan1} Landsman NP.  Mathematical topics between classical and quantum mechanics. Springer (1998).

\bibitem[Lan2]{Lan2} Landsman NP.  Strict quantization of coadjoint orbits. J. Math. Phys. Vol 39, Issue 12, pp. 6372-6383 (1998).

\bibitem[LMS1]{LMS1} Lazaroiu CI, McNamee D, Sämann C. Generalized Berezin quantization, Bergman metrics and fuzzy Laplacians. J. High Energy Phys. Issue 9, pp. 1-59 (2008).

\bibitem[LuTe1]{LuTe1} Lübke M, Teleman A. The Kobayashi–Hitchin correspondence. World Scienlific Publishing
(1995).

\bibitem[MaVD]{MaVD} Maes A, Van Daele A. Notes on compact quantum groups. arXiv: math/9803122 (1998). 

\bibitem[McTr1]{McTr1} McCullough S, Trent TT. Invariant subspaces and Nevanlinna–Pick kernels. J. Funct. Anal. Vol 178, Issue 1, pp. 226-249 (2000).

\bibitem[NT1]{NT1} Neshveyev S, Tuset L. The Martin boundary of a discrete quantum group, J. Reine Angew. Math. Vol 568, pp. 23-70 (2004).

\bibitem[Per1]{Per1} Perelomov AM. Generalized coherent states and applications. Springer (1986).

\bibitem[Pims1]{Pims1} Pimsner MV. A class of $C^*$-algebras generalizing both Cuntz-Krieger algebras and crossed products by $\Z$. Fields Inst. Commun. Vol 12, pp. 189–212 (1997).

\bibitem[Pop1]{Pop1} Popescu G. Operator theory on noncommutative varieties. Indiana Univ. Math. J. Vol 56, Issue 2, pp. 389-442 (2006).
\bibitem[Pop2]{Pop2} Popescu G. Operator theory on noncommutative varieties II. Proc. Amer. Math. Soc. Vol 135, Issue 7, pp. 2151-2164 (2007).
%\bibitem[Pop7]{Pop7} Popescu G. Similarity and ergodic theory of positive linear maps. J. Reine Angew. Math. Vol 561, pp. 87-129 (2003).

\bibitem[Raj1]{Raj1} Rajeev SG. New classical limits of quantum theories. In: Infinite Dimensional Groups and Manifolds. Vol 5, 213 (2004).

\bibitem[RCG1]{RCG1} Rawnsley J, Cahen M, Gutt S. Quantization of Kähler manifolds I: geometric interpretation of Berezin's quantization. J. Geom. Phys. Vol 7, Issue 1, pp. 45-62 (1990). 

\bibitem[RCG2]{RCG2} Rawnsley J, Cahen M, Gutt S. Quantization of Kähler manifolds II. Trans. Amer. Math. Soc. Vol 337, Issue 1 (1993). 

\bibitem[Rie1]{Rie1} Rieffel M. Deformation quantization for actions of $\R^d$. Mem. Amer. Math. Soc. Vol 506 (1993).

\bibitem[Rie2]{Rie2} Rieffel M. Matrix algebras converge to the sphere for quantum Gromov--Hausdorff distance. Mem. Amer. Math. Soc. Vol 168, Issue 796, pp. 67-91 (2004).

\bibitem[RoSt1]{RoSt1} Rørdam M, Størmer E (Eds.). Classification of nuclear $C^*$-algebras. Entropy in operator algebras. Vol 7. Springer (2002).

\bibitem[Sain]{Sain} Sain J. Berezin quantization from ergodic actions of compact quantum groups, and quantum Gromov--Hausdorff distance. PhD thesis, University of California (2009).

\bibitem[Schl1]{Schl1} Schlichenmaier M. Berezin-Toeplitz quantization for compact Kähler manifolds. A review of results. Adv. Math. Phys. Article ID 927280, 38 pages (2010).

\bibitem[Schl2]{Schl2} Schlichenmaier M. Singular projective varieties and quantization. In: Quantization of Singular Symplectic Quotients. pp. 259-282 (2001).

\bibitem[Seg1]{Seg1} Segal G. Lectures on Lie groups. In: Lectures on Lie groups and Lie algebras. Cambridge University Press (1995).

\bibitem[Serre1]{Serre1} Serre JP. Géométrie algébrique et géométrie analytique. Ann. Inst. Fourier, Grenoble. Vol 6, pp. 1-42 (1956).
\bibitem[Serre2]{Serre2} Serre JP. Faisceaux algébriques cohérents. Ann. Math. Vol 61, Issue 2, pp. 197-278 (1955).

%\bibitem[Sha1]{Sha1} Shalit OM. Product systems, subproduct systems and dilation theory of completely positive semigroups. PhD thesis. arXiv:1002.4920v1 (2010).
%\bibitem[Sha2]{Sha2} Shalit OM. Operator theory and function theory in Drury--Arveson space and its quotients. arXiv:1308.1081v5 (2013).
\bibitem[Sha3]{Sha3} Shalit OM. Stable polynomial division and essential normality of graded Hilbert modules. J. London Math. Soc. Vol 83, Issue 2, pp. 273-289 (2011).

\bibitem[ShSo1]{ShSo1} Shalit OM, Solel B. Subproduct systems. Doc. Math. Vol 14, pp. 801-868 (2009).

\bibitem[Sten1]{Sten1}  Stenstrom B. Rings of quotients -- An introduction to methods of ring theory. Springer (1975). 

\bibitem[Tian1]{Tian1} Tian G. Kähler--Einstein metrics with positive scalar curvature. Invent. Math. Vol 137, pp. 1-37 (1995).

\bibitem[Tian2]{Tian2} Tian G. Canonical metrics in Kähler geometry. Birkhauser (2000).

\bibitem[Timm1]{Timm1} Timmermann T. An invitation to quantum groups and duality: from Hopf algebras to multiplicative unitaries and beyond. Eur. Math. Soc. (2008). 

\bibitem[Tom1]{Tom1} Tomatsu R. A characterization of right coideals of quotient type and its application to classification
of Poisson Boundaries. Comm. Math. Phys. Vol 275, Issue 1, pp 271-296 (2007).

\bibitem[UnUp1]{UnUp1} Unterberger A, Upmeier H. The Berezin transform and invariant differential operators. Comm. Math. Phys. Vol 164, pp. 563-597 (1994).

\bibitem[VaVe1]{VaVe1} Vaes S, Vennet N. Identification of the Poisson and Martin boundaries of orthogonal discrete quantum groups. J. Inst. Math. Jussieu. Issue 7, pp. 391-412 (2008).

\bibitem[VVer1]{VVer1} Vaes S, Vergnioux R. The boundary of universal discrete quantum groups, exactness and factoriality. Duke Math. J. Vol 140, Issue 1, pp. 35-84 (2006).

\bibitem[VaDW]{VaDW} Van Daele A, Wang SZ. Universal quantum groups. Int. J. Math. Vol 7, Issue 2, pp. 255-264 (1996).

\bibitem[Vas1]{Vas1} Vasselli E. Continuous fields of $C^*$-algebras arising from extensions of tensor $C^*$-categories.
J. Funct. Anal. Vol 199, pp. 122-152 (2003).

\bibitem[Vas3]{Vas3} Vasselli E.  The $C^*$-algebra of a vector bundle and fields of Cuntz algebras. J. Funct. Anal.
222, Issue 2, pp. 491-502 (2005).

\bibitem[Vis2]{Vis2} Viselter A. Cuntz--Pimsner algebras for subproduct systems. Int. J. Math. Vol 23, Issue 8, p. 1250081 (2012).

\bibitem[Wang1]{Wang1}  Wang SZ. Ergodic actions of universal quantum groups on operator algebras. Comm. Math. Phys. Vol 203, pp. 481-498 (1999).

\bibitem[Wang3]{Wang3} Wang SZ. Structure and isomorphism classification of compact quantum groups $A_u (q)$ and $B_u (q)$. arXiv:math/9807095v2 (2000).

\bibitem[Wa1]{Wa1} Wang X. Balance point and stability of vector bundles over a projective manifold. Math. Res. Lett. Vol 9, Issues 2-3, pp. 393-411 (2002).

%\bibitem[Wa2]{Wa2} Wang X. Canonical metrics on stable vector bundles, Comm. Anal. Geom. Vol 13, Issue 2, pp. 253-285 (2005).

\bibitem[Wor1]{Wor1} Woronowicz SL. Compact matrix pseudogroups. Comm. Math. Phys. Vol 111, pp. 613-665 (1987).

\bibitem[Wor3]{Wor3} Woronowicz SL. Tannaka--Krein duality for compact matrix pseudogroups. Twisted $\SU(n)$ groups, Inv. Math. Vol 93, pp. 35-76 (1988).

\bibitem[Wor4]{Wor4} Woronowicz SL. Compact quantum groups. Symétries quantiques, Les Houches 1995, pp. 845-884 (1998).

\bibitem[Yau1]{Yau1} Yau S-T. On the Ricci curvature of a compact Kähler manifold and the complex Monge--Amère equation, I. Commun. Pure Appl. Math. Vol 31, pp. 339-411 (1978).

\bibitem[Zeld1]{Zeld1} Zelditch S. Szegö kernels and a theorem of Tian. Int. Math. Res. Not. Issue 6, pp. 317-331 (1998).

\end{document}